\documentclass[a4paper,11pt]{amsart}
\usepackage[vmargin=3cm,hmargin=3cm]{geometry}
\usepackage{graphicx}

\usepackage{mathpazo} 

\usepackage{xcolor}
\usepackage[
    colorlinks=true,
    linkcolor=darkblue,
    urlcolor=seablue,
    citecolor=darkgreen
    ]{hyperref}
\usepackage{orcidlink}

\usepackage[
    style=ieee,
    sorting=nyt,
    citestyle=numeric-comp,
    url=false,
    giveninits=true,
    ]{biblatex}
\AtBeginBibliography{\small}
\AtEveryBibitem{\clearname{translator}\clearlist{publisher}\clearfield{pagetotal}}

\usepackage{amsmath,amsthm,amssymb}
\allowdisplaybreaks[1]
\usepackage{mathrsfs}
\usepackage{thmtools, thm-restate}
\usepackage{quiver}
\tikzset{between/.style n args={2}{/tikz/execute at end to={
      \tikzset{spath/split at keep middle={current}{#1}{#2}}
    }}}
\usetikzlibrary{decorations.markings, graphs, shapes.geometric}
\usepackage[bbgreekl]{mathbbol}
\usepackage{mathtools}
\usepackage{stmaryrd}
\usepackage{enumitem}
\usepackage{braket}
\usepackage{bussproofs}

\usepackage{comment}

\usepackage{cleveref} 

\makeatletter
\AddToHook{cmd/appendices/before}{\def\cref@section@alias{appendix}}
\makeatother

\theoremstyle{plain}
\newtheorem{theorem}{Theorem}[section]
\newtheorem{subtheorem}{Theorem}[theorem]

\newtheorem{proposition}[theorem]{Proposition}
\newtheorem{corollary}[theorem]{Corollary}
\newtheorem{lemma}[theorem]{Lemma}
\newtheorem*{caveat}{Caveat}

\theoremstyle{definition}
\newtheorem{definition}[theorem]{Definition}
\newtheorem{example}[theorem]{Example}
\newtheorem{remark}[theorem]{Remark}
\newtheorem{scholium}[theorem]{Scholium}

\newtheorem{question}[theorem]{Question}

\Crefname{subtheorem}{Theorem}{Theorems}
\crefname{subtheorem}{theorem}{theorems}
\Crefname{assumption}{Assumption}{Assumptions}
\crefname{assumption}{assumption}{assumptions}
\Crefname{construction}{Construction}{Constructions}
\crefname{construction}{construction}{constructions}
\Crefname{fact}{Fact}{Facts}
\crefname{fact}{fact}{facts}

\usepackage{wrapfig}
 \title{
    Profunctorial algebras
}

\thanks{
    The authors express their gratitude to Nathanael Arkor, Ivan Di Liberti, Sam van Gool, Jérémie Marquès, and Daniela Petrişan for their advice during the preparation of this work.  The second-named author acknowledges financial support from the Agence Nationale de la Recherche (ANR), project ANR-23-CE48-0012-01.
}

\author{Quentin Aristote}
\author{Umberto Tarantino}
\date{}

\address[Quentin Aristote]{Université Paris-Cité, CNRS, Inria, IRIF, F-75013, Paris, France.}
\email{aristote@irif.fr}
\urladdr{https://quentin.aristote.fr/}

\address[Umberto Tarantino]{Université Paris Cité, CNRS, IRIF, F-75013, Paris, France.}
\email{tarantino@irif.fr}
\urladdr{https://utarantino.github.io/}

\keywords{
  bicategory, pseudomonad, discrete fibration, exact square, profunctor,
  ultracategory, ultracompletion}

\newcommand{\bb}[1]{\mathbb{#1}}

\renewcommand{\u}[1]{\underline{#1}}
\renewcommand{\frak}[1]{\mathfrak{#1}}

\DeclareFontFamily{U}{mathx}{}
\DeclareFontShape{U}{mathx}{m}{n}{<-> mathx10}{}
\DeclareSymbolFont{mathx}{U}{mathx}{m}{n}
\DeclareMathAccent{\widehat}{0}{mathx}{"70}
\DeclareMathAccent{\widecheck}{0}{mathx}{"71}

\newcommand{\tah}[1]{\widecheck{#1}} \renewcommand{\hat}[1]{\widehat{#1}}

\DeclareMathSymbol{:}{\mathpunct}{operators}{"3A}

\newcommand{\id}{\mathrm{id}}
\newcommand{\op}[1]{{#1}^{\operatorname{op}}}
\newcommand{\co}[1]{{#1}^{\operatorname{co}}}
\newcommand{\coop}[1]{{#1}^{\operatorname{coop}}}

\DeclareMathOperator{\Mod}{Mod}

\def\slashedrightarrow{\relbar\mathrel{\mkern-4mu}\joinrel\mapstochar\mathrel{\mkern-4mu}\joinrel\rightarrow}
\newcommand{\pro}{\slashedrightarrow}

\tikzset{kleisli/.style={ postaction={decorate, decoration={markings, mark= at position 0.5 with {
        \draw circle[radius=1.2pt] ; }}
}}}
\newcommand{\klarrow}{\mathrel{\tikz [line width=.11ex, double distance=.33ex]
    \draw[->, kleisli]
    (0,0) -- (0.4,0);}
}

\newcommand{\ult}{\mathrel{\tikz [line width=.11ex, double distance=.33ex]
    \draw[>-]
    (0.3,0) -- (0,0);}
}

\newcommand{\central}[1]{{#1}^\dagger}

\newcommand{\namedCat}[1]{\mathbf{#1}}
\newcommand{\bicat}[1]{\mathsf{#1}}

\newcommand{\K}{\bicat K} \newcommand{\ps}[1]{\bicat{#1}} 

\let\Set\relax \newcommand{\Set}{\namedCat{Set}}
\newcommand{\Pos}{\namedCat{Pos}}
\newcommand{\Rel}{\namedCat{Rel}}
\newcommand{\Cat}{\namedCat{Cat}}
\newcommand{\CAT}{\namedCat{CAT}}
\newcommand{\Prof}{\namedCat{Prof}}
\newcommand{\PROF}{\namedCat{PROF}}

\newcommand{\Span}[1]{\namedCat{Span}(#1)}

\newcommand{\DFib}[1]{\namedCat{TSDF}(#1)}
\newcommand{\Adj}[1]{\namedCat{Adj}(#1)}

\newcommand{\UltCat}{\namedCat{UltCat}}
\newcommand{\UltSp}{\namedCat{UltSp}}

\newcommand{\ConvSp}{\namedCat{ConvSp}}

\newcommand{\Top}{\namedCat{Top}}
\newcommand{\CompHaus}{\namedCat{KHaus}}
\newcommand{\Clos}{\namedCat{Clos}}

\newcommand{\two}{\namedCat{2}}

\newcommand{\Alg}[1]{\namedCat{Alg}\mathopen{}\left(#1\right)\mathclose{}}

\newcommand{\nLaxAlgrco}[1]{\namedCat{LaxAlg}_{\operatorname{r,\, co}}^{\operatorname{norm}}\mathopen{}\left(#1\right)\mathclose{}}

\newcommand{\PsAlg}[1]{\namedCat{PsAlg}_{\operatorname{ps}}\mathopen{} (#1 )\mathclose{}}

\newcommand{\PsAlgco}[1]{\namedCat{PsAlg}_{\operatorname{co}}\mathopen{} (#1 )\mathclose{}}

\newcommand{\Kl}[1]{\namedCat{Kl}\mathopen{}\left(#1\right)\mathclose{}}

\newcommand{\namedFunc}[1]{\namedCat{#1}}

\newcommand{\unit}[1][]{\eta^{#1}}
\newcommand{\mult}[1][]{\mu^{#1}}
\newcommand{\counit}[1][]{\varepsilon^{#1}}
\newcommand{\monad}[1]{\langle \kern0.05em {#1},\unit[#1],\mult[#1]\rangle}

\newcommand{\bicomma}[2]{{{#2} \triangleleft {#1}}}

\newcommand{\dom}[1][]{d_0}
\newcommand{\cod}[1][]{d_1}

\newcommand{\E}{\mathcal{E}}
\newcommand{\M}{\mathcal{M}}

\newcommand{\supp}{\operatorname{supp}}
\newcommand{\pt}{\operatorname{pt}}
\newcommand{\psh}{\namedFunc{psh}\kern0.1em}

\definecolor{darkgreen}{RGB}{0,102,0}
\definecolor{lightblue}{RGB}{111, 151, 242}
\definecolor{darkred}{RGB}{178,0,0}
\definecolor{darkblue}{HTML}{0000B2}
\definecolor{magenta}{HTML}{B20059}
\definecolor{brightyellow}{RGB}{237, 217, 83}
\definecolor{orange}{RGB}{255,123,0}
\definecolor{lightpink}{RGB}{255, 103, 129}
\definecolor{brightpink}{RGB}{232, 88, 232}
\definecolor{grassgreen}{RGB}{0,154,23}
\definecolor{seablue}{RGB}{52,111,111}
\definecolor{skyblue}{RGB}{135, 206, 235}

\newcommand{\stringdiagram}[2][]{\raisebox{-.5\height}{\includegraphics[scale=1.2, #1]{#2.pdf}}}
\newcommand{\centerstringdiagram}[2][]{\begin{center} \stringdiagram[#1]{#2} \end{center}}
 
\begin{document}

\maketitle

\begin{abstract}
  We provide a bicategorical generalization of Barr's landmark 1970 paper, in
  which he describes how to extend $\Set$-monads to relations and uses this to
  characterize topological spaces as the \emph{relational algebras} of the
  ultrafilter monad. With two-sided discrete fibrations playing the role of
  relations in a bicategory, we first describe how to extend pseudomonads on a bicategory to skew monads on its bicategory of two-sided discrete fibrations, and we characterize in terms of exact squares when these extensions are themselves pseudomonads. As a wide class of examples, we show that every
  $\Set$-monad induces a pseudomonad on the 2-category of categories admitting a skew extension to profunctors, and in a few relevant cases we introduce suitable quotients also extending to profunctors. Among
  the latter, we then focus on the ultracompletion pseudomonad, whose pseudoalgebras are ultracategories: we characterize the normalized lax algebras of its profunctorial extension as ultraconvergence spaces, a recently-introduced categorification of topological spaces.
\end{abstract}

\tableofcontents
\section{Introduction}

\emph{Bicategories}, as $2$-dimensional categorical structures, have recently
found renewed interest in computer science and logic by providing
$\lambda$-calculus and type theory with semantics that are proof-relevant and
concurrency-aware~\cite{savilleCartesianClosedBicategories2019,ahrensSemanticsTwodimensionalType2022,ahrensBicategoricalTypeTheory2023}.
As recognized already since \cite{seelyModellingComputations2categorical1987},
bicategorical models are indeed flexible enough to give a semantics not only to
types and terms, but also to reduction steps between terms. Such models
naturally arise, for instance, in game-based
semantics~\cite{melliesAsynchronousTemplateGames2021,fioreCartesianClosedBicategory2008,clairambaultCartesianClosedBicategory2023},
or typically where composition of morphisms is defined by means of universal
properties.

This has prompted several tools and techniques from ordinary category theory to
be adapted to the bicategorical setting,
e.g.~\cite{fioreCoherenceNormalisationbyevaluationBicategorical2020,paquetEffectfulSemanticsBicategories2024,azevedodeamorimLogicalRelationsCallbypushvalue2025}.
In the same spirit, the purpose of this work is to adapt a result by
Barr~\cite{barrRelationalAlgebras1970} which is fundamental in computer science
and logic. Barr's contribution is twofold: he describes how to extend monads to
act on relations, and uses this to give an algebraic characterization of
topological spaces. Analogously, we characterize how to extend $2$-dimensional
monads (\emph{pseudomonads}) to $2$-dimensional relations (\emph{two-sided
  discrete fibrations}), and use this to recover
$2$-dimensional topological spaces (\emph{ultraconvergence spaces}) algebraically.

\subsection*{Monads and relations}

Since the work of Moggi
\cite{moggiComputationalLambdacalculusMonads1989,moggiNotionsComputationMonads1991},
monads have been extensively used in theoretical computer science to model
computational effects,
e.g.~\cite{wadlerEssenceFunctionalProgramming1992,bentonMonadsEffects2002,
  plotkinNotionsComputationDetermine2002,plotkinAlgebraicOperationsGeneric2003,kammarAlgebraicFoundationsEffectdependent2012,dagninoMonadicTypeAndEffectSoundness2025}:
given a type constructor $T$ representing a certain kind of effect, a
$T$-effectful function $X \klarrow Y$ is a function of type $X \to TY$, and the
monad structure on $T$ provides a well-behaved theory of composition for
$T$-effectful functions. When interpreting types as sets and terms as functions,
non-determinism is, for instance, modeled by having $T$ be the powerset
construction $\mathcal P$; non-deterministic functions $X \to \mathcal P Y$ can
then be identified with relations, i.e., subsets $R \subseteq X \times Y$.

\looseness=-1 Barr first noticed that, viewing a relation as the span of its
projections $X \leftarrow R \rightarrow Y$, there is a natural way to define an
action of a $\Set$-functor $F$ on relations --- namely, by having $F$ act on the
two projections. Such an extension preserves the natural order of
relations and also, under appropriate conditions, their compositions. For
instance, the extension of the powerset construction transforms a relation
between two sets $A$ and $B$ into the one relating subsets of $A$ and $B$ via
the Egli-Milner formula\footnote{That is, where $X \subseteq A$ is related to $Y \subseteq B$ if and only if $\forall x \in X \ \exists
  y \in Y \, :\,  xRy$ and $\forall y \in Y \ \exists x \in X \, : \, xRy$.}. Similarly, a monad structure on $F$ extends to a \emph{skew monad} on relations, which is a genuine monad under appropriate conditions.

Already, this first part of Barr's result has far-reaching implications in
computer science. By providing a way to extend $\Set$-functors to relations, it
allows coalgebraic methods to speak of (bi)simulations and modal logic,
e.g.~\cite{mossCoalgebraicLogic1999,venemaAutomataFixedPoint2006,
  kupkeCompletenessCoalgebraicCover2012,klinCoalgebraicApproachProcess2004}. More recently, Barr's technique has been employed to combine non-determinism with other effects~\cite{garnerVietorisMonadWeak2019,goyPowersetLikeMonadsWeakly2021,aristoteMonotoneWeakDistributive2025} by constructing \emph{weak
  distributive laws}~\cite{bohmWeakTheoryMonads2010,garnerVietorisMonadWeak2019}
over the powerset monad. For instance, the problem of combining probabilistic
choice with non-determinism had gathered extensive interest in the previous literature~\cite{varaccaDistributingProbabilityNondeterminism2006,keimelMixedPowerdomainsProbability2017}, and a simple solution was given
following this approach~\cite{goyCombiningProbabilisticNondeterministic2020,goubault-larrecqWeakDistributiveLaws2024}.

\looseness=-1
As surveyed in~\cite{kurzRelationLiftingSurvey2016}, Barr's result and its applications have already been generalized in several
settings where one can see relations as
spans~\cite{burroniTcategories1971,carboni2categoricalApproachChange1991,bilkovaRelationLiftingsPreorders2011,bilkovaRelationLiftingApplication2013}.
In the ordered setting, the focus is not on mere relations but on the
\emph{strengthening-closed} ones, i.e., relations $R$ between two posets $(A,
{\le_A})$ and $(B,{\le_B})$ such that if $a ' \ge_A a$, $aRb$ and $b \ge_B b'$,
then $a'Rb'$ as well. These relations, widespread in computer science and
logic (see, e.g., \cite{hoareAxiomaticBasisComputer1969,smythStableCompactification1992,jungStablyCompactSpaces2001}),
correspond to monotone functions $(B, {\ge_B}) \times (A,
{\le_A}) \to \two$ where $\two$ is the two-element lattice.
  Categorifying this
picture,
relations between two categories $A$ and $B$ can be identified with
functors $\op B \times A \to \Set$, that is, \emph{profunctors} (see, e.g., \cite{benabouDistributorsWork2000}). Intuitively, such a profunctor $R$ is a \emph{proof-relevant relation} between
$A$ and $B$: while in the posetal case we only know whether $a R b$ or not, elements of $R(b,a)$ are now witnesses to the relation between
$a$ and $b$.

\looseness=-1 Following Barr, we thus construct extensions to the bicategory of (locally small)
categories and (small) profunctors. Our result crucially relies on seeing the latter as
certain spans called \emph{two-sided discrete fibrations}
\cite{streetFibrationsBicategories1980,
  carboniModulatedBicategories1994,loregianCategoricalNotionsFibration2020}, which provides a \emph{fibered} point of view to the above \emph{indexed}
definition of profunctors. As recognized in
\cite{paquetEffectfulSemantics2Dimensional2023,
  paquetEffectfulSemanticsBicategories2024}, effects in a bicategorical setting
can be modeled by
\emph{pseudomonads}~\cite{blackwellTwodimensionalMonadTheory1989}, a
$2$-dimensional generalization of ordinary monads: our first main contribution
is thus to describe a way to extend pseudomonads on a bicategory $\K$ to \emph{skew monads} on its bicategory of two-sided discrete fibrations in $\K$, and characterize when these skew extensions are themselves pseudomonads 
(\Cref{cor:extension-pseudomonads}).

\subsection*{Topological spaces as algebras}

\looseness=-1 In the second half of \cite{barrRelationalAlgebras1970}, Barr
proceeds to use his relational extensions to characterize topological
spaces as the namesake \emph{relational algebras} of the \emph{ultrafilter monad}
$\beta$ on sets --- that is, certain algebras for the skew extension of $\beta$
to relations. To achieve this, he builds on the work of Manes
\cite{manesTripleTheoreticConstruction1969}, where (ordinary) $\beta$-algebras are
characterized as {compact Hausdorff spaces}. Concretely, this means that a
compact Hausdorff topology on a set $X$ can be specified by a function $\beta X
\to X$ assigning, to each ultrafilter on $X$, its \emph{limit} --- one such limit exists because the space is compact, while it is unique because the space is Hausdorff.
Barr similarly recovered arbitrary topologies on $X$, where each ultrafilter may now have multiple limits or none,
in terms of relations between $\beta X$ and $X$ satisfying appropriate axioms. The study of
spaces in terms of such \emph{convergence relations} led to the development
of the field of \emph{monoidal topology}
\cite{clementinoMonoidalTopologyCategorical2014}.

\looseness=-1 Manes and Barr's
characterizations yield the bottom square of~\Cref{fig:cube}. Our second main contribution is to fill up the top square therein:
we apply our extension theorem to construct a skew monad $\u{\bbbeta}$ on profunctors whose algebras recover \emph{ultraconvergence spaces}, a categorification of topological spaces recently introduced in \cite{goolToposesEnoughPoints2025,saadiaExtendingConceptualCompleteness2025}.

\begin{figure}[h]
  \centering
  \[\begin{tikzcd}[column sep = 8pt, row sep = 15pt]
      &[26pt] \UltSp &[-8pt] &[-6pt] {\nLaxAlgrco{\u\bbbeta}} \\
      \UltCat && {\PsAlgco{\bbbeta}} \\
      & \Top && {\mathbf{LaxAlg}\big(\u\beta\big)} \\
      \CompHaus && {\Alg\beta}
      \arrow["{\textbf{\color{darkred}{Thm.~\ref{thm:ucspaces-algebraically-in-full}}}}"{description},
      color=darkred, tail reversed, from=1-2, to=1-4]
      \arrow["{\textbf{\color{darkred}{Cor.~\ref{cor:ucats-as-uspaces}}}}"{description}, hook, from=2-1, to=1-2, color=darkred]
      \arrow[hook, from=3-2, to=1-2]
      \arrow["{\textnormal{\cite{rosoliniUltracompletions2024}}}"{description}, tail
      reversed, crossing over, from=2-1, to=2-3]
      \arrow[hook, from=2-3, to=1-4]
      \arrow[hook, from=3-4, to=1-4]
      \arrow[hook, from=4-1, to=2-1]
      \arrow[hook, from=4-1, to=3-2]
      \arrow["\textnormal{\cite{manesTripleTheoreticConstruction1969}}"{description},
      tail reversed, from=4-1, to=4-3]
      \arrow[hook, from=4-3, to=3-4]
      \arrow["\textnormal{\cite{barrRelationalAlgebras1970}}"{description, pos=0.62},
      tail reversed, from=3-2, to=3-4]
      \arrow[hook, from=4-3, to=2-3, crossing over]
    \end{tikzcd}\]
  \caption{Algebraic presentations of topological spaces}
  \label{fig:cube}
\end{figure}
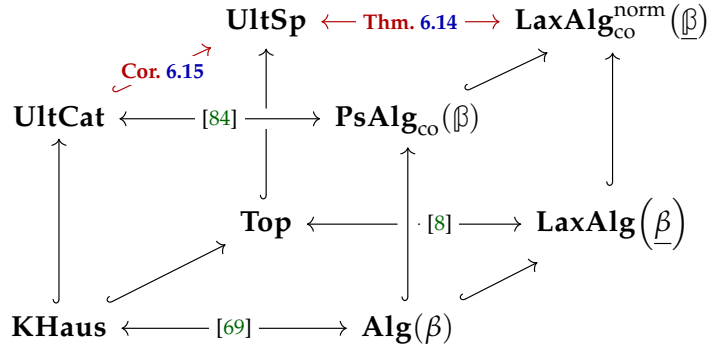

\looseness=-1
The motivation behind ultraconvergence spaces stems from duality theory and categorical logic. In the context of propositional logic,
\emph{Stone duality}
\cite{stoneTheoryRepresentationsBoolean1936,stoneTopologicalRepresentationsDistributive1937}
bridges syntax and semantics, allowing for a topological study of logical
properties which has proved widely influential in computer science
(see, e.g., \cite{gehrkeTopologicalDualityDistributive2024}).
A key feature of Stone duality is Stone's representation theorem for Boolean
algebras, which we can describe in logical terms as follows.
A (classical) propositional theory $\bb T$ can be identified with its
\emph{syntactic algebra} $B_{\bb T}$, i.e.,
the Boolean algebra of propositional formulas
modulo $\bb T$-provable equivalence. Intuitively, two theories with isomorphic syntactic algebras
describe, with possibly different syntax, the same programs and behaviours. Stone's theorem can then be expressed as saying that, endowing the set of
models of a theory $\bb T$ with the \emph{Stone topology}, the
algebra of continuous maps $\Mod(\bb T) \to \two$ is isomorphic to $B_{\bb T}$. In other
words, syntax can be reconstructed from semantics if enough topological
information on the space of models is provided; such reconstructions find
applications, for instance, to domain theory~\cite{abramskyDomainTheory1995} and decidability questions for formal languages~\cite{gehrkeTopologicalApproachRecognition2010}.

\looseness=-1 The quest for an analogue of this result in the setting of
 first-order logic led Makkai to introduce
\emph{ultracategories} \cite{makkaiStoneDualityFirst1987}. Models of a
first-order theory $\bb T$ are arranged in a category, rather than merely a set,
and the possibility of forming \emph{ultraproducts} of models determines what
Makkai calls an \emph{ultrastructure} on the category $\Mod (\bb T)$, playing
the role of the Stone topology in the propositional case. The role of the
syntactic algebra is played, in this context, by the \emph{classifying topos}
$\mathcal E_{\bb T}$ of $\bb T$, a category built out of the syntax of the
theory (see, e.g., \cite[\S D]{johnstoneSketchesElephantTopos2002}). Ultracategories allow for a reconstruction theorem \emph{à la} Stone
for first-order logic: the category of functors $\Mod(\bb T)\to \Set$ (suitably) preserving
ultraproducts is equivalent to $\mathcal E_{\bb T}$. Ultraconvergence spaces \cite{goolToposesEnoughPoints2025}, introduced as \emph{virtual ultracategories} in \cite{saadiaExtendingConceptualCompleteness2025}, are `relational' generalizations of ultracategories extending this theorem to the setting of
\emph{geometric logic}, a fragment of infinitary first-order logic.

\looseness=-1 In this work, we focus on retrieving
{ultraconvergence spaces} as certain algebras for a
skew monad on the bicategory of profunctors. Encoding a categorified Manes' theorem, ultracategories can be defined as algebras for the \emph{ultracompletion} pseudomonad $\bbbeta$ acting on
categories \cite{garnerUltrafiltersFiniteCoproducts2020, rosoliniUltracompletions2024}. This pseudomonad is the one we extend to profunctors, which allows us to characterize ultraconvergence spaces as its \emph{profunctorial algebras} (\Cref{thm:ucspaces-algebraically-in-full}). This way, we provide a categorical justification of ultraconvergence spaces, now framed within a $2$-dimensional
Barr's theorem, and we also allow for
algebraic tools in their study: for instance, we characterize ultracategories as the \emph{representable} ultraconvergence spaces, answering a question raised in \cite{saadiaExtendingConceptualCompleteness2025}.

\subsection*{Content and contributions}

\noindent This paper is organized as follows.
\begin{itemize}
\item In \Cref{sec:02}, we recall the theory of two-sided discrete fibrations in
a bicategory.
\item In \Cref{sec:03}, as our first main contribution, we describe how to extend pseudofunctors, pseudonatural
transformations and modifications on a bicategory to oplax functors, oplax transformations and modifications on its bicategory of two-sided discrete
fibrations, and we characterize when these extensions are actually pseudofunctorial or pseudonatural. In particular, we deduce an extension criterion for pseudomonads (\Cref{cor:extension-pseudomonads}).
\item In \Cref{sec:04}, we focus on the $2$-category $\CAT$ of (locally
small) categories: there, as another contribution, we show that a wide class of pseudomonads, built from
$\Set$-monads, extend to skew monads on
the bicategory $\PROF$ of categories and \emph{small} profunctors
(\Cref{thm:loke-extend}) by means of our criterion. Moreover, we review our characterization through the theory of distributive laws, reobtaining the known result that pseudomonads on $\CAT$ admit at most one extension to pseudomonads on $\PROF$ (\Cref{cor:extension-on-cat}). \item \looseness=-1 In \Cref{sec:05}, we apply our extension theorem to the pseudomonad $\bbbeta$: as our second main contribution, we characterize ultraconvergence spaces as the \emph{normalized lax algebras} of its profunctorial (skew) extension $\u\bbbeta$ (\Cref{thm:ucspaces-algebraically-in-full}). Towards this goal, we discuss the problem of defining lax algebras and their colax morphisms for skew monads, and we introduce sufficient conditions allowing to do so. 
\end{itemize}

\subsection*{Future work}
\looseness=-1 A natural direction for further work is to adapt
the ramifications of Barr's $1$-dimensional result
(coalgebraic techniques, distributive laws, monoidal topology) to the
$2$-dimensional setting.
We also believe the discussions of \Cref{sec:04,sec:05} open two lines of work on the semantics of $\lambda$-calculus.

\looseness=-1 First, $\CAT$ and $\PROF$ are well-known to respectively be
cartesian closed and compact closed bicategories, and thus $2$-dimensional
models of $\lambda$-calculus and of linear logic. The pseudomonads we introduce
in \Cref{sec:04} and their skew extensions are a natural choice to model effects in $\CAT$ and $\PROF$, as
they naturally extend the monads on $\Set$. One direction for future work is
therefore to study these monads and their properties: for instance,
whether they are strong, concurrent or commutative
\cite{paquetEffectfulSemanticsBicategories2024}, and whether they combine with
each other via pseudodistributive laws
\cite{marmolejoDistributiveLawsPseudomonads1999,chengPseudodistributiveLaws2003}. To this latter aim, a theory of distributive extensions between skew monads could be worth developing.

The machinery we present, motivated by the study of ultraconvergence
spaces, leaves us with a modular way to produce natural categorification of
the algebras of many $\Set$-monads. Natural examples are categories of domains,
for instance that of continuous lattices, which can be altered to give
domain-theoretic models of the $\lambda$-calculus (cf.\ \Cref{rem:algebras-filter}), such as Scott domains. We
conjecture that some analogous variations can be performed on their $2$-dimensional
versions, thus providing $2$-dimensional domain-theoretic models of the
$\lambda$-calculus.

\begin{caveat}
  In a first version of this draft, we claimed that the pseudomonads considered in \Cref{sec:04} extend to \emph{pseudomonads} on $\PROF$, but the proof of this result contained a mistake; we are indebted to Jérémie Marquès for pointing it out to us. As a consequence, we do not claim anymore that ultracategories are closed under free small-cocompletion, question that we now leave open. The new version of \Cref{sec:05} works around this issue by showing that the skew extensions of these pseudomonads still admit a well-behaved theory of algebras encompassing ultraconvergence spaces.
\end{caveat} \section{Relations in a bicategory}\label{sec:02}

\looseness=-1
In this section we recall basic definitions for constructing the bicategory of
discrete fibrations in a bicategory. This generalizes the construction of
relations in a regular category --- e.g., in $\Set$ --- as jointly monic
spans~\cite[Thm.\ 2.8.4]{borceuxRegularCategories1994}, the construction of
monotone relations in $\Pos$ as strengthening-closed jointly-monic
spans~\cite[Prop.\ 2.13]{bilkovaRelationLiftingsPreorders2011}, and the
construction of profunctors in categories of enriched categories as
collages~\cite[Prop.\ 6.21]{streetFibrationsBicategories1980}. Although
discrete fibrations in a bicategory go back to
Street~\cite{streetFibrationsBicategories1980}, we adopt the modern approach
accounted for in~\cite{loregianCategoricalNotionsFibration2020} and generalizing
the one of~\cite{carboniModulatedBicategories1994}.

We assume the reader to be familiar with ordinary (i.e.\ $1$-dimensional)
category theory~\cite{borceuxHandbookCategoricalAlgebra1994a}. Bicategories, introduced in~\cite{benabouIntroductionBicategories1967}, are a
$2$-dimensional generalization of categories where objects have
\emph{categories} (in this paper, always assumed to be \emph{locally small}) of morphisms between them, instead of mere sets thereof.

\begin{definition}[{\cite[Def. 1.1]{benabouIntroductionBicategories1967}}] \label{def:bicat}
  A \emph{bicategory} $\K$ consists of:
  \begin{itemize}
  \item a class of objects;
  \item for any two objects $A, B$, a \emph{hom}-category
    $\K(A,B)$ whose objects are called \emph{arrows} and denoted with
    ${\to}$, and whose morphisms are called \emph{$2$-cells} and denoted with
    ${\Rightarrow}$;
  \item for any object $A$, an \emph{identity} arrow $1_A\colon A \to A$;
  \item for any three objects $A,B,C$, a \emph{composition} functor
  $\circ \colon \K(B,C) \times \K(A,B) \to \K(A,C)$;

  \item \looseness=-1 for any arrow $f\colon A \to B$, invertible $2$-cells $\rho_f\colon f \circ 1_A \cong f$ and
    $\lambda_f\colon 1_A \circ f \cong f$
    witnessing unitality of identity arrows;
  \item for any three arrows $f\colon A \to B$, $g\colon B
    \to C$ and $h\colon C \to D$,
   an invertible $2$-cell $\alpha_{h,g,f}\colon h \circ (g \circ f) \cong
    (h \circ g) \circ f$ witnessing associativity of composition.
  \end{itemize}
  This data is required to satisfy some \emph{coherence axioms}, spelled out in Appendix
  \ref{app:A2}. In particular, $\K$ is a \emph{$2$-category} if each structural
  2-cell is the identity.
\end{definition}

The coherence conditions make it so that any two
diagrams between the same source and target arrows, involving only structural $2$-cells, coincide
\cite{maclaneCoherenceBicategoriesIndexed1985}. This allows us to draw diagrams
in a bicategory leaving the structural 2-cells implicit.

\looseness=-1
When not ambiguous, we omit
subscripts. We also generally omit the composition functor entirely,
writing $g \circ f$ as $gf$. Moreover, we write $\alpha *f$ or simply $\alpha f$ for the \emph{whiskering} $\alpha \circ \id_f$ of a 2-cell $\alpha$ with an arrow $f$, and similarly $f*\alpha$ or $f \alpha$ in place of $\id_f \circ \alpha$. Finally, we write $\cdot$ for the composition
operation inside the hom-categories.

\looseness=-1
For a bicategory $\K$, we denote by $\op\K$ and $\co\K$ the bicategories obtained by reversing arrows and $2$-cells respectively.

\looseness=-1 Many concepts of ordinary category theory can be internalized in a
bicategory $\K$: we mention here two such concepts that will appear later. An
arrow $f \colon A \to B$ is an \emph{equivalence} if there exists another arrow
$g \colon B \to A$ together with two invertible 2-cells $gf \cong 1_A$ and $fg
\cong 1_B$. An arrow $f \colon A \to B$ is \emph{left adjoint} to $g\colon B \to
A$ if there are \emph{unit} and \emph{counit} 2-cells $1_A \Rightarrow g f$ and
$fg \Rightarrow 1_B$ satisfying the usual triangle identities.

\begin{example} \label{ex:bicats} Throughout this section we will build
  intuition with the following running examples:
  \begin{itemize}
  \item the category $\Set$ of sets and functions, seen as a bicategory with no
    $2$-cells other than the
    identities; \item the locally\footnotemark{} posetal $2$-category $\Pos$ with objects
    partially ordered sets and hom-category $\Pos(A,B)$ the poset of monotone
    functions $A \to
    B$; \footnotetext{\looseness=-1 A property holds \emph{locally} if it holds for
      all hom-categories.}
  \item \looseness=-1 the $2$-category $\Cat$ whose objects are small
    categories, arrows are functors, and $2$-cells are natural transformations.
  \end{itemize}
  \looseness=-1 Examples of bicategories which are not $2$-categories will be given shortly by
  the bicategories of two-sided discrete fibrations.
\end{example}

\looseness=-1 As bicategories are categories `up to isomorphism', one can
similarly generalize functors and natural transformations: however, the
flexibility provided by the additional dimension also allows us to weaken the
structural isomorphisms to $2$-cells in a fixed direction. We sketch here the
relevant definitions, deferring coherence axioms to Appendix~\ref{app:A2}.

A \emph{pseudofunctor} $\ps F\colon \K \to \K'$ between two bicategories is an
assignment on objects $A \mapsto \ps F A$ together with a family of functors
$\K(A,B) \to \K'(\ps F A,\ps F B)$ and (coherent) invertible $2$-cells $\ps F
1_A \Rightarrow \ps 1_{\ps F A}$ and $\ps F(gf) \Rightarrow (\ps F g)(\ps F f)$
witnessing preservation of identities and composition. More generally, relaxing
the hypotheses of invertibility defines \emph{oplax functors}, while taking
$2$-cells in the other direction defines \emph{lax functors}.

A \emph{pseudonatural transformation} $\gamma\colon \ps F \Rightarrow \ps F'$
between two pseudofunctors $\K \to \K'$ is a family of arrows $\gamma_A\colon
\ps F A \to \ps F' A$ along with (coherent) invertible $2$-cells $\gamma_f\colon
\gamma_B (\ps F f)\Rightarrow (\ps F' f) \gamma_A$ for $f \colon A \to B$
witnessing commutativity. More generally, relaxing the hypotheses of
invertibility defines \emph{oplax natural transformations}, while taking
$2$-cells in the other direction defines \emph{lax natural transformations}. All
three notions also make sense between lax and oplax functors.

Finally, a further notion of higher-order morphism arises compared to the
$1$-dimensional setting: a \emph{modification} $\frak m\colon \gamma
\Rrightarrow \gamma'$ between transformations $\ps F \Rightarrow \ps F'$ is a
(coherent) family of $2$-cells $\frak m_A\colon \gamma_A \Rightarrow \gamma_A'$.

\subsection{Two-sided discrete fibrations}

The entities that will play the role of relations in the $2$-dimensional setting
are a certain kind of spans called \emph{two-sided discrete fibrations}. To
introduce them, we first recall the definition of \emph{spans}. Let $\K$ be a
bicategory.

\begin{definition}
  A \emph{span} from $A$ to $B$ in $\K$ is a pair of arrows $A \xleftarrow{q} E
\xrightarrow{p} B$. A \emph{morphism of spans}
consists of an arrow and two invertible $2$-cells as in the diagram:
\[\begin{tikzcd}[column sep=huge,row sep=.1em]
	& E \\
	A && B \\
	& {E'}
	\arrow["q"', curve={height=5pt}, from=1-2, to=2-1]
	\arrow["p", curve={height=-5pt}, from=1-2, to=2-3]
	\arrow[""{name=0, anchor=center, inner sep=0}, from=1-2, to=3-2]
	\arrow["{q'}", curve={height=-5pt}, from=3-2, to=2-1]
	\arrow["{p'}"', curve={height=5pt}, from=3-2, to=2-3]
	\arrow["\cong"{description}, draw=none, from=0, to=2-1]
	\arrow["\cong"{description}, draw=none, from=0, to=2-3]
\end{tikzcd}\] Two such morphisms $(f\colon E \to E'$, $\nu_0\colon q' f \cong
q$, $\nu_1\colon p'f \cong p)$ and $(g\colon E \to E'$, $\gamma_0\colon q'g
\cong q$, $\gamma_1\colon p'g \cong p)$ are \emph{isomorphic} if there is an
invertible $2$-cell $\delta\colon f \cong g$ such that $\gamma_0 \cdot q'\delta
= \nu_0$ and $\gamma_1 \cdot p' \delta = \nu_1$. We denote by $\Span{\K}(A,B)$
the (ordinary) category whose objects are spans $A \to B$ and whose
arrows are isomorphism classes of morphisms of spans.
\end{definition}

A span of functions $A \xleftarrow{q} E \xrightarrow{p} B$ in $\Set$ naturally
yields the relation $\{ (q(e),p(e)) \in A \times B \mid e \in E \}$, and
similarly in $\Pos$. However, in $\Pos$, we want to capture not just mere
relations, but the \emph{strengthening-closed} ones. For a span $A \to B$ in
$\Pos$ to give rise to a strengthening-closed relation, it needs to be a
so-called \emph{two-sided fibration}. A second obstacle to modeling relations as
spans is that, while every span gives rise to a relation, a given relation may
be determined by different spans. In $\Set$, for instance, spans $A \to B$
correspond precisely to functions $R \to A \times B$, whereas relations
correspond to \emph{injective} functions $R \hookrightarrow A \times B$. In a
bicategory, the analogue of this restriction is to ask for the two-sided
fibration to be \emph{discrete}.

The exact definition of two-sided fibrations (TSFs) is quite technical, and we
thus defer it to \Cref{app:A2}. We instead directly give an alternative
characterization of two-sided discrete fibrations.
To understand the rest of this paper, it is instead enough to have in mind:
\begin{enumerate}
\item the characterizations given in \Cref{ex:two-sided-discrete-fibrations},
  of TS(D)Fs in our running example;
\item the fact that TSDFs are spans, which the constructions of \Cref{sec:03}
  crucially rely on;
\item a generic way of constructing TSDFs that we describe in the rest of this
  section.
\end{enumerate}

\begin{definition}[{\cite[Prop.\ 4.7]{carboniModulatedBicategories1994}}]
  \label{def:two-sided-discrete-fibration}
  A span $A \xleftarrow{q} E \xrightarrow{p} B$ in $\K$ is a
  \emph{two-sided discrete fibration} if it enjoys the following properties.
  \begin{enumerate}
  \item\emph{unique left path lifting}: for any $e\colon D \to E$ and
    $\gamma\colon b \Rightarrow pe$, the category, whose objects are pairs
    $(\zeta\colon e' \Rightarrow e, \nu\colon b \cong pe')$ with $\gamma=
    p\zeta\cdot \nu$ and $q\zeta$ invertible, is essentially discrete and
    non-empty;
  \item \emph{unique right path lifting}: for any $e\colon D \to E$ and
    $\gamma\colon qe \Rightarrow a$, the category, whose objects are pairs
    $(\chi\colon e \Rightarrow e', \nu\colon qe' \cong a)$ with $\gamma = \nu
    \cdot q\chi$ and $p\chi$ invertible, is essentially discrete and non-empty;
  \item \emph{factorization}: any $\gamma\colon e' \Rightarrow e''\colon D
    \to E$ is a composite $\gamma = \zeta \cdot \chi$ with $p\chi$ and $q\zeta$
    invertible.
  \end{enumerate}
  We denote by $\DFib{\K}(A,B)$ the full subcategory of $\Span{\K}(A,B)$ spanned
  by the discrete fibrations.
\end{definition}

\begin{example}
  \label{ex:two-sided-discrete-fibrations}
  We describe TS(D)Fs in our running examples. Note
  that the discreteness condition is not necessarily appropriate when working in
  locally discrete bicategories; in $\Set$ and $\Pos$, a more restricted class of TSFs
  is needed to capture the correct notion of relations. We still mention
  these examples because they provide a good first intuition, and because the
  rest of the theory works the same.
  \begin{itemize}
  \item\looseness=-1 In $\Set$, every span $A \xleftarrow{q} R \xrightarrow{p}
    B$ is a TSF, and even a TSDF. Relations $R \subseteq A \times B$ are in
    one-to-one correspondence with \emph{jointly monic} TSFs: those $A
    \leftarrow R \rightarrow B$ whose corresponding function $R \to A \times B$
    is injective.

  \item In $\Pos$, a span $(A,{\leq_A}) \xleftarrow{q} (R,{\leq_R})
    \xrightarrow{p} (B,{\leq_R})$ being a TSF means that the relation it induces
    is strengthening-closed. Restricting ourselves to those TSFs $(A, {\leq_A})
    \leftarrow (R,{\leq_R}) \rightarrow (B,{\leq_B})$ such that the monotone
    function $(R, {\leq_R}) \to (A, {\leq_A}) \times (B, {\leq_B})$ is injective
    and order-reflecting, we get a poset which is isomorphic to that of
    strengthening-closed relations between $(A, {\le_A})$ and $(B,
    {\le_B})$~\cite[Prop. 2.13]{bilkovaRelationLiftingsPreorders2011}, which
    themselves correspond to monotone functions $(B,{\ge_B}) \times (A, {\le_A})
    \to \two$.

  \item Via the Grothendieck construction, $\DFib{\Cat}(A,B)$ is equivalent to
    the category of \emph{profunctors} from $A$ to $B$, i.e., functors $\op B
    \times A \to \Set$~\cite[Thm.
    2.3.2]{loregianCategoricalNotionsFibration2020}, which we denote as $A \pro
    B$. Note how these categorify the previous example, where $\Set$ now plays
    the role of $\two$.
  \end{itemize}
\end{example}

A generic way to get TSDFs is via \emph{bicomma squares}\footnotemark.

\footnotetext{By \emph{square}, in a bicategory, we always mean a $2$-cell of type $xu \Rightarrow
  yv$.}

\hspace{-1.28em}
\begin{minipage}{.78\linewidth}
  \begin{definition}\label{def:bicomma-square}
    A \emph{bicomma object} of a cospan $A \xrightarrow{g} C \xleftarrow{f} B$
    in $\K$ is an object $\bicomma{f}{g}$ in $\K$ equipped with a span $A
    \xleftarrow{\cod} \bicomma{f}{g} \xrightarrow{\dom} B$ and a $2$-cell $f
    \dom \Rightarrow g \cod$ which is universal among all such squares, in a
    sense we spell out in Appendix \ref{app:A2}. We refer to $A
    \xleftarrow{\cod} \bicomma{f}{g} \xrightarrow{\dom} B$ as a \emph{bicomma
      span}.
  \end{definition}
\end{minipage}\begin{minipage}{.22\linewidth}
  \[ \begin{tikzcd}
      {\bicomma{f}{g}} & { B} \\
      A & C \arrow["\dom", from=1-1, to=1-2]\arrow["\cod"', from=1-1, to=2-1]\arrow["f", from=1-2, to=2-2]\arrow[between={0.3}{0.7}, Rightarrow, from=1-2, to=2-1]\arrow["g"', from=2-1, to=2-2]\end{tikzcd}\]\end{minipage}
\vspace{-4pt}

The universal property guarantees that, if it exists, a bicomma object is unique
up to an \emph{essentially\footnotemark{} unique} equivalence.

\footnotetext{\emph{Essentially} means ``up to an invertible $2$-cell''.}

\begin{example}
  We compute the bicomma object $\bicomma f g$ of a cospan $A \xrightarrow{g} C
  \xleftarrow{f} B$ in the bicategories of \Cref{ex:bicats}.
  \begin{itemize}
  \item In $\Set$, the set $\bicomma f g$ is the pullback $\set{ (a,b) \in A
      \times B | f(b) = g(a) }$ of $f$ and $g$.
  \item In $\Pos$, the poset $\bicomma f g$ is the set $\set{ (a,b) \in A \times
      B | f(b) \le_C g(a) }$ equipped with the pointwise order. The canonical
    2-cell witnesses that $f \dom \le g \cod$ pointwise.
  \item In $\Cat$, the category $\bicomma f g$ has $C$-arrows of the shape $h
    \colon f(b) \to g(a)$ as objects, and commuting squares $h' \circ f(r) =
    g(s) \circ h$ for $r\colon b \to b'$ in $B$ and $s\colon a \to a'$ in $A$ as
    arrows $h \to h'$. The canonical 2-cell is the natural transformation $f
    \dom \Rightarrow g \cod$ with component at $h\colon f(b) \to g(a)$ given by
    $h$ itself.
  \end{itemize}
\end{example}

In any bicategory, bicomma spans are always TSDFs~\cite[\textsection
3.44]{streetFibrationsBicategories1980}. In $\Cat$, in particular, every TSDF is
isomorphic to a bicomma span
\cite[Prop.~4.10]{carboniModulatedBicategories1994}, and the profunctor
corresponding to the bicomma span of $A \xrightarrow{g} C \xleftarrow{f} B$ is
given by the functor $C(f-_1,g-_2)\colon \op B \times A \to \Set$.

\looseness=-1 Simple examples of bicomma spans are those of the form $A
\leftarrow \bicomma{1_A}{1_A} \rightarrow A$: we write $A^\two$ for
$\bicomma{1_A}{1_A}$. For an arrow $f\colon A \to B$, the universal property of
$B^\two$ applied to the square $1_B f = 1_B f$ yields a morphism of TSDFs
$f^\two\colon A^\two \to B^\two$. We refer to the assignment $(-)^\two$ as
\emph{powering} by $\two$. Other simple examples of bicomma spans are the
following.

\begin{definition}[\protect{\cite[\S
    4.22]{carboniModulatedBicategories1994}}] \label{def:graph} For an arrow
  $f\colon A \to B$ in $\K$, we define its \emph{graph} and its \emph{cograph}
  respectively as the bicomma spans $A \leftarrow \bicomma{1_B}{f} \rightarrow
  B$ and $B \leftarrow \bicomma{f}{1_B} \rightarrow A$. By the universal
  property of bicomma objects, the assignments $f \mapsto \bicomma{1_B}{f}$ and
  $f \mapsto \bicomma{f}{1_B}$ extend to fully-faithful functors:
  \[ (-)_\diamond \colon \K(A,B) \to \DFib{\K}(A,B) \qquad  (-)^\diamond\colon \K(A,B) \to \op{\DFib{\K}(B,A)} \]
Spans in the image of $(-)_\diamond$ are also said to be \emph{representable}.
\end{definition}

\begin{example}
  In $\Set$ and $\Pos$, the relation corresponding to $f_\diamond$ is the usual
  graph of $f$: $\set{ (x,y) | y \leq f(x) }$. In $\Cat$, we can identify
  $f_\diamond$ with the profunctor $A \pro B$ defined by $f_\diamond (b,a) =
  B(b, f(a))$. \end{example}

\subsection{The bicategory of two-sided discrete fibrations}

We now explain how the categories $\DFib{\K}(A,B)$ are the hom-categories of
a bicategory with objects those of $\K$. In other words, we will
explain how to compose TSDFs, generalizing how relations can be composed in the
$1$-dimensional setting. We start by explaining how to compose spans, for which
we need the notion of \emph{bipullback}.

\hspace{-1.28em}
\begin{minipage}{.78\linewidth}
  \begin{definition}
    A \emph{bipullback} of a cospan $A \xrightarrow{g} C
    \xleftarrow{f} B$ in $\K$ is an object $A \times_C B$ equipped with
    projections $A \xleftarrow{\pi_A} A \times_C B \xrightarrow{\pi_B} B$ and an
    invertible $2$-cell $f\pi_B \cong g\pi_A$ which is universal among all such
    squares in the sense spelled out in Appendix \ref{app:A2}.
  \end{definition}
\end{minipage}
\begin{minipage}{.22\linewidth}
  \[\begin{tikzcd}[row sep = 17pt]
      {A \times_C B} & { B} \\
      A & C \arrow["\pi_B", from=1-1, to=1-2]\arrow["\pi_A"', from=1-1, to=2-1]\arrow["f", from=1-2, to=2-2]\arrow["\cong"', between={0.3}{0.7}, Rightarrow, from=1-2, to=2-1]\arrow["g"', from=2-1, to=2-2]\end{tikzcd}\]\end{minipage}
\vspace{-5pt}

As for bicomma objects, bipullbacks are unique up to essentially unique
equivalence if they exist.

\begin{example}
  In $\Set$ and $\Pos$, bipullbacks coincide with pullbacks. In $\Cat$, the
  bipullback of a cospan of functors $A \xrightarrow{g} C \xleftarrow{f} B$ can be identified, up to equivalence, with
  the full subcategory of $\bicomma{f}{g}$ spanned by those $C$-arrows $f(b)
  \to g(a)$ that are invertible.
\end{example}

In a bicategory with bipullbacks, composing spans is easy: the composite of $A
\xleftarrow{u} E \xrightarrow{v} B$ and $B \xleftarrow{x} F \xrightarrow{y} C$
is obtained by forming the bipullback of $E \xrightarrow{v} B \xleftarrow{x} C$
and appending $u$ and $y$.
\[\begin{tikzcd}[row sep=0pt, column sep=large]
	&& {E \times_B F} \\
	& E && F \\
	A && B && B
	\arrow["{{\pi_E}}"', from=1-3, to=2-2]
	\arrow["{{\pi_F}}", from=1-3, to=2-4]
	\arrow["\cong"{description}, draw=none, from=1-3, to=3-3]
	\arrow["u"', from=2-2, to=3-1]
	\arrow["v"', dotted, from=2-2, to=3-3]
	\arrow["x", dotted, from=2-4, to=3-3]
	\arrow["y", from=2-4, to=3-5]
\end{tikzcd}\]
By the universal property of bipullbacks, this construction
extends to a composition functor:
\[ \Span{\K}(B,C) \times \Span{\K}(A,B) \to \Span{\K}(A,C) \]However, the composite of two TSDFs need not be a TSDF. One way to fix this is
to require the existence of an \emph{orthogonal factorization system} on
$\K$. 

\begin{definition}[\protect{\cite{bettiFactorizationsBicategories1999}}]\label{def:orthogonal-factorization-system}
  A \emph{factorization system} on $\K$ is a pair $(\E, \M)$ of
  classes of arrows in $\K$ such that:
  \begin{itemize}
  \item every arrow $f$ admits an \emph{$(\E,\M)$-factorization}, i.e., an
    invertible $2$-cell $f \cong me$ with $m \in \M$ and $e \in \E$;
  \item every $\E$-arrow $e\colon X \to Y$ is \emph{orthogonal} to every
    $\M$-arrow $m\colon Z \to W$ in a suitable sense (deferred to
    \Cref{app:A2}), ensuring uniqueness of the above factorization up to an
    essentially unique equivalence.
  \end{itemize}
\end{definition}

The original approach to compose TSDFs, which does not
include the example of $\Cat$, consists in constructing a specific orthogonal factorization
system~\cite{carboniModulatedBicategories1994}. A more modern approach, reviewed
in~\cite{loregianCategoricalNotionsFibration2020}, asks instead for a
factorization system whose right class $\M$ is \emph{generated by} the TSDFs, in
the sense that
\begin{enumerate}
\item $\E$-arrows are exactly those orthogonal to the arrows $E
  \xrightarrow{\langle q,p \rangle} A \times B$ where $A \xleftarrow{q} E
  \xrightarrow{p} B$ is a TSDF, and
\item $\M$-arrows are those to
which all $\E$-arrows are orthogonal.
\end{enumerate}
In both approaches $\E$ is asked to be stable under bipullbacks and powering
by $\two$, and $\M$ contains all of those $E \to A \times B$ corresponding to
TSDFs. This makes it so that the $\M$-arrow in the $(\E,\M)$-factorization of an
arrow $E \to A \times B$ coming from a TSF itself corresponds to a
TSDF~\cite[Prop.\ 4.18]{carboniModulatedBicategories1994}. Since the
bipullback-composite of two TSFs is again a TSF~\cite[\textsection
4.16]{carboniModulatedBicategories1994}, we can define the composite of two
TSDFs by taking the $\M$-arrow factoring their composite as TSFs, thus yielding
a functor:
\[ \DFib{\K}(B,C) \times \DFib{\K}(A,B) \to \DFib{\K}(A,C) \]The various universal properties involved allow us to construct appropriate
invertible 2-cells realizing associativity and, for each object $A$, unitality
of $A \leftarrow A^\two \rightarrow A$.

\begin{theorem}[{\cite[Thm.
    4.3.4]{loregianCategoricalNotionsFibration2020}}]
  \label{thm:dfibs-bicategory}
  Let $\K$ be a bicategory with finite products, bipullbacks and bicomma
  objects, and suppose given a factorization system $(\E,\M)$ on $\K$ such
  that:
  \begin{itemize}
  \item $\M$ is generated by TSDFs;
  \item $\E$ is stable under bipullbacks and powering by $\two$.
  \end{itemize}
  The above composition defines a bicategory $\DFib{\K}$ with objects those of $\K$ and hom-categories the $\DFib{\K}(A,B)$.
\end{theorem}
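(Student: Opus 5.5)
The plan is to build the bicategory in three layers. First comes the bicategory of spans $\Span{\K}$, with composition by bipullback. Then I restrict to two-sided fibrations. Finally I replace each composite by its $\M$-part, relying throughout on the essential uniqueness in the universal properties.

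\textbf{Spans.} For the first layer, composition $\Span{\K}(B,C)\times\Span{\K}(A,B)\to\Span{\K}(A,C)$ is functorial on isomorphism classes of span morphisms. This follows because the universal property of bipullbacks determines induced arrows up to unique invertible $2$-cell. Associators come from the canonical equivalences $(E\times_B F)\times_C G\simeq E\times_B(F\times_C G)$, both of which represent iterated bipullbacks. For unitors, I check that $A\leftarrow A^\two\to A$ acts as a unit only after factorization, since $A^\two$ is not the identity span. Because hom-categories are taken up to isomorphism classes of span morphisms, the coherence axioms hold automatically: any two parallel composites of structural morphisms are induced by the same universal property. So the pentagon and triangle reduce to uniqueness statements.

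\textbf{TSFs and factorization.} Next I use that bipullback composites of TSFs are TSFs (\S4.16 of Carboni et al.). For TSDFs $E$, $F$, I factor $E\times_B F\to A\times C$ as $m e$ with $e\in\E$ and $m\in\M$. Since $\M$ is generated by TSDFs, Prop.~4.18 makes the $\M$-part a TSDF, and I define this as the composite. Functoriality uses orthogonality: a span morphism between composites lifts uniquely, up to invertible $2$-cell, through the diagonal fill-in of $\E$ against $\M$.

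\textbf{Associativity.} The key lemma is that $\M$-reflection commutes with bipullback composition. Concretely, if $e\colon E'\to E$ is in $\E$, then $E'\times_B F\to E\times_B F$ is again in $\E$, by stability of $\E$ under bipullbacks. Hence $\M$ of $(\M(E\circ F))\circ G$ agrees with $\M$ of $(E\circ F\circ G)$. Associators then come from uniqueness of factorizations.

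\textbf{Units.} This is where I expect the main obstacle. I must show that $E\circ A^\two$ factors to $E$ itself. The bipullback $A^\two\times_A E$ is a kind of "comma-thickening" of $E$, and the comparison map $E\to A^\two\times_A E$ (inserting identity $2$-cells) should have a retraction in $\E$. I would try to prove this directly from stability of $\E$ under powering by $\two$ together with the lifting properties of the TSDF $E$. The unique left and right path lifting conditions of Definition~\ref{def:two-sided-discrete-fibration} are used to show that $A^\two\times_A E\to E$ has an $\E$-part that is an equivalence onto $E$ over $A\times B$. Once this holds, the left and right unitors follow, and the triangle and pentagon follow again from essential uniqueness.
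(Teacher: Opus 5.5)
Your outline is the construction the paper sketches before citing Loregian--Riehl, and those parts are fine: compose spans by bipullback, use that composites of TSFs are TSFs, take the $\M$-part (a TSDF by Prop.~4.18 of Carboni et al.), get functoriality from orthogonality, and get associativity from bipullback-stability and composition-closure of $\E$.

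The gap is the unit law. You flag it but do not prove it. You name the right map: the pushforward $r\colon A^\two\times_A E\to E$, obtained by right-lifting $q\pi_E\cong d_0\pi\Rightarrow d_1\pi$ along $q$. But you give no reason why $r\in\E$, and the two things you point to do not supply one.
\begin{itemize}
\item Being a retraction of the identity-insertion $s\colon E\to A^\two\times_A E$ is not enough. In $\coop{\Cat}$, $\E$ consists of the fully faithful functors, and a split epimorphism there is a functor admitting a retraction in $\Cat$. An example is $1\to BM$ for a nontrivial monoid $M$, which is not full.
\item Stability under powering by $\two$ is not the relevant hypothesis here; it is what Prop.~4.18 uses up.
\end{itemize}

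The missing idea combines the universal property of the lifts with the orthogonality description of $\E$.
\begin{itemize}
\item \emph{$r$ is a reflection.} The right lifts make $r$ left adjoint to $s$ with invertible counit $\epsilon\colon rs\cong 1_E$; this is a Chevalley-type characterization of right fibrations. Let $i\colon A\to A^\two$ insert identities. The unit $\theta\colon 1\Rightarrow sr$ has $E$-component the lift $\chi\colon \pi_E\Rightarrow r$ and $A^\two$-component the canonical $2$-cell $1\Rightarrow i d_1$. In particular $r\theta=(\epsilon r)^{-1}$ is invertible.
\item \emph{Reflections are left orthogonal to every arrow $m$ that reflects invertibility of $2$-cells.} The $2$-dimensional part holds because $-\circ r$ is fully faithful, since $\K(s,Z)\dashv\K(r,Z)$ has invertible counit. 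For the $1$-dimensional part, given $\phi\colon mu\cong vr$, take $w=us$. Then $mw\cong vrs\cong v$. Also $mu\theta$ is invertible because $vr\theta$ is, so $u\theta\colon u\cong usr=wr$.
\item \emph{Hence $r\in\E$.} Every TSDF arrow $\langle q',p'\rangle\colon E'\to A'\times B'$ reflects invertibility of $2$-cells, by discreteness applied with $\xi=\zeta$. Since $\M$ is generated by TSDFs, $\E$ is exactly the class of arrows orthogonal to these.
\item \emph{Conclusion.} $r$ is a morphism of spans over $A\times B$ and $\langle q,p\rangle\in\M$, so uniqueness of factorizations gives $e\circ 1_A\cong e$.
\end{itemize}
The other unitor is dual and uses left lifting along $p$ only. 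It does not need both liftings at once, as your last sentence suggests.
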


We call a bicategory satisfying the conditions of \Cref{thm:dfibs-bicategory}
\emph{regular} (see also~\cite{bourkeTwodimensionalRegularityExactness2014} for
other notions of regularity in the $2$-dimensional setting). In that case, the
functors $(-)_\diamond$ and $(-)^\diamond$ of \Cref{def:graph} determine locally
fully-faithful pseudofunctors
\[ (-)_\diamond \colon \K \to \DFib{\K} \mbox{ and }(-)^\diamond \colon \op\K
  \to \co{\DFib{\K}}, \]acting as the identity on objects~\cite[\textsection 4.22 \&
4.24]{carboniModulatedBicategories1994}. In fact, each of them determines the
other up to pseudonatural equivalence since, for every $f\colon A \to B$,
$f_\diamond$ is left adjoint to $f^\diamond$ in $\DFib{\K}$~\cite[Prop.
4.23]{carboniModulatedBicategories1994}. These properties express the fact that
the pseudofunctor $(-)_\diamond\colon \K \to \DFib{\K}$ is a \emph{proarrow
  equipment} on $\K$~\cite{woodAbstractProArrows1982}.

\begin{example}\label{ex:bicats-of-tsdfs} Let us reconsider our running
  examples.
  \begin{itemize}
  \item Surjections and injections form such a factorization system in $\Set$:
    every function can be factored as a surjection onto the image followed by an
    injection into the codomain. This factorization system can be used to
    compose jointly monic spans, in a way that yields a bicategory
    biequivalent\footnotemark{} to the locally posetal category $\Rel$ of sets
    and relations: composition of relations $R \subseteq A \times B$ and $S
    \subseteq B \times C$ is obtained by first computing the set $R \times_B S =
    \set{ ((a,b),(b,c)) | aRb \land bSc }$ (their composite as spans) and then
    forgetting the $b$'s (taking the $(\E,\M)$-image of $R \times_B S
    \rightarrow A \times C$), yielding the relation $\set{ (a,c) | \exists b :
      aRb \land bRc } \subseteq A \times C$.

    \footnotetext{A \emph{biequivalence} $\K \simeq \K'$ is a pair of pseudofunctors
      $F \colon \K \to \K'$ and $G \colon \K' \to \K$ with pseudonatural
      transformations $FG \Rightarrow \id_{\K'}$ and $\id_{\K} \Rightarrow GF$
      whose components are themselves equivalences.
}

  \item Similarly, surjections and injective order-reflections arrange jointly
    order-embeddings spans in $\Pos$ into a bicategory biequivalent to the
    locally posetal category of posets and strengthening-closed relations, with
    the same formula for compositions.

  \item\looseness=-1 The categories $\DFib{\Cat}(A,B)$ can be arranged into a
    bicategory biequivalent to $\Prof$, the bicategory of profunctors~\cite[Thm.
    7.23]{arkorBicategoriesAlgebrasRelative2025}, where the composite of $F
    \colon \op B \times A \to \Set$ and $G \colon \op C \times B \to \Set$ is
    given on $(c,a)$ in $\op C \times A$ by the \emph{coend}:
    \[ \int^{b \in B} G(c, b)\times F(b,a)\]Concretely, this set is the quotient of the disjoint union $\sum_{b \in B}
    G(c,b)\times F (b, a)$ by the equivalence relation determined by identifying
    $(b, \phi,\psi) \sim (b', \phi',\psi')$ if there exists a (finite) zig-zag
    of $B$-arrows between $b$ and $b'$ whose $G(c, -)$-image maps $\phi$ to
    $\phi'$ and whose $F(-, a)$-image maps $\psi'$ to $\psi$~\cite[Prop.
    7.8.2]{borceuxBicategoriesDistributors1994}. However, we do not know of an
    explicit description of this composition in terms of a factorization system.
    In the following, we will thus work with $\coop\Cat$ instead: the
    factorization system on $\Cat$ consisting of essentially surjective and
    fully-faithful functors makes $\coop\Cat$ into a regular bicategory, such
    that $\DFib{\coop \Cat}$ is biequivalent to
    $\coop\Prof$~\cite[Cor.~4.3.6]{loregianCategoricalNotionsFibration2020}.
    Under this correspondence, $(-)_\diamond: \coop\Cat \to \DFib{\coop\Cat}
    \cong \coop\Prof$ is the usual equipment $\Cat \to \Prof$. In general, TSDFs
    in $\op \K$ correspond to cospans in $\K$ called \emph{two-sided codiscrete
    cofibrations}.
  \end{itemize}
\end{example}
 \section{Extending to two-sided discrete fibrations}\label{sec:03}

\looseness=-1
Throughout, let $\K$ be a regular bicategory with factorization system $(\mathcal E,\mathcal M)$. In this section we present our first main
contribution: we describe how to \emph{extend} --- in a sense we will make precise as we state the result --- pseudofunctors, pseudonatural
transformations, and modifications on $\K$ to {oplax functors}, oplax natural transformations, and modifications on the
bicategory $\DFib{\K}$. In particular, in the first two cases, we characterize when these extensions are themselves pseudofunctorial and pseudonatural in terms of \emph{exact squares}. Following the insight of
\cite{paquetEffectfulSemanticsBicategories2024} that computational effects should be
represented $2$-dimensionally by \emph{pseudomonads}, we then combine
these characterizations into an extension criterion for the latter. Proofs omitted from this section can be found in Appendix \ref{app:A3}.

\subsection{Exact squares}\label{ssec:exact-squares}

Our characterizations will be given in terms of \emph{exact squares} in $\K$,
\begin{wrapfigure}{r}{.15\linewidth}

\begin{tikzcd}[cramped]
	A & B \\
	C & D \arrow["u", from=1-1, to=1-2]\arrow["v"', from=1-1, to=2-1]\arrow["\gamma"', between={0.3}{0.7}, Rightarrow, from=1-2, to=2-1]\arrow["x", from=1-2, to=2-2]\arrow["y"', from=2-1, to=2-2]\end{tikzcd}\end{wrapfigure}
originally introduced by Guitart in~\cite{guitartRelationsCarresExacts1980}. For
any square $\gamma$ as on the right, there is a canonical $2$-cell in
$\DFib{\K}$, obtained by pre- and post-composing $\gamma_\diamond$ with the unit
$\eta_x$ of $x_\diamond \dashv x^\diamond$ and the counit $\varepsilon_v$ of
$v_\diamond \dashv v^\diamond$. Omitting the structural $2$-cells witnessing
associativity of compositions, we can write it as:
\[\begin{tikzcd}[column sep = 32pt]
	{u_\diamond v^\diamond } & {x^\diamond x_\diamond   u_\diamond v^\diamond } &
  {x^\diamond y_\diamond v_\diamond v^\diamond } & {x^\diamond y_\diamond} \arrow["{\eta_x u_\diamond v^\diamond }", Rightarrow, from=1-1, to=1-2]\arrow["{x^\diamond\gamma_\diamond v^\diamond}", Rightarrow, from=1-2, to=1-3]\arrow["{x^\diamond y_\diamond\varepsilon_v}", Rightarrow, from=1-3, to=1-4]\end{tikzcd}\]

\begin{definition}
  A square $\gamma\colon xu \Rightarrow yv$ in $\K$ is \emph{exact} when the
  above $2$-cell in $\DFib{\K}$ is invertible.\end{definition}

\begin{example}\label{ex:exactness-in-examples}
  More generally, we could define a notion of exactness in $\K$ with respect to any proarrow equipment $(-)_\diamond \colon \K \to \bicat M$; when unspecified, however, we will assume the equipment provided by $\DFib{\K}$. In particular, reconsidering our running examples from the previous section:
  \begin{itemize}
    \item In $\Set$, with respect to the equipment provided by the locally posetal category of relations, a square $xu = yv$ is exact if and only if it is a
    \emph{weak pullback}, meaning that for every $b \in B$ and $c \in C$ such
    that $x(b) = y(c)$, there is some $a \in A$ such that $x = u(a)$ and $c =
    v(a)$. In other words, the universal function from $A$ to the pullback of
    $x$ and $y$ is surjective.

  \item \looseness=-1 In $\Pos$, with respect to the equipment provided by the locally posetal category of strengthening-closed relations, a square $xu \le
    yv$ is exact if and only if, for every $b \in B$ and $c \in C$ such that
    $x(b) \le_D y(c)$, there is some $a \in A$ such that $b \le_A u(a)$ and
    $v(a) \le_C c$~\cite[Def. 5.1]{bilkovaRelationLiftingsPreorders2011}. The
    square $xu \le yv$ is then said to have the \emph{interpolation property}
    \cite[Def.\ 3.1]{goolDualityModelTheory2024}.

    \item In $\Cat$, with respect to the equipment provided by $\Prof$, the square $\gamma \colon x u \Rightarrow y v$ is exact if and only if, for every $b \in B$ and $c \in C$, the canonical function
  \[ \int^{a \in A} C(v(a), c) \times B ( b, u(a) ) \to D(x(b), y(c)),\]
defined by mapping (the equivalence class of) a triple $\braket{a, \phi \colon v(a) \to c, \psi\colon b \to u(a)}$ to the composite $y(\phi) \circ \gamma_a \circ x(\psi)  $, is a bijection. In particular, a square is exact in $\coop \Cat$ (with respect to the equipment provided by $\DFib{\coop\Cat}\simeq \coop\Prof$) if and only if it is so in $\Cat$.
  \end{itemize}
\end{example}

The following family of exact squares will play an important role in our proofs.

\begin{definition}
  An arrow $f$ in $\K$ is \emph{co-fully-faithful} if the square $1f = 1f$ is
  exact.
\end{definition}

\begin{example}With respect to the notions of exactness of \Cref{ex:exactness-in-examples}:
  \begin{itemize}
    \item  in $\Set$ and $\Pos$, a function is co-fully-faithful if and
    only if it is surjective;
    \item in $\Cat$, co-fully-faithful functors are also called \emph{lax
      epimorphisms} in \cite{adamekFunctorsLaxEpimorphisms2001}, and are shown
    there to correspond to \emph{absolutely dense} functors;
    \item in $\coop\Cat$, an
    arrow is co-fully-faithful if and only if it is fully-faithful as a functor in $\Cat$.
  \end{itemize}
\end{example}

\begin{lemma} \label{lemma:left-class-exact} In a regular bicategory,
  $\E$-arrows are co-fully-faithful.
\end{lemma}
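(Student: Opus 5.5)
We have to show that for $e\colon X\to Y$ in $\E$, the square $1_Y e = 1_Y e$ is exact. In other words, the canonical 2-cell
\[ e_\diamond e^\diamond \Rightarrow 1_\diamond 1_\diamond \cong 1_Y^\two, \]
which is just the counit $\varepsilon_e$ of $e_\diamond \dashv e^\diamond$, should be invertible in $\DFib{\K}(Y,Y)$. The plan is to compute $e_\diamond e^\diamond$ explicitly as a TSDF and show that this counit is an isomorphism of spans.

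First, describe the two factors. By \Cref{def:graph}, $e^\diamond$ is the bicomma span $Y \leftarrow \bicomma{e}{1_Y} \rightarrow X$ and $e_\diamond$ is $X \leftarrow \bicomma{1_Y}{e} \rightarrow Y$. Their composite in $\DFib{\K}$ is obtained in two steps. We first form the bipullback $\bicomma{e}{1_Y}\times_X \bicomma{1_Y}{e}$. By pasting universal properties, this bipullback is a "double comma" object: a pair of squares $y_1 \Rightarrow e x \Rightarrow y_2$. We then take the $\M$-part of its $(\E,\M)$-factorization as an arrow into $Y\times Y$.

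Second, compare with $Y^\two$. Composing the two 2-cells gives a canonical arrow $c\colon \bicomma{e}{1_Y}\times_X \bicomma{1_Y}{e} \to Y^\two = \bicomma{1_Y}{1_Y}$ over $Y\times Y$, which induces the counit. Since $Y^\two \to Y\times Y$ lies in $\M$ (TSDFs generate $\M$), uniqueness of factorizations reduces invertibility of $\varepsilon_e$ to showing that $c$ lies in $\E$.

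The main obstacle is proving $c\in\E$. I would exhibit $c$ as a bipullback of $e^\two\colon X^\two \to Y^\two$, or more precisely of a composite built from $e$ and $e^\two$ along the structure maps, and then invoke the hypotheses that $\E$ is stable under bipullbacks and under powering by $\two$. Concretely, $Y^\two \simeq \bicomma{1_Y}{1_Y}$ and $\bicomma{e}{1_Y}\times_X\bicomma{1_Y}{e}$ can be rewritten as the bicomma object of $e$ with $Y^\two$-data. Its map to $Y^\two$ factors through $X^\two\times_{Y\times Y}\cdots$, and each step is either $e^\two$ or a bipullback of $e$, hence in $\E$. If this direct identification proves too messy, the fallback is an orthogonality argument: show that $c$ is left orthogonal to every $\M$-arrow of the form $E\to A\times B$ coming from a TSDF, using the lifting properties of \Cref{def:two-sided-discrete-fibration} together with orthogonality of $e$ against such arrows. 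Since $\M$ is generated by TSDFs, that suffices.
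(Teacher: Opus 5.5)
Your reduction is sound. The canonical $2$-cell is the counit $\varepsilon_e$, and, as in \Cref{lemma:dagger}, it is invertible once the comparison $c\colon P\to Y^\two$ lies in $\E$.

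The gap is in the step meant to prove $c\in\E$. The factorization you announce, in which every step is either $e^\two$ or a bipullback of $e$, cannot exist in general. Test it on $e=1_Y$: every such map is then an equivalence, but $c$ becomes the composition map $Y^\two\times_Y Y^\two\to Y^\two$. That map is not an equivalence in general; for instance, in $\coop\Cat$ it corresponds to the functor $Y\times\two\to Y\times\{0<1<2\}$ that omits the middle level. Using $\bicomma{1_Y}{e}\simeq Y^\two\times_Y X$ and $\bicomma{e}{1_Y}\simeq X\times_Y Y^\two$, the correct factorization of your $c$ is
\[ Y^\two\times_Y X\times_Y Y^\two \xrightarrow{\;1\times e\times 1\;} Y^\two\times_Y Y^\two \xrightarrow{\;\mathrm{comp}\;} Y^\two . \]
The first map is a bipullback of $e$, so it is in $\E$. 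The second is an extra ingredient, and you never show it lies in $\E$. Also, $e^\two$ plays no role for your $P$, which has only one $X$-coordinate. The orthogonality fallback is announced but never carried out.

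One way to repair your route, which also makes sense of your appeal to powering, is to compute $e_\diamond e^\diamond$ with a bicomma object as in \Cref{cor:composition-with-bicomma}. This gives $Y^\two\times_Y X^\two\times_Y Y^\two$, whose comparison to $Y^\two$ factors in two steps:
\begin{enumerate}
\item the map $1\times e^\two\times 1$, which is a bipullback of $e^\two$ and so lies in $\E$ by both stability hypotheses;
\item the triple composition $Y^\two\times_Y Y^\two\times_Y Y^\two\to Y^\two$, which lies in $\E$ by \Cref{lemma:dagger} applied to the bicomma square defining $Y^\two$, since that square is exact by \Cref{lemma:bicommas-exact}.
\end{enumerate}
The paper avoids any composition map. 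It invokes \cite[Prop.~4.26(a)]{carboniModulatedBicategories1994} to discard the left bicomma square, keeping only the pasting of the bicomma square of $\bicomma{1_Y}{e}$ with $1e=1e$. By \cite[Prop.~1.7]{carboniModulatedBicategories1994}, the resulting comparison is just the bipullback projection $Y^\two\times_Y X\to Y^\two$ of $e$, so only stability of $\E$ under bipullbacks is needed.
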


A second important class of exact squares is the following.

\begin{lemma}\label{lemma:bicommas-exact}
    Bicomma squares are exact.
\end{lemma}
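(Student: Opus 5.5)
Consider a bicomma square $\lambda\colon f\dom \Rightarrow g\cod$ for a cospan $A \xrightarrow{g} C \xleftarrow{f} B$, so $u=\dom$, $v=\cod$, $x=f$, $y=g$. The goal is to show that the canonical $2$-cell $\dom_\diamond \cod^\diamond \Rightarrow f^\diamond g_\diamond$ in $\DFib{\K}$ is invertible. The idea is to identify both sides with the TSDF $A \xleftarrow{\cod} \bicomma f g \xrightarrow{\dom} B$ itself.

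For the left-hand side, note that the graph $\dom_\diamond$ is the bicomma span of $1_B$ and $\dom$, and the cograph $\cod^\diamond$ is that of $\cod$ and $1_A$. I would use the standard fact (Carboni et al.) that every TSDF $A \xleftarrow{q} E \xrightarrow{p} B$ is isomorphic in $\DFib{\K}$ to $p_\diamond q^\diamond$. The argument goes in three steps:
\begin{enumerate}
\item The span composite of $q^\diamond$ and $p_\diamond$ is the bipullback $(\bicomma{q}{1_A}) \times_E (\bicomma{1_B}{p})$, which is the object of "cospans" $a \Leftarrow qe$, $pe \Rightarrow b$.
\item The canonical comparison from $E$ (via identity $2$-cells) into this bipullback is, using unique left and right path lifting, an $\E$-arrow, or at least its factorization exhibits the same $\M$-image.
\item Taking $\M$-images therefore gives $p_\diamond q^\diamond \cong E$.
\end{enumerate}
Applied to $E=\bicomma f g$, this yields $\dom_\diamond\cod^\diamond \cong (A\leftarrow \bicomma f g \rightarrow B)$.

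For the right-hand side, I would show that $f^\diamond g_\diamond$ is also $\bicomma f g$. The span composite of $g_\diamond = (\bicomma{1_C}{g})$ and $f^\diamond = (\bicomma{f}{1_C})$ is the bipullback over $C$, whose objects are generalized pairs $f b \Rightarrow c \Rightarrow g a$. There is a comparison from $\bicomma f g$ into it, obtained by inserting identities at $c=gb$ or $c=fa$. In the other direction there is a composition map, induced by the universal property of the bicomma, which sends the composable pair to the composite $2$-cell. I expect the composition map to exhibit $\bicomma f g$ as the $\M$-image: it is split by the insertion map and admits a $2$-cell structure making it an $\E$-arrow (a reflection-type argument), so orthogonality gives the identification.

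Finally, I would check that under these two identifications the canonical $2$-cell built from $\eta_f$, $\lambda_\diamond$ and $\varepsilon_\cod$ corresponds to the identity of $\bicomma f g$. This is the step I expect to be the main obstacle. It requires tracing the unit and counit of $f_\diamond\dashv f^\diamond$ and $\cod_\diamond\dashv\cod^\diamond$ through the universal properties, and it uses \Cref{lemma:left-class-exact} to control the $\E$-parts. I would try first to reduce the comparison to checking it after whiskering with the legs into $A$ and $B$, where it becomes $\lambda$ itself, and then invoke the uniqueness in the universal property of the bicomma object.
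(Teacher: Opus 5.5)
Your plan is essentially the paper's route. The step you flag as the main obstacle is exactly \Cref{lemma:dagger}: for any square $\gamma$, the canonical $2$-cell $\gamma^\dagger$ is induced by the universal morphism of spans from the bipullback composite of $v^\diamond$ and $u_\diamond$ into the bicomma span $x^\diamond y_\diamond$, so $\gamma$ is exact iff that arrow lies in $\E$. The paper proves this once for all squares, via compositionality of the construction and the triangle identities rather than by tracing units and counits. It then observes that, for a bicomma square, this universal morphism is literally the $\E$-arrow of Carboni et al.'s Prop.~4.25.

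One correction to your left-hand side. It is the lifting map from the bipullback to $E$ that lies in $\E$, not the identity-insertion map from $E$ into the bipullback. The insertion map is only a section of the lifting map and in general is not in $\E$.
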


From this last fact it follows that:

\begin{corollary} \label{cor:composition-with-bicomma} Composition of TSDFs can
  be performed by means of bicomma squares instead of bipullbacks.
\end{corollary}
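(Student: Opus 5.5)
The plan is to first express any TSDF as a composite of a cograph followed by a graph, and then use \Cref{lemma:bicommas-exact} to turn this composite into a single bicomma span. Let $A \xleftarrow{q} E \xrightarrow{p} B$ and $B \xleftarrow{q'} F \xrightarrow{p'} C$ be TSDFs. The first step is the standard tabulation: such a span is isomorphic in $\DFib{\K}(A,B)$ to the composite $p_\diamond q^\diamond$. To prove it, one compares the span with the bipullback-composite of $A \leftarrow \bicomma{q}{1_A} \to E$ and $E \leftarrow \bicomma{1_B}{p} \to B$. The discreteness axioms of \Cref{def:two-sided-discrete-fibration}, namely the unique left and right liftings together with factorization, should show that the canonical comparison arrow from $E$ into this composite lies in $\E$. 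The $(\E,\M)$-image of the composite is then $E$ itself. This is essentially the content of the equipment facts already quoted from \cite{carboniModulatedBicategories1994}, so I will cite rather than reprove it.

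The composite of the two TSDFs is therefore $p'_\diamond q'^\diamond p_\diamond q^\diamond$, and the key step is to rewrite the middle factor $q'^\diamond p_\diamond$. Form the bicomma object $\bicomma{p}{q'}$ of the cospan $F \xrightarrow{q'} B \xleftarrow{p} E$, with its square $p\,\dom \Rightarrow q'\,\cod$. By \Cref{lemma:bicommas-exact} this square is exact. Unfolding the definition of exactness with $u=\dom$, $v=\cod$, $x=p$ and $y=q'$, the canonical $2$-cell $\dom_\diamond \cod^\diamond \Rightarrow p^\diamond q'_\diamond$ is invertible. Taking the version with roles transposed (equivalently, applying the same argument in $\op\K$, or to the mate), we obtain $q'^\diamond p_\diamond \cong \cod_\diamond \dom^\diamond$. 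Substituting this and using pseudofunctoriality of $(-)_\diamond$ and $(-)^\diamond$ gives
\[ p'_\diamond q'^\diamond p_\diamond q^\diamond \;\cong\; p'_\diamond \cod_\diamond \dom^\diamond q^\diamond \;\cong\; (p'\cod)_\diamond (q\dom)^\diamond . \]
By the first step, the right-hand side is the $(\E,\M)$-image of the span $A \xleftarrow{q\dom} \bicomma{p}{q'} \xrightarrow{p'\cod} C$. So the composite TSDF is computed by replacing the bipullback with the bicomma object and then taking the $\M$-part, which is exactly the statement.

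I expect the main obstacle to be the orientation bookkeeping in the middle step: checking that the exactness $2$-cell of the bicomma square, as defined, is literally the comparison $q'^\diamond p_\diamond \Rightarrow \cod_\diamond \dom^\diamond$ (or its inverse) and not some other mate. If the orientation comes out wrong, I would instead apply \Cref{lemma:bicommas-exact} to the bicomma square of the reversed cospan. As a fallback, I would argue directly that the canonical arrow $E\times_B F \to \bicomma{p}{q'}$ lies in $\E$ after composing down to $A\times C$, using that $\E$ is stable under bipullbacks and powering by $\two$. The bicomma object is obtained from $B^\two$ by bipullbacks, and \Cref{lemma:left-class-exact} would control the identity part.
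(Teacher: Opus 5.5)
Your plan (tabulate both TSDFs, rewrite the middle factor through a bicomma object using \Cref{lemma:bicommas-exact}, then re-tabulate) is the paper's, but two steps fail as written.

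First, the orientation does come out wrong. For your $\bicomma{p}{q'}$, with square $p\,\dom\Rightarrow q'\cod$, exactness says $(\dom)_\diamond(\cod)^\diamond\cong p^\diamond q'_\diamond$, an isomorphism of arrows $F\to E$ in $\DFib{\K}$. No mate, and no passage to $\op\K$ (which changes the equipment altogether), yields $q'^\diamond p_\diamond\cong(\cod)_\diamond(\dom)^\diamond$. That isomorphism is in fact false in general: in the posetal picture, $q'^\diamond p_\diamond$ relates $e$ to $f$ iff $q'(f)\le p(e)$, whereas your bicomma consists of the pairs with $p(e)\le q'(f)$. Your fallback is the right fix, and it is what the paper does. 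Take the bicomma $\bicomma{q'}{p}$ of $E\xrightarrow{p}B\xleftarrow{q'}F$, with $\cod$ landing in $E$, $\dom$ in $F$, and square $q'\dom\Rightarrow p\,\cod$. Its exactness reads literally $(\dom)_\diamond(\cod)^\diamond\cong q'^\diamond p_\diamond$, so the composite becomes $(p'\dom)_\diamond(q\cod)^\diamond$.

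The genuine gap is your last sentence, ``by the first step''. Your first step, and the sketch you give for it, concern TSDFs. But the span $A\xleftarrow{q\cod}\bicomma{q'}{p}\xrightarrow{p'\dom}C$ is not discrete in general. Moreover, for an arbitrary span $A\xleftarrow{y}G\xrightarrow{x}C$ it is false that $x_\diamond y^\diamond$ is the $\M$-part of $G\to A\times C$; in the posetal picture, $x_\diamond y^\diamond$ is the strengthening-closure of the image. Two further facts are needed, and they are exactly what the paper invokes:
\begin{enumerate}
\item this span is a two-sided fibration, being a bicomma-composite of TSFs \cite[\S 4.15]{carboniModulatedBicategories1994};
\item the construction of \cite[Prop.~4.25]{carboniModulatedBicategories1994} uses only the fibration property, so for any TSF the $\M$-part of $G\to A\times C$ is $x_\diamond y^\diamond$.
\end{enumerate}
Relatedly, the $\E$-arrow in that construction runs from the bipullback $\bicomma{y}{1_A}\times_G\bicomma{1_C}{x}$ to $G$ (cf.\ \Cref{lemma:dagger}), not from $G$ into it as in your sketch. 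In the posetal picture, the arrow $G\to\bicomma{y}{1_A}\times_G\bicomma{1_C}{x}$ is not even surjective. This is why discreteness plays no role. Your second fallback has the same defect: $E\times_B F\to\bicomma{q'}{p}$ is not in $\E$ in general, and the two spans share their $\M$-part only thanks to the lifting properties of $E$ and $F$.
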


\subsection{Extension theorems}\label{ssec:extension-theorem}

We may now state our extension result which, for the sake of readability, we
split into three parts. First, we treat \emph{extensions of pseudofunctors}:
assuming preservation of co-fully-faithful arrows we can construct oplax
extensions, while we can characterize their pseudofunctoriality in terms of
\emph{preservation of exact squares}.

\begin{definition}
  Let $\ps F\colon \K \to \K'$ be a pseudofunctor between regular bicategories. We say that $\ps F$ satisfies the \emph{Beck-Chevalley condition} if it preserves exact squares, that is, if for every exact square $\gamma \colon xu \Rightarrow yv$ in $\K$, the square $\ps F \gamma \colon (\ps F x ) (\ps F u) \Rightarrow (\ps F y ) (\ps F v)$ is exact in $\K'$.
\end{definition}

\addtocounter{theorem}{1}
\begin{subtheorem}\label{thm:extension-pseudofunctors}
  Let $\ps F\colon \K \to \K'$ be a pseudofunctor between regular bicategories.
  \begin{enumerate}
  \item \label{thm:extension-pseudofunctors:oplax} If $\ps F$ preserves
    co-fully-faithful arrows, then it \emph{extends} to an oplax functor
    $\underline{\ps F}\colon \DFib{\K} \to \DFib{\K'}$, in the sense that there
    is an oplax natural transformation $\delta^{\ps F}\colon \u {\ps
      F}(-)_\diamond \Rightarrow (\ps F-)_\diamond$ having identity components
    on objects\footnote{Such a transformation is also called an \emph{icon}, see
      \cite{lackIcons2010}.}. In particular, we define $\u{\ps F}$ on arrows as
    \[ \underline{\ps F}(A \xleftarrow{q} E \xrightarrow{p} A) = (\ps F
      p)_\diamond (\ps F q)^\diamond \] \item \label{thm:extension-pseudofunctors:pseudo} Moreover, if $\ps F$
    satisfies the Beck-Chevalley condition, then $\u {\ps F}$ is a pseudofunctor
    and $\delta^{\ps F}$ is pseudonatural.
  \item\looseness=-1 \label{thm:extension-pseudofunctors:converse} Conversely,
    if there exist a pseudofunctor $\ps F': \DFib{\K} \to \DFib{\K}$ and a
    pseudonatural transformation $\ps F'(-)_\diamond \Rightarrow (\ps F
    -)_\diamond$ having identity components, then $\ps F$ satisfies the
    Beck-Chevalley condition, in which case $\ps F'$ is equivalent to $\u{\ps
      F}$.
  \end{enumerate}
\end{subtheorem}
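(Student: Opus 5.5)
The plan is to first write every TSDF in canonical form as $p_\diamond q^\diamond$. For a TSDF $A \xleftarrow{q} E \xrightarrow{p} B$, I would start by showing that the span is equivalent in $\DFib{\K}(A,B)$ to the composite $p_\diamond q^\diamond$. Indeed, by \Cref{cor:composition-with-bicomma} this composite is computed from the bicomma object $\bicomma{1_E}{1_E} = E^\two$ with legs $q\cod, p\dom$. The TSDF property of $(q,p)$, namely unique path lifting together with factorization, says exactly that the resulting span $E^\two \to A\times B$ has $\M$-image $E$. Equivalently, the comparison map $E^\two \to E$ is in $\E$. This makes the definition $\u{\ps F}(q,p) = (\ps F p)_\diamond(\ps F q)^\diamond$ well defined up to equivalence, and functorial on hom-categories. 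A morphism of spans $f\colon E \to E'$ gives 2-cells $p_\diamond q^\diamond \cong p'_\diamond f_\diamond f^\diamond q'^\diamond \Rightarrow p'_\diamond q'^\diamond$ through the counit of $f_\diamond \dashv f^\diamond$, and we apply $\ps F$ before doing so.

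For part (1), we need the oplax structure cells. The unit cell $\u{\ps F}(A^\two) \Rightarrow 1$ and the comultiplication $\u{\ps F}(S\circ R) \Rightarrow \u{\ps F}S\,\u{\ps F}R$ both come from the same source. Write $R = (q,p)$ and $S = (x,y)$. Then $S\circ R$ is represented by $E \xleftarrow{} P \xrightarrow{} F$ followed by $\E$-image factorization. Here I use the map $\ps F P \to \ps F E \times_{\ps F B}\ldots$ and the mate $(\ps F \pi_F)_\diamond(\ps F \pi_E)^\diamond \Rightarrow (\ps F x)^\diamond(\ps F p)_\diamond$ of the image under $\ps F$ of the comma square. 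This mate is the canonical cell of the exactness definition, so it always exists. The image factorization $P \xrightarrow{e} G \xrightarrow{m}$ must then be absorbed. Since $e$ is in $\E$, it is co-fully-faithful by \Cref{lemma:left-class-exact}, and by hypothesis $\ps F e$ is co-fully-faithful too. This means $(\ps F e)_\diamond(\ps F e)^\diamond \cong 1$, which gives $\u{\ps F}(S\circ R)\cong (\ps F\,\text{legs of }P)_\diamond(\ldots)^\diamond$. The same argument applied to $E^\two \to E$ handles the unit. The icon $\delta^{\ps F}$ has components $\u{\ps F}(f_\diamond) = (\ps F f)_\diamond (\ps F 1)^\diamond \cong (\ps F f)_\diamond$, obtained via the $\E$-map $ B^\two\ldots$ as above. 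Oplax coherence (associativity and unit axioms) reduces to pasting of mates and the uniqueness of bicomma factorizations.

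For part (2), the comultiplication is invertible exactly when the mate of $\ps F$ applied to the bicomma square is invertible. Bicomma squares are exact by \Cref{lemma:bicommas-exact}, so Beck--Chevalley gives invertibility. The unit cell is already invertible from (1). Pseudonaturality of $\delta^{\ps F}$ then follows, since its naturality cells are built from these structure cells.

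For part (3), given $\ps F'$ and a pseudonatural icon $\ps F'(-)_\diamond \cong (\ps F-)_\diamond$, we have $\ps F'(v^\diamond)\cong(\ps F v)^\diamond$ by uniqueness of right adjoints, because pseudofunctors preserve adjunctions. Hence $\ps F'(p_\diamond q^\diamond)\cong (\ps F p)_\diamond(\ps F q)^\diamond$, which gives $\ps F'\simeq \u{\ps F}$ via the canonical form above. For an exact square $\gamma$, the canonical cell $u_\diamond v^\diamond \Rightarrow x^\diamond y_\diamond$ is built from $\gamma_\diamond$, units and counits, and all of these are preserved by $\ps F'$. Its image is therefore the canonical cell for $\ps F\gamma$, and it is invertible.

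I expect the main obstacle to be part (1): verifying that the comparison cells are compatible with the $\E$-image step, and checking the oplax coherence axioms. Here I would lean on the fact that all cells are mates, so that coherence follows from the functoriality of mates under pasting.
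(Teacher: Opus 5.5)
Your overall architecture is the paper's. You use the same formula for $\u{\ps F}$ and the same action on morphisms of spans via $\varepsilon_{\ps F h}$. Your comultiplication also matches: you absorb the $\E$-comparison out of the bicomma object, which is co-fully-faithful by \Cref{lemma:left-class-exact} and hence preserved by $\ps F$, and then take the mate of $\ps F$ applied to the bicomma square. Invertibility under Beck--Chevalley comes via \Cref{lemma:bicommas-exact}, and the converse via preservation of adjunctions and $e\cong p_\diamond q^\diamond$. Make sure $P$ is the bicomma object $\bicomma{x}{p}$ of \Cref{cor:composition-with-bicomma}, and not the bipullback that your notation $\pi_E,\pi_F$ suggests; otherwise \Cref{lemma:bicommas-exact} does not apply in (2).

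The step that is actually wrong is your construction of $\delta^{\ps F}$. As an object of $\DFib{\K}(A,B)$, $f_\diamond$ is the graph span $A\xleftarrow{d_1}\bicomma{1_B}{f}\xrightarrow{d_0}B$. Since $\u{\ps F}$ is defined from the legs of the span, $\u{\ps F}(f_\diamond)=(\ps F d_0)_\diamond(\ps F d_1)^\diamond$. The span $A\xleftarrow{1_A}A\xrightarrow{f}B$ is not a TSDF in general: in $\Cat$, left path lifting fails for any non-invertible $b\to f(a)$. So the equation $\u{\ps F}(f_\diamond)=(\ps F f)_\diamond(\ps F 1_A)^\diamond$ is not available, and $\delta_f$ is not an identity up to coherence. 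It must be defined, as in the paper, as the mate $(\ps F d_0)_\diamond(\ps F d_1)^\diamond\Rightarrow(\ps F 1_B)^\diamond(\ps F f)_\diamond\cong(\ps F f)_\diamond$ of $\ps F$ applied to the bicomma square $1_B d_0\Rightarrow f d_1$. You then have to verify the oplax naturality axioms for this cell, which your sketch does not address. The unit cell of $\u{\ps F}$ is the case $f=1_A$.

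For the same reason, two claims in (2) are unsupported: that the unit cell is ``already invertible from (1)'', and that the naturality cells of $\delta^{\ps F}$ are invertible ``since they are built from the structure cells''. The cell $\delta_f$ is not built from the unit and comultiplication, and nothing in your (1) makes it invertible. The paper's justification is the same as for the comultiplication: each $\delta_f$, and in particular the unit cell, is the mate of the $\ps F$-image of a bicomma square. It is therefore invertible once $\ps F$ satisfies Beck--Chevalley, by \Cref{lemma:bicommas-exact}.

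Your conclusion is in fact true even without Beck--Chevalley, but for a reason you did not give. The arrow $j\colon A\to\bicomma{1_B}{f}$ induced by the identity square satisfies $d_1\dashv j$ with invertible counit. Since $\ps F$ preserves this adjunction, $(\ps F d_1)^\diamond\cong(\ps F j)_\diamond$, and the mate reduces to $\ps F$ of the invertible $2$-cell $d_0 j\cong f$.

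Two smaller points remain. First, the canonical form $e\cong p_\diamond q^\diamond$ does not come from $E^\two$ via \Cref{cor:composition-with-bicomma}, because the spans $A\xleftarrow{q}E\xrightarrow{1_E}E$ and $E\xleftarrow{1_E}E\xrightarrow{p}B$ are not TSDFs; simply cite \cite[Prop.~4.25]{carboniModulatedBicategories1994}, as the paper does. Second, in (3), identifying $\ps F'$ with $\u{\ps F}$ also requires matching the action on $2$-cells. This follows by writing an arbitrary $2$-cell of $\DFib{\K}$ as $p'_\diamond\varepsilon_h q'^\diamond\cdot\zeta_\diamond\chi^\diamond$, whose image is then determined because pseudofunctors preserve units and counits.
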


\begin{proof}[Proof sketch]
  We can deduce this from the following, more general claim. Let $\K$ be a
  regular bicategory and let $\ps G\colon \K \to \bicat M$ be a pseudofunctor
  such that:
  \begin{itemize}
  \item for every $\K$-arrow $f\colon A \to B$, the $\bicat M$-arrow $\ps G f$
    has a right adjoint $(\ps G f)^r$;
  \item if $h: A \to B$ is co-fully-faithful, the corresponding $2$-cell $(\ps G
    h)(\ps G h)^r \Rightarrow 1_B$ is invertible.
  \end{itemize}
  Then, there exists an oplax functor $\underline{\ps G}\colon \DFib{\K} \to
  \bicat M$ equipped with an oplax natural transformation $\delta\colon
  \underline{\ps G}(-)_\diamond \Rightarrow \ps G$ having identity components.
  The setting of \eqref{thm:extension-pseudofunctors:oplax} is recovered by
  taking $\ps G$ to be the composite $(\ps F-)_\diamond \colon \K \to
  \DFib{\K'}$. In particular, we will construct the structural $2$-cells
  witnessing oplax functoriality and oplax naturality by means of $2$-cells
  between TSDFs induced by images of bicomma squares. In the case of
  \eqref{thm:extension-pseudofunctors:pseudo}, we will assume moreover that the
  $2$-cell $(\ps G u) (\ps G v)^r \Rightarrow (\ps G x)^r (\ps G y)$ induced by
  an exact square $xu \Rightarrow yv$ is invertible --- which, for $\ps G = (\ps
  F-)_\diamond \colon \K \to \DFib{\K'}$, corresponds to $\ps F$ preserving
  exact squares; in that case, since bicomma squares are exact, all structural
  $2$-cells constructed will be invertible. We defer the full proof of
  \Cref{thm:extension-pseudofunctors}, in particular that of
  \eqref{thm:extension-pseudofunctors:converse}, to Appendix \ref{app:A3}.

  First, we define $\u {\ps G}$ on objects by setting $\underline{\ps G}A = \ps
  F A$, and on arrows by setting:
  \[ \u{\ps G} (A \xleftarrow{q} E \xrightarrow{p} B) = (\ps G p) (\ps G q)^r
  \]Consider now a $2$-cell in $\DFib{\K}$ as $(a)$ below, and consider the
  induced diagram $(b)$ in $\bicat M$, where the action of $(-)^r$ on $2$-cells
  is induced by the universal property of adjunctions in a bicategory:
  \[(a) \quad \begin{tikzcd}[sep = 20pt]
      & E \\
      A && B \\
      & {E'} \arrow["q"', curve={height=12pt}, from=1-2, to=2-1]\arrow["p", curve={height=-12pt}, from=1-2, to=2-3]\arrow[""{name=0, anchor=center, inner sep=0}, "h"{description}, from=1-2,
      to=3-2]\arrow["{q'}", curve={height=-12pt}, from=3-2, to=2-1]\arrow["{p'}"', curve={height=12pt}, from=3-2, to=2-3]\arrow["\chi"', between={0.3}{0.7}, Rightarrow, from=0, to=2-1]\arrow["\zeta", between={0.3}{0.7}, Rightarrow, from=2-3, to=0]\end{tikzcd} \qquad\quad (b) \quad \begin{tikzcd}[sep = 40pt]
      & {\ps GE} \\
      {\ps G A} && {\ps G B} \\
      & {\ps F E'} \arrow["{\ps G p}", curve={height=-24pt}, from=1-2, to=2-3]\arrow[""{name=0, anchor=center, inner sep=0}, "{\ps G h}"{description},
      curve={height=-24pt}, from=1-2, to=3-2]\arrow["{\varepsilon_h}"{description}, between={0.3}{0.7}, Rightarrow,
      from=1-2, to=3-2]\arrow["{(\ps G q)^r}", curve={height=-24pt}, from=2-1, to=1-2]\arrow["{(\ps G{q'})^r}"', curve={height=24pt}, from=2-1, to=3-2]\arrow[""{name=1, anchor=center, inner sep=0}, "{(\ps G h)^r}"{description},
      curve={height=-24pt}, from=3-2, to=1-2]\arrow["{\ps G{p'}}"', curve={height=24pt}, from=3-2, to=2-3]\arrow["{(\ps G \chi)^r}", between={0.3}{0.7}, Rightarrow, from=2-1, to=1]\arrow["{\ps G \zeta}", between={0.3}{0.7}, Rightarrow, from=2-3, to=0]\end{tikzcd} \]We set the image of $(a)$ under $\underline{\ps G}$ to be the $2$-cell in
  $\bicat M$ obtained by pasting:
  \[ (\ps G p') \counit_h (\ps G q')^r \cdot (\ps G \zeta) (\ps G \chi)^r \]This action on $2$-cells of $\DFib{\K}$ is well-defined as it gives the same
  value for any two isomorphic morphisms of spans. By the coherence in $\bicat M$
  and by pseudofunctoriality of $\ps G$, one can see that $\underline{\ps G}$
  (strictly) preserves identity $2$-cells and their compositions, so that it
  defines a functor $\DFib{\K}(A,B) \to \bicat M(\ps G A, \ps G B)$ for every two
  objects $A, B$ of $\K$.

  For an object $A$ of $\K$, the $2$-cell $\underline{\ps G}(A \xleftarrow{\cod}
  A^\two \xrightarrow{\dom} A) \Rightarrow 1_{\ps G A}$, witnessing preservation
  of identities, is constructed as the composite
  \[ (\ps G \dom) (\ps G \cod)^r \Rightarrow (\ps G 1_A)^r (\ps G 1_A) \cong
    1_{\ps G A} \]where the first step is induced by the image of the bicomma square
  $1_A \dom \Rightarrow 1_A \cod$, and the second one by coherence in $\bicat M$
  and pseudofunctoriality of $\ps G$.

  Consider then two TSDFs and their composite:
  \[ e = A \xleftarrow{q} E \xrightarrow {p} B \quad f = B \xleftarrow{s} \ps G
    \xrightarrow{r} C \quad fe = A \xleftarrow{y} G \xrightarrow{x} C \]Let $E \xleftarrow{\cod} \bicomma{s}{p} \xrightarrow{\dom} F$ be the bicomma
  span of the cospan $E \xrightarrow{p} B \xleftarrow{s} F$, and write $h\colon
  \bicomma{s}{p} \to G$ for the arrow factoring $A \xleftarrow{q \cod}
  \bicomma{s}{p} \xrightarrow{p \dom} C$ into $gf$, which by
  \Cref{cor:composition-with-bicomma} lies in $\mathcal E$:
  \[\begin{tikzcd}[row sep=1pt, column sep=large]
      && {\bicomma{s}{p}} \\
      & E && F \\
      A && G && C \arrow["\cod"', from=1-3, to=2-2]\arrow["\dom", from=1-3, to=2-4]\arrow[""{name=0, anchor=center, inner sep=0}, "h", dashed, from=1-3,
      to=3-3]\arrow["q"', from=2-2, to=3-1]\arrow["r", from=2-4, to=3-5]\arrow["y", from=3-3, to=3-1]\arrow["x"', from=3-3, to=3-5]\arrow["\cong"{description, pos=0.2}, draw=none, from=0, to=3-5]\arrow["\cong"{description, pos=0.8}, draw=none, from=3-1, to=0]\end{tikzcd}\]The $2$-cell $\underline{\ps G}(fe) \Rightarrow (\underline{\ps
    G}f)(\underline{\ps G}e)$, witnessing preservation of composition, is
  constructed as the composite
  \begin{align*}
    (\ps G x) (\ps G y)^r
    &\cong (\ps G x) (\ps G h) (\ps G h)^r (\ps G y)^r \\
    &\cong (\ps G r)(\ps G \dom)(\ps G \cod)^r(\ps G q)^r \\
    &\Rightarrow (\ps G r)(\ps G s)^r(\ps G p)(\ps G q)^r
  \end{align*}
  using preservation of the co-fully-faithful morphism $h$ for the first step,
  pseudofunctoriality of $\ps G$ for the second step, and the $2$-cell induced
  by the image of the bicomma square $s \dom \Rightarrow p \cod$ for the third
  step.

  Finally, we construct the transformation $\delta \colon
  \underline{\ps G}(-)_\diamond \Rightarrow \ps G$. For an object $A$ of $\K$,
  we simply let $\delta_A \colon \ps G A \to \ps G A$ be the identity $1_{\ps G A}$, while for an arrow $f\colon A \to B$ in $\K$, setting $f_\diamond = A
  \xleftarrow{\cod} \bicomma{1_B}{f} \xrightarrow{\dom} B$, we let
  $\delta_f\colon \underline{\ps G} (f_\diamond) \Rightarrow \ps G f$ be the
  $2$-cell induced by the image of the bicomma square $1_B \dom \Rightarrow f
  \cod$.
\end{proof}

We now move to \emph{extensions of pseudonatural transformations}. While oplax
extensions always exist, we can characterize their pseudonaturality in terms of
\emph{exactness of naturality squares}.

\begin{definition}
  Let $\ps F, \ps F' \colon \K \to \K'$ be pseudofunctors between regular
  bicategories and let $\gamma\colon \ps F \Rightarrow \ps F'$ be a
  pseudonatural transformation. We say that $\gamma$ satisfies the
  \emph{Beck-Chevalley condition} if its naturality squares are exact, that is,
  if for every arrow $f \colon A \to B$ in $\K$ the square $\gamma_f \colon
  (\gamma_B) (\ps F f) \cong (\ps F' f ) (\gamma_A)$ is exact in $\K'$.
\end{definition}

\addtocounter{theorem}{-1}
\addtocounter{subtheorem}{1}
\begin{subtheorem}\label{thm:extension-psnat-trans}
  Let $\gamma\colon \ps F \Rightarrow \ps F'$ be a pseudonatural transformation
  between pseudofunctors.
  \begin{enumerate}
  \item Suppose $\ps F$ and $\ps F'$ have oplax extensions in the sense of
    \Cref{thm:extension-pseudofunctors}\eqref{thm:extension-pseudofunctors:oplax}.
    Then, there is an oplax natural transformation $\u\gamma \colon \u{\ps F}
    \Rightarrow \u{\ps F}'$ such that $\u{\gamma}_A = (\gamma_A)_\diamond$ and
    \[\begin{tikzcd}[cramped, column sep = 9pt, row sep=10pt]
        {\DFib{\K}} && {\DFib{\K'}} \\
        \\
        \K && {\K'} \arrow[""{name=0, anchor=center, inner sep=0}, "{{\u{ \ps F}}}",
        curve={height=-18pt}, from=1-1, to=1-3]\arrow[""{name=1, anchor=center, inner sep=0}, "{{\u{\ps
              F}'}}"{description}, curve={height=18pt}, from=1-1, to=1-3]\arrow[""{name=2, anchor=center, inner sep=0}, "{{(-)_\diamond}}",
        from=3-1, to=1-1]\arrow["{{\ps F'}}"', curve={height=18pt}, from=3-1, to=3-3]\arrow["{{(-)_\diamond}}"', from=3-3, to=1-3]\arrow["{{\u{\gamma}}\,}"', between={0.2}{0.8}, Rightarrow, from=0,
        to=1]\arrow["{{\delta^{\ps F'}}}"'{pos=0.7}, shift right, between={0.4}{0.8},
        Rightarrow, from=2, to=3-3]\end{tikzcd} = \begin{tikzcd}[cramped, column sep=9pt, row sep=10pt]
        {\DFib{\K}} && {\DFib{\K'}} \\
        \\
        \K && {\K'} \arrow["{{\u{ \ps F}}}", curve={height=-18pt}, from=1-1, to=1-3]\arrow["{{(-)_\diamond}}", from=3-1, to=1-1]\arrow[""{name=0, anchor=center, inner sep=0}, "{{\ps F}}"{description},
        curve={height=-18pt}, from=3-1, to=3-3]\arrow[""{name=1, anchor=center, inner sep=0}, "{{\ps F'}}"',
        curve={height=18pt}, from=3-1, to=3-3]\arrow[""{name=2, anchor=center, inner sep=0}, "{{(-)_\diamond}}"',
        from=3-3, to=1-3]\arrow["{{\delta^{\ps F}}}"{pos=0.1}, between={0}{0.5}, Rightarrow,
        from=1-1, to=2]\arrow["{\gamma\,}"', between={0.2}{0.8}, Rightarrow, from=0, to=1]\end{tikzcd} \]\item Moreover, suppose $\ps F$ and $\ps F'$ have extensions in the sense of
    \Cref{thm:extension-pseudofunctors}(2). Then,
    $\gamma$ satisfies the Beck-Chevalley condition if and only if $\u\gamma$ is pseudonatural, in which case
    $\u\gamma$ is the only pseudonatural transformations satisfying the above
    equalities.
  \end{enumerate}
\end{subtheorem}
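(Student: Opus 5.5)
We construct $\u\gamma$ directly in $\DFib{\K'}$. For a TSDF $e = A \xleftarrow{q} E \xrightarrow{p} B$ we have $\u{\ps F}e = (\ps F p)_\diamond(\ps F q)^\diamond$ and $\u{\ps F}'e = (\ps F' p)_\diamond(\ps F' q)^\diamond$. The component $\u\gamma_e \colon (\gamma_B)_\diamond(\ps F p)_\diamond(\ps F q)^\diamond \Rightarrow (\ps F' p)_\diamond(\ps F' q)^\diamond(\gamma_A)_\diamond$ will be the pasting of two cells.
\begin{itemize}
\item The invertible cell $(\gamma_B)_\diamond(\ps F p)_\diamond \cong (\ps F' p)_\diamond(\gamma_E)_\diamond$ is obtained from $(\gamma_p)_\diamond$ and pseudofunctoriality of $(-)_\diamond$.
\item The mate $(\gamma_E)_\diamond(\ps F q)^\diamond \Rightarrow (\ps F' q)^\diamond(\gamma_A)_\diamond$ of $(\gamma_q^{-1})_\diamond$ is obtained from the adjunctions $(\ps F q)_\diamond \dashv (\ps F q)^\diamond$ and $(\ps F' q)_\diamond \dashv (\ps F' q)^\diamond$, using the unit of the latter and the counit of the former. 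By the definition of exactness, this mate is precisely the canonical $2$-cell attached to the naturality square $\gamma_q\colon \gamma_A(\ps F q) \cong (\ps F' q)\gamma_E$.
\end{itemize}

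Several verifications follow. First, $\u\gamma_e$ is natural in morphisms of TSDFs: the image of a $2$-cell $(h,\chi,\zeta)$ is built from $\varepsilon_h$ and the images of $\chi,\zeta$, and we use naturality of $\gamma$ on $2$-cells together with mate calculus. Next we check the oplax-naturality coherences against the structural cells of $\u{\ps F}$ and $\u{\ps F}'$ from the proof of \Cref{thm:extension-pseudofunctors}. For the identity this is the cell induced by the bicomma square $A^\two$. For composition it is the composite of $(\ps G h)(\ps G h)^r\cong 1$, pseudofunctoriality, and the bicomma square $s\dom\Rightarrow p\cod$. Each step is a pasting of mates of images of squares under $\ps F$, $\ps F'$ and $\gamma$. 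The required equalities reduce to the compatibility of mates with pasting, together with the modification axioms for $\gamma$ on bicomma squares, i.e.\ that $\gamma$ commutes with $\ps F$ and $\ps F'$ applied to the $2$-cell of a bicomma square. The equation of pastings in the statement is checked on a representable $f_\diamond$. There $\delta^{\ps F}_f$ is induced by the bicomma square $1_B\dom\Rightarrow f\cod$, and the claim follows by again pasting mates and using pseudonaturality of $\gamma$ on $\cod$, $\dom$ and $f$.

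For (2), assume the Beck--Chevalley condition. Then the second constituent of $\u\gamma_e$ is invertible, and the first always is, so $\u\gamma$ is pseudonatural. Conversely, suppose $\u\gamma$ is pseudonatural. Apply it to $f^\diamond$ for $f\colon A\to B$, where $f^\diamond$ is the cograph $B\leftarrow\bicomma{f}{1_B}\rightarrow A$. Using the extensions from \Cref{thm:extension-pseudofunctors}(2), $\u{\ps F}(f^\diamond)\cong(\ps F f)^\diamond$, because the bicomma square defining the cograph is exact and is preserved. The component $\u\gamma_{f^\diamond}$ then identifies, up to these isomorphisms, with the mate of $\gamma_f$, so $\gamma_f$ is exact. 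I expect this identification to be the main obstacle. It requires carefully tracking the isomorphism $\u{\ps F}(f^\diamond)\cong(\ps F f)^\diamond$ through mates of the cograph bicomma square. I would first check it for $f=1$, and then reduce the general case using $\u{\ps F}(f_\diamond)\cong(\ps F f)_\diamond$ and the adjunction $f_\diamond\dashv f^\diamond$ preserved by the pseudofunctor $\u{\ps F}$.

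For uniqueness, every TSDF $e$ is isomorphic to $p_\diamond q^\diamond$, by \Cref{cor:composition-with-bicomma} and the fact that bicomma squares are exact. A pseudonatural $\u\gamma$ is therefore determined by its components on $p_\diamond$, which the displayed equality forces to be $\gamma_p$. Its components on $q^\diamond$ are then forced to be the mates of these, by the standard fact that a pseudonatural transformation commutes with adjunctions up to mates. Pseudonaturality on the composite $p_\diamond q^\diamond$ then pins down $\u\gamma_e$.
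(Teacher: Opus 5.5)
Your proposal is correct and follows the paper's proof in Appendix~\ref{app:A3} essentially step by step. As in the paper, $\u\gamma_e$ is the pasting of the invertible naturality cell at $p$ with $\gamma_q^{\circlearrowright}$, the Beck--Chevalley condition makes it invertible, and both the converse and uniqueness go through identifying $\u\gamma_{f^\diamond}$ with the mate of $\gamma_f$ and using $e\cong p_\diamond q^\diamond$.

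Two small corrections.
\begin{itemize}
\item The mate you build is the canonical cell of the square read as $(\ps F' q)\gamma_E\cong\gamma_A(\ps F q)$, which is the orientation of the paper's condition BCC$'$. It is not the canonical cell of $\gamma_A(\ps F q)\cong(\ps F' q)\gamma_E$ as you wrote. This matters because exactness of an invertible square depends on its orientation.
\item The isomorphism $e\cong p_\diamond q^\diamond$ is \cite[Prop.~4.25]{carboniModulatedBicategories1994}. It is an input to \Cref{cor:composition-with-bicomma}, not a consequence of it.
\end{itemize}
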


Finally, we treat \emph{extensions of modifications}. As it turns out, no
further conditions are needed in this case.

\begin{subtheorem}\label{thm:extension-modifications}
  \looseness=-1 Let $\mathfrak m \colon \gamma \Rrightarrow \gamma'$ be a
  modification between pseudonatural transformations as in
  \Cref{thm:extension-psnat-trans}. Then, the $2$-cells $\u{\mathfrak m}_A
  \coloneqq (\mathfrak{m}_A)_\diamond$ define a modification $\u{\frak m} \colon
  \u \gamma \Rrightarrow \u{\gamma}'$.
\end{subtheorem}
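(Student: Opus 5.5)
We have to check that the family of $2$-cells $\u{\frak m}_A = (\frak m_A)_\diamond \colon (\gamma_A)_\diamond \Rightarrow (\gamma'_A)_\diamond$ satisfies the modification axiom with respect to the oplax naturality $2$-cells of $\u\gamma$ and $\u\gamma'$. Concretely, for every TSDF $e = (A \xleftarrow{q} E \xrightarrow{p} B)$ we must show that the two pastings
\[ \u\gamma_e \cdot \bigl((\frak m_B)_\diamond \u{\ps F} e\bigr) \quad\text{and}\quad \bigl(\u{\ps F}' e\, (\frak m_A)_\diamond\bigr) \cdot \u\gamma'_e \]
coincide as $2$-cells $(\gamma_B)_\diamond (\ps F p)_\diamond (\ps F q)^\diamond \Rightarrow (\ps F' p)_\diamond (\ps F' q)^\diamond (\gamma'_A)_\diamond$. (Orientation conventions for oplax transformations aside, this is the single axiom to check.)

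First, I recall from the (deferred) construction in \Cref{thm:extension-psnat-trans} how $\u\gamma_e$ is built. Since $\u{\ps F} e = (\ps F p)_\diamond(\ps F q)^\diamond$ is a composite of a representable and a corepresentable, $\u\gamma_e$ must be the pasting of two pieces. The first is $(\gamma_p)_\diamond$, obtained by applying $(-)_\diamond$ to the invertible naturality cell $\gamma_p\colon \gamma_B(\ps F p)\cong(\ps F' p)\gamma_A$. The second is the mate of $\gamma_q$ under the adjunctions $(\ps F q)_\diamond \dashv (\ps F q)^\diamond$ and $(\ps F' q)_\diamond\dashv(\ps F' q)^\diamond$, i.e.\ the canonical $2$-cell $(\gamma_A)_\diamond(\ps F q)^\diamond \Rightarrow (\ps F' q)^\diamond (\gamma_E)_\diamond$ of the square $\gamma_q$ (as in the definition of exact squares). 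Hence $\u\gamma_e$ factors through $(\gamma_E)_\diamond$ in the middle.

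The key step is then to slide $(\frak m)_\diamond$ across each piece separately, using two facts. On the representable side, $(-)_\diamond$ is a pseudofunctor that is functorial on $2$-cells, so the modification axiom for $\frak m$ at $p$, namely $\gamma'_p\cdot(\frak m_B \ps F p) = (\ps F' p\,\frak m_A)\cdot\gamma_p$ up to the obvious orientation, transports to the analogous equation after applying $(-)_\diamond$. On the corepresentable side, mates are natural: the mate correspondence commutes with pasting $2$-cells along the free edges. So the modification axiom for $\frak m$ at $q$ yields that the mates of $\gamma_q$ and $\gamma'_q$ are intertwined by $(\frak m_A)_\diamond$ and $(\frak m_E)_\diamond$. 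Gluing the two equations along the common middle $2$-cell $(\frak m_E)_\diamond$ gives the required identity. Finally, I would verify that this is compatible with the representatives chosen: $\u\gamma_e$ is defined from a representative span, but $(\frak m)_\diamond$ does not depend on it, so no extra check is needed.

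The main obstacle is bookkeeping rather than substance. One has to pin down exactly which pasting defines $\u\gamma_e$, with its orientation and the associators and pseudofunctoriality constraints of $(-)_\diamond$ suppressed, and then check naturality of mates with these constraints present. I would handle this with string diagrams in $\DFib{\K'}$, treating the units and counits of $(-)_\diamond \dashv (-)^\diamond$ as caps and cups. The argument is then a single slide of the $\frak m$-beads through the $\gamma$-cells using the modification axiom, plus the zigzag identities. No exactness is used, which matches the claim that no Beck–Chevalley hypothesis is needed.
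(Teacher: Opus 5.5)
Your proposal is correct and follows the paper's proof. There too, $\u\gamma_e$ is the pasting of (the inverse of, in the paper's orientation) $\gamma_p$ with the mate $\gamma_q^\circlearrowright$. A two-step string-diagram computation then slides $\frak m_B$ through the first piece to $\frak m_E$ using the modification axiom at $p$, and then through the mate to $\frak m_A$ using the axiom at $q$ together with sliding around the unit/counit bends.

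One small slip: in your displayed axiom at $p\colon E\to B$, the right-hand cell should be $\frak m_E$ rather than $\frak m_A$, which is what you in fact use when gluing.
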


\addtocounter{theorem}{1}
\begin{example}[Related work] \label{ex:relational-extensions-in-the-literature}

  Because we only consider TSDFs and not jointly monic or
  jointly order-embedding spans, we technically do not recover Barr's original
  relational extension theorem~\cite{barrRelationalAlgebras1970}, nor similar
  results in regular categories~\cite[\textsection
  4.3]{carboni2categoricalApproachChange1991}
  and~\cite[Cor.~1.5.7]{schubertLaxAlgebrasScenic2006}, $\Pos$~\cite[Thm.\
  5.3]{bilkovaRelationLiftingsPreorders2011}, and $\Pos$-regular
  categories~\cite[Thm. 6.9]{kurzStoneDualityRelations2021}. Still, the proofs
  of these results can all be considered instances of our $2$-dimensional proof:
  the only difference lies in which spans are considered as relations (recall
  \Cref{ex:two-sided-discrete-fibrations}).

  \looseness=-1
  These references all consider extensions of functors as both oplax or strict
  functors. It should be noted that Barr does not ask for the $\Set$-functor to
  be extended to preserve surjections, corresponding to the co-fully-faithful arrows, because this is automatic when the Axiom
  of Choice is assumed to hold; thus, in his case, an oplax extension is always possible. The question of extending natural transformations is explicitly considered only in the context of $\Set$ and that of regular categories. Dealing with modifications is instead one of the main novelties of our setting, as of course they trivialize in the
  locally posetal case.

  \looseness=-1 Our main new example is an extension theorem in the $2$-category
  of $\mathcal{V}$-enriched categories, where $\mathcal V$ is any complete and
  cocomplete symmetric monoidal closed category. Indeed,
  \Cref{thm:dfibs-bicategory} applies to $\op{\mathcal V\Cat}$ by~\cite[Lem.\
  4.3.5]{loregianCategoricalNotionsFibration2020}, and its TSDFs correspond to
  $\mathcal V$-enriched profunctors~\cite[Thm.\
  4.3.2]{loregianCategoricalNotionsFibration2020}. An analogous extension
  theorem was proved for \emph{strict} functors in the locally posetal case
  where $\mathcal{V}$ is a quantale --- a sup-lattice which is monoidal closed
  as a category --- in~\cite[Thm.\ 5.10]{bilkovaRelationLiftingApplication2013},
  where the result is also claimed, though without proof, for an abitrary base
  of enrichment $\mathcal V$.
\end{example}

\begin{remark}
  Shortly after the first version of this paper was made public, a similar
  extension theorem appeared independently in Street's
  \cite{streetHomodularPseudofunctorsBicategories2026}, who also points to
  \cite{gouzouCaracterisationDist1973,gouzouCaracterisationDistXBicategorieXSommes1975}
  for earlier, restricted versions of the result (notably for the case of $\K =
  \Cat$). While we work with stronger conditions than in
  \cite{streetHomodularPseudofunctorsBicategories2026}, two aspects of our work
  does not appear there. First, while Street only considers how to extend
  pseudofunctors to pseudofunctors, we also consider the weaker case when the
  extended functor is only oplax, and study how to also extend natural
  transformations and modifications. Second, we give explicit proofs for the
  coherence axioms in Appendix \ref{app:A3}, using the graphical language of
  \emph{string diagrams}.
\end{remark}

\subsection{Extensions of pseudomonads}

\looseness=-1
We conclude this section by characterizing when pseudomonads, which model effects in
bicategories~\cite{paquetEffectfulSemanticsBicategories2024},
extend to bicategories of TSDFs. The extension criterion is obtained by combining
\Cref{thm:extension-pseudofunctors,thm:extension-psnat-trans,thm:extension-modifications}. First, we introduce pseudomonads as a particular case of the \emph{skew monads} of \cite{streetSkewMonads2015} (see also~\cite{bungeCoherentExtensionsRelational1974,marmolejoDoctrinesWhoseStructure1997,lackCoherentApproachPseudomonads2000}). Skew monads are a generalization of (pseudo)monads where the functor, the unit and the multiplication are either \emph{lax} or \emph{oplax}, and where the structural constraints are not required to be invertible. One should not be surprised by the emergence of such a notion: concretely, these monads are already considered implicitly in \cite{barrRelationalAlgebras1970}, as the ultrafilter monad on $\Set$ only extends to a skew monad on $\Rel$.

\begin{definition}\label{def:skew-monad}
  A \emph{right skew monad} on a bicategory $\K$ consists of:
  \begin{itemize}
  \item an oplax functor  $\ps T
    \colon \bicat K \to \bicat K$, whose structural $2$-cells we denote by $\psi_A \colon \ps T 1_A \Rightarrow 1_{\ps T A}$ and $\psi_{g, f} \colon \ps T (g\circ f) \Rightarrow \ps T g \circ \ps T f$;
  \item an oplax natural transformation $\eta \colon 1 \Rightarrow \ps T$ called
    \emph{unit};
  \item an oplax natural transformation $\mu \colon \ps T ^2
    \Rightarrow\ps T$ called \emph{multiplication};
  \item three modifications
    \[\begin{tikzcd}[sep = 20pt]
	{\ps T^3} & {\ps T ^2} \\
	{\ps T^2} & {\ps T}
	\arrow["{{\ps T \mu }}", from=1-1, to=1-2]
	\arrow["{\mu  \ps T}"', from=1-1, to=2-1]
	\arrow["{\mathfrak m}"', between={0.2}{0.9}, Rightarrow, from=1-2, to=2-1]
	\arrow["{\mu }", from=1-2, to=2-2]
	\arrow["{\mu }"', from=2-1, to=2-2]
\end{tikzcd} \quad \begin{tikzcd}[sep = 20pt]
	& {\ps T } & \\
	{\ps T^2} & {\ps T} & {\ps T^2 }
	\arrow["{\eta \ps T }"', curve={height=12pt}, from=1-2, to=2-1]
	\arrow[""{name=0, anchor=center, inner sep=0}, equals, from=1-2, to=2-2]
	\arrow["{{\ps T}\eta }", curve={height=-12pt}, from=1-2, to=2-3]
	\arrow["{{{\mu }}}"', from=2-1, to=2-2]
	\arrow["{{{\mu }}}", from=2-3, to=2-2]
	\arrow["{{\mathfrak l}}"', between={0.2}{0.8}, Rightarrow, from=0, to=2-1]
	\arrow["{{\mathfrak r}}"', between={0.2}{0.8}, Rightarrow, from=2-3, to=0]
\end{tikzcd}\]
    \vspace{-10pt}
  \end{itemize}
satisfying the following coherence conditions:
\begin{enumerate}
  \item
\[\begin{tikzcd}
	{\ps T^4} & {\ps T^4} && {\ps T^3} \\
	& {\ps T^3} & {\ps T^2} & {\ps T^3} \\
	{\ps T^2} & {\ps T^2} & {\ps T} & {\ps T^2}
	\arrow[equals, from=1-1, to=1-2]
	\arrow[""{name=0, anchor=center, inner sep=0}, "{\ps T (\mu \circ \ps T \mu)}"', from=1-1, to=3-1]
	\arrow["{\mu\ps T^2}", from=1-2, to=1-4]
	\arrow[""{name=1, anchor=center, inner sep=0}, "{\ps T^2 \mu}"', from=1-2, to=2-2]
	\arrow[""{name=2, anchor=center, inner sep=0}, equals, from=1-4, to=2-4]
	\arrow["{\mu\ps T}", from=2-2, to=2-3]
	\arrow[""{name=3, anchor=center, inner sep=0}, "{\ps T \mu}"', from=2-2, to=3-2]
	\arrow[""{name=4, anchor=center, inner sep=0}, "\mu"{description}, from=2-3, to=3-3]
	\arrow["{\ps T \mu}"', from=2-4, to=2-3]
	\arrow[""{name=5, anchor=center, inner sep=0}, "{\mu \ps T}", from=2-4, to=3-4]
	\arrow[equals, from=3-1, to=3-2]
	\arrow["\mu"', from=3-2, to=3-3]
	\arrow["\mu", from=3-4, to=3-3]
	\arrow["\psi", between={0.3}{0.7}, Rightarrow, from=0, to=2-2]
	\arrow["{\mu_{\mu}}", between={0.3}{0.7}, Rightarrow, from=1, to=2]
	\arrow["{\mathfrak m}", between={0.3}{0.7}, Rightarrow, from=3, to=4]
	\arrow["{\mathfrak m}", between={0.3}{0.7}, Rightarrow, from=4, to=5]
\end{tikzcd}\quad = \quad \begin{tikzcd}
	{\ps T^4} & {\ps T^4} && {\ps T^3} \\
	&& {\ps T^3} \\
	{\ps T^2} & {\ps T^2} & {\ps T} & {\ps T^2}
	\arrow[equals, from=1-1, to=1-2]
	\arrow[""{name=0, anchor=center, inner sep=0}, "{{\ps T (\mu \circ \ps T \mu)}}"', from=1-1, to=3-1]
	\arrow["{{\mu\ps T^2}}", from=1-2, to=1-4]
	\arrow["{{\ps T \mu \ps T}}", from=1-2, to=2-3]
	\arrow[""{name=1, anchor=center, inner sep=0}, "{{\ps T (\mu \circ \mu \ps T)}}"{description}, curve={height=-6pt}, from=1-2, to=3-1]
	\arrow[""{name=2, anchor=center, inner sep=0}, "{{\mu \ps T}}", from=1-4, to=3-4]
	\arrow[""{name=3, anchor=center, inner sep=0}, "{{\ps T \mu}}"', from=2-3, to=3-2]
	\arrow[""{name=4, anchor=center, inner sep=0}, "{{\mu \ps T}}", from=2-3, to=3-4]
	\arrow[equals, from=3-1, to=3-2]
	\arrow["\mu"', from=3-2, to=3-3]
	\arrow["\mu", from=3-4, to=3-3]
	\arrow["{{\ps T \mathfrak m}}"{pos=0.3}, shift left=3, between={0.3}{0.7}, Rightarrow, from=0, to=1]
	\arrow["\psi", shift left=2, between={0.4}{0.7}, Rightarrow, from=1, to=2-3]
	\arrow["{{\mathfrak m\ps T}}", shift left=2, between={0.2}{0.8}, Rightarrow, from=2-3, to=2]
	\arrow["{{\mathfrak m}}"', between={0.3}{0.7}, Rightarrow, from=3, to=4]
\end{tikzcd} \]

\item

\[\begin{tikzcd}[column sep = 30 pt]
	{\ps T^2} && {\ps T^2} && \\
	{\ps T^2} && {\ps T^3} && {\ps T^2} \\
	&& {\ps T}
	\arrow[equals, from=1-1, to=1-3]
	\arrow[""{name=0, anchor=center, inner sep=0}, "{{\ps T1_{\ps T}}}"', from=1-1, to=2-1]
	\arrow[""{name=1, anchor=center, inner sep=0}, "{{\ps T(\mu \circ \eta\ps T)}}"{description}, curve={height=-6pt}, from=1-3, to=2-1]
	\arrow["{{\ps T\eta \ps T}}", from=1-3, to=2-3]
	\arrow[""{name=2, anchor=center, inner sep=0}, curve={height=-16pt}, equals, from=1-3, to=2-5]
	\arrow[""{name=3, anchor=center, inner sep=0}, "\mu"', curve={height=16pt}, from=2-1, to=3-3]
	\arrow["\ps T\mu", from=2-3, to=2-1]
	\arrow["{{\mu \ps T }}"', from=2-3, to=2-5]
	\arrow[""{name=4, anchor=center, inner sep=0}, "\mu", curve={height=-16pt}, from=2-5, to=3-3]
	\arrow["{{\ps T\mathfrak l}}",pos=0.3, between={0.3}{0.7}, Rightarrow, from=0, to=1]
	\arrow["{{\psi}}", between={0.4}{0.8},pos=0.6, Rightarrow, from=1, to=2-3]
	\arrow["{{\mathfrak m}}"', shift left=5, between={0.3}{0.7}, Rightarrow, from=3, to=4]
	\arrow["{{\mathfrak r \ps T}}"', between={0.2}{0.8}, Rightarrow, from=2-3, to=2]
\end{tikzcd} \quad = \quad \begin{tikzcd}
	{\ps T^2} && {\ps T^2} \\
	& {\ps T^2} \\
	& {\ps T}
	\arrow[""{name=0, anchor=center, inner sep=0}, "{{\ps T1_{\ps T}}}"', from=1-1, to=2-2]
	\arrow[equals, from=1-3, to=1-1]
	\arrow[""{name=1, anchor=center, inner sep=0}, equals, from=1-3, to=2-2]
	\arrow["\mu", from=2-2, to=3-2]
	\arrow["\psi", between={0.25}{0.7}, Rightarrow, from=0, to=1]
\end{tikzcd}\]

\item

\[\begin{tikzcd}
	& {\ps T^2} & {\ps T^3} \\
	{\ps T} & {\ps T^2} & {\ps T^2} \\
	& {\ps T}
	\arrow["{{\eta\ps T^2}}", from=1-2, to=1-3]
	\arrow[""{name=0, anchor=center, inner sep=0}, "\mu"', from=1-2, to=2-1]
	\arrow[""{name=1, anchor=center, inner sep=0}, "{{\ps T \mu}}"{description}, from=1-3, to=2-2]
	\arrow["{{\mu\ps T}}", from=1-3, to=2-3]
	\arrow["{{\eta \ps T}}", from=2-1, to=2-2]
	\arrow[""{name=2, anchor=center, inner sep=0}, equals, from=2-1, to=3-2]
	\arrow["{{\mathfrak m}}"', between={0.2}{0.8}, Rightarrow, from=2-2, to=2-3]
	\arrow["\mu", from=2-2, to=3-2]
	\arrow["\mu", from=2-3, to=3-2]
	\arrow["{{\eta_{\mu}}}", between={0.3}{0.7}, Rightarrow, from=0, to=1]
	\arrow["{{\mathfrak l}}", between={0.2}{0.8}, Rightarrow, from=2, to=2-2]
\end{tikzcd}\quad = \quad \begin{tikzcd}
	{\ps T^2} && {\ps T^3} \\
	& {\ps T^2} \\
	& {\ps T}
	\arrow["{\eta\ps T^2}", from=1-1, to=1-3]
	\arrow[""{name=0, anchor=center, inner sep=0}, equals, from=1-1, to=2-2]
	\arrow[""{name=1, anchor=center, inner sep=0}, "{\mu\ps T}", from=1-3, to=2-2]
	\arrow["\mu", from=2-2, to=3-2]
	\arrow["{\mathfrak l\ps T}", between={0.3}{0.7}, Rightarrow, from=0, to=1]
\end{tikzcd}\]

\item

\[\begin{tikzcd}
	{\ps T^2} & {\ps T^2} && {\ps T} \\
	& {\ps T^3} & {\ps T^2} & {\ps T} \\
	{\ps T^2} & {\ps T^2} & {\ps T} & {\ps T}
	\arrow[equals, from=1-1, to=1-2]
	\arrow[""{name=0, anchor=center, inner sep=0}, "{\ps T(\mu\circ \ps T\eta)}"', from=1-1, to=3-1]
	\arrow["\mu", from=1-2, to=1-4]
	\arrow[""{name=1, anchor=center, inner sep=0}, "{\ps T^2\eta}"', from=1-2, to=2-2]
	\arrow[""{name=2, anchor=center, inner sep=0}, equals, from=1-4, to=2-4]
	\arrow["{\mu\ps T}", from=2-2, to=2-3]
	\arrow[""{name=3, anchor=center, inner sep=0}, "{\ps T\mu}"', from=2-2, to=3-2]
	\arrow[""{name=4, anchor=center, inner sep=0}, "\mu"{description}, from=2-3, to=3-3]
	\arrow["{\ps T\eta}"', from=2-4, to=2-3]
	\arrow[""{name=5, anchor=center, inner sep=0}, "{\mu\ps T}", from=2-4, to=3-4]
	\arrow[equals, from=3-1, to=3-2]
	\arrow["\mu"', from=3-2, to=3-3]
	\arrow[equals, from=3-4, to=3-3]
	\arrow["\psi", between={0.3}{0.7}, Rightarrow, from=0, to=2-2]
	\arrow["{\mu_\eta}", between={0.3}{0.7}, Rightarrow, from=1, to=2]
	\arrow["{\mathfrak m}", between={0.3}{0.7}, Rightarrow, from=3, to=4]
	\arrow["{\mathfrak r}", between={0.3}{0.7}, Rightarrow, from=4, to=5]
\end{tikzcd} \quad = \quad \begin{tikzcd}
	{\ps T^2} & {\ps T^2} & {\ps T^2} \\
	& {\ps T^2} \\
	& {\ps T}
	\arrow[equals, from=1-1, to=1-2]
	\arrow[""{name=0, anchor=center, inner sep=0}, "{\ps T(\mu\circ \ps T\eta)}"', from=1-1, to=2-2]
	\arrow[equals, from=1-2, to=1-3]
	\arrow[""{name=1, anchor=center, inner sep=0}, "{\ps T1_{\ps T}}"{description}, from=1-2, to=2-2]
	\arrow[""{name=2, anchor=center, inner sep=0}, equals, from=1-3, to=2-2]
	\arrow["\mu", from=2-2, to=3-2]
	\arrow["{\ps T \mathfrak r}",pos=0.4, between={0.3}{0.7}, Rightarrow, from=0, to=1]
	\arrow["\psi", between={0.3}{0.7}, Rightarrow, from=1, to=2]
\end{tikzcd}\]

\item

\[\begin{tikzcd}
	& 1 & \\
	{\ps T} & {\ps T^2} & {\ps T} \\
	{\ps T} & {\ps T} & {\ps T}
	\arrow[""{name=0, anchor=center, inner sep=0}, "\eta"', from=1-2, to=2-1]
	\arrow[""{name=1, anchor=center, inner sep=0}, "\eta", from=1-2, to=2-3]
	\arrow["{\eta\ps T}", from=2-1, to=2-2]
	\arrow[""{name=2, anchor=center, inner sep=0}, equals, from=2-1, to=3-1]
	\arrow[""{name=3, anchor=center, inner sep=0}, "\mu"{description}, from=2-2, to=3-2]
	\arrow["{\ps T\eta}"', from=2-3, to=2-2]
	\arrow[""{name=4, anchor=center, inner sep=0}, equals, from=2-3, to=3-3]
	\arrow[equals, from=3-1, to=3-2]
	\arrow[equals, from=3-3, to=3-2]
	\arrow["{\eta_\eta }", between={0.3}{0.7}, Rightarrow, from=0, to=1]
	\arrow["{\mathfrak l}", between={0.3}{0.7}, Rightarrow, from=2, to=3]
	\arrow["{\mathfrak r}", between={0.3}{0.7}, Rightarrow, from=3, to=4]
\end{tikzcd}\quad = \quad \begin{tikzcd}
	1 \\
	{\ps T}
	\arrow[""{name=0, anchor=center, inner sep=0}, "\eta"', curve={height=12pt}, from=1-1, to=2-1]
	\arrow[""{name=1, anchor=center, inner sep=0}, "\eta", curve={height=-12pt}, from=1-1, to=2-1]
	\arrow["1_\eta", between={0.2}{0.8}, Rightarrow, from=0, to=1]
\end{tikzcd}\]
\end{enumerate}

We also define:
\begin{itemize}
\item a \emph{left skew monad} on $\K$ as a right skew monad on $\co \K$ --- concretely, this amounts to considering lax functors, lax transformations, and modifications in the opposite direction;
\item a \emph{right skew comonad} on $\K$ as a right skew monad on $\op \K$ --- concretely, this amounts to considering an oplax functor $\ps T$, lax natural \emph{counit} $\ps T \Rightarrow 1$ and \emph{comultiplication} $\ps T \Rightarrow\ps T^2$, and modifications in the direction above;
\item a \emph{left skew comonad} on $\K$ as a right skew monad on $\coop \K$ --- concretely, this amounts to considering lax functors, oplax natural counit and comultiplication, and modifications in the opposite direction.
\end{itemize}

In particular, a skew (co)monad $\braket{\ps T, \eta,\mu}$ is a
\emph{pseudo(co)monad} if $\ps T$ is a pseudofunctor, the transformations $\eta$
and $\mu$ are pseudonatural, and the modifications $\mathfrak m$, $\mathfrak l$
and $\mathfrak r$ are invertible (see
\cite{lackCoherentApproachPseudomonads2000,marmolejoDoctrinesWhoseStructure1997}).
\end{definition}

\begin{corollary}\label{cor:extension-pseudomonads}
\looseness=-1 Let $\monad{\ps T}$ be a pseudomonad on a regular bicategory $\K$.
\begin{enumerate}
\item \label{cor:extension-pseudomonads:right-monad} If $\ps T$ preserves
  co-fully-faithful arrows, then the pseudomonad $\monad{\ps T}$ extends to a
  right skew monad $\monad{\u{\ps T}}$ on $\DFib{\K}$.
\item \label{cor:extension-pseudomonads:pseudo-monad} In particular, the skew
  monad $\monad{\u{\ps T}}$ is a pseudomonad if and only if $\ps T$, $\eta^{\ps
    T}$ and $\mu^{\ps T}$ all satisfy the Beck-Chevalley condition.
\end{enumerate}
Dually, let $\monad{\ps T}$ be a pseudocomonad on $\K$.
\begin{enumerate}[resume]
\item \label{cor:extension-pseudomonads:right-comonad} If $\ps T$ preserves
  co-fully-faithful arrows, and $\eta^{\ps T}$ and $\mu^{\ps T}$ satisfy the
  Beck-Chevalley condition, then $\monad{\ps T}$ extends to a right skew comonad
  $\monad{\u{\ps T}}$ on $\DFib{\K}$.
\item \label{cor:extension-pseudomonads:pseudo-comonad} In particular, the skew
  comonad $\monad{\u{\ps T}}$ is a pseudomonad if and only if $\ps T$ also
  satisfies the Beck-Chevalley condition.
\end{enumerate}
\end{corollary}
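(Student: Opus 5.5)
We obtain the corollary by applying \Cref{thm:extension-pseudofunctors,thm:extension-psnat-trans,thm:extension-modifications} to the data of the pseudomonad, and then checking the five coherence axioms of \Cref{def:skew-monad}. For (1), since $\ps T$ preserves co-fully-faithful arrows, so does $\ps T^2$ and $\ps T^3$, and \Cref{thm:extension-pseudofunctors}\eqref{thm:extension-pseudofunctors:oplax} gives oplax extensions $\u{\ps T}$, $\u{\ps T^2}$, $\u{\ps T^3}$ and the identity functor extends to the identity of $\DFib{\K}$. \Cref{thm:extension-psnat-trans}(1) then yields oplax transformations $\u\eta\colon 1 \Rightarrow \u{\ps T}$ and $\u\mu\colon \u{\ps T^2}\Rightarrow \u{\ps T}$ with components $\eta_\diamond$, $\mu_\diamond$. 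Finally, \Cref{thm:extension-modifications} extends $\mathfrak m,\mathfrak l,\mathfrak r$ to modifications with components $(\mathfrak m_A)_\diamond$, etc.

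The first subtlety is that a skew monad needs $\u\mu\colon\u{\ps T}\,\u{\ps T}\Rightarrow\u{\ps T}$, not $\u{\ps T^2}\Rightarrow \u{\ps T}$, and the whiskerings $\u{\ps T}\u\mu$, $\u\mu\u{\ps T}$ appearing in $\mathfrak m$. I will compare $\u{\ps T^2}$ with $\u{\ps T}\u{\ps T}$: on a TSDF $(q,p)$ we have $\u{\ps T}\u{\ps T}(q,p) = \u{\ps T}\big((\ps Tp)_\diamond(\ps Tq)^\diamond\big)$, and there is a canonical comparison $\u{\ps T^2}(q,p)=(\ps T^2p)_\diamond(\ps T^2 q)^\diamond \Rightarrow \u{\ps T}((\ps Tp)_\diamond)\,\u{\ps T}((\ps Tq)^\diamond)\Rightarrow \u{\ps T}\u{\ps T}(q,p)$ built from oplaxity $\psi$ and $\delta^{\ps T}$ (invertible on representables and, dually, on corepresentables since co-fully-faithful $\E$-parts are preserved). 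I will argue this comparison is an invertible icon, using that the graph of $\ps T p$ already has the form $(\ps Tp)_\diamond$ so $\u{\ps T}$ applied to it is computed by the bicomma $\bicomma{1}{\ps Tp}$ whose image factors co-fully-faithfully; then compose $\u\mu$ with it. The equalities in \Cref{thm:extension-psnat-trans}(1), together with local full faithfulness of $(-)_\diamond$, reduce each coherence axiom for the extension to the corresponding axiom of $\monad{\ps T}$ on representables; since every TSDF is $p_\diamond q^\diamond$ and all 2-cells involved are determined by their components on objects (which are images under $(-)_\diamond$ of those of $\ps T$), the axioms follow. This last reduction is the main obstacle: the axioms involve $\psi$ and the naturality cells $\u\mu_\mu,\u\eta_\eta$ at non-representable arrows, so I will verify them by string-diagram computation using the explicit formulas from the proof of \Cref{thm:extension-pseudofunctors}, checking them on $p_\diamond$ and $q^\diamond$ separately and using that the construction of $\u\gamma$ is compatible with composition $p_\diamond q^\diamond$.

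For (2): if the three Beck-Chevalley conditions hold, \Cref{thm:extension-pseudofunctors}\eqref{thm:extension-pseudofunctors:pseudo} and \Cref{thm:extension-psnat-trans}(2) make $\u{\ps T}$ pseudo and $\u\eta,\u\mu$ pseudonatural, while the extended modifications are invertible as images of invertible cells. Conversely, if the extension is a pseudomonad, $\delta^{\ps T}$ gives by \Cref{thm:extension-pseudofunctors}\eqref{thm:extension-pseudofunctors:converse} the Beck-Chevalley condition for $\ps T$, and \Cref{thm:extension-psnat-trans}(2) gives it for $\eta$ and $\mu$.

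The comonad case (3)–(4) is dual but not symmetric: working in $\op\K$ reverses the transformations to lax ones, which \Cref{thm:extension-psnat-trans} does not directly produce. Instead I assume Beck-Chevalley for counit and comultiplication, so their extensions are pseudonatural, hence usable as lax transformations, and run the same argument; (4) then follows exactly as (2).
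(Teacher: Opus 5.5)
Your architecture is the paper's. You extend $\ps T$, $\eta^{\ps T}$, $\mu^{\ps T}$ and the modifications by \Cref{thm:extension-pseudofunctors,thm:extension-psnat-trans,thm:extension-modifications}. You bridge $\u{\ps T}\,\u{\ps T}$ and $\u{(\ps T^2)}$ by an icon, read (2) off the iff statements of those theorems, and in the comonad case assume Beck--Chevalley for counit and comultiplication a priori.

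The gap is in the bridge, which you orient as $\u{(\ps T^2)}e\Rightarrow\u{\ps T}\,\u{\ps T}e$. The available cells point the other way:
\begin{itemize}
\item $\psi_{(\ps Tp)_\diamond,(\ps Tq)^\diamond}$ goes from $\u{\ps T}((\ps Tp)_\diamond(\ps Tq)^\diamond)=\u{\ps T}\,\u{\ps T}e$ to $\u{\ps T}((\ps Tp)_\diamond)\,\u{\ps T}((\ps Tq)^\diamond)$;
\item $\delta^{\ps T}_{\ps Tp}\theta^{\ps T}_{\ps Tq}$ then goes on to $(\ps T^2p)_\diamond(\ps T^2q)^\diamond$.
\end{itemize}
So your comparison is built from inverses of these cells. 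Since it shares its domain with $\u{\mu^{\ps T}}$, you must invert it again to obtain anything of type $\u{\ps T}\,\u{\ps T}\Rightarrow\u{\ps T}$. Your construction of the multiplication therefore rests entirely on the claim that it is an invertible icon.

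Your argument for that claim (the bicomma $\bicomma{1}{\ps Tp}$ and co-fully-faithful factorizations) concerns at most $\delta^{\ps T}_{\ps Tp}$. Nothing in it addresses the oplax constraint $\psi_{(\ps Tp)_\diamond,(\ps Tq)^\diamond}$. Invertibility of the constraints of $\u{\ps T}$ is, in general, exactly what the Beck--Chevalley condition supplies in \Cref{thm:extension-pseudofunctors}\eqref{thm:extension-pseudofunctors:pseudo}, and that condition is not assumed in (1). Whether this particular component is invertible anyway would need its own proof. The paper does not claim it: it imposes pseudonaturality of $\delta^{\ps T}$ and $\omega^{\ps T}$ as an extra hypothesis in \Cref{def:allowing-lax-algebras}, and checks it by hand for $\ps L_T$.

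The repair is never to invert anything. Take the cells in the direction the oplax structure gives,
$\omega^{\ps T}_e:=\delta^{\ps T}_{\ps Tp}\theta^{\ps T}_{\ps Tq}\cdot\psi_{(\ps Tp)_\diamond,(\ps Tq)^\diamond}\colon\u{\ps T}\,\u{\ps T}e\Rightarrow\u{(\ps T^2)}e$.
This is an oplax icon $\u{\ps T}^2\Rightarrow\u{(\ps T^2)}$. Then:
\begin{itemize}
\item set $\mu^{\u{\ps T}}:=\u{\mu^{\ps T}}\circ\omega^{\ps T}$, which is oplax simply as a composite of oplax transformations;
\item set $\eta^{\u{\ps T}}:=\u{\eta^{\ps T}}\circ\omega^1$, where $\omega^1_e$ is the canonical isomorphism $e\cong p_\diamond q^\diamond$. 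The extension of $1_{\K}$ is $e\mapsto p_\diamond q^\diamond$, which is only isomorphic to the identity, not equal to it.
\end{itemize}

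Part (2) then goes through as you describe, with one added remark. Once $\ps T$ satisfies Beck--Chevalley, $\psi$, $\delta^{\ps T}$ and $\theta^{\ps T}$ are all invertible, so $\omega^{\ps T}$ is pseudonatural, and $\mu^{\u{\ps T}}$ is pseudonatural if and only if $\u{\mu^{\ps T}}$ is.

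In the comonad case the same $\omega^{\ps T}$ serves, because an oplax icon $\u{\ps T}^2\Rightarrow\u{(\ps T^2)}$ is the same data as a lax icon $\u{(\ps T^2)}\Rightarrow\u{\ps T}^2$. Composing it after the pseudonatural $\u{\mu^{\ps T}}\colon\u{\ps T}\Rightarrow\u{(\ps T^2)}$ gives the lax comultiplication. Likewise $(\omega^1)^{-1}\circ\u{\eta^{\ps T}}$ gives the lax counit.
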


\begin{proof}
  We first describe how the triple $\monad{\u{\ps T}}$ is constructed. First,
  since $\ps T$ preserves co-fully-faithful arrows, we can consider its oplax
  extension $\u{\ps T} \colon \DFib{\K} \to \DFib{\K}$ in the sense of
  \Cref{thm:extension-pseudofunctors}. The unit $\eta^{\ps T} \colon 1_{\K }
  \Rightarrow \ps T$ and the multiplication $\mu^{\ps T} \colon \ps T^2
  \Rightarrow \ps T$ then extend to oplax natural transformations $\u{\eta^{\ps
      T}} \colon \u {1}_{\K} \Rightarrow \u{\ps T}$ and $\u{\mu^{\ps T}}\colon
  \u{(\ps T^2)}\Rightarrow \u{\ps T}$ via \Cref{thm:extension-psnat-trans}. To
  obtain a unit and a multiplication for $\u{\ps T}$, note that there are two
  canonical oplax natural transformations $\omega^1 \colon 1_{\DFib{\K}}
  \Rightarrow \u{1}_{\K}$ and $\omega^{\ps T} \colon \u{\ps T}^2 \Rightarrow
  \u{(\ps T^2)}$ having identity components, which we can thus compose with
  $\u{\eta^{\ps T}}$ and $\u{\mu^{\ps T}}$. We now describe how they are defined.

  Fix a TSDF $e = A \xleftarrow{q} E \xrightarrow{p} B$. First, the structural
  $2$-cell $\omega^1_e$ is the canonical isomorphism $e \cong p_\diamond
  q^\diamond$ of \cite[Prop. 4.25]{carboniModulatedBicategories1994}, so in
  particular $\omega^1$ is pseudonatural. Note then that, in addition to the
  oplax natural transformation $\delta^{\ps T}: \u{\ps T}(-)_\diamond
  \Rightarrow (\ps T -)_\diamond$, there is also, dually, an oplax natural
  transformation $\theta^{\ps T}: \u{\ps T}(-)^\diamond \Rightarrow (\ps T
  -)^\diamond$. Recall now that, by construction, $\u{\ps T}e = (\ps T
  p)_\diamond (\ps T q)^\diamond$, and $\u{(\ps T^2)}e = (\ps T^2 p)_\diamond
  (\ps T^2 q)^\diamond$. Therefore, we set $\omega^{\ps T}_e$ as the composite
  $2$-cell:
  \[\begin{tikzcd}[column sep=huge]
      {\u{\ps T}((\ps T p)_\diamond (\ps T q)^\diamond)} &[-10pt] {\u{\ps T}((\ps T
        p)_\diamond) \u{\ps T}((\ps T q)^\diamond)} & {(\ps T^2 p)_\diamond (\ps
        T^2 q)^\diamond} \arrow["\psi_{(\ps T p)_\diamond, (\ps T q)^\diamond}",Rightarrow, from=1-1, to=1-2]\arrow["{\delta^{\ps T}_{\ps T p} \theta^{\ps T}_{\ps T
          q}}", Rightarrow, from=1-2, to=1-3]\end{tikzcd}\]where we denote by $\psi$ the oplax associator of $\u{\ps T}$.

  The structural modifications of $\monad{\u{\ps T}}$ then arise
  straightforwardly by those of $\monad{\ps T}$ via
  \Cref{thm:extension-modifications} --- so, in particular, they are invertible
  --- and the validity of the coherence conditions also follows directly by the
  analogous conditions for $\monad{\ps T}$.

  To show \eqref{cor:extension-pseudomonads:pseudo-monad}, since its structural
  modifications are invertible, note first that the triple $\monad{\u{\ps T}}$
  is a pseudomonad if and only if $\u{\ps T}$ is a pseudofunctor and the
  transformations $\eta^{\u{\ps T}}$ and $\mu^{\u {\ps T}}$ are pseudonatural.
  By \Cref{thm:extension-pseudofunctors}, the first condition is equivalent to
  the Beck-Chevalley condition for $\ps T$. In that case, the transformation
  $\omega^{\ps T}$ is pseudonatural since so are $\delta^{\ps T}$ and
  $\theta^{\ps T}$, so that by \Cref{thm:extension-psnat-trans} the second
  condition becomes equivalent to the Beck-Chevalley condition for $\eta^{\ps
    T}$ and $\mu^{\ps T}$.

  The dual result about comonads is analogous: the only difference is that, in
  order to construct the lax natural counit and comultiplication for $\u{\ps
    T}$, we need the Back-Chevalley condition to hold \emph{a priori} for
  $\eta^{\ps T}$ and $\mu^{\ps T}$.
\end{proof}

 \section{Pseudomonads on \texorpdfstring{$\CAT$}{CAT} and extensions to \texorpdfstring{$\PROF$}{PROF}}\label{sec:04}

\looseness=-1 In this section, we focus on the case of
$\CAT$, the $2$-category of locally small categories. As recalled in
\Cref{ex:bicats-of-tsdfs}, $\coop\Cat$ is a regular bicategory, and
the equipment $\Cat \to \Prof$ is retrieved as $\coop\Cat \to {\DFib{\coop\Cat}}$.
In the same way, $\coop\CAT$ is a regular bicategory, and
$\coop{\DFib{\coop\CAT}}$ is biequivalent to $\PROF$, the bicategory of locally
small categories and \emph{small} profunctors: a profunctor $F\colon C \pro D$
is small if, for every object $c$ of $C$, the presheaf $F(-,c)\colon \op D \to
\Set$ is a small colimit of representables\footnotemark{} (in which case we say
that this presheaf is \emph{small}).

Note that because we start with pseudomonads on $\CAT$, we get
pseudo\emph{co}monads on $\coop\CAT$. When the assumptions of
\Cref{cor:extension-pseudomonads}\eqref{cor:extension-pseudomonads:right-comonad} are satisfied, these extend to right skew
comonads on $\coop\PROF$, which in turn correspond to left skew monads on
$\PROF$.

\footnotetext{A presheaf is \emph{representable} if it is of the form $D(-,d)$ for some $d$ in $D$.}

\looseness=-1 We begin by introducing a wide class of pseudomonads on $\CAT$
which extend to skew monads on $\PROF$ via \Cref{cor:extension-pseudomonads}.
One of these pseudomonads is the $2$-dimensional version of the ultrafilter
monad we will focus on in \Cref{sec:05}. More generally, these pseudomonads are
given, modularly as higher-dimensional analogues of $\Set$-monads, and thus are a
natural choice to model effects on $\CAT$. In \Cref{ssec:psdist-laws} we then
tie our extension result with the theory of pseudodistributive laws.

\subsection{A class of examples}\label{ssec:loke}
\looseness=-1 Technically, the pseudomonads we now consider arise as \emph{left
  oplax Kan extensions}~\cite{tarantinoUltracategoriesKanExtensions2025} of
\emph{relative
  2-monads}~\cite{altenkirchMonadsNeedNot2010,fioreRelativePseudomonadsKleisli2018,arkorBicategoriesAlgebrasRelative2025}.
To avoid discussing relative $2$-monads and oplax Kan extensions entirely, we
focus on a simple case of interest: namely, those relative $2$-monads obtained
by composing $\Set$-based monads with the inclusion $\Set \hookrightarrow\CAT$
which views sets as discrete categories. For ease of notation, we leave this inclusion implicit in the
following. 

\begin{definition}\label{def:left-oplax-kan-extension}
  Let $T$ be a monad on $\Set$. For a category $C$, denote by ${\ps L}_T
  C$ the category having:
  \begin{itemize}
  \item as objects, triples $(X, h, \nu)$ of a set $X$, a functor $h \colon X
    \to C$, and an element $\nu \in T(X)$;
  \item as morphisms $(X,h,\nu) \to (X',h',\nu')$, pairs of a function $g \colon
    X' \to X$ such that $Tg(\nu') = \nu$ and of a (natural) transformation $\alpha \colon
    h \circ g \Rightarrow h'$, i.e.\ simply an $X'$-indexed family of $C$-arrows $(\alpha_x \colon hg(x)\to h'(x))_{x\in X'}$.
  \end{itemize}

  This definition naturally extends to a pseudofunctor\footnote{In fact, a \emph{$2$-functor}.} $\ps L_T\colon \CAT\to \CAT$ where, for a functor $f\colon C \to D$, we set $\ps L_Tf (X,h,\nu) = (X,  f\circ h,
  \nu)$. Moreover, $\ps L_T$ carries the structure of a pseudomonad~\cite[Thm.\
  4.13]{tarantinoUltracategoriesKanExtensions2025}, described in Appendix
  \ref{app:A4}.
\end{definition}

\begin{example}
  Consider the following $\Set$-monads:
  \begin{itemize}
  \item the \emph{powerset monad} $\mathcal P$, which takes a set $X$ to the set of
    its subsets, and a function $f\colon X \to Y$ to the function $\mathcal
    P(f)$ mapping $\nu \subseteq X$ to the direct image $f[\nu]\subseteq Y$;
  \item the \emph{finite distribution monad} $\mathcal D$, which takes a set $X$ to
    the set of finite distributions on $X$ --- i.e., finitely-supported
    functions $X \to [0,1]$ having total mass $1$ --- and a function $f\colon X
    \to Y$ to the function $\mathcal D (f)$ mapping $\nu \colon X \to [0,1]$ to the distribution on $Y$ defined by $y \mapsto \sum_{x
      \in f^{-1}(y)} \nu(x)$;
  \item\looseness=-1 the \emph{filter monad} $\mathcal F$, which takes a set $X$ to the set of
    (possibly improper) filters on $X$ --- i.e., nonempty families of subsets of
    $X$ closed under binary intersection and enlargement --- and a function
    $f\colon X \to Y$ to the function $\mathcal F(f)$ mapping a filter $\nu$ on $X$ to the filter on $Y$ defined by $\set{
      Y_0 \subseteq Y | f^{-1}(Y_0) \in \nu }$;
  \item the \emph{ultrafilter monad} $\beta$, defined by restricting the filter
    monad $\mathcal F$ to ultrafilters --- i.e., maximal filters.
  \end{itemize}

  \looseness=-1
  Unfolding the definitions, objects of ${\ps L}_{\mathcal P} C$, ${\ps
    L}_{\mathcal D}$, ${\ps L}_{\mathcal F} C$ and ${\ps L}_{\beta} C$ can
 be thought of, respectively, formal subsets, probability distributions,
  filters and ultrafilters of objects of $C$.
\end{example}

\begin{wrapfigure}{r}{0.2\linewidth}
      \vspace{-0.2em}\hspace{1pt}
  \begin{tikzcd}[column sep = 12pt, row sep = 14pt,cramped]
    {A} && {A} \\
    & {\ps T A}
    \arrow[""{name=0, anchor=center, inner sep=0}, equals, from=1-1, to=1-3]
    \arrow["{\eta_{A}^{\ps T}}"'{inner sep=.8ex}, from=1-1, to=2-2]
    \arrow["a"'{inner sep=.8ex}, from=2-2, to=1-3]
    \arrow["\,\Gamma", between={0.2}{0.8}, Rightarrow, from=0, to=2-2]
  \end{tikzcd}

  \vspace{8pt}

  \hspace{-8pt}
  \begin{tikzcd}[row sep = 22pt,column sep = 30pt,cramped]
    {\ps T^2 A} & {\ps T A} \\
    {\ps T A} & {A}
    \arrow["{\ps T a}"{inner sep=.8ex}, from=1-1, to=1-2]
    \arrow["{\mu_{A}^{\ps T}}"'{inner sep=.8ex},  from=1-1, to=2-1]
    \arrow["\Delta"', between={0.2}{0.8}, Rightarrow, from=1-2, to=2-1]
    \arrow["a"{inner sep=.8ex},  from=1-2, to=2-2]
    \arrow["a"'{inner sep=.8ex},  from=2-1, to=2-2]
  \end{tikzcd}
  \vspace{-10pt}
\end{wrapfigure}
Intuitively, the pseudomonad $\ps L_T$ is thus a $2$-dimensional version of the monad $T$. This can also
be stated formally, as we explain now.

Recall that an (Eilenberg-Moore) algebra for a $\Set$-monad $T$ is a function
$TA \to A$ compatible with the monad structure, so that it can be thought of as
evaluating effectful contexts to a value. This notion can be extended to the
$2$-dimensional setting by relaxing the monad compatibility equations to
appropriate $2$-cells. For instance, a \emph{lax algebra} for a pseudomonad
$\monad {\ps T}$ on a bicategory $\K$ is defined by an arrow $a \colon \ps T A
\to A$ in $\K$ together with (coherent) $2$-cells as on the right. $2$-cells in the opposite direction define \emph{colax
  algebras}, while  invertible ones define \emph{pseudoalgebras}.
Similarly we can define \emph{lax}, \emph{colax}, and \emph{pseudomorphisms};
evident notions of \emph{algebra $2$-cells} then yield bicategories of all
combinations of algebras and their morphisms. For a detailed account of these definitions, we refer the reader to, e.g., \cite[\S 2.4]{stepan-thesis} (but see also \Cref{rem:lax-algebras-are-okay,rem:colax-morphisms-are-okay}).

\looseness=-1
That ${\ps L}_T$ is a $2$-dimensional version of $T$ can then be formalized by the following lemma,
expressing how its pseudoalgebras expand the algebras of $T$ with an additional dimension.

\begin{lemma}[\protect{\cite[Ex.\ 1.18]{tarantinoUltracategoriesKanExtensions2025}}]\label{lem:discrete-algebras-of-loke}
  The category $\Alg{T}$ of $T$-algebras embeds into the $2$-category $\PsAlg{\ps
    L_T}$ of pseudo-$\ps L_T$-algebras, pseudomorphisms, and algebra $2$-cells as
  the locally discrete full sub-$2$-category spanned by those algebras whose carrier category is small and discrete.
\end{lemma}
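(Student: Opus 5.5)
The plan is to build a $2$-functor $\Alg{T} \to \PsAlg{\ps L_T}$, check that it is fully faithful with locally discrete image, and then identify the essential image with the algebras whose carrier is small and discrete. Note first what $\ps L_T$ does to a discrete set $A$. An object $(X,h,\nu)$ of $\ps L_T A$ is a function $h\colon X \to A$ together with $\nu \in TX$. A morphism $(g,\alpha)\colon (X,h,\nu) \to (X',h',\nu')$ consists of $g\colon X' \to X$ with $Tg(\nu') = \nu$, and $\alpha$ forces $hg = h'$. So every object $(X,h,\nu)$ receives a morphism from $(A,1_A,Th(\nu))$, namely the one given by $g = h$. Hence any functor $\ps L_T A \to A$ into a discrete category is constant on connected components, and is determined by its values on the objects $(A,1_A,\xi)$ with $\xi \in TA$.

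From this we get a bijection between functors $\ps L_T A \to A$ and functions $TA \to A$. The assignment is $a \mapsto (\xi \mapsto a(A,1_A,\xi))$. Conversely, a function $\alpha\colon TA \to A$ gives the functor $(X,h,\nu) \mapsto \alpha(Th(\nu))$. This is well defined, because a morphism $(g,\alpha)$ gives $T h'(\nu') = Th(Tg(\nu')) = Th(\nu)$. Here I would use the explicit unit and multiplication of $\ps L_T$ from Appendix~\ref{app:A4}. The unit is $\eta_A(x) = (1, x, \eta^T(*))$. The multiplication flattens $(X, (Y_x,h_x,\nu_x)_x, \nu)$ to $(\sum_x Y_x, [h_x], \ldots)$, with $T$-element obtained from $\mu^T$.

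Under the correspondence above, the coherence $2$-cells $\Gamma$ and $\Delta$ live in a discrete category. So they exist, and are then unique identities, exactly when the unit and associativity equations hold strictly. Tracing through, these are precisely $\alpha\eta^T = 1$ and $\alpha\mu^T = \alpha T\alpha$; the second uses naturality of $\mu^T$ along $Th$. The pseudoalgebra coherence axioms then hold automatically. Similarly, a pseudomorphism between such algebras is a functor $f\colon A \to B$, i.e.\ a function, whose structural isomorphism must be an identity. This amounts to $f\alpha = \beta Tf$. Algebra $2$-cells are identities, since $B$ is discrete. This yields the locally discrete, fully faithful embedding.

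It remains to identify the essential image. A pseudoalgebra whose carrier is merely equivalent to a small discrete category is equivalent in $\PsAlg{\ps L_T}$ to one on a set, by transporting the structure along the equivalence. So the full sub-$2$-category spanned by small discrete carriers is biequivalent to, and in fact isomorphic to, $\Alg{T}$.

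The main obstacle I expect is the bookkeeping with the multiplication of $\ps L_T$. Showing that the $\Delta$-equation reduces exactly to $\alpha\mu^T = \alpha T\alpha$ requires unwinding how $\mu^{\ps L_T}$ acts on objects and relating the resulting $T$-element to $\mu^T \circ T(\ldots)$. I would handle this by evaluating both sides only on the canonical objects $(TA, \eta_A\text{-type family}, \Xi)$ for $\Xi \in T^2A$, which suffices by the connectedness observation.
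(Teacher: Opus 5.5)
Your argument is correct. The paper gives no proof of this lemma; it only cites \cite[Ex.~1.18]{tarantinoUltracategoriesKanExtensions2025}. Your proposal therefore works as a self-contained direct verification. The connectedness observation (each $(X,h,\nu)$ receives the unique map $(h,\id)$ from $(A,1_A,Th(\nu))$) gives the bijection between functors $\ps L_T A \to A$ and functions $TA \to A$. Discreteness of the codomain then turns existence of $\Gamma$, $\Delta$ and of pseudomorphism structures into the strict equations $\alpha\eta^T_A = 1_A$, $\alpha\mu^T_A = \alpha\, T\alpha$ and $f\alpha = \beta\, Tf$. All coherence axioms and all algebra $2$-cells are then trivial.

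Two points need more precision.

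\textbf{Test objects for the multiplication law.} These should be $(TA, H_0, \Xi)$ with $H_0(\xi) = (A, 1_A, \xi)$, not a family built from $\eta_A$. A family of the form $\eta_A k$ only probes $\Xi$ in the image of $T\eta^T_A$. There, given the unit law, both sides reduce to $\alpha$, so the equation becomes vacuous. With $H_0$ the computation goes through:
\begin{itemize}
\item one has $a \circ \ps L_T a\,(TA,H_0,\Xi) = \alpha\,T\alpha(\Xi)$;
\item the copairing $s$ and the map $q$ of Appendix~\ref{app:A4} satisfy $Ts \circ q = 1_{TA}$, so naturality of $\mu^T$ gives $a \circ \mu_A(TA,H_0,\Xi) = \alpha\mu^T_A(\Xi)$;
\item these objects suffice, because any $(X,H,\nu)$ with $H(x) = (X_x,h_x,\nu_x)$ receives a map from $(TA, H_0, T\tilde H(\nu))$, where $\tilde H(x) = Th_x(\nu_x)$.
\end{itemize}

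\textbf{Transport of structure.} Your final paragraph on transport is not needed. The statement concerns carriers that are literally small and discrete, and your bijection already shows the image of $\Alg{T}$ is exactly that full sub-$2$-category. The isomorphism follows from your earlier steps, so the ``so'' linking it to transport is a non sequitur.

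Separately, you use $\alpha$ both for the $2$-cell part of morphisms in $\ps L_T A$ and for the algebra map; rename one of them in a write-up.
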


\begin{example}\label{ex:discrete-algebras-of-loke}In the four cases above, \Cref{lem:discrete-algebras-of-loke} specializes to embeddings:
  \begin{itemize}
  \item of $\mathcal
    P$-algebras, i.e.\ complete
    join-semi-lattices, into pseudo-${\ps L}_{\mathcal P}$-algebras;
  \item of $\mathcal
    D$-algebras, i.e.\ {convex spaces}
    (see, e.g., \cite{fritzConvexSpacesDefinition2015}), into pseudo-${\ps L}_{\mathcal D}$-algebras;
  \item of $\mathcal
    F$-algebras, i.e.\ continuous lattices~\cite{scottContinuousLattices1972}
    by \cite{dayFilterMonadsContinuous1975}, into pseudo-${\ps L}_{\mathcal F}$-algebras;
  \item of $\beta$-algebras, i.e.\ compact Hausdorff
    spaces by~\cite{manesTripleTheoreticConstruction1969}, into pseudo-${\ps L}_{\beta}$-algebras.
  \end{itemize}
\end{example}

\Cref{cor:extension-pseudomonads} applies to any pseudomonad of the form
${\ps{L}_T}$. 

\begin{theorem}\label{thm:loke-extend}
  For any monad $T$ on $\Set$, ${\ps L_T}$ extends to a left skew monad
  ${\u{\ps L}_T}$ on $\PROF$.
\end{theorem}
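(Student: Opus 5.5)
The plan is to obtain the theorem from \Cref{cor:extension-pseudomonads}\eqref{cor:extension-pseudomonads:right-comonad}, applied in the regular bicategory $\coop\CAT$. Since $\ps L_T$ is a ($2$-)pseudomonad on $\CAT$, it is a pseudocomonad on $\coop\CAT$. A right skew comonad on $\coop{\DFib{\coop\CAT}}\simeq\coop\PROF$, hence on $\DFib{\coop\CAT}$ up to the evident dualities, is the same as a left skew monad on $\PROF$. So, as recalled at the start of this section, it suffices to check three things:
\begin{enumerate}
\item $\ps L_T$ preserves co-fully-faithful arrows of $\coop\CAT$, i.e.\ fully faithful functors;
\item the naturality squares of $\eta^{\ps L_T}$ are exact;
\item the naturality squares of $\mu^{\ps L_T}$ are exact.
\end{enumerate}
By \Cref{ex:exactness-in-examples}, exactness in $\coop\CAT$ agrees with exactness in $\CAT$, so I can test it with the coend formula there.

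\emph{Step 1 (fully faithful functors).} From \Cref{def:left-oplax-kan-extension},
\[ \ps L_TC\big((X,h,\nu),(X',h',\nu')\big)\;\cong\;\sum_{g\colon X'\to X,\ Tg(\nu')=\nu}\ \prod_{x\in X'} C\big(hg(x),h'(x)\big). \]
The functor $\ps L_T f$ leaves the indexing set of $g$'s unchanged and acts factorwise by $f$. Hence if $f$ is fully faithful, so is $\ps L_T f$.

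\emph{Step 2 (unit squares).} Here $\eta_C(c)=(1,c,\eta_1(*))$, and $\ps L_T$ is a $2$-functor, so the square $\ps L_Tf\circ\eta_C=\eta_D\circ f$ commutes strictly. Exactness asks that, for $\xi=(X,h,\nu)\in\ps L_TC$ and $d\in D$, the canonical map
\[ \int^{c\in C} D(fc,d)\times \ps L_TC(\xi,\eta_Cc)\longrightarrow \ps L_TD(\ps L_Tf\,\xi,\eta_Dd) \]
be a bijection. The formula above gives $\ps L_TC(\xi,\eta_Cc)\cong\sum_{x\in X,\ \eta_X(x)=\nu}C(hx,c)$. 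Since coends commute with coproducts, the co-Yoneda lemma reduces the left-hand side to $\sum_{x:\eta_X(x)=\nu}D(fhx,d)$. This is exactly the right-hand side, and one checks that the canonical map is this identification.

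\emph{Step 3 (multiplication squares), which I expect to be the main obstacle.} This needs the explicit multiplication from Appendix~\ref{app:A4}. On an object $(Y,k,\omega)$ of $\ps L_T^2C$, where $k(y)=(X_y,h_y,\nu_y)$, the multiplication is $\mu_C(Y,k,\omega)=(\sum_y X_y,[h_y]_y,\nu)$, with $\nu$ obtained from $\omega$ via the $\nu_y$ and the monad multiplication of $T$.

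The plan is again to write the relevant hom-sets as coproducts of products of hom-sets of $C$, and then to compute the coend
\[ \int^{\Xi\in\ps L_T^2C}\ps L_TD(\ps L_Tf\,\mu\,\Xi\ \text{or}\ \ldots)\times\ps L_T^2C(\ldots) \]
by exhibiting, in each connected component of the category of elements, a canonical representative. The obvious candidate is the object of $\ps L_T^2C$ whose inner families are built directly from the indexing functions appearing in a given morphism, so that every other element is connected to it by a zig-zag. After that, the co-Yoneda lemma applied factorwise, which is possible because each factor depends on only one index, should identify the coend with $\ps L_TD(\mu_D\,\ps L_T^2f\,\Xi',\zeta)$.

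The delicate points are the following:
\begin{itemize}
\item tracking the condition $Tg(\nu')=\nu$ through $\mu^T$;
\item checking that the canonical map is the resulting identification, and not merely showing that the two sides are abstractly isomorphic;
\item possibly passing from strict commutation to the actual pseudonaturality $2$-cells.
\end{itemize}
If the direct zig-zag argument becomes unwieldy, my fallback is to factor the multiplication square into a square involving coproduct (sum) formation and a square involving the $T$-multiplication, and to prove each exact separately, using that exact squares are stable under pasting.

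Once Steps 1--3 hold, \Cref{cor:extension-pseudomonads}\eqref{cor:extension-pseudomonads:right-comonad} yields the skew extension $\u{\ps L}_T$.
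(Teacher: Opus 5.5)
Your route is the paper's. You apply \Cref{cor:extension-pseudomonads}\eqref{cor:extension-pseudomonads:right-comonad} in $\coop\CAT$, show that $\ps L_T$ preserves fully faithful functors (your Step 1 is the paper's argument), and check Beck--Chevalley for $\eta$ and $\mu$ in the orientation you chose, which is the one the paper verifies. Step 2 is correct: your co-Yoneda reduction is a tidier form of the paper's explicit cocone argument. (Minor slip: the equivalence is $\DFib{\coop\CAT}\simeq\coop\PROF$, but your conclusion that one gets a left skew monad on $\PROF$ is right.)

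\textbf{The gap is Step 3.} It stays a plan, and it is partly mis-stated. Write $b=(Z,l,\chi)\in\ps L_TC$ and $\zeta=(Y,K,\xi)\in\ps L_T^2D$, with $K(y)=(Y_y,k_y,\xi_y)$ and $\mu_D\zeta=(\sum_yY_y,[k_y],\bar\xi)$. The map to be shown bijective is
\[ \int^{\Xi\in\ps L_T^2C}\ps L_T^2D(\ps L_T^2f\,\Xi,\zeta)\times\ps L_TC(b,\mu_C\Xi)\longrightarrow\ps L_TD(\ps L_Tf\,b,\mu_D\zeta), \]
sending $(\Xi,(p,\alpha),(q,\gamma))$ to $\mu_D(p,\alpha)\circ\ps L_Tf(q,\gamma)$. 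Its target is not $\ps L_TD(\mu_D\ps L_T^2f\,\Xi',\zeta)$. Factorwise co-Yoneda is neither needed nor readily available, since morphisms of $\ps L_T^2C$ change indexing sets. Two facts suffice.

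\emph{Surjectivity.} Given $(r,\rho)$, take $\Xi^c=(Y,\,y\mapsto(Y_y,lri_y,\xi_y),\,\xi)$ with the arrows $(\id_Y,(\id,\rho i_y)_y)$ and $(r,\id)$. This is well defined: the only condition is $Tr(\bar\xi)=\chi$, which is the hypothesis on $r$. It maps to $(r,\rho)$.

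\emph{Injectivity.} Every triple $(\Xi,(p,\alpha),(q,\gamma))$ with image $(r,\rho)$ must be connected to this representative. Here $\Xi=(X,H,\nu)$, $H(x)=(X_x,h_x,\nu_x)$ and $\alpha_y=(a_y,\tilde\alpha_y)$. A single morphism cannot do this, because $\gamma\colon lq\Rightarrow[h_x]$ points the wrong way for a map $\Xi\to\Xi^c$. Instead use the cospan
\[ \Xi\xrightarrow{\ (p,(a_y,\id)_y)\ }\Xi'\xleftarrow{\ (\id_Y,(\id,\gamma m i_y)_y)\ }\Xi^c,\qquad \Xi'=(Y,\,y\mapsto(Y_y,h_{p(y)}a_y,\xi_y),\,\xi), \]
where $m(y,w)=(p(y),a_y(w))$, so that $r=qm$. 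Both legs are valid: they use $Tp(\xi)=\nu$ and $Ta_y(\xi_y)=\nu_{p(y)}$. Along the left leg, the triple is identified with $(\Xi',(\id_Y,(\id,\tilde\alpha_y)_y),(r,\gamma m))$. Along the right leg, the representative is identified with the same element, because $\rho_{(y,w)}=\tilde\alpha_y(w)\circ f\gamma_{m(y,w)}$.

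Your instinct to build the representative from $r$ is where you actually part ways with the paper, and yours is the right choice. The paper uses the constant family $L(y)=(Z,l,\chi)$, with components $(ri_y,\rho i_y)\colon(Z,fl,\chi)\to(Y_y,k_y,\xi_y)$. These are morphisms only if $T(ri_y)(\xi_y)=\chi$ for each $y$, and that does not follow from $Tr(\bar\xi)=\chi$. For a counterexample, take $T=\beta$, $Y=Z$ infinite, $Y_y=1$, $\xi=\chi$ nonprincipal, and $r$ the evident bijection $\sum_yY_y\cong Z$. Then each $\beta(ri_y)(\xi_y)$ is principal, so no such morphisms exist.
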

\begin{proof}
  The claim follows by
  \Cref{cor:extension-pseudomonads}.\eqref{cor:extension-pseudomonads:right-comonad}: we show here that the $2$-functor $\ps L_T$ preserves fully-faithful functors, while a proof that the Beck-Chevalley condition holds for its unit and multiplication is deferred to Appendix \ref{app:A4}
  (\Cref{prop:lokes-allow-for-lax-algebras-2,prop:lokes-allow-for-lax-algebras-3}).
  Suppose $f \colon C \to D$ is fully-faithful and consider two objects
  $(X,h,\nu)$ and $(X',h',\nu')$ in $\ps L_TC$. Then, an $\ps L_T D$-morphism
  $\ps L_Tf (X,h,\nu) \to \ps L_T f (X',h',\nu')$ consists of a function $g
  \colon X' \to X$ such that $T g (\nu') = \nu$ and a natural transformation
  $\alpha \colon f h g \Rightarrow f h'$: by fully-faithfulness of $f$, such a
  transformation $\alpha$ is given by $f*\bar\alpha$ for a unique transformation
  $\bar\alpha \colon h g \Rightarrow h'$. Thus, the map $\ps L_T C ((X,h,\nu),
  (X',h',\nu') ) \to \ps L_T D ((X,fh,\nu), (X',fh',\nu')) $ given by
  functoriality of $\ps L_T f$ is a bijection, i.e.\ $\ps L_T f$ is
  fully-faithful.
\end{proof}

\looseness=-1 Let us now describe the action of $\u{\ps L}_T$ on profunctors.
Since we are working with the regular bicategory $\coop\CAT$, we represent a
profunctor $F\colon C \pro D$ as a TSDF therein, i.e., as a cospan $C
\xrightarrow{\cod} R \xleftarrow{\dom} D$ in $\CAT$ (see, e.g.,
\cite{streetFibrationsBicategories1980}). In particular, we can describe the
category $R$ as having
\begin{itemize}
  \item as objects, objects of $D$ and objects of $C$;
  \item as morphisms $x \to x'$,
  \begin{itemize}
    \item $D$-arrows $x \to x'$ in case $x,x' \in D$,
    \item $C$-arrows $x \to x'$ in case $x,x'\in C$,
    \item elements of $F(x,x')$ in case $x\in D$ and $x' \in C$.
  \end{itemize}
  \end{itemize}
The two functors $\dom$ and $\cod$ are then the evident inclusions.

\begin{remark}\label{rem:explicit-description-on-profunctors}
  Identifiying a profunctor $F\colon C \pro D$ with the cospan $C
  \xrightarrow{\cod} R \xleftarrow{\dom} D$ as above, the profunctor $\u{\ps L}_T F \colon \op{\ps L_T D} \times \ps L_T C \to \Set$ is defined as the composite:
  \[\begin{tikzcd}[column sep = large]
      {\ps L_T C} & {\ps L_T R} & {\ps L_T D}
      \arrow["{(\ps L_T \cod)_\diamond}"{inner sep=.8ex}, "\shortmid"{marking}, from=1-1, to=1-2]
      \arrow["{(\ps L_T \dom)^\diamond}"{inner sep=.8ex}, "\shortmid"{marking}, from=1-2, to=1-3]
    \end{tikzcd}\]
  This means that, fixing two triples $(X, h, \nu)$ in $\ps L_T D$ and $(Y, k,
  \xi)$ in $\ps L_T C$, an element of the set $\u{\ps L}_TF ( (X,h,\nu), (Y,k,\xi) )$ is
  given by an equivalence class of:
  \begin{itemize}
  \item an object $(Z, l, \chi)$,
  \item a morphism $(p, \gamma) \colon (X, \dom h, \nu) \to (Z,  l, \chi)$,
  \item and a morphism $(q, \delta) \colon (Z, l, \chi) \to (Y, \cod k , \xi)$
\end{itemize}
  in $\ps L_T R$, where two such triples are equivalent if there exists a zig-zag of morphisms
  in $\ps L_T R$ making the evident triangles commute. Up to equivalence, we can (and will) always represent such a class either with the triple $\braket{ (Y, d_1k, \xi) , (q,\delta)\circ (p,\gamma) , \id }$ or with the triple $\braket{ (X, d_0h , \nu), \id, (q,\delta)\circ (p,\gamma) }$.
\end{remark}

\subsection{Quotient pseudomonads}\label{ssec:quotients}

The $2$-dimensional version of the ultrafilter monad we will work with in
\Cref{sec:05} is not quite ${\ps L}_\beta$, but rather a quotient thereof. In
fact, at least in our four examples, there are obvious quotients of
${\ps L_T}$ that can be considered. The key idea is to identify morphisms
which agree `almost everywhere' with respect to some measure defined by the
monad $T$, according to the intuition that sets `of measure zero' should not
matter to model the desired behavior.
\Cref{thm:loke-extend}, \Cref{lem:discrete-algebras-of-loke} and
\Cref{ex:discrete-algebras-of-loke} can then be easily seen to carry over to these quotients.

\begin{definition}
  Let $T \in \set{ \mathcal F , \beta, \mathcal P, \mathcal D}$. For a category
  $C$, denote by $\bb T C$ the quotient of $\ps L_T C$ obtained by identifying
  two morphisms $(f,\alpha), (f',\alpha') \colon (X,h,\nu) \to (X',h',\nu')$:
\begin{itemize}
\item for $T =\mathcal P$, so that $\nu'$ is a subset of $X'$, when $f(x) =
  f'(x)$ and $\alpha_x = \alpha'_x$ for all $x\in \nu'$;
\item for $T = \mathcal D$, so that $\nu'$ is a distribution on $X'$, when $f(x)
  = f'(x)$ and $\alpha_x = \alpha'_x$ for all $x\in \supp(\nu')$;
\item for $T = \mathcal F$ (resp.\ $T = \beta$), so that $\nu'$ is a filter
  (resp.\ ultrafilter) on $X'$, when there exists a subset $V \in \nu'$ such
  that $f(x) = f'(x)$ and $\alpha_x = \alpha'_x$ for all $x\in V$.
\end{itemize}
This quotient extends to a pseudofunctor $\bb T \colon \CAT \to \CAT$ inheriting the structure of a pseudomonad from that of ${\ps L_T}$.
\end{definition}

\begin{corollary}\label{cor:quotients-extend}
  For $T \in \set{ \mathcal F , \beta,\mathcal P,\mathcal D}$, the pseudomonad
  ${\bb T}$ extends to a left skew monad on $\PROF$.
\end{corollary}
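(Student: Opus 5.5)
We proceed exactly as in the proof of \Cref{thm:loke-extend}, via \Cref{cor:extension-pseudomonads}\eqref{cor:extension-pseudomonads:right-comonad} applied to the pseudocomonad on $\coop\CAT$ induced by $\bb T$. Two things must be checked: that $\bb T$ preserves fully-faithful functors (the co-fully-faithful arrows of $\coop\CAT$), and that the unit and multiplication of $\bb T$ satisfy the Beck-Chevalley condition, i.e.\ their naturality squares are exact in $\Cat$ in the sense of \Cref{ex:exactness-in-examples}.

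For preservation of fully-faithful functors, fix $f\colon C\to D$ fully-faithful and objects $(X,h,\nu)$, $(X',h',\nu')$ of $\bb T C$. A morphism between their images is an equivalence class of pairs $(g,\alpha)$ with $\alpha\colon fhg\Rightarrow fh'$. As before, each such $\alpha$ equals $f*\bar\alpha$ for a unique $\bar\alpha$. It remains to check that the equivalence relation defining $\bb T$ is reflected: $(g,f*\bar\alpha)\sim(g',f*\bar\alpha')$ if and only if $(g,\bar\alpha)\sim(g',\bar\alpha')$. This holds because the relation only asks for agreement of $g,g'$ and of the components on a set that is ``large'' for $\nu'$ (namely $\nu'$ itself, $\supp(\nu')$, or some $V\in\nu'$). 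The set does not depend on the target category, and $f$ is faithful on components. So $\bb T f$ is fully faithful.

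For the Beck-Chevalley condition, I would argue that the naturality squares of $\eta^{\bb T}$ and $\mu^{\bb T}$ are images of those of $\ps L_T$ under the quotient $q\colon \ps L_T\Rightarrow\bb T$. This quotient is componentwise bijective on objects and full, hence absolutely dense/co-fully-faithful in $\Cat$. Exactness of a square amounts to bijectivity of the canonical coend comparison map $\int^{a} C(v a,c)\times B(b,ua)\to D(xb,yc)$, and exactness for $\ps L_T$ is known (\Cref{prop:lokes-allow-for-lax-algebras-2,prop:lokes-allow-for-lax-algebras-3}). The plan is to show directly that the comparison map for $\bb T$ is:
\begin{itemize}
\item surjective, since every $\bb T$-morphism lifts to an $\ps L_T$-morphism, and the $\ps L_T$-map is surjective;
\item injective, since whenever two $\ps L_T$-zigzags become equal after quotienting on the target, one can modify them by morphisms that are identified in $\bb T$ so as to obtain equal classes.
\end{itemize}
Concretely, the triples representing elements can be normalized to the form $\braket{(Y,\dots),\varphi,\id}$, as in \Cref{rem:explicit-description-on-profunctors}. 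Two such triples related in $\bb T$ agree on a large subset $V$. Restricting along the inclusion $V\hookrightarrow Y$, which is legitimate because $V$ is large, gives an object mediating a zigzag in the coend.

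The main obstacle is this injectivity step for $\mu^{\bb T}$. There the large sets live at two levels: an outer element of $T$ indexing inner elements of $T$. One must check that ``almost everywhere'' agreement for the multiplied measure can be decomposed into outer-almost-everywhere agreement of inner-almost-everywhere agreements. For $\mathcal P$ and $\mathcal D$ this is immediate, since supports multiply. For $\mathcal F$ and $\beta$ I would first try the standard description of sets in $\mu(\Xi)$ as those $W$ with $\set{\nu \mid W\in\nu}\in\Xi$. Such a $W$ is then exhibited as arising from a large family of large sets, which allows restricting the zigzag accordingly.
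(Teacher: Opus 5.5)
Your proposal is correct. Its skeleton is the paper's: you apply \Cref{cor:extension-pseudomonads}\eqref{cor:extension-pseudomonads:right-comonad}, and your fully-faithfulness check is the paper's ``clearly'' made explicit. Where you differ is the Beck--Chevalley step. The paper's Scholium only asserts that the explicit colimit computations of \Cref{prop:lokes-allow-for-lax-algebras-2,prop:lokes-allow-for-lax-algebras-3} go through verbatim with $\bb T$ in place of $\ps L_T$. You instead transfer exactness from $\ps L_T$ along the full, bijective-on-objects quotient; the co-fully-faithfulness you note is what lets you index the $\bb T$-coend by $\ps L_T^2C$ rather than $\bb T^2C$. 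Surjectivity then comes for free, and injectivity reduces to one compatibility of null sets with the multiplication: a set that is large for $\mu_D(Y,K,\xi)$ meets $\xi$-almost every summand $Y_y$ in a $\xi_y$-large set. For filters this reads: if $V$ is large then $\set{y | i_y^{-1}(V)\in\xi_y}\in\xi$, with $i_y$ the coproduct inclusions. This isolates exactly what ``verbatim'' hides. In the paper's argument the same point appears as well-definedness of $\Omega$ on $\bb T D$-classes, i.e.\ the class of $(\id_Y,\hat\rho)$ in $\bb T^2D$ must depend only on the almost-everywhere class of $(r,\rho)$. The paper's route is simply shorter.

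Two small adjustments to your concrete sketch.

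First, the normal form to use is not that of \Cref{rem:explicit-description-on-profunctors}, which describes elements of $\u{\ps L}_TF$. It is the canonical representative $\braket{(Y,L,\xi),(\id_Y,\hat\rho),(\sum_y\id_Z,\id)}$ from the proof of \Cref{prop:lokes-allow-for-lax-algebras-3}. With it, the representatives attached to almost-everywhere equal $(r,\rho)$ and $(r',\rho')$ share their indexing object and their last component. Your decomposition then makes $(\id_Y,\hat\rho)$ and $(\id_Y,\hat\rho')$ literally equal in $\bb T^2D$, so no restriction along a large subset is needed. As written, that step also confuses the large set $V\subseteq\sum_yY_y$ with a subset of $Y$.

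Second, for the unit there is nothing to check. The only large set for $\eta^T_1(*)$ is $\set{*}$, so the quotient leaves the hom-sets into $\eta(c)$ and $\eta(d)$ in that comparison map unchanged.
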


\begin{remark}
  \looseness=-1
  The projection functors $\ps L_T C \twoheadrightarrow \bb T C$ assemble into morphisms of pseudomonads on $\CAT$ (see, e.g., \cite[Def.\ 2.1]{marmolejoCoherencePseudodistributiveLaws2008} and
  \cite[Def.\ 2.1]{gambinoFormalTheoryPseudomonads2021}) and, although the notion doesn't yet appear in the literature, morphisms of skew monads on $\PROF$. We leave a more
  systematic study of such quotients and
  the relations between their algebras to future work.
\end{remark}

\begin{remark}\label{rem:algebras-filter}
    As they can be identified with the $\mathcal F$-algebras, continuous lattices coincide with the small and discrete pseudo-$\bb F$-algebras. Morphisms of pseudo-$\bb F$-algebras between them coincide with
  Scott-continuous maps \emph{which also preserve
    all meets}, since the latter correspond to morphisms of $\mathcal F$-algebras.
    Therefore, strictly speaking, $\PsAlg{\bb F}$  does not categorify the category of continuous lattices proved
  to be cartesian closed in \cite{scottContinuousLattices1972}. We conjecture
  that a cartesian closed $2$-category of pseudo-$\bb F$-algebras (cf.\
  \cite{savilleCartesianClosedBicategories2019}) can be defined by weakening
  their morphisms appropriately, thus giving a bicategorical domain-theoretic
  model of $\lambda$-calculus.

  Similarly, we can consider the \emph{proper} filter monad $\mathcal F_+$ on $\Set$, i.e.\ where $\mathcal F_+(X)$ is the set of
proper filters on a set $X$: the category $\Alg{ \mathcal F_+}$, isomorphic to that of \emph{continuous domains} by \cite{wylerAlgebraicTheoriesContinuous1985}
(see also
\cite{escardoInjectiveSpacesFilter1997}), contains the cartesian closed category of Scott domains.
\end{remark}

\subsection{Pseudodistributive laws}\label{ssec:psdist-laws}

In the $1$-dimensional setting, $\Rel$ is equivalent to the Kleisli category of
the powerset monad: this allows to characterize relational extensions of
$\Set$-monads in terms of distributive laws~\cite{beckDistributiveLaws1969}. In
the $2$-dimensional setting, one can similarly define the \emph{Kleisli
  bicategory} $\Kl{\ps T}$ of a pseudomonad ${\ps T}$ on a bicategory $\K$ as a
bicategory with objects those of $\K$ and arrows $X \klarrow Y$ the $\ps
T$-effectful ones $X \to \ps T Y$~\cite{chengPseudodistributiveLaws2003}. To see
how $\PROF$ can also be identified with a Kleisli bicategory, recall the
\emph{small presheaf} pseudomonad $\psh$.

\begin{definition}\label{ex:small-presheaves}
 For a category $C$, denote by $\psh C$ the full subcategory of
  $[\op C, \Set]$ spanned by small presheaves; this assignment extends to a pseudomonad on $\CAT$ where, for a functor $f \colon C \to D$, the functor $\psh f \colon \psh C \to \psh D$ acts by left Kan extension along $\op f$ (see, e.g., \cite{dayLimitsSmallFunctors2007}). At a category $C$, its unit is given by the Yoneda embedding $C \hookrightarrow \psh C$, while its multiplication $\psh^2 C \to \psh C$ is given by the left Kan extension of the identity functor $1_{\psh C}$ along the Yoneda embedding ${\psh C} \hookrightarrow \psh^2 C$.
\end{definition}

\looseness=-1 The category $\psh C$ is the free cocompletion of $C$ under small
colimits: in particular, $\psh C=[\op C, \Set]$ if $C$ is small. Currying, the datum of a
functor $C \to \psh D$ is equivalent to that of a {small}
profunctor $\op D \times C \to \Set$: this correspondence gives rise to a biequivalence $\Kl\psh
\simeq \PROF$ (see, e.g., \cite{walkerDistributiveLawsAdmissibility2019}). The
pseudomonads on $\PROF$ provided by \Cref{cor:extension-pseudomonads}(2) are therefore extensions, in the sense
of~\cite{chengPseudodistributiveLaws2003}, to $\Kl\psh$. Thus, by \cite[Thm.\
4.3]{chengPseudodistributiveLaws2003}, our \Cref{cor:extension-pseudomonads} also characterizes when a pseudomonad \emph{pseudodistributes} over $\psh$; see Appendix \ref{app:A5} for more details.

\begin{definition}[\cite{marmolejoDistributiveLawsPseudomonads1999}]
    Let $\monad{\ps T}$ and $\monad{\ps S}$ be pseudomonads on a bicategory $\bicat{ K}$. A \emph{pseudodistributive law} of $\ps T$ over $\ps S$ consists of a pseudonatural transformation $\lambda \colon \ps T \circ \ps S \Rightarrow \ps S \circ \ps T$ equipped with four invertible modifications
    \[\begin{tikzcd}[column sep = 15pt, row sep = 12pt]
	{\ps T\ps S} && {\ps S \ps T} \\
	& {\ps S}
	\arrow[""{name=0, anchor=center, inner sep=0}, "\lambda", from=1-1, to=1-3]
	\arrow["{ \eta^{\ps T} \ps S}", from=2-2, to=1-1]
	\arrow["{\ps S \eta^{\ps T}}"', from=2-2, to=1-3]
	\arrow["\cong\,"', between={0.2}{0.8}, Rightarrow, from=0, to=2-2]
\end{tikzcd} \qquad \begin{tikzcd}[row sep = 12pt, column sep = 15pt]
	{\ps T\ps S} && {\ps S \ps T} \\
	& {\ps T}
	\arrow[""{name=0, anchor=center, inner sep=0}, "\lambda", from=1-1, to=1-3]
	\arrow["{\ps T \eta^{\ps S}}", from=2-2, to=1-1]
	\arrow["{\eta^{\ps S}\ps T}"', from=2-2, to=1-3]
	\arrow["\cong\,"', between={0.2}{0.8}, Rightarrow, from=0, to=2-2]
\end{tikzcd}\]
\[ \begin{tikzcd}[column sep = 15pt]
	{\ps T^2 \ps S} & {\ps T \ps S \ps T} & {\ps S \ps T^2} \\
	{\ps T\ps S} && {\ps S \ps T}
	\arrow["{\ps T \lambda }", from=1-1, to=1-2]
	\arrow["{\mu^{\ps T} \ps S}"', from=1-1, to=2-1]
	\arrow["{\lambda \ps T}", from=1-2, to=1-3]
	\arrow["{\ps S \mu^{\ps T}}", from=1-3, to=2-3]
	\arrow[""{name=0, anchor=center, inner sep=0}, "\lambda"', from=2-1, to=2-3]
	\arrow["\cong\,"', between={0.2}{0.8}, Rightarrow, from=1-2, to=0]
\end{tikzcd} \quad \begin{tikzcd}[column sep = 15pt]
	{\ps T \ps S^2} & {\ps S \ps T \ps S} & {\ps S^2 \ps T} \\
	{\ps T \ps S} && {\ps S \ps T}
	\arrow["{\lambda \ps S}", from=1-1, to=1-2]
	\arrow["{\ps T \mu^{\ps S}}"', from=1-1, to=2-1]
	\arrow["{\ps S \lambda}", from=1-2, to=1-3]
	\arrow["{\mu^{\ps S}\ps T}", from=1-3, to=2-3]
	\arrow[""{name=0, anchor=center, inner sep=0}, "\lambda"', from=2-1, to=2-3]
	\arrow["\cong\,"', between={0.2}{0.8}, Rightarrow, from=1-2, to=0]
\end{tikzcd} \]
satisfying eight coherence conditions spelled out in \cite{marmolejoCoherencePseudodistributiveLaws2008}. If such a $\lambda$ exists, we say that $\ps T$ \emph{pseudodistributes over $\ps S$}.
\end{definition}

\begin{corollary}\label{cor:extension-on-cat}
  For a pseudomonad $\braket{\ps T,\eta,\mu}$ on $\CAT$, the following are
  equivalent:
  \begin{enumerate}
  \item ${\ps T}$, $\eta$ and $\mu$ satisfy the Beck-Chevalley condition;
  \item $\braket{\ps T, \eta, \mu}$ extends to a pseudomonad on $\PROF$;
  \item $\braket{\ps T, \eta, \mu}$ pseudodistributes over $\psh$.
\end{enumerate}
  \begin{proof}
    Combine \Cref{cor:extension-pseudomonads} with \cite[Thm.\ 4.3]{chengPseudodistributiveLaws2003}.
  \end{proof}
\end{corollary}

In particular, we reobtain \cite[Cor.\ 49]{walkerDistributiveLawsAdmissibility2019} in the
archetypal case of the small presheaf pseudomonad:

\begin{corollary}
    Up to isomorphism, there is at most one pseudodistributive law of a pseudomonad on $\CAT$ over $\psh$.
\end{corollary}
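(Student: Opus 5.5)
The plan is to apply \Cref{cor:extension-on-cat} to the small presheaf pseudomonad $\psh$ itself, after first reducing the statement to the uniqueness of pseudomonad extensions to $\PROF$. Suppose $\lambda$ and $\lambda'$ are two pseudodistributive laws of a pseudomonad $\braket{\ps T,\eta,\mu}$ on $\CAT$ over $\psh$. By \cite[Thm.\ 4.3]{chengPseudodistributiveLaws2003}, pseudodistributive laws of $\ps T$ over $\psh$ correspond, up to equivalence, to extensions of $\ps T$ to pseudomonads on $\Kl\psh \simeq \PROF$. Those are pseudomonads $\ps T'$ on $\PROF$ equipped with a pseudonatural transformation $\ps T'(-)_\diamond \Rightarrow (\ps T-)_\diamond$ with identity components, compatible with units and multiplications. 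So it suffices to show that any two such extensions are equivalent, compatibly with this structure, and then transport back along Cheng–Hyland–Power.

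For uniqueness, pass to $\coop\CAT$, where $\coop{\DFib{\coop\CAT}}\simeq\PROF$. There, $\ps T$ becomes a pseudocomonad, and an extension $\ps T'$ gives a pseudofunctor on $\DFib{\coop\CAT}$ with a pseudonatural icon to $(\ps T-)_\diamond$. By \Cref{thm:extension-pseudofunctors}\eqref{thm:extension-pseudofunctors:converse}, $\ps T$ then satisfies the Beck-Chevalley condition and $\ps T'\simeq\u{\ps T}$. Compatibility of the extended unit and multiplication with $\eta,\mu$ says they satisfy the equalities of \Cref{thm:extension-psnat-trans} (after composing with the canonical $\omega$'s from the proof of \Cref{cor:extension-pseudomonads}, now pseudonatural). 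The uniqueness clause of \Cref{thm:extension-psnat-trans}(2) then identifies them with $\u\eta$ and $\u\mu$. The structural modifications are forced as well, since their components are $(\mathfrak m_A)_\diamond$ and $(-)_\diamond$ is locally fully faithful. Hence any extension is equivalent to the canonical one of \Cref{cor:extension-pseudomonads}, so $\lambda\cong\lambda'$.

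The main obstacle is bookkeeping in the Cheng–Hyland–Power correspondence. We must make sure that ``isomorphism of pseudodistributive laws'' matches ``equivalence of extensions compatible with the icon to $(\ps T-)_\diamond$''. We must also check that the identity-on-objects equivalence produced by \Cref{thm:extension-pseudofunctors}\eqref{thm:extension-pseudofunctors:converse} can be chosen with invertible components, so that it yields an invertible modification between $\lambda$ and $\lambda'$. I would handle this by noting that both extensions agree on objects and that the comparison $\ps T'\simeq\u{\ps T}$ is an icon built from the given transformations. Its components at each profunctor are therefore isomorphisms, and these translate directly into an invertible modification $\lambda\cong\lambda'$.
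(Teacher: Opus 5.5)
Your proposal is correct and follows the paper's route: the paper deduces this corollary from \Cref{cor:extension-on-cat}, transferring via \cite[Thm.\ 4.3]{chengPseudodistributiveLaws2003} to extensions to $\Kl\psh\simeq\PROF$, which are unique by the uniqueness clauses of \Cref{thm:extension-pseudofunctors}\eqref{thm:extension-pseudofunctors:converse} and \Cref{thm:extension-psnat-trans}(2), with the modifications forced by local full faithfulness of $(-)_\diamond$. The bookkeeping you flag is also left implicit in the paper, in particular the transport of structure needed because the comparison $\zeta$ in the Kleisli-extension data is only a pseudonatural equivalence rather than a transformation with identity components.
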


 \section{Ultraconvergence spaces, algebraically}\label{sec:05}

\looseness=-1
In this section, we focus on the pseudomonad $\bbbeta$ constructed in \Cref{ssec:quotients}. First, we discuss \emph{ultracategories}, introducing them as its pseudoalgebras. Then, we apply our extension theorem so as to recover \emph{ultraconvergence spaces}
\cite{goolToposesEnoughPoints2025} as the \emph{profunctorial $\bbbeta$-algebras} --- that is, suitable algebras for the skew extension of $\bbbeta$ to $\PROF$.

\subsection{Ultracategories}

\looseness=-1
The pseudomonad $\bbbeta$ is known as the \emph{ultracompletion} pseudomonad, and it was first considered in \cite{rosoliniUltracompletions2024} (based on the work of \cite{garnerUltrafiltersFiniteCoproducts2020}) to study \emph{ultracategories}. Originally introduced by Makkai \cite{makkaiStoneDualityFirst1987}, ultracategories can be thought of as categories endowed with structure allowing to compute abstract \emph{ultraproducts} of tuples of objects. In \cite{lurieUltracategories2018}, Lurie gave a new axiomatization for the concept, simplifying Makkai's definition and extending his results. While \cite{marmolejoUltraproductsContinuousFamilies1995} constituted a first attempt towards an algebraic theory of ultracategories, in the recent \cite{rosoliniUltracompletions2024} they were re-defined, in even simpler terms, as pseudo-$\bbbeta$-algebras (cf.\ also \cite[\S 3]{saadiaExtendingConceptualCompleteness2025}). Intuitively, we can think of $\bbbeta C$ as the category of \emph{formal ultraproducts} of objects of $C$, which we also refer to as \emph{ultrafamilies}: an ultracategory is thus a category $C$ in which these formal ultraproducts can be evaluated by means of a pseudoalgebra functor $\bbbeta C \to C$.

\begin{definition}\label{def:ultracategory}
    An \emph{ultracategory} is a pseudo-$\bbbeta$-algebra.    We denote by $\UltCat$ the $2$-category $\PsAlgco{\bbbeta}$ of pseudo-$\bbbeta$-algebras, colax morphisms, and algebra $2$-cells.
\end{definition}

\begin{example}\label{ex:ultracat-of-points}
    The archetypal example of an ultracategory is given by the category of models of a \emph{coherent} theory $\mathbb T$ --- that is, a first-order theory whose axioms are of the form $\forall \vec{x} (\phi(\vec{x}) \Rightarrow \psi(\vec{x}))$ where $\phi(\vec{x})$ and $\psi(\vec{x})$ are built using only $\top$, $\bot$, $\land$, $\lor$, and $\exists$. Note that every first-order theory is equivalent to a coherent theory (see, e.g., \cite[\S D1.5.13]{johnstoneSketchesElephantTopos2002}).

    The algebra functor $\bbbeta (\Mod (\bb T)) \to \Mod (\bb T)$ maps an
    ultrafamily $(X, M, \nu)$ of models to their \emph{ultraproduct}
    $\prod_{x:\nu} M_x$, which is a model of $\bb T$ by Łoś's theorem (see,
    e.g., \cite[\S 4]{changModelTheory2012}). The model $\prod_{x:\nu} M_x$ is
    characterized by the fact that it satisfies precisely those properties
    shared by some subset of models $\set{M_x | x \in X_0}$ with $X_0 \in \nu$: borrowing intuition from the topological case, we can think of it as the \emph{limit} of the ultrafamily $(X, M, \nu)$.

\end{example}

\begin{remark}
  As noted in \Cref{ex:discrete-algebras-of-loke}, by Manes' theorem
  \cite[Prop.\ 5.5]{manesTripleTheoreticConstruction1969}, discrete
  ultracategories coincide with $\beta$-algebras. This fact, originally proved
  by Lurie in \cite[Thm.\ 3.1.5]{lurieUltracategories2018} for his
  axiomatization of ultracategories, here entails that the pseudomonad $\bbbeta$ is neither \emph{lax-idempotent}
  nor \emph{colax-idempotent} \cite{kellyPropertylikeStructures1997}.
  Intuitively, this means that the datum of an ultracategory consists of
  additional \emph{structure} imposed on a category, rather than a property
  thereof.
\end{remark}

The main result of \cite{makkaiStoneDualityFirst1987,lurieUltracategories2018} expresses how a coherent theory can be recovered from its category of models, once the latter is endowed with its canonical ultracategory structure. Exploiting the biequivalence established in \cite{hamadUltracategoriesColaxAlgebras2025}, we can state it as follows.

\begin{theorem}[\protect{\cite[Thm.\ 2.2.2]{lurieUltracategories2018}}] \label{thm:reconstruction-coherent}
Let $\bb T$ be a coherent theory. Then, $\UltCat( \Mod (\bb T), \Set)$ is the classifying topos\footnote{Recall that classifying toposes play the role, for first-order logic, of the propositional syntactic algebras.} of $\bb T$.
\end{theorem}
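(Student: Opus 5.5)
This statement is cited from Lurie's work, so I would not reprove Makkai--Lurie duality from scratch. The plan is to transport Lurie's theorem along the comparison between his axiomatization and ours, and reduce the statement to \cite[Thm.~2.2.2]{lurieUltracategories2018}. That result says that for a coherent theory $\bb T$ (equivalently a coherent topos $\E_{\bb T}$), the evaluation functor
\[
\E_{\bb T} \to \namedCat{Fun}^{\mathrm{Ult}}(\Mod(\bb T), \Set),
\qquad
\phi \mapsto (M \mapsto M(\phi)),
\]
is an equivalence. Here the target consists of Lurie's left ultrafunctors from the ultracategory of models, with its ultraproduct structure, into $\Set$.

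The key steps, in order, are the following.

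\begin{enumerate}
\item Identify the ultrastructure. I would check that the pseudo-$\bbbeta$-algebra structure on $\Mod(\bb T)$ of \Cref{ex:ultracat-of-points} corresponds, under the biequivalence of \cite{hamadUltracategoriesColaxAlgebras2025}, to Lurie's ultrastructure given by ultraproducts. This should be direct, since both are defined by $(X,M,\nu)\mapsto \prod_{x:\nu}M_x$.

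\item Identify the ultrastructure on $\Set$. The same has to be verified for $\Set$, with ultraproducts of sets as the algebra functor.

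\item Match the morphisms. I would invoke the result of \cite{hamadUltracategoriesColaxAlgebras2025} that colax morphisms of pseudo-$\bbbeta$-algebras correspond to Lurie's left ultrafunctors, and algebra $2$-cells to natural transformations compatible with the ultrastructure. This identifies $\UltCat(\Mod(\bb T),\Set)$ with Lurie's category of left ultrafunctors. The direction matters here. Lurie's left ultrafunctors come with comparison maps $F(\prod_\nu M_x)\to \prod_\nu F(M_x)$, which is the colax direction in the paper's convention. This is why $\UltCat$ was defined using colax rather than pseudo morphisms in \Cref{def:ultracategory}.

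\item Conclude. Composing with Lurie's equivalence gives $\UltCat(\Mod(\bb T),\Set)\simeq \E_{\bb T}$.
\end{enumerate}

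I expect the third step to be the main obstacle: checking that the coherence data of colax morphisms for the quotient pseudomonad $\bbbeta$ (as opposed to $\ps L_\beta$) matches exactly Lurie's axioms for left ultrafunctors. In particular, the identification of morphisms "almost everywhere" in $\bbbeta C$ must match Lurie's requirement that ultrafunctor structure be compatible with restriction to a subset in the ultrafilter.

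To handle this, I would first try to compare both notions on generating data. A morphism of ultrafamilies in $\bbbeta C$ factors, up to the quotient, through reindexing along a function $g\colon X'\to X$ with $\beta g(\nu')=\nu$, followed by a family of arrows. Lurie's ultrafunctor axioms are stated precisely in terms of such reindexings (his categorical Fubini transforms) and such families. Once this factorization is in place, the comparison becomes a routine matching of coherence conditions. I would defer to \cite{hamadUltracategoriesColaxAlgebras2025} for those details.
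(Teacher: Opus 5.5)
Your proposal is correct and takes the same route as the paper. The paper does not reprove the result: it obtains the statement by transporting Lurie's \cite[Thm.~2.2.2]{lurieUltracategories2018} along the biequivalence of \cite{hamadUltracategoriesColaxAlgebras2025} between Lurie's ultracategories with left ultrafunctors and $\PsAlgco{\bbbeta}$, and your observation that left ultrafunctors correspond to colax (rather than pseudo) morphisms is exactly what identifies $\UltCat(\Mod(\bb T),\Set)$ with Lurie's category of left ultrafunctors.
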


\subsection{Ultraconvergence spaces as algebras}\label{ssec:ultraconv-spaces}

\looseness=-1
In the recent works \cite{saadiaExtendingConceptualCompleteness2025,hamadGeneralisedUltracategoriesConceptual2025,goolToposesEnoughPoints2025},
ultracategories were generalized by drawing inspiration from Barr's generalization of Manes' theorem: that is, by relaxing the `limit map'
$\bbbeta C \to C$ defining an ultracategory to a `convergence relation', in the
shape of a profunctor between $C$ and ultrafamilies in $C$. Concretely, all three generalizations arise in the attempt to categorify Barr's description of
topological spaces as \emph{relational $\beta$-algebras} --- that is, lax algebras for a right skew
monad $\u\beta\colon \Rel \to \Rel$ extending the ultrafilter monad. Although they all succeed in conveying a
reconstruction theorem for geometric logic, the precise axiomatizations of such generalizations
are somewhat arbitrary: in fact, while those of
\cite{saadiaExtendingConceptualCompleteness2025, goolToposesEnoughPoints2025} coincide, that of
\cite{hamadGeneralisedUltracategoriesConceptual2025} is weaker by one axiom.

\looseness=-1
Here, we can take a different approach: we can let a profunctorial version of ultracategories emerge naturally via our extension theorem,  thus promoting the inspiration behind \cite{saadiaExtendingConceptualCompleteness2025,hamadGeneralisedUltracategoriesConceptual2025,goolToposesEnoughPoints2025} to an instance of a $2$-dimensional extension result \emph{à la} Barr. It turns out that the resulting notion coincides with the \emph{ultraconvergence spaces} (or \emph{virtual ultracategories}) of \cite{saadiaExtendingConceptualCompleteness2025,goolToposesEnoughPoints2025}, up to restricting to \emph{small} profunctors. This provides a categorical justification for the notion introduced in \emph{loc.\ cit.}, and paves the way to an algebraic study of ultraconvergence spaces.

\looseness=-1 We begin by recalling the definition given in
\cite{saadiaExtendingConceptualCompleteness2025, goolToposesEnoughPoints2025} so
as to fix notations. Below, we denote by $1$ both the singleton set $\{*\}$ and the unique
ultrafilter $\eta^{\beta}_1(*)$ on it. Moreover, for an ultrafilter $\nu$ on a set $X$ and an $X$-indexed family
of ultrafilters $(\xi_x \in \beta Y_x)_{x \in X}$, we denote by
$\sum_{x:\nu} \xi_x$ their \emph{direct sum}, i.e., the ultrafilter on the disjoint union
$\sum_{x\in X}Y_x$ defined by those subsets $S$ such that $S \cap Y_x \in \xi_x$ for all $x \in X_0$ for some $X_0 \in \nu$. This construction is functorial in the following sense, which we describe for $\bbbeta 1$ for the sake of readability although it is clear how it extends to an arbitrary $\bbbeta C$.
\begin{itemize}
	\item Let $g \colon (X, \nu) \to (Y, \xi)$ be a $\bbbeta 1$-arrow and let $((Z_x, \chi_x))_{x:\nu}$ be an ultrafamily in $\bbbeta 1$. Then, the map $(y,z) \mapsto (g(y), z)$ determines an arrow
\end{itemize}
\[ \textstyle \left(\sum_{x\in X} Z_x, \sum_{x:\nu}\chi_x\right)\to \left(\sum_{y\in Y}Z_{g(y)}, \sum_{y:\xi}\chi_{g(y)}\right)\]
\begin{itemize}
	\item[]in $\bbbeta 1$, which we denote by $g\otimes \id$.

	\item Let $(g_x \colon (Y_x, \xi_x) \to (Z_x,\chi_x))_{x:\nu}$ be an ultrafamily of $\bbbeta 1$-arrows. Then, the map $(x,z)\mapsto (x, g_x(z))$ determines an arrow
\end{itemize}
\[ \textstyle \left(\sum_{x\in X} Y_x, \sum_{x:\nu}\xi_x\right)\to \left(\sum_{x\in X}Z_x, \sum_{x:\nu}\chi_{x}\right)\]
\begin{itemize}
	\item[]in $\bbbeta 1$, which we denote by $\id \otimes (g_x)_{x:\nu}$.
\end{itemize}

To provide intuition behind the following definition, recall Barr's characterization of topological spaces as convergence relations: a
topology on a set $X$ is the datum of a relation $\beta X \pro X$, between
ultrafilters on $X$ and their limits, satisfying certain axioms --- for
instance, the \emph{principal} ultrafilter $\eta^\beta_X(x)$ at $x \in X$ should
converge to $x$. An ultraconvergence space is a $2$-dimensional generalization
of this description.

\begin{definition}\label{def:uc-space}
  An \emph{ultraconvergence space} $A$ is a discrete category $A_0$ equipped
  with an \emph{ultraconvergence structure}, i.e., the datum of:
\begin{itemize}
\item a profunctor $A \colon \bbbeta A_0 \pro A_0$, where we refer to elements
  $r \in A(a, (X, b, \nu))$ as \emph{ultraconvergence arrows}, denoted as $r \colon a \ult
  (b_x)_{x:\nu}$, and where we denote by $r[g]$ the ultraconvergence arrow $A(\id, g)(r)$ for $g \colon (X, b, \nu) \to (Y,c,\xi)$ in $\bbbeta A_0$;
\item for each object $a \in A_0$, an \emph{identity} ultraconvergence arrow $\id_a \colon a \ult (a)_{*:1}$;
\item for each ultraconvergence arrow $r \colon a \ult (b_x)_{x:\nu}$ and each
  ultrafamily $(X, s,\nu)$ of ultraconvergence arrow $s_x \colon b_x \ult
  (c_{x,y})_{y:\xi_x}$, a \emph{composite} ultraconvergence arrow
  $(s_x)_{x:\nu} \cdot r \colon a \ult (c_{x,y})_{(x,y): \sum_{x:\nu}\xi_x}$,
\end{itemize}
satisfying:
\begin{enumerate}
	\item \emph{left naturality}, $(s_{g(y)})_{y:\xi} \cdot r[g] = ( (s_x)_{x:\nu} \cdot r)[g\otimes \id]$;
	\item \emph{right naturality}, $(s_x[g_x])_{x:\nu} \cdot r = ((s_x)_{x:\nu} \cdot r)[\id \otimes (g_x)_{x:\nu}]$;
	\item \emph{left unitality}, $(r)_{*:1} \cdot \id_a = r$;
	\item\emph{right unitality}, $(\id_{b_x})_{x:\nu} \cdot r = r$;
	\item \emph{associativity}, $(t_{x,y})_{(x,y):\sum_{x:\nu}\xi_x} \cdot ( ( s_x)_{x:\nu}\cdot r) = ((t_{x,y})_{y:\xi_x} \cdot s_x )_{x:\nu}\cdot r$,
\end{enumerate}
for any $r$, $(s_x)_{x:\nu}$ and $g$ as above, any ultrafamily of ultraconvergence arrows $(t_{x,y})_{(x,y):\sum_{x:\nu}\xi_x}$ of the appropriate type, and any ultrafamily of $\bbbeta A_0$-arrows $(g_x \colon (Y_x,c_x, \xi_x) \to (Z_x,d_x,\chi_x))_{x:\nu}$.  Moreover, in this paper we
will assume that the profunctor $A$ is \emph{small}.
\end{definition}

\looseness=-1 Ultraconvergence arrows $a \ult (b_x)_{x:\nu}$ can be thought of
as witnessing convergence of the ultrafamily $(X, b, \nu)$ in $A_0$ to the point
$a$ in $A_0$. As there may be several of these witnesses, ultraconvergence
spaces are a \emph{proof-relevant} generalization of topological spaces, in
which convergence is two-valued.

\begin{example}[\protect{\cite[Rem.\ 3.8]{goolToposesEnoughPoints2025}}] \label{ex:top-spaces-as-ultspaces}
    Topological spaces coincide with those ultraconvergence spaces $A \colon \bbbeta A_0 \pro A_0$ such that:
    \begin{enumerate}
        \item $A_0$ is small, i.e., a set (carrying the topology);
        \item there is at most one ultraconvergence arrow of each type;
        \item for any morphism $g \colon (X, b, \nu) \to (Y, c, \xi)$ in $\bbbeta A_0$, if there exists an ultraconvergence arrow $a \ult (c_y)_{y:\xi}$ then there exists one $a \ult (b_{x})_{x:\nu}$.
    \end{enumerate}
\end{example}

\begin{example}[\protect{\cite[Ex.\ 3.10]{goolToposesEnoughPoints2025}}] \label{ex:ultrasp-of-points}
The archetypal example of an ultraconvergence space is given by the (discrete) category of models of a geometric theory $\bb T$ --- that is, a theory in \emph{infinitary} first-order logic whose axioms are of the form $\forall \vec x (\phi(\vec x) \Rightarrow \psi(\vec x))$ where $\phi(\vec x)$ and $\psi(\vec x)$ are built using only finitary $\land$, infinitary $\bigvee$, and $\exists$.

\looseness=-1
Unlike in \Cref{ex:ultracat-of-points}, the ultraproduct $\prod_{x:\nu} N_x$ of an ultrafamily $(X, N, \nu)$ of models of $\bb T$ may not be itself a model of $\bb T$. However, we can think of $\prod_{x:\nu} N_x$ as a possibly-undefined limit of the ultrafamily $(X, N, \nu)$, and define an ultraconvergence structure on $\Mod (\bb T)$ by setting ultraconvergence arrows $M \ult (N_x)_{x:\nu}$ to be \emph{$\Sigma$-structure homomorphisms} $M \to \prod_{x:\nu} N_x$ (\cite[Def.\ D1.2.1]{johnstoneSketchesElephantTopos2002}) with $\Sigma$ the signature of $\bb T$.
\end{example}

\begin{wrapfigure}{r}{0.24\linewidth}
      \vspace{-8pt}\hspace{3pt}
  \begin{tikzcd}[column sep = 11pt, row sep = 13pt,cramped]
	{A_0} && {A_0} \\
	& {\bbbeta A_0}
	\arrow[""{name=0, anchor=center, inner sep=0}, equals, from=1-1, to=1-3]
	\arrow["{\eta_{A_0}^{\bbbeta}}"'{inner sep=.8ex}, from=1-1, to=2-2]
	\arrow["A"'{inner sep=.8ex}, "\shortmid"{marking}, from=2-2, to=1-3]
	\arrow["\,\Gamma", between={0.3}{0.8}, Rightarrow, from=0, to=2-2]
  \end{tikzcd}

  \vspace{6pt}
  \hspace{-9pt}
  \begin{tikzcd}[row sep = 22pt,column sep = 30pt,cramped]
	{\bbbeta^2 A_0} & {\bbbeta A_0} \\
	{\bbbeta A_0} & {A_0}
	\arrow["{\u\bbbeta A}"{inner sep=.8ex}, "\shortmid"{marking}, from=1-1, to=1-2]
	\arrow["{\mu_{A_0}^{\bbbeta}}"'{inner sep=.8ex}, from=1-1, to=2-1]
	\arrow["\Delta"', between={0.3}{0.7}, Rightarrow, from=1-2, to=2-1]
	\arrow["A"{inner sep=.8ex}, "\shortmid"{marking}, from=1-2, to=2-2]
	\arrow["A"'{inner sep=.8ex}, "\shortmid"{marking}, from=2-1, to=2-2]
  \end{tikzcd}
  \vspace{-10pt}
\end{wrapfigure}
Recall that, by  \Cref{cor:quotients-extend}, $\bbbeta$ extends to a left skew monad $\u\bbbeta$ on $\PROF$. For an ultraconvergence space $A \colon \bbbeta A_0 \pro A_0$, it is immediate that the datum of its identity ultraconvergence arrows corresponds to that of a natural transformation $\Gamma$ as
in the top diagram on the right.\footnote{For diagrams in $\PROF$, we identify a functor $f \colon C \to D$ with the representable profunctor $f_\diamond \colon C \pro D$ when possible, denoting it with an undecorated arrow.} With some work, one can also see that the datum
of its composite ultraconvergence arrows is interdefinable with that of a
transformation $\Delta$ as in the bottom diagram on the right. Thus, the datum of an ultraconvergence structure on a discrete category $A_0$ coincides with what should be that of a \emph{lax $\u\bbbeta$-algebra}. However, it is not immediately clear how such an algebra \emph{should be defined} considering that $\u{\bbbeta}$ is a \emph{left} skew monad (cf.\ the lax algebras for right skew monads defined in \cite{bungeCoherentExtensionsRelational1974,streetSkewMonads2015}). More precisely, it is not evident how to write appropriate coherence axioms, at least such that ``free algebras are algebras'' --- i.e., such that, for each category $C$, the profunctor $(\mu^{\bbbeta}_{C})_\diamond \colon \bbbeta^2 C \pro \bbbeta C$ endows the category $\bbbeta C$ with the structure of a lax algebra, whose unitor and associator are determined by the structural modifications of the monad. In the following definition, we thus enucleate sufficient conditions ensuring as much: in essence, these conditions make a left skew monad ``close enough'' to a pseudomonad that we can invert enough structure in order to define lax algebras.

\begin{definition}\label{def:allowing-lax-algebras}
  Let $\monad{\u{\ps T}}$ be a left skew monad on $\PROF$ extending a pseudomonad $\monad {\ps T}$ on $\CAT$ via \Cref{cor:extension-pseudomonads}.\eqref{cor:extension-pseudomonads:right-comonad}. We say that $\u{\ps T}$ \emph{allows for lax algebras} if:
  \begin{enumerate}
    \item the transformations $\delta^{\ps T} \colon \u{\ps T}(-)_\diamond \Rightarrow (\ps T -)_\diamond$ and $\omega^{\ps T} \colon \u{ \ps T}^2 \Rightarrow \u{(\ps T^2)}$ are pseudonatural;
\item there exists a natural family of natural transformations $\phi_{G,f_\diamond} \colon \u{\ps T}(G \circ f_\diamond) \Rightarrow \u{\ps T} G \circ \u{\ps T} f_\diamond$, for any functor $f \colon A \to B$ and any profunctor $G \colon B \pro C$, such that
    \begin{enumerate}
      \item the diagram
      \[\begin{tikzcd}
	{\u{\ps T}(G \circ (f'f)_\diamond)} & {\u{\ps T}G \circ \u{\ps T}(f' f)_\diamond} & {\u{\ps T}G \circ \u{\ps T}(f'_\diamond \circ  f_\diamond)} \\
	{\u{\ps T}(G \circ f'_\diamond\circ f_\diamond)} & {\u{\ps T}(G \circ f'_\diamond) \circ \u{\ps T}f_\diamond} & {\u{\ps T}G \circ \u {\ps T}  f'_\diamond \circ \u{\ps T}f_\diamond}
	\arrow["{{\phi_{G, (f'f)_\diamond}}}", from=1-1, to=1-2]
	\arrow["\cong"', from=1-1, to=2-1]
	\arrow["\cong", from=1-2, to=1-3]
	\arrow["{{\u{\ps T}G* \phi_{f'_\diamond, f_\diamond}}}", from=1-3, to=2-3]
	\arrow["{{\phi_{G\circ f'_\diamond, f_\diamond}}}"', from=2-1, to=2-2]
	\arrow["{{\phi_{G,f'_\diamond}*\u{\ps T }f_\diamond}}"', from=2-2, to=2-3]
    \end{tikzcd}\]
      commutes, and
      \item $\phi_{f'_\diamond,f_\diamond} = \psi_{f'_\diamond,f_\diamond}^{-1}$, where $\psi_{f'_\diamond,f_\diamond} \colon \u{\ps T} f'_\diamond \circ \u{\ps T} f_\diamond\Rightarrow \u{\ps T}(f'_\diamond \circ f_\diamond)$ is the lax associator for $\u{\ps T}$
    \end{enumerate}
    for each pair of functors $f\colon A \to B, f'\colon B \to C$ and each profunctor $G \colon C \pro D$.

\end{enumerate}
\end{definition}

Intuitively, the third condition corresponds to a semi-\emph{oplax} associator for $\u{\ps T}$, restricted to the case where the first arrow is representable, and acting as an inverse to its lax associator when both arrows are representable (cf.\ \cite[Def.\ 4.2]{bungeCoherentExtensionsRelational1974}). Note also that, by axiom (1), each lax unitor $\psi_A \colon 1_{\ps TA}\Rightarrow \u{\ps T}1_A$ for $\u{\ps T}$ is invertible, since the diagram of natural transformations
\[\begin{tikzcd}
{(1_{\ps TA})_\diamond} & {(\ps T1_A)_\diamond} \\
{1_{\ps T A}} & {\u{\ps T}(1_A)_\diamond}
\arrow["\cong", from=1-1, to=1-2]
\arrow["\cong"', from=1-1, to=2-1]
\arrow["{\delta^{\ps T}_{1_A}}", from=1-2, to=2-2]
\arrow["{\psi_A}"', from=2-1, to=2-2]
\end{tikzcd}\]
commutes by lax unitality of $\delta^{\ps T}$.

\begin{remark}\label{rem:lax-algebras-allowed-identifying-delta}
  \looseness=-1
  In essence, this means that $\u{\ps T}$ restricts to a pseudofunctor on representable profunctors by mapping a profunctor $f_\diamond$ to $(\ps T f)_\diamond$ and a natural transformation $\alpha \colon f_\diamond \Rightarrow g_\diamond$ to $(\delta_g^{\ps T})^{-1} \circ \u{\ps T}\alpha \circ \delta^{\ps T}_f \colon (\ps T f)_\diamond\Rightarrow(\ps T g)_\diamond$. Therefore, in the following, in order to simplify notations we will identify the profunctor $\u {\ps T} (f_\diamond)$ with the representable $(\ps T f)_\diamond$, thus omitting $\delta^{\ps T}$.

  Similarly, we will identify the components of $\eta^{\u{\ps T}}$ and $\mu^{\u{\ps T}}$ at a representable $f_\diamond$ with $(\eta^{\ps T}_f)_\diamond$ and $(\mu^{\ps T}_f)_\diamond$ respectively, thus omitting the components of the canonical transformations $\omega^1 \colon 1_{\PROF}\Rightarrow \u{1_{\CAT}}$ and $\omega^{\ps T} \colon \u{\ps T}^2\Rightarrow \u{(\ps T^2)}$ of \Cref{cor:extension-pseudomonads}. Note also that, by the a-priori assumption that $\eta^{\ps T}$ and $\mu^{\ps T}$ satisfy the Beck-Chevalley condition, axiom (1) entails that $\eta^{\u{\ps T}}$ and $\mu^{\u{\ps T}}$ are pseudonatural.

\end{remark}

For such a skew monad, we can indeed define lax algebras in a sensible way; we refer the reader to Appendix \ref{app:A5} for more details.

\begin{definition}\label{def:lax-algebras-allowed}
  Let $\monad{\u{\ps T}}$ be a left skew monad on $\PROF$ extending a
  pseudomonad $\monad{\ps T}$ on $\CAT$ via
  \Cref{cor:extension-pseudomonads}\eqref{cor:extension-pseudomonads:right-comonad}
  and suppose that $\u{\ps T}$ allows for lax algebras. Then, a \emph{lax
    $\u{\ps T}$-algebra} consists of:
  \begin{itemize}
    \item a category $A_0$,
    \item a (small) profunctor $A \colon {\ps T} A_0 \pro A_0$,
    \item a natural transformation $\Gamma \colon 1_{A_0} \Rightarrow A \circ (\eta^{\ps T }_{A_0})_\diamond$ called \emph{unitor},
    \item a natural transformation $\Delta \colon A \circ \u{\ps T} A \Rightarrow A \circ (\mu^{\ps T}_{A_0})_\diamond$ called \emph{multiplicator},
  \end{itemize}
satisfying the following coherence conditions:
\begin{enumerate}
  \item
  \[\begin{tikzcd}
	& {\ps TA_0} & {\ps T^2A_0} \\
	{A_0} & {\ps T A_0} & {\ps T A_0} \\
	& {A_0}
	\arrow["{\eta^{\ps T}_{\ps TA_0}}", from=1-2, to=1-3]
	\arrow[""{name=0, anchor=center, inner sep=0}, "A"'{inner sep=.8ex}, "\shortmid"{marking}, from=1-2, to=2-1]
	\arrow[""{name=1, anchor=center, inner sep=0}, "{{\u{\ps T}A}}"{inner sep=.8ex}, "\shortmid"{marking}, from=1-3, to=2-2]
	\arrow["{\mu^{\ps T}_{A_0}}", from=1-3, to=2-3]
	\arrow["{\eta^{\ps T}_{A_0}}", from=2-1, to=2-2]
	\arrow[""{name=2, anchor=center, inner sep=0}, equals, from=2-1, to=3-2]
	\arrow["A"{inner sep=.8ex}, "\shortmid"{marking}, from=2-2, to=3-2]
	\arrow[""{name=3, anchor=center, inner sep=0}, "A"{inner sep=.8ex}, "\shortmid"{marking}, from=2-3, to=3-2]
	\arrow["{{(\eta_A^{\u{\ps T}})^{-1}}}", between={0.3}{0.7},pos=0.6, Rightarrow, from=0, to=1]
	\arrow["{{{\Delta}}}", between={0.3}{0.7}, Rightarrow, from=1, to=3]
	\arrow["{{{\Gamma}}}", between={0.3}{0.9}, Rightarrow, from=2, to=2-2]
\end{tikzcd}=
\begin{tikzcd}
	{\ps T A_0} && {\ps T^2A_0} \\
	& {\ps T A_0} \\
	& {A_0}
	\arrow["{\eta^{\ps T}_{\ps T A_0}}", from=1-1, to=1-3]
	\arrow[""{name=0, anchor=center, inner sep=0}, equals, from=1-1, to=2-2]
	\arrow[""{name=1, anchor=center, inner sep=0}, "{\mu^{\ps T}_{A_0}}", from=1-3, to=2-2]
	\arrow["A"{inner sep=.8ex}, "\shortmid"{marking}, from=2-2, to=3-2]
	\arrow["{\mathfrak{l}_{A_0}^{-1}}", between={0.3}{0.7}, Rightarrow, from=0, to=1]
\end{tikzcd}\]

\item
\[\begin{tikzcd}[column sep = 30 pt, row sep = 26pt]
	{\ps T A_0} && {\ps T A_0} && \\
	{\ps T A_0} && {\ps T^2 A_0} && {\ps T A_0} \\
	&& {A_0}
	\arrow[equals, from=1-1, to=1-3]
	\arrow[""{name=0, anchor=center, inner sep=0}, "{\u{\ps T} 1_{A_0}}"'{inner sep =.8ex},"\shortmid"{marking}, from=1-1, to=2-1]
	\arrow[""{name=1, anchor=center, inner sep=0}, "{\u{\ps T} (A \circ (\eta^{\ps T}_{A_0})_\diamond ) }"{description}, curve={height=6pt}, from=1-3, to=2-1]
	\arrow["{\u{\ps T}( \eta^{\ps T}_{A_0})_\diamond }"{inner sep =.8ex},"\shortmid"{marking}, from=1-3, to=2-3]
	\arrow[""{name=2, anchor=center, inner sep=0}, curve={height=-18pt}, equals, from=1-3, to=2-5]
	\arrow[""{name=3, anchor=center, inner sep=0}, "A"'{inner sep=.8ex}, "\shortmid"{marking}, curve={height=18pt}, from=2-1, to=3-3]
	\arrow["{\u{\ps T} A}"{inner sep=.8ex}, "\shortmid"{marking}, from=2-3, to=2-1]
	\arrow["{\mu^{\ps T }_{A_0}}"', from=2-3, to=2-5]
	\arrow[""{name=4, anchor=center, inner sep=0}, "A"{inner sep=.8ex}, "\shortmid"{marking}, curve={height=-18pt}, from=2-5, to=3-3]
	\arrow["{\u{\ps T}\Gamma}"{pos=0.4}, between={0.1}{0.4}, Rightarrow, from=0, to=1]
	\arrow["{\phi_{A, (\eta^{\ps T}_{A_0})_\diamond}}"'{pos=0.6}, between={0.5}{0.9}, Rightarrow, from=1, to=2-3]
	\arrow["{{{\Delta}}}"', shift left=5, between={0.3}{0.7}, Rightarrow, from=3, to=4]
	\arrow["{{{\mathfrak{r}_{A_0}^{-1}}}}"', between={0.3}{0.7}, Rightarrow, from=2-3, to=2, shift right=2]
\end{tikzcd}= \begin{tikzcd}[column sep = 30 pt, row sep = 26pt]
	{\ps T A_0} && {\ps T A_0} \\
	& {\ps T A_0} \\
	& {A_0}
	\arrow[equals, from=1-1, to=1-3]
	\arrow[""{name=0, anchor=center, inner sep=0}, "{\u{\ps T}1_{A_0}}"'{inner sep =.8ex},"\shortmid"{marking}, from=1-1, to=2-2]
	\arrow[""{name=1, anchor=center, inner sep=0}, equals, from=1-3, to=2-2]
	\arrow["A"'{inner sep =.8ex},"\shortmid"{marking}, from=2-2, to=3-2]
	\arrow["{{\psi_A^{-1}}}", between={0.3}{0.7}, Rightarrow, from=0, to=1]
\end{tikzcd}\]
\item
\[\begin{tikzcd}[column sep = 15pt]
	{\ps T^3 A_0} && {\ps T^2 A_0} \\
	{\ps T^2A_0} & {\ps T A_0} & {\ps T^2 A_0} \\
	{\ps T A_0} & {A_0} & {\ps T A_0}
	\arrow["{\mu^{\ps T}_{\ps T A_0}}", from=1-1, to=1-3]
	\arrow[""{name=0, anchor=center, inner sep=0}, "{\u{\ps T}^2 A }"'{inner sep=.8ex}, "\shortmid"{marking}, from=1-1, to=2-1]
	\arrow[""{name=1, anchor=center, inner sep=0}, equals, from=1-3, to=2-3]
	\arrow["{\mu^{{\ps T}}_{A_0}}", from=2-1, to=2-2]
	\arrow[""{name=2, anchor=center, inner sep=0}, "{\u{\ps T}A}"'{inner sep=.8ex}, "\shortmid"{marking}, from=2-1, to=3-1]
	\arrow[""{name=3, anchor=center, inner sep=0}, "A"'{inner sep=.8ex}, "\shortmid"{marking}, from=2-2, to=3-2]
	\arrow["{\u{\ps T}A}"'{inner sep=.8ex}, "\shortmid"{marking}, from=2-3, to=2-2]
	\arrow[""{name=4, anchor=center, inner sep=0}, "{\mu^{\ps T}_{A_0}}", from=2-3, to=3-3]
	\arrow["A"'{inner sep=.8ex}, "\shortmid"{marking}, from=3-1, to=3-2]
	\arrow["A"{inner sep=.8ex}, "\shortmid"{marking}, from=3-3, to=3-2]
	\arrow["{(\mu^{\u{\ps T}}_A)^{-1}}", between={0.3}{0.7}, Rightarrow, from=0, to=1]
	\arrow["\Delta", between={0.3}{0.7}, Rightarrow, from=2, to=3]
	\arrow["\Delta", between={0.3}{0.7}, Rightarrow, from=3, to=4]
\end{tikzcd} = \begin{tikzcd}[column sep = 20pt]
	{\ps T^3 A_0} &[-10pt] {\ps T^3 A_0} & {\ps T^3 A_0} && {\ps T^2 A_0} \\
	{\ps T^2 A_0} &&& {\ps T^2 A_0} \\
	{\ps T A_0} & {\ps T A_0} & {\ps T A_0} & {A_0} & {\ps T A_0}
	\arrow[equals, from=1-1, to=1-2]
	\arrow["{\u{\ps T}^2 A}"'{inner sep=.8ex}, "\shortmid"{marking}, from=1-1, to=2-1]
	\arrow[equals, from=1-2, to=1-3]
	\arrow[""{name=0, anchor=center, inner sep=0}, "{{\u{\ps T} (A\circ \u{\ps T} A)}}"{description, pos=0.2}, "\shortmid"{marking}, from=1-2, to=3-2]
	\arrow["{\mu^{\ps T}_{\ps T A_0}}", from=1-3, to=1-5]
	\arrow["{\u{\ps T}(\mu^{\ps T}_{A_0})_\diamond}"{inner sep=.8ex}, "\shortmid"{marking}, from=1-3, to=2-4]
	\arrow[""{name=1, anchor=center, inner sep=0}, "{{\u{\ps T} (A\circ (\mu^{\ps T}_{A_0})_\diamond )}}"{inner sep=.8ex}, "\shortmid"{marking}, curve={height=-6pt}, from=1-3, to=3-2]
	\arrow[""{name=2, anchor=center, inner sep=0}, "{\mu_{A_0}^{\ps T}}", from=1-5, to=3-5]
	\arrow["{\u{\ps T}A}"'{inner sep=.8ex}, "\shortmid"{marking}, from=2-1, to=3-1]
	\arrow[""{name=3, anchor=center, inner sep=0}, "{\u{\ps T}A}"{inner sep=.8ex}, "\shortmid"{marking}, from=2-4, to=3-3]
	\arrow[""{name=4, anchor=center, inner sep=0}, "{\mu_{A_0}^{\ps T}}", from=2-4, to=3-5]
	\arrow[equals, from=3-1, to=3-2]
	\arrow[equals, from=3-2, to=3-3]
	\arrow["A"'{inner sep=.8ex}, "\shortmid"{marking}, from=3-3, to=3-4]
	\arrow["A"{inner sep=.8ex}, "\shortmid"{marking}, from=3-5, to=3-4]
	\arrow["{{\u{\ps T} \Delta }}"'{pos=0.7}, shift left=3, between={0.3}{0.7}, Rightarrow, from=0, to=1]
	\arrow["{\phi_{A, (\mu^{\ps T}_{A_0})_\diamond}}", shift left=2, between={0.4}{0.7}, Rightarrow, from=1, to=2-4]
	\arrow["{\psi_{A, \u{\ps T}A}}", between={0.3}{0.7}, Rightarrow, from=2-1, to=0]
	\arrow["{{\mathfrak m}^{-1}_{A_0}}", shift left=2, between={0.2}{0.8}, Rightarrow, from=2-4, to=2]
	\arrow["\Delta"', between={0.3}{0.7}, Rightarrow, from=3, to=4]
\end{tikzcd}\]
\end{enumerate}
\end{definition}
\begin{remark}\label{rem:lax-algebras-are-okay}
  If a pseudomonad $\monad{\ps T}$ satisfies the stronger assumptions of
  \Cref{cor:extension-pseudomonads}\eqref{cor:extension-pseudomonads:pseudo-comonad},
  thus extending to a pseudomonad $\monad{\u{\ps T}}$, then the latter clearly
  allows for lax algebras. In that case, the previous definition is equivalent
  to the usual definition of a lax algebra for a pseudomonad on $\PROF$ (see,
  e.g., \cite[\S 2.4]{stepan-thesis}). More generally, the coherence conditions
  above are formally equivalent to the usual coherence conditions defining lax
  algebras (and, in particular, pseudoalgebras) for a pseudomonad on an
  arbitrary bicategory.
\end{remark}

\begin{proposition}\label{prop:lokes-allow-for-lax-algebras}
  For any monad $T$ on $\Set$, the left skew monad $\u{\ps L}_T$ allows for lax algebras.

  Moreover, for $T \in \set{ \mathcal F, \beta, \mathcal P, \mathcal D}$, the same holds for $\u{\bb T}$.
\end{proposition}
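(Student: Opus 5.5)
The plan is to work entirely with an explicit formula for $\u{\ps L}_T$ on profunctors, obtained from \Cref{rem:explicit-description-on-profunctors}, and then check each clause of \Cref{def:allowing-lax-algebras} by hand.

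\emph{Step 1: the explicit formula.} Let $F\colon C\pro D$ have collage $C\xrightarrow{\cod}R\xleftarrow{\dom}D$. I claim that $\u{\ps L}_TF((X,h,\nu),(Y,k,\xi))$ is in natural bijection with the set of pairs $(g,\alpha)$, where
\begin{itemize}
  \item $g\colon Y\to X$ is a function with $Tg(\xi)=\nu$,
  \item $\alpha=(\alpha_y)_{y\in Y}$ is a family with $\alpha_y\in F(hg(y),k(y))$.
\end{itemize}
By \Cref{rem:explicit-description-on-profunctors}, every class has a representative $\braket{(Y,\cod k,\xi),\,m,\,\id}$. Here $m$ is an $\ps L_TR$-morphism $(X,\dom h,\nu)\to(Y,\cod k,\xi)$, and by the description of the collage $R$ such a morphism is exactly a pair $(g,\alpha)$ as above. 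The map sending a class to the composite $m$ of its two legs is well-defined, because zig-zags with commuting triangles preserve that composite. It is then a bijection: it is surjective by the choice of representative, and it is injective because every triple is equivalent to the one with composite first leg and identity second leg. Naturality in both variables is immediate.

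\emph{Step 2: clause (1).} Specialising Step 1 to $F=f_\diamond$, so that $F(b,a)=B(b,fa)$, gives $\u{\ps L}_T(f_\diamond)\cong(\ps L_Tf)_\diamond$. Indeed, a pair $(g,\alpha)$ with $\alpha_y\colon hg(y)\to fk(y)$ is precisely an $\ps L_TB$-morphism $(X,h,\nu)\to\ps L_Tf(Y,k,\xi)$. One checks that this bijection is the component $\delta_f$, so each $\delta_f$ is invertible. An oplax natural transformation with invertible components is pseudonatural, so $\delta$ is pseudonatural. The dual computation for cographs shows that $\theta$ has invertible components. Since $\omega$ is built from $\psi$, $\delta$ and $\theta$, it remains to see that the relevant instance of $\psi$ is invertible. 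For this I will compute both $\u{\ps L}_T^2F$ and $\u{(\ps L_T^2)}F$ via Step 1 applied twice. Both should reduce to pairs $(g,\alpha)$ in which $g$ is now an $\ps L_T1$-level reindexing of families of families, and $\omega_F$ should be the identity on this description.

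\emph{Step 3: clause (2).} By Step 1, $\u{\ps L}_T(G\circ f_\diamond)$ consists of pairs $(g,\alpha)$ with $\alpha_y\in G(hg(y),fk(y))$. I define $\phi_{G,f_\diamond}$ to send such a pair to the coend class of $\braket{\ps L_Tf(Y,k,\xi),\,(g,\alpha),\,\id}$ in $\u{\ps L}_TG\circ(\ps L_Tf)_\diamond$. Condition (b) amounts to saying that, when $G=f'_\diamond$ is representable, this map is inverse to the composition map $\psi$; this follows from the co-Yoneda lemma. Condition (a) is a direct check: both paths send $(g,\alpha)$ to the class with identity in the representable slots.

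\emph{Step 4: the quotients $\bb T$.} The same argument applies to $\u{\bb T}$ for $T\in\set{\mathcal F,\beta,\mathcal P,\mathcal D}$. In Step 1, pairs $(g,\alpha)$ are now taken modulo agreement on a $\xi$-large set (respectively on $\xi$, or on $\supp\xi$). What has to be verified is that the zig-zag relation and the composite map remain compatible with this identification, and that $\phi$ is well-defined on classes. Both hold because the identification is stable under reindexing along $g$, using $Tg(\xi)=\nu$.

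I expect the main obstacle to be the pseudonaturality of $\omega$, namely the two-level unwinding in Step 2 together with tracking the quotient there. I would handle it by first proving a lemma that $\u{\ps L}_T$ preserves composites $F\circ f^\diamond$ with corepresentables on the right, so that $\psi_{(\ps L_Tp)_\diamond,(\ps L_Tq)^\diamond}$ is invertible.
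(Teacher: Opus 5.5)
Your proposal is correct and is essentially the paper's proof. The paper likewise reads $\u{\ps L}_T F$ off hom-sets of $\ps L_T R$, checks directly that $\delta_f$ and $\omega_F$ are bijections, and defines $\phi_{G,f_\diamond}$ exactly as you do (middle object $\ps L_T f(Y,k,\xi)$, identity second leg, with (a) immediate); it also notes that every step is compatible with almost-everywhere identification for $\bb T$.

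The differences are minor. You obtain $\phi_{f'_\diamond,f_\diamond}=\psi^{-1}_{f'_\diamond,f_\diamond}$ from co-Yoneda rather than from lax naturality of $\delta$. Your optional reduction of $\omega$ to invertibility of $\psi_{(\ps L_T p)_\diamond,(\ps L_T q)^\diamond}$ is sound, provided that composite is read in the order of $\DFib{\coop\CAT}$. In $\PROF$ it is then a corepresentable applied after a representable, i.e.\ the $G\circ f_\diamond$ case, which your Step~3 co-Yoneda argument already inverts (for every $G$). Be careful with the conventions, though: your Step~3 uses $\PROF$ order, and in that order the claim with the corepresentable applied first is false in general. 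That is exactly where $\u{\ps L}_T$ fails to be pseudofunctorial.
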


The axioms of an
ultraconvergence space can then be seen to correspond to those making a tuple
$\braket{A_0,A,\Gamma,\Delta}$ as above a {lax $\u\bbbeta$-algebra}, so that we
obtain the following.

\begin{theorem}\label{thm:ucspaces-algebraically}
    Let $A_0$ be a discrete category and let $A \colon \bbbeta A_0 \pro A_0$ be a small profunctor. Then, there is a bijection between:
    \begin{enumerate}
        \item ultraconvergence structures based on $A$;
        \item lax-$\u\bbbeta$-algebra structures based on $A$.
    \end{enumerate}
    \begin{proof}
      For concreteness, we here show that the datum of a natural transformation $\Delta \colon  A \circ \u\bbbeta A \Rightarrow A \circ \mu^{\u\bbbeta}_{A_0}$ corresponds exactly to a (suitably natural) choice of composite ultraconvergence arrows. We begin by describing the composite profunctor $A \circ \u\bbbeta A \colon \bbbeta^2 A_0 \pro A_0$: fix $a \in A_0$ and $(Y, C, \xi) \in \bbbeta^2 A_0$. An element of $A \circ \u\bbbeta A ( a , (Y, C, \xi))$ is given by an equivalence class of triples of:
      \begin{enumerate}
        \item an object $(X, b, \nu)$ in $\bbbeta A_0$,
        \item an ultraconvergence arrow $r \colon a \ult (b_x)_{x:\nu}$, and
        \item an element of $\u\bbbeta A ((X, b,\nu), (Y, C, \xi))$ which,
          identifying $A\colon \bbbeta A_0 \pro A_0$ with a two-sided codiscrete
          cofibration $\bbbeta A_0 \xrightarrow{d_1} R \xleftarrow{d_0} A_0$, we
          can represent as in \Cref{rem:explicit-description-on-profunctors} by
          a suitable equivalence class of a $\bbbeta R$-arrow $(p, \gamma)
          \colon (X, d_0b,\nu) \to (Y, d_1C, \xi)$.
      \end{enumerate}
      Two such pairs are identified if there exists a zig-zag of $\bbbeta A_0$-arrows making the evident triangles commute.

Suppose given a choice of composite ultraconvergence arrows. Let $a \in A_0$ and let $(Y,C,\xi)$ in $\bbbeta^2 A_0$; for each $y\in Y$, write $C(y) \coloneqq (Z_y, c_y, \chi_y)$. Consider an element of $(A\circ \u\bbbeta A) ( a , (Y, C, \xi))$, represented as above by some $(X,b,\nu)$ in $\bbbeta A_0$, an ultraconvergence arrow $r \colon a \ult (b_x)_{x:\nu}$, and a $\bbbeta R$-arrow $(p,\gamma) \colon (X, d_0b , \nu) \to (Y, d_1C, \xi)$. Note that $p$ trivially defines a $\bbbeta A_0$-arrow $p \colon (X, b,\nu) \to (Y, bp, \xi)$, so that we can consider the ultraconvergence arrow $r[p] \colon a \ult (b_{p(y)})_{y:\xi}$. Note also that, for each $y \in Y$, the component $\gamma_y \in A(b_{p(y)} , C(y))$ is an ultraconvergence arrow $b_{p(y)} \ult (c_{y,z})_{z:\chi_y}$. The composite $(\gamma_y)_{y:\xi} \cdot r[p]$ is therefore an ultraconvergence arrow $a \ult (c_{y,z})_{(y,z) : \sum_{y:\xi}\chi_y}$, i.e., an element of $A(a, \mu^{\bbbeta}_{A_0}(Y, C, \xi)) = (A \circ\mu^{\u\bbbeta}_{A_0} )(a, (Y,C, \xi))$. This assignment defines a transformation $\Delta \colon  A \circ \u\bbbeta A \Rightarrow A \circ \mu^{\u\bbbeta}_{A_0}$.

Conversely, suppose given a transformation $\Delta \colon  A \circ \u\bbbeta A \Rightarrow A \circ \mu^{\u\bbbeta}_{A_0}$. Let $r \colon a \ult (b_x)_{x:\nu}$ be an ultraconvergence arrow and let $(s_x \colon b_x \ult (c_{x,y})_{y:\xi_x})_{x:\nu}$ be an ultrafamily of ultraconvergence arrows. Consider then the object $(X, C, \nu)$ in $\bbbeta^2 A_0$ defined by $C(x) = (Y_x, c_x, \xi_x)$, so that $s_x \in A(b_x, C(x))$ for each $x \in X$: setting $\gamma_x \coloneqq s_x$ we have that the pair $(\id_X, \gamma)$ defines a $\bbbeta R$-arrow $(X, d_0b, \nu) \to (X, d_1 C, \nu)$. Applying the corresponding component of $\Delta$ to the triple given by $(X,b,\nu)$, $r$, and $(\id_X, \gamma)$ we obtain an element of $(A \circ \mu^{\u\bbbeta}_{A_0})( a , (X, C, \nu)) = A( a, \mu^{\bbbeta}_{A_0}(X, C, \nu))$, i.e., an ultraconvergence arrow $a \ult (c_{x,y})_{(x,y):\sum_{x:\nu}\xi_x}$ which we can set as the composite $(s_x)_{x:\nu}\cdot r$.

The two naturality axioms of an ultraconvergence structure then correspond, in the algebraic description, to the naturality of the multiplicator $\Delta$, while the other axioms correspond to the axioms of a lax algebra.

    \end{proof}

\end{theorem}

\subsection{Transformations via normalization}

We now move to arrows and $2$-cells between ultraconvergence spaces; first, let us recall the definitions from \cite{saadiaExtendingConceptualCompleteness2025,goolToposesEnoughPoints2025}.

\begin{definition}
A \emph{continuous map} of ultraconvergence spaces $ A \to A'$ consists of a functor $f \colon A_0 \to A_0'$ together with a \emph{continuity structure} on it, that is, a family of functions
\[ A(a , (X,b,\nu)) \to A'(f(a), (X, fb, \nu))\]
 also denoted by $f$, satisfying
\begin{enumerate}
	\item $f(r[g]) = f(r)[g]$;
	\item $f(\id_a) = \id_{f(a)}$;
	\item $f((s_x)_{x:\nu}\cdot r) =( f(s_x))_{x:\nu} \cdot f(r)$,
\end{enumerate}
for any ultraconvergence arrow $r \colon a \ult (b_x)_{x:\nu}$, any ultrafamily of ultraconvergence arrows $(s_x \colon b_x \ult (c_{x,y})_{y:\xi_x} )_{x:\nu}$, and $\bbbeta A_0$-arrow $g\colon (X,b,\nu) \to (Y,c,\xi)$.

A \emph{transformation} $f \Rightarrow g$ between continuous maps $f,g \colon A \to A'$ consists
of an ultraconvergence arrow $\alpha_a \colon f(a) \ult (g(a))_{*:1}$ in $A'$
for each $a\in A_0$, such that
\[ g(r) \cdot \alpha_a = (\alpha_{b_x})_{x:\nu} \cdot f(r)\]
for each ultraconvergence arrow $r \colon a \ult (b_x)_{x:\nu}$ in $A$.

\end{definition}

\begin{example}[\protect{\cite[Rem.\ 3.13]{goolToposesEnoughPoints2025}}]
  \looseness=-1
 \Cref{ex:top-spaces-as-ultspaces} extends to an embedding $\Top \hookrightarrow\UltSp$: continuous maps recover the topologically continuous maps, and transformations recover their (pointwise) \emph{specialization order}.
\end{example}

The datum of a continuity structure on $f \colon A_0 \to A_0'$ coincides with
what should be the structure of a \emph{colax morphism} of lax
$\u\bbbeta$-algebras on the representable profunctor $f_\diamond \colon A_0 \pro
A_0'$ --- that is, a natural transformation $A'\circ \u\bbbeta f_\diamond
\Rightarrow f_\diamond \circ A$. However, this notion, too, is not generally
well-defined for a left skew monad: as for lax algebras, it is not evident how
to write appropriate coherence axioms. Crucially, a notion of
\emph{representable colax morphism} can be defined for monads allowing for lax
algebras; we refer the reader to Appendix \ref{app:A5} for more details.

\begin{definition}\label{def:colax-morphisms-lax-algebras-allowed}
  Let $\monad{\u{\ps T}}$ be a left skew monad on $\PROF$ extending a pseudomonad $\monad{\ps T}$ on $\CAT$ via \Cref{cor:extension-pseudomonads} and suppose that $\u{\ps T}$ allows for lax algebras. Let $\braket{A_0, A, \Gamma,\Delta}$ and $\braket{A_0', A', \Gamma',\Delta'}$ be lax $\u{\ps T}$-algebras. Then, a \emph{representable colax morphism} $A \to A'$ consists of:
\begin{itemize}
  \item a functor $f\colon A_0 \to A_0'$, and
  \item a natural transformation $\Theta \colon f_\diamond \circ A \Rightarrow  A'\circ (\ps T f)_\diamond$,
\end{itemize}
satisfying the following coherence conditions:
\begin{enumerate}
  \item
  \[ \begin{tikzcd}
	& {A_0'} & {\ps T A_0'} \\
	{A_0} & {\ps T A_0} & {A_0'} \\
	& {A_0}
	\arrow["{A'}"'{inner sep=.8ex}, "\shortmid"{marking}, from=1-3, to=1-2]
	\arrow[""{name=0, anchor=center, inner sep=0}, "f", from=2-1, to=1-2]
	\arrow[""{name=1, anchor=center, inner sep=0}, "{\ps T f}"', from=2-2, to=1-3]
	\arrow["A"'{inner sep=.8ex}, "\shortmid"{marking}, from=2-2, to=2-1]
	\arrow["{\eta^{\ps T}_f}"', between={0.3}{0.7}, Rightarrow, from=2-2, to=2-3]
	\arrow["{\eta^{\ps T}_{A_0'}}"', from=2-3, to=1-3]
	\arrow[""{name=2, anchor=center, inner sep=0}, equals, from=3-2, to=2-1]
	\arrow["{\eta^{\ps T}_{A_0}}"', from=3-2, to=2-2]
	\arrow["f"', from=3-2, to=2-3]
	\arrow["\Theta", between={0.3}{0.7}, Rightarrow, from=0, to=1]
	\arrow["{{\Gamma}}"', between={0.3}{0.8}, Rightarrow, from=2, to=2-2]
\end{tikzcd} = \begin{tikzcd}
	{A_0'} && {\ps T A_0'} \\
	& {A_0'} \\
	& {A_0}
	\arrow["{A'}"'{inner sep=.8ex}, "\shortmid"{marking}, from=1-3, to=1-1]
	\arrow[""{name=0, anchor=center, inner sep=0}, equals, from=2-2, to=1-1]
	\arrow[""{name=1, anchor=center, inner sep=0}, "{{\eta^{\ps T}_{A_0'}}}"', from=2-2, to=1-3]
	\arrow["f"', from=3-2, to=2-2]
	\arrow["{{\Gamma'}}", between={0.3}{0.7}, Rightarrow, from=0, to=1]
\end{tikzcd}\]
\item
\[\begin{tikzcd}[column sep = 35pt, row sep = 25pt]
	& {\ps T^2 A_0} & \\
	{\ps T A_0} && {\ps T^2 A_0'} \\
	{A_0} & {\ps T A_0'} & {\ps T A_0'} \\
	& {A_0'}
	\arrow["{\u{\ps T }A}"', from=1-2, to=2-1]
	\arrow["{\ps T^2 f}", from=1-2, to=2-3]
	\arrow[""{name=0, anchor=center, inner sep=0}, "{\u{\ps T} (f_\diamond \circ A) }"{description, pos=0.25}, "\shortmid"{marking}, shift right=1, curve={height=12pt}, from=1-2, to=3-2]
	\arrow[""{name=1, anchor=center, inner sep=0}, "{\u{\ps T}(A' \circ (\ps T f)_\diamond)}"{description, pos=0.75}, "\shortmid"{marking}, shift left=1, curve={height=-12pt}, from=1-2, to=3-2]
	\arrow["A"'{inner sep=.8ex}, "\shortmid"{marking}, from=2-1, to=3-1]
	\arrow["{\ps T f}"', from=2-1, to=3-2]
	\arrow["{\u{\ps T} A'}"{inner sep=.8ex}, "\shortmid"{marking}, from=2-3, to=3-2]
	\arrow["{\mu^{\ps T }_{A_0'}}", from=2-3, to=3-3]
	\arrow["\Theta"', between={0.2}{0.8}, Rightarrow, from=3-1, to=3-2]
	\arrow["f"', from=3-1, to=4-2]
	\arrow["{{{\Delta_b}}}"', between={0.2}{0.8}, Rightarrow, from=3-2, to=3-3]
	\arrow["{A'}"'{inner sep=.8ex}, "\shortmid"{marking}, from=3-2, to=4-2]
	\arrow["{A'}"{inner sep=.8ex}, "\shortmid"{marking}, from=3-3, to=4-2]
	\arrow["{\phi_{A',(\ps Tf)_\diamond}}"', between={0.2}{0.8}, Rightarrow, from=1, to=2-3]
	\arrow["{\u{\ps T}\Theta }", between={0.2}{0.8}, Rightarrow, from=0, to=1]
	\arrow["{{{\psi_{f_\diamond,A}}}}"', between={0.2}{0.8}, Rightarrow, from=2-1, to=0]
\end{tikzcd} = \begin{tikzcd}[column sep = 35pt, row sep = 25pt]
	& {\ps T^2A_0} & \\
	{\ps TA_0} & {\ps T A_0} & {\ps T ^2 A_0'} \\
	{A_0} && {\ps T A_0'} \\
	& {A_0'}
	\arrow["{\u{\ps T }A}"'{inner sep=.8ex}, "\shortmid"{marking}, from=1-2, to=2-1]
	\arrow["{\mu^{\ps T}_{A_0}}", from=1-2, to=2-2]
	\arrow["{\ps T^2 f}", from=1-2, to=2-3]
	\arrow["\Delta"', between={0.2}{0.8}, Rightarrow, from=2-1, to=2-2]
	\arrow["A"'{inner sep=.8ex}, "\shortmid"{marking}, from=2-1, to=3-1]
	\arrow["{\mu_f^{\ps T}}"', between={0.2}{0.8}, Rightarrow, from=2-2, to=2-3]
	\arrow["A"{inner sep=.8ex}, "\shortmid"{marking}, from=2-2, to=3-1]
	\arrow["{\ps T f}"', from=2-2, to=3-3]
	\arrow["{\mu^{\ps T}_{A_0'}}", from=2-3, to=3-3]
	\arrow["\Theta"', between={0.3}{0.7}, Rightarrow, from=3-1, to=3-3]
	\arrow["f"', from=3-1, to=4-2]
	\arrow["{A'}"{inner sep=.8ex}, "\shortmid"{marking}, from=3-3, to=4-2]
\end{tikzcd}\]
\end{enumerate}

Given two such morphisms $\braket{f,\Theta}, \braket{f', \Theta'}$, an \emph{algebra $2$-cell} $\sigma \colon f \Rightarrow f'$ is a natural transformation $f \Rightarrow f'$ such that
\[ \begin{tikzcd}[row sep = 14pt]
	{\ps T A_0} && {\ps T A'_0} \\
	\\
	{A_0} && {A_0'}
	\arrow[""{name=0, anchor=center, inner sep=0}, "{{\ps T f}}"{description}, curve={height=18pt}, from=1-1, to=1-3]
	\arrow[""{name=1, anchor=center, inner sep=0}, "{{\ps T f'}}", curve={height=-18pt}, from=1-1, to=1-3]
	\arrow["A"'{inner sep=.8ex}, "\shortmid"{marking}, from=1-1, to=3-1]
	\arrow[""{name=2, anchor=center, inner sep=0}, "{A'}"{inner sep=.8ex}, "\shortmid"{marking}, from=1-3, to=3-3]
	\arrow["f"', curve={height=18pt}, from=3-1, to=3-3]
	\arrow["{{\ps T \sigma}}", between={0.3}{0.7}, Rightarrow, from=0, to=1]
	\arrow["\Theta"'{pos=0.35}, between={0.2}{0.7}, Rightarrow, from=3-1, to=2]
		\end{tikzcd} \qquad \begin{tikzcd}[row sep = 14pt]
	{\ps T A_0} && {\ps T A_0'} \\
	\\
	{A_0} && {A_0'}
	\arrow["{\ps T f'}", curve={height=-18pt}, from=1-1, to=1-3]
	\arrow[""{name=0, anchor=center, inner sep=.8ex},"\shortmid"{marking}, "A"', from=1-1, to=3-1]
	\arrow["{A'}"{inner sep =.8ex},"\shortmid"{marking}, from=1-3, to=3-3]
	\arrow[""{name=1, anchor=center, inner sep=0}, "{{{f'}}}"{description}, curve={height=-18pt}, from=3-1, to=3-3]
	\arrow[""{name=2, anchor=center, inner sep=0}, "f"', curve={height=18pt}, from=3-1, to=3-3]
	\arrow["{{{\Theta'}}}"{pos=0.65}, between={0.3}{0.8}, Rightarrow, from=0, to=1-3]
	\arrow["\sigma", between={0.3}{0.7}, Rightarrow, from=2, to=1]
		\end{tikzcd} \]

We denote by ${\nLaxAlgrco{\u{\ps T}}}$ the $2$-category of lax $\u{\ps T}$-algebras, representable colax morphisms, and algebra $2$-cells (see \Cref{prop:2cat-lax-algebras-allowed}).
\end{definition}

\begin{remark}\label{rem:colax-morphisms-are-okay}
  As in \Cref{rem:lax-algebras-are-okay}, note that if $\u{\ps T}$ is a pseudomonad then these definitions are equivalent to the usual definitions of (representable) colax morphisms between lax algebras and algebra $2$-cells between them. More generally, the coherence conditions above are formally equivalent to the usual coherence conditions defining colax morphisms (and, in particular, pseudomorphisms) of lax algebras for a pseudomonad on an arbitrary bicategory.
\end{remark}

With this definition, \Cref{thm:ucspaces-algebraically} extends straightforwardly to continuous maps, recovered as the {representable colax morphisms} of discrete lax $\u\bbbeta$-algebras.

\begin{proposition}\label{prop:continuous-maps-algebraically}
    Let $A,A'$ be ultraconvergence spaces and let $f \colon A_0 \to A_0'$ be a functor. Then, there is a bijection between:
    \begin{enumerate}
        \item continuity structures $A \to A'$ on $f$;
        \item structures of a colax morphism of lax $\u\bbbeta$-algebras $A\to A'$ on $f$.
    \end{enumerate}
\end{proposition}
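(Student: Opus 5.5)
We follow the pattern of the proof of \Cref{thm:ucspaces-algebraically}: first identify the underlying data on each side, then match the coherence conditions. A representable colax morphism structure on $f$ is a natural transformation $\Theta \colon f_\diamond \circ A \Rightarrow A' \circ (\bbbeta f)_\diamond$. Since $A_0$ and $A_0'$ are discrete and $f_\diamond(a',a) = A_0'(a',f(a))$, the composite $f_\diamond \circ A$ at $(a', (X,b,\nu))$ is a coend over $A_0$. Discreteness collapses it to the coproduct $\sum_{a \colon f(a)=a'} A(a,(X,b,\nu))$. On the other side, $A' \circ (\bbbeta f)_\diamond$ at the same point is simply $A'(a', (X,fb,\nu))$ by the Yoneda lemma. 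Hence a family of components of $\Theta$ is exactly a family of functions $A(a,(X,b,\nu)) \to A'(f(a),(X,fb,\nu))$, and naturality of $\Theta$ is the requirement that these commute with the action of $\bbbeta A_0$-arrows $g$. That requirement is axiom (1) of a continuity structure, $f(r[g]) = f(r)[g]$. Naturality in the $A_0'$ variable is vacuous, because $A_0'$ is discrete.

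Next we match the two coherence conditions of \Cref{def:colax-morphisms-lax-algebras-allowed}. Condition (1) involves only $\Gamma$, $\Gamma'$, $\Theta$ and the invertible $2$-cell $\eta^{\bbbeta}_f$, which is an identity here, since $\bbbeta f(a)_{*:1} = (f(a))_{*:1}$. Evaluating both sides at $a \in A_0$ should give $f(\id_a) = \id_{f(a)}$, which is axiom (2).

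Condition (2) is the main obstacle. We plan to evaluate both pastings on a generator of $\u\bbbeta A \circ$-type elements, as in the proof of \Cref{thm:ucspaces-algebraically}. Such a generator consists of an ultraconvergence arrow $r \colon a \ult (b_x)_{x:\nu}$ together with the $\bbbeta R$-arrow $(\id_X,(s_x)_x)$, where $R$ is the collage of $A$. On the right-hand side, $\Delta$ produces $(s_x)_{x:\nu}\cdot r$, then $\mu^{\bbbeta}_f$ (again an identity on ultrafamilies), and then $\Theta$, giving $f((s_x)_{x:\nu}\cdot r)$.

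For the left-hand side we must compute four things:
\begin{itemize}
\item $\psi_{f_\diamond,A}$, which merely regroups the data;
\item $\u\bbbeta\Theta$, which acts by $\bbbeta$ applied to the map of collages induced by $\Theta$, and hence sends the family $(s_x)$ to $(f(s_x))$;
\item $\phi_{A',(\bbbeta f)_\diamond}$, which, using its explicit construction from \Cref{prop:lokes-allow-for-lax-algebras}, should identify $\u\bbbeta(A' \circ (\bbbeta f)_\diamond)$-elements of this form with pairs of an ultrafamily of arrows in $A'$ and a $\bbbeta^2 f$-image;
\item finally $\Delta'$ applied after $\Theta$ on $r$, producing $(f(s_x))_{x:\nu}\cdot f(r)$.
\end{itemize}
The delicate part is checking that $\u\bbbeta\Theta$ and $\phi$ act as described on representatives. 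This uses the explicit description of \Cref{rem:explicit-description-on-profunctors} and the fact that every element of $\u\bbbeta A$ admits a representative with identity first component.

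Once the two sides are matched, condition (2) reduces to axiom (3). Both sides are natural, and since these generators hit every element up to naturality, checking on them suffices. This yields the desired bijection.
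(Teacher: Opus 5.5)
Your proposal is correct and follows essentially the same route as the paper. You identify the components of $\Theta$ with the functions of a continuity structure, using discreteness and co-Yoneda. You then match naturality of $\Theta$ with $f(r[g])=f(r)[g]$, and match the two colax-morphism axioms with preservation of identities and composites. You go further than the paper only in spelling out how axiom (2) evaluates on generators $\bigl(r,(\id_X,(s_x)_x)\bigr)$. Note that the reduction to such generators happens in the composite $f_\diamond\circ A\circ\u\bbbeta A$, not in $\u\bbbeta A$ alone: the reindexing $p$ is absorbed into $r[p]$ through the coend over $\bbbeta A_0$, exactly as in the proof of \Cref{thm:ucspaces-algebraically}.
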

\begin{proof}
  Note that, for $a' \in A_0'$ and $(X,b,\nu)$ in $\bbbeta A_0$, an element of $(f_\diamond \circ A ) (a', (X,b,\nu))$ is given by an ultraconvergence arrow $r \colon a \ult (b_x)_{x:\nu}$ in $A$ for some $a \in A_0$ such that $f(a) = a'$. Thus, the datum of a transformation $\Theta \colon f_\diamond \circ A \Rightarrow A' \circ (\bbbeta f)_\diamond$ corresponds exactly to a continuity structure $A \to A'$ on $f$, as both assignments map $r$ to some ultraconvergence arrow $f(r) \colon a' \ult (f(b_x))_{x:\nu}$, which is an element of $A'(a', (X, fb, \nu)) = (A'\circ (\bbbeta f)_\diamond) (a', (X, fb, \nu))$. One can then see that the naturality of $\Theta$ corresponds to the naturality of the continuity structure, and that the axioms of a colax morphism correspond exactly to preservation of identities and composites.
\end{proof}

This algebraic description of ultraconvergence spaces breaks when dealing with
$2$-cells. Indeed, the natural notion of an algebra $2$-cell, between continuous
maps of ultraconvergence spaces seen as colax morphisms of lax
$\u\bbbeta$-algebras, trivializes: the only such $2$-cells are identities, since
they are based on natural transformations between functors which are themselves
between discrete categories. To recover transformations between continuous maps
algebraically, we draw inspiration from the theory of \emph{generalized
  multicategories} \cite{cruttwellUnifiedFrameworkGeneralized2010}. We cannot
entirely place ourselves in this framework as our structures of interest are
naturally described by means of skew monads, rather than the strict monads considered
therein. However, we can adapt the correspondence between
\emph{object-discreteness} and \emph{normalization} studied \emph{ibid.} as follows.

\looseness=-1 First recall that a topology on a set $X$ equips its
$0$-dimensional structure, only consisting of disconnected elements, with
additional $1$-dimensional structure: namely, the specialization order. In the same way,
an ultraconvergence structure on a discrete category $A_0$ equips it with
$1$-dimensional structure, packaged in the \emph{category of points} of the
corresponding lax algebra.

\begin{definition}
  For a lax $\u\bbbeta$-algebra $A\colon \bbbeta A_0 \pro A_0$, the
  \emph{category of points}\footnote{In multicategorical terms, this is the
    \emph{underlying category} of the generalized multicategory $A$.} of $A$ is
  the category $\pt(A)$ whose objects are those of $A_0$ and whose morphisms $a
  \to a'$ are ultraconvergence arrows $a \ult (a')_{*:1}$.
\end{definition}

\begin{remark}
  In case $A_0$ is discrete and hence $A$ is an ultraconvergence space, $\pt(A)$
  is the category of continuous maps $1 \to A$ --- indeed, the \emph{points} of
  $A$~\cite[Def.\ 4.6]{saadiaExtendingConceptualCompleteness2025}.
\end{remark}

\looseness=-1 The profunctor $A\colon \bbbeta A_0 \pro A_0$ can be seen to
extend to a profunctor $\bbbeta (\pt ( A)) \pro \pt( A)$ which also naturally
carries a lax $\u \bbbeta$-algebra structure; with a slight abuse of notation, we denote with $\pt(A)$ both the algebra and its carrier category. The operation $A \mapsto \pt(A)$ forms a \emph{closure operator} on lax $\u \bbbeta$-algebras: indeed, $\pt(\pt(A))
\cong \pt(A)$, and the unitor $\Gamma\colon 1_{A_0} \Rightarrow A \circ
(\eta_{A_0}^\bbbeta)_\diamond$ of $A$ induces a comparison functor $\gamma
\colon A_0 \to \pt ( A )$ acting as the identity on objects and carrying the structure of a colax morphism.

\looseness=-1 The information that an ultraconvergence space encompasses is
exactly that of the profunctor on its category of points. One could consider
arbitrary lax $\u \bbbeta$-algebras instead of just the discrete ones, but it
may happen that two such algebras have the same categories of points. In this
sense, the $1$-dimensional data of the carrier category of an algebra is
superfluous, or ``unwelcome''~\cite[\S 4.3]{hylandElementsTheoryAlgebraic2014}.
The definition of ultraconvergence spaces hence requires the carrier category to
be discrete, precisely so as to exclude this data. Another approach described in \emph{loc.\ cit.}, which also
singles out the categories of points, is to ask for the comparison functor $\gamma\colon A_0 \to
\pt(A)$ to be fully-faithful, hence an isomorphism of categories, so that $A
\cong \pt(A)$. In other words, this amounts to restricting to those lax $\u\bbbeta$-algebras which are \emph{closed} with respect to the operator $\pt (-)$.

\begin{definition}
    A lax $\u\bbbeta$-algebra is \emph{normalized} if its comparison functor $\gamma$ is fully-faithful.
\end{definition}

The two descriptions, in terms of discreteness and of normalization, are equivalent for lax $\u\bbbeta$-algebras and their representable colax morphisms.

\begin{proposition}\label{prop:discrete-iff-normalized}
  There is a bijection between:
  \begin{enumerate}
  \item discrete lax $\u\bbbeta$-algebras, and
  \item normalized lax $\u\bbbeta$-algebras.
  \end{enumerate}
  This bijection extends to an equivalence of categories by considering
  representable colax morphisms.
\end{proposition}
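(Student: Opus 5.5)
The plan is to realize the bijection by the two operations already at hand: the closure operator $\pt(-)$ and its inverse, restriction to the discrete category of objects. Given a discrete lax $\u\bbbeta$-algebra $A\colon \bbbeta A_0 \pro A_0$, send it to $\pt(A)$. We know $\pt(A)$ carries a lax $\u\bbbeta$-algebra structure on the profunctor $\bbbeta(\pt(A)) \pro \pt(A)$ extending $A$. The idempotence $\pt(\pt(A)) \cong \pt(A)$ says exactly that the comparison functor of $\pt(A)$ is an isomorphism, so $\pt(A)$ is normalized.

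Conversely, given a normalized lax algebra $B\colon \bbbeta B_0 \pro B_0$, let $|B_0|$ be the discrete category on its objects, with inclusion $i\colon |B_0| \to B_0$. Restrict along $i$ and $\bbbeta i$ to get $B\circ(\bbbeta i)_\diamond$, then compose with $i^\diamond$ on the target side. Unitor and multiplicator restrict accordingly, using $\phi$ from \Cref{def:allowing-lax-algebras} and the pseudonaturality of $\eta^{\u\bbbeta}$ and $\mu^{\u\bbbeta}$ on representables (\Cref{rem:lax-algebras-allowed-identifying-delta}). The resulting data are just ultraconvergence arrows between objects, composites, and identities, read off from $B$. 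By \Cref{thm:ucspaces-algebraically} the axioms are inherited elementwise, so this gives an ultraconvergence space $|B|$.

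For the bijection, I check both composites. First, $|\pt(A)| = A$ on the nose: $\pt(A)$ has the same objects, and its profunctor restricted to discrete ultrafamilies is $A$. Second, for $\pt(|B|)$ versus $B$, note that morphisms $a\to a'$ of $\pt(|B|)$ are ultraconvergence arrows $a \ult (a')_{*:1}$ in $B$. These are morphisms of $\pt(B)$, which is isomorphic to $B_0$ via $\gamma$ by normalization. I then compare profunctors. An element of $B(b,(X,c,\nu))$ with $c\colon X\to B_0$ already lies over the discrete ultrafamily underlying $(X,c,\nu)$, since $X$ is a set, so objects of $\bbbeta B_0$ and $\bbbeta |B_0|$ coincide. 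It remains to see that the action of $\bbbeta B_0$-arrows with nontrivial components $\alpha_x$ is recovered from composition with the arrows $\Gamma(\alpha_x)$ in $\pt(B)$. That is, I must show $r[(g,\alpha)] = (\Gamma(\alpha_x))_{x}\cdot r[(g,\id)]$. This follows from the unit axiom (2) and naturality of $\Delta$ in the second variable, which is exactly what the extension of $A$ to $\pt(A)$ uses. Hence $\pt(|B|)\cong B$ compatibly with $\gamma$.

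For morphisms, a representable colax morphism $(f,\Theta)\colon A\to A'$ of discrete algebras is a continuity structure by \Cref{prop:continuous-maps-algebraically}. It induces a functor $\pt(f)\colon \pt(A)\to\pt(A')$ on arrows $a\ult(a')_{*:1}$, and $\Theta$ extends along the same formula. Conversely, restricting a colax morphism of normalized algebras to objects gives a continuity structure. Functoriality is clear, and the two composites are naturally isomorphic to the identities via the isomorphisms above. I expect the main obstacle to be the second step: verifying that the profunctor action of nondiscrete $\bbbeta B_0$-arrows, and the coherence of the extended structure, are determined by the discrete data together with $\Gamma$. I would first establish the displayed factorization identity and derive everything else from it.
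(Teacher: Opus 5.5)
The gap is size. Otherwise your route is the one the paper sketches. The paper gives no proof beyond the remarks that $\pt(-)$ is a closure operator and that normalization means $A\cong\pt(A)$. Your key identity $r[(g,\alpha)]=(\gamma(\alpha_y))_{y:\xi}\cdot r[(g,\id)]$ is correct: it is right naturality of $\Delta$ along the arrows $\id\otimes(\eta(\alpha_y))_y$, followed by right unitality.

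The size problem sits at the step ``this gives an ultraconvergence space $|B|$''. That is exactly where you invoke \Cref{thm:ucspaces-algebraically}, whose hypothesis is that the profunctor be small.

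\textbf{Why smallness fails.} On a discrete category, a presheaf is small iff it is nonempty at only a \emph{set} of objects. So \Cref{def:uc-space,def:lax-algebras-allowed} demand that each ultrafamily $\xi$ admit ultraconvergence arrows from only a set of points. Restricting along $i$ keeps the sets $B(a,\xi)$ but loses this bound.

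\textbf{Counterexample.} Take $B_0=\Set$ with the ultraproduct pseudo-$\bbbeta$-algebra $\Phi$ (\Cref{ex:ultracat-of-points} for the empty theory), and set $B=\Phi_\diamond$. It is small, being representable. It is normalized, as \Cref{cor:ucats-as-uspaces} presupposes. Yet $|B|(a,\eta^{\bbbeta}_{\Set}(1))\cong\Set(a,1)$ is nonempty for every set $a$. For the same reason $i^\diamond$ is not a small profunctor, so $i^\diamond\circ B\circ(\bbbeta i)_\diamond$ is not even a composite in $\PROF$.

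\textbf{Consequence for the statement.} Since $\pt(A)(-,\xi)$ and $A(-,\xi)$ are nonempty at the same objects, no small discrete $A$ has $\pt(A)\cong B$. So the correspondence fails for the statement as literally posed, and no better choice of inverse repairs it. The paper only checks smallness on the normalized side (\Cref{rem:ultrasp-of-points}).

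\textbf{What you can prove.} Put the size condition where both constructions preserve it. Either restrict the normalized side to algebras in which each $B(-,\xi)$ is nonempty at only a set of objects, or drop smallness on the discrete side. The first condition already implies smallness, since the category of elements is then small. With it, $\pt(A)$ is small for the same reason and $|B|$ is small by hypothesis.

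Your argument then goes through once you add three checks you left implicit:
\begin{enumerate}
\item The comparison $\gamma\colon B_0\cong\pt(|B|)$ must also match the actions on the point: $r\circ t=(r)_{*:1}\cdot\gamma(t)$ for $t\colon a'\to a$ in $B_0$. This follows from $\gamma(t)=\Gamma_a\circ t$, naturality of $\Delta$ in the point, and left unitality.
\item Naturality of $\gamma$ in $B$ (equivalently, fullness of $\pt$) needs that, for normalized algebras, the functor part of a representable colax morphism is forced by $\Theta$. This is the identity $\Theta(\gamma(u))=\gamma'(Fu)$, which comes from naturality of $\Theta$ and axiom (1) of \Cref{def:colax-morphisms-lax-algebras-allowed}.
\item Reading off the axioms of $|B|$ requires the elementwise translation of \Cref{thm:ucspaces-algebraically} for a non-discrete carrier, not the theorem itself.
\end{enumerate}
Finally, the ``bijection'' holds only up to isomorphism on the normalized side, since $\pt(|B|)$ has hom-sets $B(a,\eta(a'))$ rather than $B_0(a,a')$. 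The real content is the equivalence.
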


\begin{remark}\label{rem:ultrasp-of-points}
  \Cref{ex:ultrasp-of-points} can now be more easily described as a lax
  $\u\bbbeta$-algebra structure on the actual category of models of a geometric
  theory $\bb T$, rather than on the discrete category underneath. This
  definition also shows how the profunctor $\bbbeta (\Mod (\bb T)) \pro \Mod
  (\bb T)$ defining ultraconvergence
  arrows satisfies our extra assumption of smallness: indeed, this follows from $\Mod{(\bb T)}$
  being accessible \cite[Cor.\ 4.3.2]{borceuxHandbookCategoricalAlgebra1994} and
  hence admitting a small dense subcategory (cf.\ \cite[Thm.\
  5.1]{kellyBasicConceptsEnriched2005}).
\end{remark}

Since the carrier categories of discrete lax $\u \bbbeta$-algebras do not
include the $1$-dimensional data present in the categories of points, the algebra $2$-cells between morphisms thereof,
as we noted, trivialize. On the other hand,
$2$-cells in the normalized description recover exactly the transformations
between continuous maps. 

\begin{proposition}\label{prop:transformations-algebraically}
  Let $A,A'$ be ultraconvergence spaces seen as
  normalized lax $\u\bbbeta$-algebras, and let $f,g \colon A \to A'$ be
  continuous maps seen as representable colax morphisms of lax
  $\u\bbbeta$-algebras. Then, there is a bijection between:
  \begin{enumerate}
  \item transformations $f \Rightarrow g$;
  \item algebra $2$-cells $f_\diamond \Rightarrow g_\diamond$.
  \end{enumerate}
\end{proposition}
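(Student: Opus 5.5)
Under \Cref{prop:discrete-iff-normalized}, I will regard $A$ and $A'$ as the normalized algebras $\pt(A)$ and $\pt(A')$. Their carrier categories are the categories of points, whose morphisms $a \to a'$ are the ultraconvergence arrows $a \ult (a')_{*:1}$. The continuous maps $f,g$ become representable colax morphisms $\braket{f,\Theta}$ and $\braket{g,\Theta'}$ whose underlying functors $f,g\colon \pt(A)\to\pt(A')$ act on points by applying the continuity structure to arrows of shape $(a')_{*:1}$. An algebra $2$-cell $\sigma\colon f\Rightarrow g$ is a natural transformation between these functors satisfying the single pasting equation of \Cref{def:colax-morphisms-lax-algebras-allowed}. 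Its components $\sigma_a\colon f(a)\to g(a)$ in $\pt(A')$ are exactly ultraconvergence arrows $\alpha_a\colon f(a)\ult (g(a))_{*:1}$. The bijection will therefore be the identity on components, and the work is to match the conditions on the two sides.

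The first step is to unfold the algebra $2$-cell equation elementwise. I fix $r\colon a\ult (b_x)_{x:\nu}$, viewed as an element of $(f_\diamond\circ A)$-type data as in the proof of \Cref{prop:continuous-maps-algebraically}. The left pasting whiskers $\Theta'$ after $(\bbbeta\sigma)$. Since $\bbbeta\sigma$ at $(X,b,\nu)$ is the $\bbbeta\pt(A')$-arrow $(\id_X,(\sigma_{b_x})_x)$, and the profunctor $\pt(A')$ acts on such arrows by the extension of $A'$ to $\bbbeta\pt(A')$, this side computes to $(\alpha_{b_x})_{x:\nu}\cdot f(r)$. The right pasting first applies $\Theta$ and then precomposes along $\sigma_a$ on the $A_0'$ side, giving $g(r)\cdot\alpha_a$. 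The equation thus becomes precisely the transformation axiom $g(r)\cdot\alpha_a=(\alpha_{b_x})_{x:\nu}\cdot f(r)$.

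The second step is naturality. A transformation in the sense of the definition gives a natural transformation $f\Rightarrow g$ of functors on points, since naturality is the special case of the axiom with $r$ an arrow of shape $(a')_{*:1}$, using left and right unitality. Conversely, an algebra $2$-cell is natural by assumption. Both assignments are identity on components, so they are mutually inverse once the equations match.

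The main obstacle is making precise how the extended profunctor $\pt(A)\colon\bbbeta\pt(A)\pro\pt(A)$ acts on non-identity morphisms of $\pt(A)$ on either side. These actions are defined via composition of ultraconvergence arrows: postcomposition $a'\to a$ acts by $r\cdot(-)$, and an ultrafamily of point-arrows acts by $(s_x)_{x:\nu}\cdot(-)$. The same bookkeeping is needed to check that $\Theta$ is transported correctly along $\gamma$. I would first establish a lemma spelling out these actions via the composite ultraconvergence arrows and associativity, and then verify the two pasting computations above.
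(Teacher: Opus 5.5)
Your proposal is correct and is the argument the paper has in mind. The paper gives no written proof, only the remark that in the normalized description algebra $2$-cells recover transformations. Your route matches it: pass to the categories of points, identify the components $\sigma_a$ with arrows $f(a)\ult(g(a))_{*:1}$, unfold the algebra $2$-cell equation elementwise as in the proof of \Cref{prop:continuous-maps-algebraically}, and recover naturality of $\sigma$ as the case where $r$ is point-shaped.

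Two small points need tidying. Since $\pt(A)$ is not discrete, elements of $f_\diamond\circ A$ are coend classes $[(u,r)]$ with $u\colon a'\to f(a)$ in $\pt(A')$, so you should first reduce to $u=\id_{f(a)}$ using naturality of both pastings in the $\pt(A')$-variable. Also, the right-hand pasting uses $\Theta'$ (the structure of $g$), not $\Theta$.
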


In essence, by passing to a description in terms of normalization rather
than discreteness, we arrive at our second main contribution: we recover the $2$-category of ultraconvergence spaces in a concise and
algebraic way --- the only difference with \cite{saadiaExtendingConceptualCompleteness2025, goolToposesEnoughPoints2025} being the smallness bound.

\begin{theorem}\label{thm:ucspaces-algebraically-in-full}
    The $2$-category $\UltSp$ of ultraconvergence spaces is isomorphic to the $2$-category $\nLaxAlgrco{\u\bbbeta}$ of normalized lax $\u\bbbeta$-algebras, representable colax morphisms, and algebra $2$-cells.
\end{theorem}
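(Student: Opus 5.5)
The plan is to assemble the isomorphism from the three bijections already established, at the levels of objects, $1$-cells and $2$-cells. I would first define a $2$-functor $\UltSp \to \nLaxAlgrco{\u\bbbeta}$ and then show it is bijective on objects and locally an isomorphism of hom-categories.

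\emph{Objects.} An ultraconvergence space $A$ on a discrete $A_0$ corresponds, by \Cref{thm:ucspaces-algebraically}, to a discrete lax $\u\bbbeta$-algebra $\braket{A_0,A,\Gamma,\Delta}$. By \Cref{prop:discrete-iff-normalized} this in turn corresponds to the normalized algebra $\pt(A)$. So on objects we send $A$ to $\pt(A)$, regarded as a lax $\u\bbbeta$-algebra on its category of points, which is a bijection. Here I would make sure the bijection of \Cref{prop:discrete-iff-normalized} really is given by $A\mapsto \pt(A)$, with inverse restricting a normalized algebra to the discrete category on its objects. This is used to identify hom-sets below.

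\emph{$1$-cells.} A continuous map $f\colon A\to A'$ is, by \Cref{prop:continuous-maps-algebraically}, the same as a representable colax morphism between the discrete algebras. Transporting along the equivalence of \Cref{prop:discrete-iff-normalized}, it becomes a representable colax morphism $\pt(A)\to\pt(A')$. Since that equivalence is bijective on objects, and both categories are faithful enough to have isomorphic hom-sets, the map on hom-objects is a bijection.

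I would check this concretely. The underlying functor $\pt(f)$ sends $\alpha\colon a\ult (a')_{*:1}$ to $f(\alpha)$, and the colax structure is $f$ on ultraconvergence arrows. Functoriality of this assignment is immediate from axiom (3) of continuity structures, applied to composites of ultraconvergence arrows, which is how composition in $\pt$ is defined. Preservation of identities and composites of $1$-cells is also immediate, since composition of representable colax morphisms is given by pasting the $\Theta$'s.

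\emph{$2$-cells.} By \Cref{prop:transformations-algebraically}, transformations $f\Rightarrow g$ correspond bijectively to algebra $2$-cells $f_\diamond\Rightarrow g_\diamond$ between the normalized versions. It remains to verify that this bijection respects vertical composition, horizontal composition and whiskering. Both sides compose via composites in the category of points: a transformation $\alpha$ yields the natural transformation $\pt(f)\Rightarrow\pt(g)$ with components $\alpha_a\in\pt(A')(f(a),g(a))$. Vertical composition is then $(\beta_a)_{*:1}\cdot\alpha_a$ on both sides, by definition of composition in $\pt(A')$. Whiskering uses the continuity structure on points in the same way.

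I expect the main obstacle to be this strict $2$-functoriality check, not the bijections themselves. The subtlety is that the correspondences were built at the level of the discrete algebras, whereas the $2$-cells live on the normalized side. One must confirm that the equivalence of \Cref{prop:discrete-iff-normalized} is in fact an isomorphism of categories, compatible with pasting of the $\Theta$'s and with the coherence cells $\phi,\psi$ implicit in composing colax morphisms (\Cref{prop:2cat-lax-algebras-allowed}). I would handle this by writing every cell elementwise as ultraconvergence arrows, where composition reduces to the unit and associativity axioms of \Cref{def:uc-space}.
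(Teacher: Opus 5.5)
Your plan is correct and is essentially the paper's own argument, which obtains the theorem by assembling \Cref{thm:ucspaces-algebraically}, \Cref{prop:discrete-iff-normalized}, \Cref{prop:continuous-maps-algebraically} and \Cref{prop:transformations-algebraically} on objects, $1$-cells and $2$-cells, and leaves compatibility with composition and whiskering as the routine elementwise check you describe. One small point for the object-level check you flag: restricting a normalized algebra $B$ to its discrete part and reapplying $\pt$ gives back $B$ only up to the identity-on-objects isomorphism $\gamma\colon B_0\cong\pt(B)$, so on the nose you get a $2$-functor that is locally an isomorphism of hom-categories and surjective on objects up to isomorphism, and ``isomorphism'' in the statement (as in the paper) should be read modulo this canonical identification.
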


In particular, we can characterize ultracategories among ultraconvergence spaces. Indeed, as in \cite[Thm.\ 9.2]{cruttwellUnifiedFrameworkGeneralized2010}, an ultraconvergence space is defined by a representable profunctor if and only if it corresponds to a \emph{colax $\bbbeta$-algebra}. Among these, we can single out those corresponding to pseudo-$\bbbeta$-algebras (that is, ultracategories), which we can identify as the \emph{representable} ultraconvergence spaces in the sense of \cite[Cor.\ 9.4]{cruttwellUnifiedFrameworkGeneralized2010}. Although the embedding was proved by hand in \cite[Thm.\ 4.4]{saadiaExtendingConceptualCompleteness2025}, we highlight how the algebraic framework makes it straightforward (cf.\ also \cite[Rem.\ 4.9]{saadiaExtendingConceptualCompleteness2025}).

\begin{corollary}\label{cor:ucats-as-uspaces}
    There is a 2-fully-faithful embedding $\UltCat \hookrightarrow \UltSp$ whose essential image is spanned by those ultraconvergence spaces $A \colon \bbbeta A_0 \pro A_0$ such that $A = \Phi_\diamond$ for some pseudo-$\bbbeta$-algebra $\Phi \colon \bbbeta A_0 \to A_0$.
\end{corollary}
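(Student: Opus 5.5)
The plan is to obtain the embedding as the restriction of \Cref{thm:ucspaces-algebraically-in-full} along a 2-functor $\UltCat \to \nLaxAlgrco{\u\bbbeta}$ sending a pseudo-$\bbbeta$-algebra $\langle A_0, \Phi, \Gamma_\Phi, \Delta_\Phi\rangle$ to the representable profunctor $\Phi_\diamond \colon \bbbeta A_0 \pro A_0$. The algebra structure is transported along the locally fully-faithful pseudofunctor $(-)_\diamond$. Concretely, the unitor is $(\Gamma_\Phi)_\diamond \colon 1_{A_0} \Rightarrow \Phi_\diamond \circ (\eta^{\bbbeta}_{A_0})_\diamond$, composed with the pseudofunctoriality isomorphism. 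The multiplicator is obtained from $(\Delta_\Phi)_\diamond$ after using the identification $\u\bbbeta(\Phi_\diamond) \cong (\bbbeta\Phi)_\diamond$. This identification is available because $\u\bbbeta$ allows for lax algebras (\Cref{prop:lokes-allow-for-lax-algebras}), so $\delta^{\bbbeta}$ is pseudonatural and, as in \Cref{rem:lax-algebras-allowed-identifying-delta}, $\u\bbbeta$ restricts to a pseudofunctor on representables.

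The coherence conditions of \Cref{def:lax-algebras-allowed} then reduce to those of a pseudoalgebra. Every profunctor appearing in them is representable. Moreover, the structural cells $\phi$, $\psi$, $\eta^{\u\bbbeta}$ and $\mu^{\u\bbbeta}$ at representables are, by axiom (2b) of \Cref{def:allowing-lax-algebras} and by \Cref{rem:lax-algebras-allowed-identifying-delta}, the images under $(-)_\diamond$ of the corresponding pseudofunctoriality and pseudonaturality cells of $\bbbeta$ in $\CAT$. Each equation is therefore the $(-)_\diamond$-image of a pseudoalgebra axiom, where $(\Gamma_\Phi)^{-1}$ and $(\Delta_\Phi)^{-1}$ are used only to fix the direction of the cells. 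By \Cref{rem:lax-algebras-are-okay}, these conditions are formally the usual lax-algebra axioms.

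For morphisms, a colax morphism $\langle f, \theta\rangle$ with $\theta \colon \Phi' \circ \bbbeta f \Rightarrow f \circ \Phi$ should be sent to $\Theta \colon f_\diamond \circ \Phi_\diamond \Rightarrow \Phi'_\diamond \circ (\bbbeta f)_\diamond$. Since $(-)_\diamond$ is contravariant on 2-cells in the relevant sense (via $\coop$), I will take $\Theta = \theta_\diamond$ up to these identifications and check the direction carefully. Again, the coherence conditions of \Cref{def:colax-morphisms-lax-algebras-allowed} are images of the colax-morphism axioms. Because $(-)_\diamond$ is locally fully faithful, the assignment $\theta \mapsto \Theta$ is a bijection onto the transformations between representables, and algebra 2-cells correspond exactly. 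This already shows that the 2-functor is 2-fully-faithful into lax $\u\bbbeta$-algebras.

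The main obstacle is normalization: the image algebra should be normalized, so that it lands in $\UltSp$ via \Cref{thm:ucspaces-algebraically-in-full}. The points of $\Phi_\diamond$ from $a$ to $a'$ are elements of $A_0(a, \Phi(\eta a')) \cong A_0(a, a')$, using $\Gamma_\Phi$ invertible. So the comparison functor $\gamma$ is fully faithful, which is exactly where pseudo (rather than colax) algebras are needed. A further subtlety is that the morphisms of $\UltSp$ are representable colax morphisms whose underlying functors are between points categories; full-faithfulness on 1-cells must use \Cref{prop:discrete-iff-normalized} to identify these with functors $A_0 \to A_0'$. For the essential image, given an ultraconvergence space with $A \cong \Phi_\diamond$ and $\Phi$ a pseudo-$\bbbeta$-algebra, the definition applies directly. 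One should also check that the transported structure agrees with the given one; I expect this to follow by full faithfulness of $(-)_\diamond$, pulling $\Gamma$ and $\Delta$ back to cells in $\CAT$.
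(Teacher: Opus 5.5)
Your route is the paper's. You send $\Phi$ to $\Phi_\diamond$, transport the structure along the locally fully faithful $(-)_\diamond$ using the identifications of \Cref{rem:lax-algebras-allowed-identifying-delta}, get normalization from invertibility of the unitor, and conclude with \Cref{thm:ucspaces-algebraically-in-full}. The paper compresses exactly this into the Cruttwell--Shulman correspondence between representable algebras and algebras for $\bbbeta$ on the same carrier, among which it singles out the pseudo ones. Two steps need repair, however.

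\emph{The directions of 2-cells.} The equipment $(-)_\diamond\colon \CAT\to\PROF$ is \emph{covariant} on $2$-cells: $\alpha\colon g\Rightarrow h$ induces $D(-,g-)\Rightarrow D(-,h-)$. Passing through $\coop\CAT$ reverses $2$-cells on both sides, so nothing flips. Correspondingly, a colax morphism in $\UltCat$ carries $\theta\colon f\circ\Phi\Rightarrow\Phi'\circ\bbbeta f$, not $\Phi'\circ\bbbeta f\Rightarrow f\circ\Phi$. Indeed, by Yoneda a transformation $\Theta\colon f_\diamond\circ\Phi_\diamond\Rightarrow\Phi'_\diamond\circ(\bbbeta f)_\diamond$ is a family $A_0'(a',f\Phi b)\to A_0'(a',\Phi'(\bbbeta f)b)$ natural in $a'$, i.e.\ a map $f\Phi b\to\Phi'(\bbbeta f)b$; compare \Cref{prop:continuous-maps-algebraically}. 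Your two misstatements cancel, so $\Theta=\theta_\diamond$ is the right assignment. But if you correct only one of them, $\theta_\diamond$ points the wrong way. You would then be matching representable colax morphisms of $\u{\bbbeta}$-algebras with \emph{lax} morphisms of $\bbbeta$-algebras. Since $\theta$ is not invertible, the bijection on $1$-cells, and hence $2$-full faithfulness, depends on getting this direction right.

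\emph{The essential image.} Full faithfulness of $(-)_\diamond$ does not make a given structure $(\Gamma,\Delta)$ on $\Phi_\diamond$ agree with one transported from a pseudo-algebra structure on $\Phi$. What it does give, with faithfulness reflecting the axioms, is this: $(\Gamma,\Delta)$ is the image of a unique pair of cells $1\Rightarrow\Phi\circ\eta^{\bbbeta}_{A_0}$ and $\Phi\circ\bbbeta\Phi\Rightarrow\Phi\circ\mu^{\bbbeta}_{A_0}$ in $\CAT$ satisfying the corresponding algebra axioms. Normalization forces the first cell to be invertible, since the comparison functor acts on homs by postcomposition with it. (So normalization only uses invertibility of the unitor, not full pseudo-ness.) Nothing, however, forces the second cell to be invertible, nor to coincide with some previously chosen pseudo-structure on the same functor $\Phi$. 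The condition in the statement must therefore be read structurally: the pulled-back cells are invertible, i.e.\ they \emph{are} a pseudo-$\bbbeta$-algebra structure on $\Phi$. Under that reading the essential-image claim follows at once from your construction; this is the paper's ``single out those corresponding to pseudo-$\bbbeta$-algebras'' step.
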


\looseness=-1
For completeness, we state here the main result of \cite{saadiaExtendingConceptualCompleteness2025,goolToposesEnoughPoints2025}, expressing how a \emph{$\Set$-complete}\footnote{By this we mean a theory satisfying a completeness theorem with respect to its class of $\Set$-based models. This restriction is necessary, cf.\ \cite[\S 1]{goolToposesEnoughPoints2025}.}
geometric theory can be recovered from its category of models, once the latter is endowed with its canonical ultraconvergence structure. It is now evident how, via \Cref{cor:ucats-as-uspaces}, it specializes to \Cref{thm:reconstruction-coherent} in the coherent case, for which the ultraconvergence space of models is actually an ultracategory.

\begin{theorem}\label{thm:reconstruction-geometric}
    Let $\bb T$ be a $\Set$-complete geometric theory. Then, $\UltSp(\Mod {(\bb T)}, \Set)$ is the classifying topos of $\bb T$.
\end{theorem}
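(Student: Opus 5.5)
The statement to prove is the last one above, \Cref{thm:reconstruction-geometric}, which is quoted from \cite{saadiaExtendingConceptualCompleteness2025,goolToposesEnoughPoints2025}. The paper's own contribution is to identify ultraconvergence spaces algebraically, so the plan is a transport argument rather than a reproof from scratch. \Cref{thm:ucspaces-algebraically-in-full} gives an isomorphism of $2$-categories $\UltSp \cong \nLaxAlgrco{\u\bbbeta}$. Hence the hom-category $\UltSp(\Mod(\bb T),\Set)$ is identified with the category of representable colax morphisms and algebra $2$-cells in the algebraic setting. The reconstruction statement of \emph{loc.\ cit.} is formulated for their ultraconvergence spaces. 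The first step is therefore to check that $\Mod(\bb T)$ and $\Set$, with their canonical structures, satisfy the smallness bound of \Cref{def:uc-space}, so that they are objects of our $\UltSp$; once this holds, the two hom-categories agree on the nose.

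For $\Mod(\bb T)$, smallness is \Cref{rem:ultrasp-of-points}: the category is accessible, hence has a small dense subcategory. For $\Set$, I would equip it with its ultracategory structure given by ultraproducts, as in \Cref{ex:ultracat-of-points} for the empty theory. Via \Cref{cor:ucats-as-uspaces}, this becomes the ultraconvergence space with representable profunctor $\Phi_\diamond$, which is small because representables are small.

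Next, I would check that the morphisms and transformations agree. By \Cref{prop:continuous-maps-algebraically} and \Cref{prop:transformations-algebraically}, continuous maps and transformations in the sense of \emph{loc.\ cit.} match exactly the representable colax morphisms and algebra $2$-cells. So the category $\UltSp(\Mod(\bb T),\Set)$ computed here coincides with the one in \cite{goolToposesEnoughPoints2025}. The $\Set$-completeness hypothesis is then used exactly as in their reconstruction theorem, which identifies this category with the classifying topos $\mathcal E_{\bb T}$. Quoting that result finishes the proof.

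The main obstacle is verifying that the canonical structures agree. Concretely, the ultraconvergence arrows $M \ult (N_x)_{x:\nu}$, defined as $\Sigma$-homomorphisms $M \to \prod_{x:\nu} N_x$, must coincide with the lax $\u\bbbeta$-algebra structure the paper places on the actual category $\Mod(\bb T)$. This needs care because of the discrete-versus-normalized passage of \Cref{prop:discrete-iff-normalized}. To handle it, I would first compute $\pt$ of the discrete structure: arrows $M \ult (N)_{*:1}$ are $\Sigma$-homomorphisms $M \to N$, i.e.\ morphisms of $\Mod(\bb T)$. This shows the normalized algebra has carrier $\Mod(\bb T)$, and the claim follows.
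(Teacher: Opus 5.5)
Your proposal matches the paper, which does not prove this statement itself: it quotes it from \cite{saadiaExtendingConceptualCompleteness2025,goolToposesEnoughPoints2025}, says the only difference with \emph{loc.\ cit.}\ is the smallness bound, and discharges that bound for $\Mod(\bb T)$ via accessibility in \Cref{rem:ultrasp-of-points}, as you do (the hom-categories then agree simply because continuous maps and transformations are defined as in \emph{loc.\ cit.}, so the detour through \Cref{thm:ucspaces-algebraically-in-full} is not needed for them). One caution: what you, and that remark, actually verify is smallness of the profunctors on the categories of points $\Mod(\bb T)$ and $\Set$, not the literal bound of \Cref{def:uc-space} on the discrete carriers, where a presheaf such as $S \mapsto \Set(S,P)$ with $P$ a nonempty ultraproduct is nonzero on a proper class of objects and hence not small; so the smallness bound has to be read on the normalized side, as the paper implicitly does.
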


\subsection{Variations on the theme}\label{ssec:conv-spaces}

The discussions of the previous sections place the theory of ultracategories and ultraconvergence spaces within a much more general framework: namely, paralleling \cite{barrRelationalAlgebras1970}, that of \emph{profunctorial algebras} of a pseudomonad on $\CAT$, which we can define as the normalized lax algebras of its skew extension to $\PROF$ in the sense of \Cref{cor:extension-pseudomonads} --- of course, provided that the latter exists and allows for lax algebras. In this section, we follow this line of thought by introducing a variation of ultraconvergence spaces based on filters rather than ultrafilters, setting the grounds for future work.
Recall indeed that, by \Cref{cor:quotients-extend} and \Cref{prop:lokes-allow-for-lax-algebras}, the pseudomonad $\bb F$ also extends to a left skew monad
$\u{\bb F}$ on $\PROF$ allowing for lax algebras, so that we can give the following definition.

\begin{definition}
  \looseness=-1
  We denote by $\ConvSp$ the $2$-category of \emph{convergence spaces}: normalized lax $\u{\bb F}$-algebras,
  representable colax morphisms, and algebra $2$-cells.
\end{definition}

\looseness=-1
Exactly as in \Cref{ex:top-spaces-as-ultspaces}, note that the category of relational $\mathcal F$-algebras embeds into $\ConvSp$. As shown in
\cite{dayFilterMonadsContinuous1975}, the category $\Clos$ of closure spaces
embeds (reflectively) into the former, so that we deduce an embedding of $\Clos$ into the latter. Intuitively, we can thus think of our convergence spaces as a $2$-dimensional analogue of closure spaces.

\begin{corollary}
    The category $\Clos$ of closure spaces embeds fully-faithfully into $\ConvSp$.
\end{corollary}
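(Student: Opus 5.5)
The plan is to reduce the statement to the known embedding of closure spaces into relational $\mathcal F$-algebras \cite{dayFilterMonadsContinuous1975}. It then suffices to show that relational $\mathcal F$-algebras embed fully-faithfully into $\ConvSp$, mirroring \Cref{ex:top-spaces-as-ultspaces}, and to compose the two embeddings.

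Concretely, a relational $\mathcal F$-algebra is a set $X$ with a convergence relation $\mathcal F X \pro X$ satisfying reflexivity and transitivity. I would send it to the discrete category $X$ together with the profunctor $A \colon \bb F X \pro X$ defined by
\[
A(x,(Y,b,\nu)) =
\begin{cases}
\{*\} & \text{if } \mathcal F b(\nu) \text{ converges to } x,\\
\emptyset & \text{otherwise.}
\end{cases}
\]
This is functorial in $\bb F X$: a morphism $g\colon (Y,b,\nu)\to(Y',b',\nu')$ forces $\mathcal F b'(\nu') = \mathcal F b(\nu)$ up to the quotient, since $X$ is discrete and $bg=b'$ on a set in $\nu'$. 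It is small because $X$ is a set. The unitor $\Gamma$ comes from reflexivity (principal filters converge). For the multiplicator $\Delta$, I would repeat the argument of \Cref{thm:ucspaces-algebraically} with filters in place of ultrafilters: a composite of convergence witnesses is defined precisely by Barr's transitivity axiom for the Kleisli/direct-sum filter $\sum_{y:\nu}\xi_y$. Because every hom-set is a subsingleton, the coherence axioms of \Cref{def:lax-algebras-allowed} hold automatically. This gives a discrete lax $\u{\bb F}$-algebra, and by the filter analogue of \Cref{prop:discrete-iff-normalized} it determines a normalized one. Its category of points is the specialization preorder.

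For full faithfulness on morphisms, I would mimic \Cref{prop:continuous-maps-algebraically}. A representable colax morphism between such algebras is a function $f$ together with a $\Theta$, and $\Theta$ exists, uniquely, exactly when $f$ preserves convergence, that is, when $f$ is continuous. The coherence conditions are again vacuous. On $2$-cells, \Cref{prop:transformations-algebraically} transfers verbatim, so algebra $2$-cells between $f$ and $g$ correspond to the pointwise specialization order. The functor is therefore fully faithful on the locally posetal level as well. Composing with Day's fully-faithful reflective embedding of $\Clos$ into relational $\mathcal F$-algebras gives the result.

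The main obstacle is that \Cref{prop:discrete-iff-normalized} and \Cref{prop:transformations-algebraically} are stated only for $\bbbeta$, so they must be checked to carry over to $\bb F$. I expect the argument to depend only on $\pt(-)$ being a closure operator and on $\Gamma$ giving identity-on-objects comparison functors, with nothing specific to ultrafilters. A second point needing care is whether Day's notion of relational $\mathcal F$-algebra uses Barr's lax axioms in the same direction as the skew extension $\u{\bb F}$. The exactness of the powerset-style lifting should make these agree, but this is where I would verify directions explicitly.
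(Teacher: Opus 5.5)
Your proposal is correct and follows the paper's route: the paper likewise embeds relational $\mathcal F$-algebras into $\ConvSp$ ``exactly as in'' \Cref{ex:top-spaces-as-ultspaces} and composes with Day's reflective embedding of $\Clos$. Your explicit profunctor, the subsingleton argument for coherence, and the transfer of the $\bbbeta$-propositions to $\bb F$ spell out what the paper leaves implicit, and your second concern is settled by the paper's subsequent remark, which specifies that relational $\mathcal F$-algebras are taken as lax algebras for the Barr extension (for which your identification of composites with transitivity via the direct-sum filter is exactly the right check).
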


\begin{remark}
  \looseness=-1
  More precisely, we are here considering relational $\mathcal F$-algebras as lax algebras for the \emph{Barr
extension} of $\mathcal F$ to $\Rel$ described in \cite[Def.\
1.10.1]{clementinoMonoidalTopologyCategorical2014}. As described in \cite[Ex.\ 1.10.3]{clementinoMonoidalTopologyCategorical2014},
there are two other (lax) extensions of $\mathcal F$ to $\Rel$, called
respectively \emph{canonical} and \emph{opcanonical} in
\cite{sealCanonicalOpcanonicalLax2005} (see also
\cite{schubertExtensionsTheoryLax2008}). While the three analogous extensions coincide for
$\beta$, they don't coincide for $\mathcal F$: in particular, lax algebras for its
canonical extension coincide with closure spaces, while those for its
opcanonical extension coincide with topological spaces.
\end{remark}

  \looseness=-1
Extending \Cref{ex:ultrasp-of-points}, note that the category of models of a geometric theory $\bb T$ in a signature $\Sigma$ carries the structure of a convergence space too. In the same way, \emph{convergence arrows} $M \ult (N_x)_{x:\nu}$ are defined as $\Sigma$-structure homomorphisms $M \to \prod_{x:\nu} N_x$: however, $\prod_{x:\nu} N_x$ is now a \emph{reduced product} of models, a construction which generalizes that of ultraproducts to arbitrary filters (see, e.g., \cite[\S 4, 6]{changModelTheory2012}).

\begin{question}
    Can a reconstruction theorem for geometric logic be proved in terms of reduced products, avoiding non-constructive generation of ultrafilters? The main difficulty in lifting the proof of \cite{goolToposesEnoughPoints2025} to the setting of convergence spaces lies in the use of \cite[Lem.\ 3.2]{goolToposesEnoughPoints2025}, whose analogous for filters is easily seen to be false.

    One possible workaround could be to redefine convergence spaces as the profunctorial algebras for a pseudomonad $\bb F_\leq$ obtained, with similar considerations to those of \Cref{sec:04}, starting from the functor $\mathcal F_\leq$ mapping a set $X$ to the \emph{poset} of (possibly improper) filters on $X$ ordered by
  inclusion. In essence, the only difference with $\bb F$ lies in relaxing the requirement
  on the functions defining morphisms: for a category $C$, $\bb F_\leq$-morphisms $(g,\alpha)
  \colon (X,h,\nu) \to (X',h',\nu')$ must satisfy $\nu \subseteq \mathcal F_\leq g
  (\nu')$ instead of $\nu = \mathcal F_\leq g (\nu')$. We leave this subject to future work.

\end{question}

\printbibliography

\newpage
\appendix
\section{Deferred definitions from \texorpdfstring{\Cref{sec:02}}{Section II}}\label{app:A2}
In this appendix we provide more details on the definitions from \Cref{sec:02}. Let us begin by spelling out the definitions of \emph{bicategory}, \emph{lax}/\emph{oplax}/\emph{pseudo}-\emph{functor}, \emph{lax}/\emph{oplax}/\emph{pseudo}-\emph{natural transformation}, and \emph{modification}.

\begin{definition}[\protect{\Cref{def:bicat}}]
  A \emph{bicategory} $\K$ consists of:
  \begin{itemize}
  \item a class of objects;
  \item for any two objects $A, B$, a \emph{hom}-category
    $\K(A,B)$ whose objects are called \emph{arrows} and denoted as
    $f \colon A {\to} B$, and whose morphisms are called \emph{$2$-cells} and denoted as
    $\sigma \colon f {\Rightarrow} g$;
  \item for any object $A$, an \emph{identity} arrow $1_A\colon A \to A$;
  \item for any three objects $A,B,C$, a \emph{composition} functor
  $\circ \colon \K(B,C) \times \K(A,B) \to \K(A,C)$;
  \item \looseness=-1 for any arrow $f\colon A \to B$, invertible $2$-cells $\rho_f\colon f \circ 1_A \cong f$ and
    $\lambda_f\colon 1_A \circ f \cong f$;
  \item for any three arrows $f\colon A \to B$, $g\colon B
    \to C$ and $h\colon C \to D$,
   an invertible $2$-cell $\alpha_{h,g,f}\colon h \circ (g \circ f) \cong
    (h \circ g) \circ f$.
  \end{itemize}
  This data is required to satisfy
  \begin{enumerate}
\item an \emph{identity coherence} axiom,
  \end{enumerate}
\small
\[\begin{tikzcd}
	{g \circ (1_B\circ f)} && {(g\circ 1_B)\circ f} \\
	& {g\circ f}
	\arrow["{\alpha_{g,1_B,f}}", from=1-1, to=1-3]
	\arrow["{g*\lambda_f}"', from=1-1, to=2-2]
	\arrow["{\rho_g*f}"', from=2-2, to=1-3]
\end{tikzcd}\]
\normalsize
\begin{enumerate}
\item[2)] and an \emph{associativity coherence} axiom,
\end{enumerate}
\small
\[\begin{tikzcd}[column sep = 12pt]
    & {(k\circ h)\circ (g\circ f)} \\
    {k\circ (h\circ (g\circ f))} && {((k\circ h)\circ g)\circ f} \\
    {k\circ ((h\circ g)\circ f)} && {(k\circ (h\circ g))\circ f}
    \arrow["{\alpha_{k\circ h, g,f}}", from=1-2, to=2-3]
    \arrow["{\alpha_{k,h,g\circ f}}", from=2-1, to=1-2]
    \arrow["{k*\alpha_{h,g,f}}"', from=2-1, to=3-1]
    \arrow["{\alpha_{k,h\circ g, f}}"', from=3-1, to=3-3]
    \arrow["{\alpha_{k,h,g}*f}"', from=3-3, to=2-3]
\end{tikzcd}\]
\normalsize
for all arrows $f\colon A \to B$, $g\colon B \to C$, $h\colon C \to D$ and $k \colon D \to E$ in $\K$.

In particular, $\K$ is a \emph{2-category} if each structural 2-cell is the identity.
\end{definition}

\begin{definition}
  An \emph{adjunction} $f \dashv g$ in a bicategory is a pair of arrows $f\colon
  A \to B$ (the \emph{left adjoint}) and $g\colon B \to A$ (the \emph{right
    adjoint}) together with \emph{unit} and \emph{counit} $2$-cells $\eta\colon
  1_A \Rightarrow gf$ and $\varepsilon\colon fg \Rightarrow 1_B$ satisfying:
  \[ g\varepsilon \cdot \eta g = 1_g \qquad \varepsilon f \cdot f \eta = 1_f \]
\end{definition}

Of course, adjunctions in $\Cat$ coincide with the ordinary notion of adjunction
of functors. The usual rules of the calculus of adjoint functors still stand in
an arbitrary bicategory (up to the coherence isomorphisms).

\begin{definition}\label{def:lax-functor}
    Let $\K,\K'$ be bicategories. A \emph{lax functor} $\ps{F}\colon \bicat K \to \bicat K'$ consists of
\begin{itemize}
  \item a mapping $A \mapsto \ps{F} A$ of objects of $\bicat K$ to objects of $\bicat K'$;
  \item for any two objects $A,B$ of $\bicat K$, a functor $\ps{F}_{A,B}\colon \bicat{K}(A,B) \to \bicat{K}'(\ps{F}A, \ps{F}B)$;
  \item \looseness=-1 for every object $A$ of $\bicat K$, a $2$-cell $\phi_{A}\colon 1_{\ps{F} A} \Rightarrow
    \ps{F}_{A,A}(1_A)$ in $\bicat {K}'$;
  \item for any two arrows $f\colon A \to B$ and $g\colon B \to C$ in $\bicat K$,
    a $2$-cell $\phi_{g,f}\colon \ps{F}(g) \ps{F}(f) \Rightarrow
    \ps{F}(g f)$ in $\bicat K'$.
\end{itemize}
    This data is required to satisfy:
    \begin{enumerate}
    \item two \emph{identity coherence} axioms,
    \end{enumerate}
\small
\[\begin{tikzcd}
	{F(1_B)\circ Ff} & {F(1_B\circ f)} \\
	{1_{FB}\circ Ff} & Ff
	\arrow["{\phi_{1_B,f}}", from=1-1, to=1-2]
	\arrow["{F(\lambda_f)}", from=1-2, to=2-2]
	\arrow["{\phi_{B}*Ff}", from=2-1, to=1-1]
	\arrow["{\lambda'_{Ff}}"', from=2-1, to=2-2]
\end{tikzcd} \, \begin{tikzcd}
	{Ff\circ 1_{FA}} & {Ff \circ F(1_A)} \\
	Ff & {F(f\circ 1_A)}
	\arrow["{Ff*\phi_{FA}}", from=1-1, to=1-2]
	\arrow["{\rho'_{Ff}}"', from=1-1, to=2-1]
	\arrow["{\phi_{f,1_A}}", from=1-2, to=2-2]
	\arrow["{F(\rho_f)}", from=2-2, to=2-1]
\end{tikzcd}\]
\normalsize
    \begin{enumerate}
    \item[2)]  and a \emph{composition coherence} axiom,
    \end{enumerate}
\small
\[\begin{tikzcd}
{Fh\circ (Fg\circ Ff)} && {(Fh\circ Fg)\circ Ff} \\
{Fh\circ F(g\circ f)} && {F(h\circ g)\circ Ff} \\
{F(h\circ (g\circ f))} && {F((h\circ g)\circ f)}
\arrow["{\alpha'_{Fh,Fg,Ff}}", from=1-1, to=1-3]
\arrow["{Fh*\phi_{g,f}}"', from=1-1, to=2-1]
\arrow["{\phi_{h,g}*Ff}", from=1-3, to=2-3]
\arrow["{\phi_{h,g\circ f}}"', from=2-1, to=3-1]
\arrow["{\phi_{h\circ g,f}}", from=2-3, to=3-3]
\arrow["{F(\alpha_{h,g,f})}"', from=3-1, to=3-3]
\end{tikzcd}\]
\normalsize
for all arrows $f\colon A \to B$, $g \colon B \to C$ and $h \colon C \to D$ in $\K$.

Similarly we define \emph{oplax functors}, by considering structural $2$-cells in the opposite direction, and \emph{pseudofunctors}, by taking them to be invertible. Between $2$-categories, we also speak of a \emph{$2$-functor} as a pseudofunctor whose structural $2$-cells are identities.

\end{definition}

\begin{definition}\label{def:lax-natural-transformations}
    Let $\ps F, \ps F' \colon \K \to \K'$ be lax functors. A \emph{lax natural transformation} $\sigma \colon \ps F \Rightarrow \ps F'$ consists of
    \begin{itemize}
        \item for any object $A$ of $\K$, an arrow $\sigma_A \colon \ps F A \to \ps F' A$ in $\K'$;
        \item for any arrow $f \colon A \to B$ in $\K$, a 2-cell $\sigma_f \colon \ps F' f \circ \sigma_A \Rightarrow \sigma_B \circ \ps F f$ in $\K'$.
    \end{itemize}
    This data is required to satisfy
\begin{enumerate}
    \item a \emph{naturality} axiom,
\end{enumerate}
\small
\[ \begin{tikzcd}
	{\ps FA} && {\ps FB} \\
	\\
	{\ps F'A} && {\ps F'B}
	\arrow[""{name=0, anchor=center, inner sep=0}, "{\ps F f}"{description}, curve={height=18pt}, from=1-1, to=1-3]
	\arrow[""{name=1, anchor=center, inner sep=0}, "{\ps F f'}", curve={height=-18pt}, from=1-1, to=1-3]
	\arrow["{\sigma_A}"', from=1-1, to=3-1]
	\arrow[""{name=2, anchor=center, inner sep=0}, "{\sigma_{B}}", from=1-3, to=3-3]
	\arrow["{\ps F' f}"', curve={height=18pt}, from=3-1, to=3-3]
	\arrow["{\ps F\alpha}", between={0.3}{0.7}, Rightarrow, from=0, to=1]
	\arrow["{\sigma_f}"'{pos=0.35}, between={0.2}{0.7}, Rightarrow, from=3-1, to=2]
		\end{tikzcd} = \begin{tikzcd}
		{\ps FA} && {\ps FB} \\
	\\
	{\ps F' A} && {\ps F' B}
	\arrow["{\ps F f'}", curve={height=-18pt}, from=1-1, to=1-3]
	\arrow[""{name=0, anchor=center, inner sep=0}, "{\sigma_A}"', from=1-1, to=3-1]
	\arrow["{\sigma_{B}}", from=1-3, to=3-3]
	\arrow[""{name=1, anchor=center, inner sep=0}, "{\ps F'f'}"{description}, curve={height=-18pt}, from=3-1, to=3-3]
	\arrow[""{name=2, anchor=center, inner sep=0}, "{\ps F'f}"', curve={height=18pt}, from=3-1, to=3-3]
	\arrow["{\sigma_{f'}}"{pos=0.65}, between={0.3}{0.8}, Rightarrow, from=0, to=1-3]
	\arrow["{\ps F'\alpha}", between={0.3}{0.7}, Rightarrow, from=2, to=1]
		\end{tikzcd} \]
\normalsize
\begin{enumerate}
    \item[2)]a \emph{unitality} axiom,
\end{enumerate}
\small
\[ \begin{tikzcd}
	{\ps FA} && {\ps F A} \\
	\\
	{\ps F'A} && {\ps F'B}
	\arrow[""{name=0, anchor=center, inner sep=0}, "{1_{\ps F A}}"{description}, curve={height=18pt}, from=1-1, to=1-3]
	\arrow[""{name=1, anchor=center, inner sep=0}, "{\ps F(1_A)}", curve={height=-18pt}, from=1-1, to=1-3]
	\arrow["{\sigma_A}"', from=1-1, to=3-1]
	\arrow[""{name=2, anchor=center, inner sep=0}, "{\sigma_A}"{description}, curve={height=12pt}, from=1-1, to=3-3]
	\arrow["{\sigma_A}", from=1-3, to=3-3]
	\arrow["{1_{\ps F' A}}"', curve={height=18pt}, from=3-1, to=3-3]
	\arrow["{\phi_A}", between={0.3}{0.7}, Rightarrow, from=0, to=1]
	\arrow["{\lambda_{\sigma_A}'}"', curve={height=6pt}, between={0}{0.8}, Rightarrow, from=3-1, to=2]
	\arrow["{{\rho'}^{-1}_{\sigma_A}}"', curve={height=-6pt}, between={0.2}{0.8}, Rightarrow, from=3-3, to=0]
		\end{tikzcd} = \begin{tikzcd}
	{\ps FA} && {\ps F A} \\
	\\
	{\ps F' A} && {\ps F' A}
	\arrow["{\ps F (1_A)}", curve={height=-18pt}, from=1-1, to=1-3]
	\arrow[""{name=0, anchor=center, inner sep=0}, "{{\sigma_A}}"', from=1-1, to=3-1]
	\arrow["{{\sigma_{A}}}", from=1-3, to=3-3]
	\arrow[""{name=1, anchor=center, inner sep=0}, "{{\ps F' (1_A)}}"{description}, curve={height=-18pt}, from=3-1, to=3-3]
	\arrow[""{name=2, anchor=center, inner sep=0}, "{1_{\ps F' A}}"', curve={height=18pt}, from=3-1, to=3-3]
	\arrow["{{\sigma_{1_A}}}"{pos=0.65}, between={0.3}{0.8}, Rightarrow, from=0, to=1-3]
	\arrow["{\phi'_A}", between={0.3}{0.7}, Rightarrow, from=2, to=1]
		\end{tikzcd} \]
\normalsize
\begin{enumerate}
    \item[3)]and an \emph{associativity} axiom,
\end{enumerate}
\small
\[ \begin{tikzcd}[column sep =12pt]
	{\ps FA} && {\ps F C} \\
	& {\ps F B} \\
	{\ps F' A} && {\ps F' C} \\
	& {\ps F' B}
	\arrow[""{name=0, anchor=center, inner sep=0}, "{\ps F (g\circ f)}", from=1-1, to=1-3]
	\arrow["{\ps F f}"', from=1-1, to=2-2]
	\arrow["{{\sigma_A}}"', from=1-1, to=3-1]
	\arrow["{\sigma_C}", from=1-3, to=3-3]
	\arrow["{\ps Fg}"', from=2-2, to=1-3]
	\arrow["{\sigma_B}"{description}, from=2-2, to=4-2]
	\arrow["{\sigma_f}"', between={0.2}{0.8}, Rightarrow, from=3-1, to=2-2]
	\arrow["{\ps F' f}"', from=3-1, to=4-2]
	\arrow["{\sigma_g}"', between={0.4}{0.6}, Rightarrow, from=4-2, to=1-3]
	\arrow["{\ps F' g}"', from=4-2, to=3-3]
	\arrow["{\phi_{g,f}}", between={0.2}{0.8}, Rightarrow, from=2-2, to=0]
		\end{tikzcd} = \begin{tikzcd}[column sep =12pt,row sep =20.6pt]
	{\ps FA} && {\ps F C} \\
	\\
	{\ps F' A} && {\ps F' C} \\
	& {\ps F' B}
	\arrow["{\ps F (g\circ f)}", from=1-1, to=1-3]
	\arrow["{{\sigma_A}}"', from=1-1, to=3-1]
	\arrow["{\sigma_C}", from=1-3, to=3-3]
	\arrow["{{\sigma_{g\circ f}}}", between={0.3}{0.7}, Rightarrow, from=3-1, to=1-3]
	\arrow[""{name=0, anchor=center, inner sep=0}, "{{\ps F' (g\circ f)}}", from=3-1, to=3-3]
	\arrow["{\ps F' f}"', from=3-1, to=4-2]
	\arrow["{\ps F' g}"', from=4-2, to=3-3]
	\arrow["{\phi_{g,f}'}", between={0.2}{0.8}, Rightarrow, from=4-2, to=0]
		\end{tikzcd} \]
\normalsize
for all arrows $f ,f' \colon A\to B$, $g \colon B \to C$ and all $2$-cells $\alpha \colon f \Rightarrow f'$ in $\K$.

Similarly we define \emph{oplax natural transformations}, by considering structural $2$-cells in the opposite direction, and \emph{pseudonatural transformations}, by taking them to be invertible. Moreover, it is evident how to define all three kinds of transformations between oplax functors and between pseudofunctors. We also speak of a \emph{pseudonatural equivalence} as a pseudonatural transformation whose components are equivalences.

\end{definition}

\begin{definition}\label{def:modification}
    Let $\sigma, \sigma ' \colon \ps F \Rightarrow \ps F'$ be lax natural transformations between lax functors $\ps F, \ps F' \colon \K \to \K'$. A \emph{modification} $\mathfrak m\colon \sigma \Rrightarrow \sigma'$ consists of a 2-cell $\mathfrak m_A \colon \sigma_A \Rightarrow \sigma'_A$ in $\K'$ for any object $A$ of $\K$, such that
\small
\[ \begin{tikzcd}
	{\ps FA} && {\ps F'A} \\
	\\
	{\ps F B} && {\ps F'B}
	\arrow[""{name=0, anchor=center, inner sep=0}, "{\sigma_A'}"{description}, curve={height=18pt}, from=1-1, to=1-3]
	\arrow[""{name=1, anchor=center, inner sep=0}, "{\sigma_A}", curve={height=-18pt}, from=1-1, to=1-3]
	\arrow["{\ps F f}"', from=1-1, to=3-1]
	\arrow[""{name=2, anchor=center, inner sep=0}, "{\ps F' f}", from=1-3, to=3-3]
	\arrow["{\sigma_B'}"', curve={height=18pt}, from=3-1, to=3-3]
	\arrow["{\mathfrak m_A}"', between={0.3}{0.7}, Rightarrow, from=1, to=0]
	\arrow["{{\sigma'_f}}"{pos=0.65}, between={0.3}{0.8}, Rightarrow, from=2, to=3-1]
		\end{tikzcd} = \begin{tikzcd}
	{\ps FA} && {\ps F' A} \\
	\\
	{\ps F B} && {\ps F' B}
	\arrow["{\sigma_A}", curve={height=-18pt}, from=1-1, to=1-3]
	\arrow[""{name=0, anchor=center, inner sep=0}, "{\ps F f}"', from=1-1, to=3-1]
	\arrow["{\ps F'f}", from=1-3, to=3-3]
	\arrow[""{name=1, anchor=center, inner sep=0}, "{\sigma_B}"{description}, curve={height=-18pt}, from=3-1, to=3-3]
	\arrow[""{name=2, anchor=center, inner sep=0}, "{\sigma'_B}"', curve={height=18pt}, from=3-1, to=3-3]
	\arrow["{{\sigma_{f}}}"'{pos=0.35}, between={0.2}{0.7}, Rightarrow, from=1-3, to=0]
	\arrow["{\mathfrak m_B}"', between={0.3}{0.7}, Rightarrow, from=1, to=2]
		\end{tikzcd} \]
\normalsize
for all arrows $f\colon A \to B$ in $\K$. Similarly we define modifications between all sorts of transformations introduced in \Cref{def:lax-natural-transformations}. A modification is \emph{invertible} if so is each of its structural $2$-cells.
\end{definition}

We now give the definitions concerning \emph{fibrations}, following
\cite{carboniModulatedBicategories1994,loregianCategoricalNotionsFibration2020}:
fix a bicategory $\K$. First, we define what it means for a 2-cell to be
\emph{cartesian} with respect to a span.

Note that \cite{carboniModulatedBicategories1994}
depicts a span from $B$ to $A$ as $A \leftarrow \cdot \rightarrow B$, instead of
$B \leftarrow \cdot \rightarrow A$ as done here and in
\cite{loregianCategoricalNotionsFibration2020}. The ``left'' properties we
define next thus refer to the arrows that we depict on the right side of spans,
and conversely ``right'' properties refer to the arrows on the left.

\begin{definition}
  Let $A \xleftarrow{q} E \xrightarrow{p} B$ be a span in $\K$ and let $e,e'
  \colon D \to E$. A 2-cell $\chi \colon e' \Rightarrow e$ is \emph{left
    cartesian} (to the span) if:
    \begin{enumerate}
        \item $q\chi$ is invertible;
        \item for each triple $(g,\xi,\alpha)$ of an arrow $g \colon L \to K$
          and 2-cells $\xi \colon e'' \Rightarrow eg$ and $\alpha \colon pe''
          \Rightarrow pe' g$ such that $p \xi = p\chi g \cdot \alpha$, there
          exists a unique 2-cell $\xi' \colon e'' \Rightarrow e' g$ such that
          $\xi = \chi g \cdot \xi'$ and $p \xi' = \alpha$.
    \end{enumerate}

    In $\co\K$, the span becomes $B \xleftarrow{p} E \xrightarrow{q} A$: a
    2-cell is \emph{right cartesian} with respect to $A \xleftarrow{q} E
    \xrightarrow{p} B$ if it is left cartesian in $\co\K$ with respect to $B
    \xleftarrow{p} E \xrightarrow{q} A$.
\end{definition}

\begin{definition}
  A span $A \xleftarrow{q} E \xrightarrow{p} B$ in $\K$ is a \emph{left
    fibration} when the following \emph{left path lifting} condition holds: for
  all arrows $e \colon D \to E$ and 2-cells $\gamma \colon a \Rightarrow pe$,
  there exists a \emph{left cartesian lift} $\chi \colon \gamma^* e \Rightarrow
  e$ for which there is an invertible 2-cell $\psi \colon a \cong p(\gamma^* e)$
  such that $\gamma = p\chi \cdot \psi$.

  Dually, $A \xleftarrow{q} E \xrightarrow{p} B$ is a \emph{right fibration}
  when the following \emph{right path lifting} condition holds: for all arrows
  $e \colon D \to E$ and 2-cells $\gamma \colon qe \Rightarrow b$, there exists
  a \emph{right cartesian lift} $\chi \colon e \Rightarrow \gamma_!e$ for which
  there is an invertible 2-cell $\psi \colon q(\gamma_!e) \cong b$ such that
  $\gamma = \psi \cdot q \chi$.

  A span is a \emph{two-sided fibration (TSF)} if it is both a left and a right
  fibration.
\end{definition}

\begin{definition}
  A span $A \xleftarrow{q} E \xrightarrow{p} B$ in $\K$ is \emph{discrete} if
  for all arrows $e,e' \colon D \to E$ and 2-cells $\xi,\zeta \colon e
  \Rightarrow e'$, if $q\xi = q\zeta$ and $p\xi = p\zeta$ with $q\xi$ and $p\xi$
  invertible, then $\xi = \zeta$ and they are invertible.

  A \emph{two-sided discrete fibration (TSDF)} is a discrete TSF.
\end{definition}

We now turn to the definitions of \emph{bicomma square} and \emph{bipullback} in $\K$.

\begin{definition}
  A \emph{bicomma object} of a cospan $A \xrightarrow{g} C
  \xleftarrow{f} B$ in $\K$ is an object $\bicomma{f}{g}$ in $\K$  equipped with a span $A \xleftarrow{\cod} \bicomma{f}{g} \xrightarrow{\dom} B$ and a $2$-cell $\gamma \colon f \dom \Rightarrow g \cod$ satisfying the following universal properties:
  \begin{enumerate}
  \item for every span $A \xleftarrow{p} X \xrightarrow{q} B$ and 2-cell
    $\sigma\colon fp \Rightarrow gq$, there are an arrow $u\colon X \to \bicomma{f}{g}$ and two 2-cells
    $\chi\colon \dom u \cong p$ and $\zeta\colon \cod u \cong q'$ such that
    $\sigma = g\zeta \cdot \gamma u \cdot f \chi$;
  \item for every pair of arrows $u, v\colon X \to \bicomma{f}{g}$ and of 2-cells $\chi\colon \dom u
    \Rightarrow \dom v$ and $\zeta\colon \cod u \to \cod v$ such that $\gamma
    v \cdot f \chi = g \zeta \cdot \gamma u$, there is a unique 2-cell $\sigma\colon
    u \Rightarrow v$ such that $\dom \sigma = \chi$ and $\cod \sigma = \zeta$.
  \end{enumerate}
  We refer to the span $A \xleftarrow{\cod} \bicomma{f}{g} \xrightarrow{\dom} B$ as a \emph{bicomma span}. If it exists, a bicomma object is unique up to essentially unique equivalence.

\end{definition}

\begin{definition}
  A \emph{bipullback} of a cospan $A \xrightarrow{g} C
  \xleftarrow{f} B$ in $\K$ is an object $A \times_C B$ equipped with a cospan $A \xleftarrow{\cod} A \times_C B \xrightarrow{\dom} B$ and an invertible $2$-cell $\gamma \colon f  \dom \cong g \cod$ satisfying the following universal properties:
    \begin{enumerate}
  \item for every span $A \xleftarrow{q} X \xrightarrow{p} B$ and 2-cell
    $\sigma\colon fq \cong gp$, there are an arrow $u\colon X \to A \times_C B$ and two 2-cells
    $\chi\colon \dom u \cong q$ and $\zeta\colon \cod u \cong p$ such that
    $\sigma = g\zeta \cdot \gamma u \cdot f \chi$;
  \item for every pair of arrows $u, v\colon X \to A \times_C B$ and of 2-cells $\chi\colon \dom u
    \Rightarrow \dom v$ and $\zeta\colon \cod u \Rightarrow \cod v$ such that $\gamma
    v \cdot f \chi = g \zeta \cdot \gamma u$, there is a unique 2-cell $\sigma\colon
    u \Rightarrow v$ such that $\dom \sigma = \chi$ and $\cod \sigma = \zeta$.
  \end{enumerate}
If it exists, a bipullback is unique up to essentially unique equivalence.

We now recall the precise definition and properties of orthogonal factorization systems on bicategories from \cite{bettiFactorizationsBicategories1999}. First recall that, for each object $A$ in $\K$, we have two \emph{representable} pseudofunctors $\K(A,-)\colon \K \to
\CAT$ and $\K(-,A)\colon \op\K \to \CAT$. The former, for
instance, takes an object $B$ to the hom-category $\K(A,B)$, an arrow $f\colon B
\to C$ to the functor $\K(A,B) \to \K(A,C)$ that post-composes by $f$,
\[\begin{tikzcd}[column sep=large]
	A & B && A &[-18pt]&[-18pt] C
	\arrow[""{name=0, anchor=center, inner sep=0}, "u", curve={height=-12pt}, from=1-1, to=1-2]
	\arrow[""{name=1, anchor=center, inner sep=0}, "v"', curve={height=12pt}, from=1-1, to=1-2]
	\arrow["{\K(A,f)}", between={0.2}{0.8}, maps to, from=1-2, to=1-4]
	\arrow[""{name=2, anchor=center, inner sep=0}, "{fu}"', curve={height=12pt}, from=1-4, to=1-6]
	\arrow[""{name=3, anchor=center, inner sep=0}, "{fv}", curve={height=-12pt}, from=1-4, to=1-6]
	\arrow["\gamma", between={0.2}{0.8}, Rightarrow, from=0, to=1]
	\arrow["{f\gamma}", between={0.2}{0.8}, Rightarrow, from=3, to=2]
\end{tikzcd}\] and a $2$-cell $\delta\colon f \Rightarrow g\colon B \to C$ to
the natural transformation $\K(A,f) \Rightarrow \K(A,g)$ with component at
$u\colon A \to B$ given by the $2$-cell $\delta u\colon fu \Rightarrow
gu$.

\begin{definition}[\Cref{def:orthogonal-factorization-system}]
A \emph{factorization system} on $\K$ is a pair $(\E, \M)$ of
  classes of arrows in $\K$ such that:
  \begin{enumerate}
  \item every arrow $f$ admits an \emph{$(\E,\M)$-factorization}, i.e.\ an
    invertible $2$-cell $f \cong me$ with $m \in \M$ and $e \in E$;
  \end{enumerate}
  \vspace{-1em}
 \begin{minipage}{.62\linewidth}
    \begin{enumerate}
    \item[(2)] every $\E$-arrow $e\colon X \to Y$ is \emph{orthogonal} to every $\M$-arrow $m\colon Z \to W$, in the sense that the square on the right is
      a bipullback in $\CAT$.
    \end{enumerate}
    \end{minipage}
    \begin{minipage}{.34\linewidth}
      \vspace{-0.4em}
      \[\quad \begin{tikzcd}[column sep=3.1em, row sep=scriptsize]
          {\K(Y,Z)} & {\K(Y,W)} \\
          {\K(X,Z)} & {\K(X,W)} \arrow["{\K(Y,m)}", from=1-1, to=1-2]
          \arrow[""{name=0, anchor=center, inner sep=0}, "{\K(e,Z)}"', from=1-1,
          to=2-1] \arrow[""{name=1, anchor=center, inner sep=0}, "{\K(e,W)}",
          from=1-2, to=2-2] \arrow["{\K(X,m)}"', from=2-1, to=2-2]
          \arrow["\cong"{description}, draw=none, from=0, to=1]
        \end{tikzcd}\]
    \end{minipage}

  In that case:
  \begin{itemize}
    \item $\E \cap \M$ consists of the equivalences in $\K$;
    \item $\E$ contains all arrows which are orthogonal to every $\M$-arrow;
    \item the factorization $f \cong m e$ is unique up to an equivalence which is itself uniquely determined up to a unique invertible 2-cell.
  \end{itemize}
\end{definition}

\end{definition}
 \section{Deferred proofs from \texorpdfstring{\Cref{sec:03}}{Section III}}\label{app:A3}

In this appendix we give the proofs of the extension theorems of \Cref{sec:03}.

\subsection{String diagrams for adjunctions in a bicategory}

We will write down $2$-cells in the bicategories of interest using the graphical
calculus of string diagrams, which we now recall briefly. Additional background
on string diagrams for bicategories can be found
in~\cite{streetCategoricalStructures1996}, and formal development of category
theory using string diagrams, notably the theory of adjunctions,
in~\cite{hinzeIntroducingStringDiagrams2023}.

Formally, a diagram in a bicategory is a $2$-graph: objects are vertices of the
graphs, arrows are (oriented) edges and $2$-cells are cells. The
string-diagrammatic depiction of such a diagram is then given by the dual graph:
$2$-cells are represented as vertices (called \emph{nodes}), arrows as edges
(called \emph{strings}) and objects as cells. For instance, the diagram
\[\begin{tikzcd}
	A && B && C
	\arrow[""{name=0, anchor=center, inner sep=0}, "g"{description,pos=0.3}, from=1-1, to=1-3]
	\arrow[""{name=1, anchor=center, inner sep=0}, "f", curve={height=-24pt}, from=1-1, to=1-3]
	\arrow[""{name=2, anchor=center, inner sep=0}, "h"', curve={height=24pt}, from=1-1, to=1-3]
	\arrow[""{name=3, anchor=center, inner sep=0}, "u", curve={height=-12pt}, from=1-3, to=1-5]
	\arrow[""{name=4, anchor=center, inner sep=0}, "v"', curve={height=12pt}, from=1-3, to=1-5]
	\arrow["\,\alpha", between={0.2}{0.8}, Rightarrow, from=1, to=0]
	\arrow["\,\beta", between={0.2}{0.8}, Rightarrow, from=0, to=2]
	\arrow["\,\gamma", between={0.2}{0.8}, Rightarrow, from=3, to=4]
\end{tikzcd}\] is depicted as \centerstringdiagram{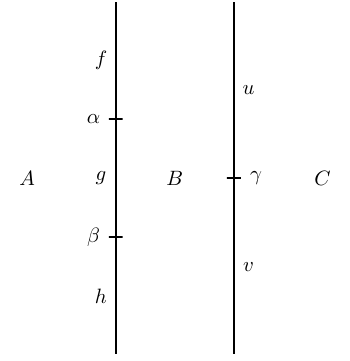} Composition inside
string diagrams is thus done left to right and top to bottom. Two string
diagrams that can be obtained from one another by stretching strings and
slidings nodes on strings represent the same composite $2$-cells.

In practice, we will omit to label objects, and will only label the source and
target arrows; we may also omit labels entirely when they are clear from the
context. \[ \stringdiagram{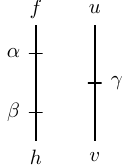} \]

The usual diagrammatic calculus of bicategories omits the unitors and
associators, and so does the string diagrammatic calculus. In practice this
amounts to assuming that the bicategory at hand is a $2$-category, which is made
rigorous by the strictification theorem for bicategories: every bicategory is
biequivalent to a $2$-category~\cite{powerCoherenceBicategoriesFinite1989}.

$2$-cells $f_1 \cdots f_n \Rightarrow g_1 \cdots g_m$ will more generally be depicted
as $(n+m)$-gones. A $2$-cell $f \Rightarrow gh$ will for instance be depicted
as a triangle, and a square $xu \Rightarrow yv$ as a square:
\[ \stringdiagram{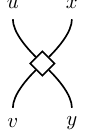} \]

Some of the composite $2$-cells we will consider will be invertible, and we
will use their inverses in our constructions. We depict these inverses by
flipping the string diagram vertically and enclosing it in a dashed box. For
instance:
\[ \left( \stringdiagram{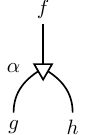}\right)^{-1} = \stringdiagram{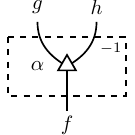} \]
We may omit the box if the $2$-cell being inverted is not a composite of
non-invertible $2$-cells. The usual string diagram manipulations can be done
both inside or outside such boxes without changing the composite $2$-cell that
is depicted, but $2$-cells can only be moved in and out of boxes if they are
themselves invertible. The usage of such boxes is made formal
in~\cite{melliesFunctorialBoxesString2006}.

Let $\K$ be a bicategory, and let $\Adj{\K}$ be the bicategory whose objects are
those of $\K$, arrows $A \to B$ are adjunctions $(f\colon A \to B, g\colon B \to
A, \eta\colon 1 \Rightarrow gf, \varepsilon\colon fg \Rightarrow 1)$ in $\K$ (so
that $g \varepsilon \cdot \eta g = \id_g$ and $\varepsilon f \cdot f \eta =
\id_f$), and $2$-cells $(f,g,\eta,\varepsilon) \Rightarrow
(f',g',\eta',\varepsilon')$ are \emph{mate pairs}, i.e., pairs $(\alpha\colon f
\Rightarrow f', \beta\colon g' \Rightarrow g)$ such that $\varepsilon' \cdot g'
\alpha = \varepsilon \cdot \beta f$ and $g \alpha \cdot \eta = \beta f' \cdot
\eta'$. Given an arrow $f\colon A \to B$ in $\Adj{\K}$, we write $f = (f_>, f^<,
\eta_f, \varepsilon_f)$, and similarly we write $2$-cells in $\Adj{\K}$ as
$\alpha = (\alpha_>, \alpha^<)$: this defines $2$-functors $-_>\colon \Adj{\K}
\to \K$ and $-^<\colon \op{\Adj{\K}} \to \co \K$.

In string diagrams, we use spikes to witness that arrows in $\K$ are in the
image of $-_>$ or $-^<$: we respectively write
\[ \stringdiagram{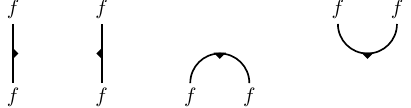} \] for the $2$-cells $\id_{f_>}$, $\id_{f^<}$,
$\eta_f$ and $\varepsilon_f$ in $\K$. Note that we label the source and targets
with $f$ (instead of $f_>$ or $f^<$) because the direction of the spike already
encodes which of $f_>$ or $f^<$ is being considered. Similarly, given a $2$-cell
$f \Rightarrow g$ in $\Adj{\K}$, we respectively write
\[ \stringdiagram{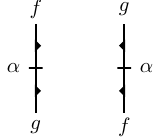}\] for the $2$-cells
$\alpha_>$ and $\alpha^<$ in $\K$, and the defining equations of mate pairs mean
we can slide these around bends:
\[ \stringdiagram{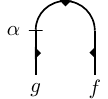} =
  \stringdiagram{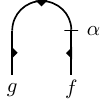} \qquad
  \stringdiagram{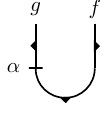} =
  \stringdiagram{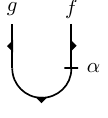} \] Note that the spikes are
only an annotation of the strings, meant to differentiate which arrow they
represent. In particular, they do not stand for any specific $2$-cell, and may
thus be duplicated or removed without changing the composite $2$-cell depicted
by the string diagram.

Finally, given a square $\gamma\colon x_>u \Rightarrow yv_>$, define
\[\gamma^\circlearrowright = x^< y \counit_v \cdot x^<
  \gamma v^< \cdot \eta_x u v^< \]
In string diagrams, we write this
\[ \stringdiagram{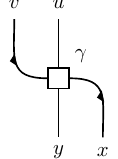} = \stringdiagram{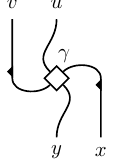} \]

\subsection{Two-sided discrete fibrations as a proarrow equipment}

Let $\K$ be a regular bicategory (cf.\ \Cref{thm:dfibs-bicategory}). Recall from
\Cref{sec:02} that $(-)_\diamond \colon \K \to \DFib{\K}$ is a proarrow
equipment. This means in particular that this pseudofunctor factors through the
$2$-functor $\Adj{\DFib{\K}} \to \DFib{\K}$: the right adjoint to $f_\diamond$
is $f^\diamond$, and given a square $\gamma\colon xu \Rightarrow yv$ in $\K$,
$\central \gamma$ (as defined in \Cref{ssec:exact-squares}) coincides with
$(\gamma_\diamond)^\circlearrowright$.

We now describe the units and counits of the adjunctions $f_\diamond \dashv
f^\diamond$. For this, first recall the following.

\begin{lemma}
  Given a cospan $A \xrightarrow{f} C \xleftarrow{g} B$, the two-sided discrete
  fibration $g^\diamond f_\diamond$ is a bicomma span of this cospan.
\end{lemma}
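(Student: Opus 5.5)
We need to show that $g^\diamond f_\diamond$ is a bicomma span of $A \xrightarrow{f} C \xleftarrow{g} B$, i.e.\ the composite in $\DFib{\K}$ of the graph $f_\diamond = A \xleftarrow{\cod} \bicomma{1_C}{f} \xrightarrow{\dom} C$ with the cograph $g^\diamond = C \xleftarrow{\cod} \bicomma{g}{1_C} \xrightarrow{\dom} B$. The plan has three steps: form the bipullback composite, identify it with $\bicomma{g}{f}$, and check that the $(\E,\M)$-factorization step is trivial.

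First, the span composite is the bipullback $P = \bicomma{1_C}{f} \times_C \bicomma{g}{1_C}$ of $\dom \colon \bicomma{1_C}{f} \to C$ and $\cod \colon \bicomma{g}{1_C} \to C$. A generalized element of $P$ amounts to:
\begin{itemize}
\item a pair $(a, c)$ with a $2$-cell $\sigma \colon c \Rightarrow fa$ (an element of $\bicomma{1_C}{f}$);
\item a pair $(c', b)$ with a $2$-cell $\tau \colon gb \Rightarrow c'$ (an element of $\bicomma{g}{1_C}$);
\item an invertible $2$-cell $c \cong c'$.
\end{itemize}
Pasting these gives a $2$-cell $gb \Rightarrow fa$. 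This defines, by the universal property of $\bicomma{g}{f}$, a morphism of spans $m \colon P \to \bicomma{g}{f}$ over $A$ and $B$.

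Conversely, we build a section $s \colon \bicomma{g}{f} \to P$. Starting from the universal cell $\lambda \colon g\dom \Rightarrow f\cod$, we take $c = g\dom$, $\tau = \id$, the invertible cell $c \cong c'$ to be the identity, and $\sigma = \lambda$. The universal properties of the two bicommas and of the bipullback assemble these data into $s$, and $ms \cong 1$ follows from the $2$-dimensional uniqueness clause of $\bicomma{g}{f}$.

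The crux is showing that $m$ lies in $\E$. Since $\bicomma{g}{f}$ is already a TSDF (bicomma spans are TSDFs), the arrow $\bicomma{g}{f} \to A \times B$ lies in $\M$. Hence if $m \in \E$, then the $(\E,\M)$-factorization of $P \to A \times B$ is precisely $P \xrightarrow{m} \bicomma{g}{f} \to A \times B$, so $g^\diamond f_\diamond \simeq \bicomma{g}{f}$ with the pasted $2$-cell as its universal square. Because $m$ has a section up to isomorphism, $m$ is a split (pseudo-)epimorphism, and the plan is to prove directly that it is orthogonal to every $\M$-arrow; this forces $m \in \E$, since $\E$ contains all arrows orthogonal to $\M$. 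The orthogonality argument goes as follows: given a square from $m$ to $n \in \M$, the diagonal is obtained by precomposing the top arrow with $s$. Uniqueness then needs the $2$-cell $sm \Rightarrow 1$ induced by $\sigma$, together with the fact that $n$ reflects it. This is delicate, since $sm$ is not isomorphic to $1$.

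If that fails, I would fall back on \Cref{cor:composition-with-bicomma} in the special case of a graph followed by a cograph. There, the bicomma of $\dom$ and $\cod$ is the "iterated comma" that visibly retracts onto $\bicomma{g}{f}$ via $\tau$ and $\sigma$. The arrow $h$ factoring it lies in $\E$ by that corollary, and uniqueness of factorizations then gives the identification. The expected main obstacle is exactly this $\E$-membership, equivalently the verification that the composite computed by the paper's recipe has no extra identifications beyond those of $\bicomma{g}{f}$.
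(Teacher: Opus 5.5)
You have found the right crux, namely that the comparison $m\colon P\to\bicomma{g}{f}$ must lie in $\E$. The proposal does not prove this, and neither of your routes can as stated.

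\textbf{Why the splitting is not enough.} A section up to isomorphism does not put an arrow in $\E$. In the regular bicategory $\coop\Cat$, $\E$ consists of the fully faithful functors. Let $B\mathbb{Z}$ be the one-object category with endomorphism monoid $\mathbb{Z}$, and take the arrow $m\colon B\mathbb{Z}\to 1$ given by the functor $1\to B\mathbb{Z}$. It satisfies $ms=1$ for $s$ given by $B\mathbb{Z}\to 1$, yet it is not in $\E$, since that functor is not full.

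\textbf{Why the fallback fails.} It is circular: the appendix proves \Cref{cor:composition-with-bicomma} by identifying $s^\diamond p_\diamond$ with the bicomma span of $p$ and $s$, which is precisely the present lemma. Even granting the corollary, it only says that the arrow $h$ into the composite is in $\E$. To invoke uniqueness of factorizations you would still need the retraction of the iterated comma onto $\bicomma{g}{f}$ to be in $\E$, which is the same gap.

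\textbf{The missing idea: $m$ is more than split.}
\begin{enumerate}
\item With your $s$ (taking $\tau=\id$, $\sigma=\lambda$), there is a $2$-cell $\varepsilon\colon sm\Rightarrow 1_P$ whose middle component comes from $\tau$, not $\sigma$. The universal properties show that $\varepsilon$ and $1\cong ms$ satisfy the triangle identities. So $s\dashv m$ with invertible unit, and in particular $m\varepsilon$ is invertible.
\item This makes $-\circ m$ fully faithful on every hom-category, which gives the uniqueness half of orthogonality outright. Reflection of invertible $2$-cells is needed for existence, not for uniqueness as you suggest.
\item For existence, take $n\colon Z\to W$, $v\colon P\to Z$, $w\colon\bicomma{g}{f}\to W$ and $\theta\colon nv\cong wm$. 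The filler $vs$ satisfies $nvs\cong wms\cong w$, and $vsm\cong v$ provided $v\varepsilon$ is invertible. The interchange law gives $\theta\cdot nv\varepsilon=wm\varepsilon\cdot\theta sm$, so $nv\varepsilon$ is invertible, and the resulting isomorphisms are compatible with $\theta$.
\item It remains for $n$ to reflect invertible $2$-cells, so you should not test against arbitrary $\M$-arrows. Since $\M$ is generated by TSDFs, $\E$ is exactly the class of arrows orthogonal to the arrows $\langle q,p\rangle\colon E\to A'\times B'$ of TSDFs. These reflect invertibility by the discreteness axiom (take $\xi=\zeta$).
\end{enumerate}
With these additions your argument becomes a self-contained alternative to the paper's proof, which simply invokes \cite[Props.\ 4.26(a) and 1.7]{carboniModulatedBicategories1994}.
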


\begin{proof}
  Follows from \cite[Props.\ 4.26(a) and 1.7]{carboniModulatedBicategories1994}.
\end{proof}

By this lemma, $f^\diamond f_\diamond\colon A \pro A$ is the bicomma span of the
cospan at the bottom of the composite square
\[\begin{tikzcd}[column sep=tiny, row sep=scriptsize, cramped]
	&[5pt]&[-12pt]{A^\two \times_A A^\two} \\
	& {A^\two} & \cong &[-12pt]{A^\two} \\
	A && A &&[5pt] A \\
	& A & {=} & A \\
	&& B
	\arrow[from=1-3, to=2-2]
	\arrow[from=1-3, to=2-4]
	\arrow[from=2-2, to=3-1]
	\arrow[from=2-2, to=3-3]
	\arrow[from=2-4, to=3-3]
	\arrow[from=2-4, to=3-5]
	\arrow[equals, from=3-1, to=4-2]
	\arrow[between={0.4}{0.6}, Rightarrow, from=3-3, to=3-1]
	\arrow[equals, from=3-3, to=4-2]
	\arrow[equals, from=3-3, to=4-4]
	\arrow[between={0.4}{0.6}, Rightarrow, from=3-5, to=3-3]
	\arrow[equals, from=3-5, to=4-4]
	\arrow["f"', from=4-2, to=5-3]
	\arrow["f", from=4-4, to=5-3]
\end{tikzcd}\] while $\id_A \cong \id_A \id_A\colon A \pro A$ is the $\M$-part
in the $(\E,\M)$-factorization of the span at the top of this square. The unit
$\eta_f\colon \id_A \to f^\diamond f_\diamond$ is then induced by the universal
morphism of spans (from the top span to the bicomma span) that factors this square
into the bicomma square.

Similarly, the counit $\varepsilon_f\colon f_\diamond f^\diamond \Rightarrow
1_B$ is induced by the universal morphism of spans factoring the $2$-cell
\[\begin{tikzcd}[column sep=tiny, row sep=scriptsize, cramped]
    &&[-34pt] {\bicomma{f}{1_A} \times_A \bicomma{1_A}{f}} \\
    & {\bicomma f {1_A}} & \cong &[-34pt] {\bicomma {1_A} f} \\
    B && A && B \\
    & B & {=} & B \\
    && B
    \arrow[from=1-3, to=2-2]
    \arrow[from=1-3, to=2-4]
    \arrow[from=2-2, to=3-1]
    \arrow[from=2-2, to=3-3]
    \arrow[from=2-4, to=3-3]
    \arrow[from=2-4, to=3-5]
    \arrow[equals, from=3-1, to=4-2]
    \arrow[between={0.4}{0.6}, Rightarrow, from=3-3, to=3-1]
    \arrow["f"{description}, from=3-3, to=4-2]
    \arrow["f"{description}, from=3-3, to=4-4]
    \arrow[between={0.4}{0.6}, Rightarrow, from=3-5, to=3-3]
    \arrow[equals, from=3-5, to=4-4]
    \arrow[equals, from=4-2, to=5-3]
    \arrow[equals, from=4-4, to=5-3]
  \end{tikzcd}\] into the bicomma span $B \leftarrow B^\two \rightarrow B$.

More generally, in the above constructions, by replacing the squares $f 1_A = f
1_A$ and $1_B f = 1_B f$ by an arbitrary square $\gamma$ we retrieve $\central
\gamma$.

\begin{lemma}\label{lemma:dagger}
  Let $\gamma\colon xu \Rightarrow yv$ be a square in $\K$. In $\DFib{\K}$,
  $\central \gamma$ is the morphism of two-sided discrete fibrations induced by
  the universal morphism of spans that factors the square
  \[\begin{tikzcd}[column sep=tiny, row sep=scriptsize, cramped]
      &&[-32pt] {\bicomma{v}{1_C} \times_A \bicomma{1_B}{u}} \\
      & \bicomma{v}{1_C} & \cong &[-32pt] \bicomma{1_B}{u} \\
      C && A && B \\
      & C && B \\
      && D
      \arrow[from=1-3, to=2-2]
      \arrow[from=1-3, to=2-4]
      \arrow[from=2-2, to=3-1]
      \arrow[from=2-2, to=3-3]
      \arrow[from=2-4, to=3-3]
      \arrow[from=2-4, to=3-5]
      \arrow[from=3-1, to=4-2]
      \arrow[between={0.4}{0.6}, Rightarrow, from=3-3, to=3-1]
      \arrow["v"', from=3-3, to=4-2]
      \arrow["u", from=3-3, to=4-4]
      \arrow[between={0.4}{0.6}, Rightarrow, from=3-5, to=3-3]
      \arrow[from=3-5, to=4-4]
      \arrow["y"', from=4-2, to=5-3]
      \arrow["\gamma"', between={0.4}{0.6}, Rightarrow, from=4-4, to=4-2]
      \arrow["x", from=4-4, to=5-3]
    \end{tikzcd}\] into the bicomma span $x^\diamond y_\diamond$.

  In particular, $\gamma$ is an exact square if and only if the underlying
  $\K$-arrow of this universal morphism lies in $\E$.
\end{lemma}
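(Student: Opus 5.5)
We need to prove Lemma dagger: $\central\gamma$, the composite $\varepsilon$–$\gamma_\diamond$–$\eta$ in $\DFib{\K}$, is induced by the universal morphism of spans from the bipullback composite $v^\diamond$ then $u_\diamond$ into the bicomma span $x^\diamond y_\diamond$. Exactness then corresponds to this morphism lying in $\E$.

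I would first fix representatives. The composite $u_\diamond v^\diamond$ is the $\M$-part of the bipullback span $\bicomma{v}{1_C}\times_A \bicomma{1_B}{u}$. By the preceding lemma, $x^\diamond y_\diamond$ is the bicomma span $\bicomma{x}{y}$. A $2$-cell between TSDFs is a morphism of spans. Orthogonality of $\E$ against $\M$ lets a morphism out of the bipullback span, into a TSDF, descend essentially uniquely to the $\M$-image, so it suffices to compute on the level of the bipullback.

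Next I would compute the pasting $x^\diamond y_\diamond\varepsilon_v\cdot x^\diamond\gamma_\diamond v^\diamond\cdot\eta_x u_\diamond v^\diamond$ step by step. The unit and counit are induced by the universal morphisms into bicomma spans described just above. The $2$-cell $\gamma_\diamond\colon u_\diamond x_\diamond \Rightarrow v_\diamond y_\diamond$ (up to pseudofunctoriality) is induced by the bicomma universal property applied to whiskering the canonical cell with $\gamma$. All three constructions are instances of one move: a span, equipped with a square into a cospan, yields the universal morphism into the bicomma. Composing them should give a morphism determined by the total pasted square $y\,(\text{cell of }\bicomma v{1_C})\cdot\gamma\cdot x\,(\text{cell of }\bicomma{1_B}u)$. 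This is exactly the square displayed in the statement.

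The key uniqueness step follows from part (2) of the universal property of bicomma objects. Two morphisms into $\bicomma{x}{y}$ agree up to unique invertible 2-cell once their induced squares coincide. So it is enough to check that the pasting produces the displayed square after composing with the projections.

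This verification is the main obstacle. Intermediate composites, such as $x^\diamond x_\diamond u_\diamond v^\diamond$, are themselves $\M$-images of iterated bipullbacks, so one must track how factorizations interact with whiskering. I would use \Cref{cor:composition-with-bicomma} to compute composites with bicommas throughout. Then I would verify that each whiskering is induced by the obvious map of iterated bipullbacks, using stability of $\E$ under bipullbacks. The comparison maps are all in $\E$, so by orthogonality they can be cancelled.

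For the final claim, let $h$ be the universal morphism, and factor it as $h\cong me$. By construction $\central\gamma$ is the $\DFib{\K}$-morphism from the $\M$-image of the source to $\bicomma xy$, which is induced by $m$ up to equivalence. This morphism of TSDFs is invertible iff $m$ is an equivalence. Since $\E\cap\M$ consists of equivalences, that happens iff $h\in\E$.
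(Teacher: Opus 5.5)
Your outline is sound, and your argument for the final claim works. It tacitly uses that the morphism $k$ from the $\M$-image of $\bicomma{v}{1_C}\times_A\bicomma{1_B}{u}$ to $\bicomma{x}{y}$ lies in $\M$ by cancellation. Then $h\cong k e_0$ is already the $(\E,\M)$-factorization of $h$, so $k\simeq m$; the paper leaves this part implicit.

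For the main step, however, the paper takes a different route, and in your proposal that step is only announced (``should give''). The paper does not compute $x^\diamond y_\diamond\varepsilon_v\cdot x^\diamond\gamma_\diamond v^\diamond\cdot\eta_x u_\diamond v^\diamond$ forwards. Instead it writes $\gamma^*$ for the cell induced by the displayed square, for \emph{every} square, and uses three facts:
\begin{itemize}
\item a $2$-cell $\alpha\colon f\Rightarrow g$, read as the square $1f\Rightarrow g1$, has $\alpha^*=\alpha_\diamond$;
\item by their very construction, $\eta_v$ and $\varepsilon_x$ are $(-)^*$ of the identity squares $v1_A=v1_A$ and $1_Dx=1_Dx$;
\item $(-)^*$ satisfies the pasting law $(z\gamma\cdot\beta u)^*=\beta^*y_\diamond\circ t_\diamond\gamma^*$, proved once from the bicomma universal property.
\end{itemize}
Pasting $\gamma$ between the two identity squares gives $\gamma_\diamond=\varepsilon_x y_\diamond v_\diamond\circ x_\diamond\gamma^*v_\diamond\circ x_\diamond u_\diamond\eta_v$. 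So $\gamma_\diamond$ is the inverse mate of $\gamma^*$, and the triangle identities yield $\central\gamma=\gamma^*$.

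The gain is that the only concrete computation is the pasting law, which whiskers by representables only. Intermediate TSDFs like $x^\diamond x_\diamond u_\diamond v^\diamond$, and the constraints $x_\diamond u_\diamond\cong(xu)_\diamond$, never need explicit presentations.

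Your forward computation can be done, but orthogonality only lets you \emph{check} equalities after precomposing with $\E$-maps; it does not let you \emph{lift} maps through them. To compose the three cells on iterated bipullbacks you must therefore construct representatives landing in compatible covers, and similarly for $\gamma_\diamond$ and the unitors. For example, you need a representative of $\eta_x u_\diamond v^\diamond$ with values in $\bicomma{v}{1_C}\times_A\bicomma{1_B}{u}\times_B\bicomma{1_D}{x}\times_D\bicomma{x}{1_D}$, where $x^\diamond\gamma_\diamond v^\diamond$ is naturally computed; but $\eta_x$ is defined by a map into $\bicomma{x}{x}$. That bookkeeping is exactly what you defer, so as written the proposal is a viable plan for the main step rather than a proof of it.
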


\begin{proof}
  Temporarily write $\gamma \mapsto \gamma^*$ for the operation defined above.
  By definition, if $\alpha\colon f \Rightarrow g\colon A \to B$ is a $2$-cell
  in $\K$, then seeing $\alpha$ as a square $\alpha\colon 1_B f \Rightarrow g
  1_A$ we get that $\alpha^*$ is (up to the structural $2$-cells)
  $\alpha_\diamond\colon f_\diamond \Rightarrow g_\diamond$. Moreover $-^*$ is
  compositional: for a pasted square
  \[\begin{tikzcd}[column sep=scriptsize, cramped]
      & A \\
      C && B \\
      & D && E \\
      && F
      \arrow["v"', from=1-2, to=2-1]
      \arrow["u", from=1-2, to=2-3]
      \arrow["y"', from=2-1, to=3-2]
      \arrow["\gamma"', between={0.3}{0.7}, Rightarrow, from=2-3, to=2-1]
      \arrow["x"{description}, from=2-3, to=3-2]
      \arrow["t", from=2-3, to=3-4]
      \arrow["z"', from=3-2, to=4-3]
      \arrow["\beta"', between={0.3}{0.7}, Rightarrow, from=3-4, to=3-2]
      \arrow["w", from=3-4, to=4-3]
    \end{tikzcd}\] one shows by the universal property of bicomma squares that $(z
  \gamma \cdot \beta u)^* = \beta^* y^\diamond \circ t_\diamond \gamma^*$.
  Applying this to the composite square
  \[\begin{tikzcd}[column sep=scriptsize, cramped]
      & A \\
      A & {=} & A \\
      & C && B \\
      && D & {=} & D \\
      &&& D
      \arrow[equals, from=1-2, to=2-1]
      \arrow[equals, from=1-2, to=2-3]
      \arrow["v"', from=2-1, to=3-2]
      \arrow["v"{description}, from=2-3, to=3-2]
      \arrow["u", from=2-3, to=3-4]
      \arrow["y"', from=3-2, to=4-3]
      \arrow["\gamma"', between={0.3}{0.7}, Rightarrow, from=3-4, to=3-2]
      \arrow["x"{description}, from=3-4, to=4-3]
      \arrow["x", from=3-4, to=4-5]
      \arrow[equals, from=4-3, to=5-4]
      \arrow[equals, from=4-5, to=5-4]
    \end{tikzcd}\] we get that $\gamma_\diamond = \varepsilon_x y_\diamond
  v_\diamond \circ x_\diamond \gamma^* v_\diamond \circ x_\diamond u_\diamond
  \eta_v$ and hence, by postcomposing with $\varepsilon_v$ and precomposing with
  $\eta_x$ (and applying the triangle equalities involving the unit and counit)
  we get that $\central \gamma = \gamma^*$.
\end{proof}

\begin{lemma}[\protect{\Cref{lemma:left-class-exact}}]
      If $e$ is in $\E$, the square $1 e = 1 e$ is exact.
\end{lemma}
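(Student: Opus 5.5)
We apply the last part of \Cref{lemma:dagger} to the square $\gamma \colon 1_Y e = 1_Y e$ with $u = v = e\colon X \to Y$ and $x = y = 1_Y$. By that lemma, exactness of $\gamma$ is equivalent to the following: the universal morphism of spans from the bipullback composite $\bicomma{e}{1_Y} \times_X \bicomma{1_Y}{e}$ (with legs to $Y$ and $Y$) to the bicomma span $1_Y^\diamond 1_{Y\diamond} \simeq Y^\two$ lies in $\E$. This universal morphism factors the pasted square
\[ \bicomma{e}{1_Y} \times_X \bicomma{1_Y}{e} \rightrightarrows Y \]
into the bicomma square of $1_Y, 1_Y$.

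The first step is to identify this arrow concretely. Write $P \coloneqq \bicomma{e}{1_Y} \times_X \bicomma{1_Y}{e}$. Its underlying arrow $w\colon P \to Y^\two$ is obtained by pasting two bicomma squares along $e$.

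We would factor $w$ as a composite of arrows each of which is visibly in $\E$. Consider first the arrow $e^\two \colon X^\two \to Y^\two$, or more precisely the comparison from $\bicomma{1_Y}{e}$ and $\bicomma{e}{1_Y}$. Both are obtained from $Y^\two$ by bipullback along $e$ on one side: we have $\bicomma{1_Y}{e} \simeq Y^\two \times_Y X$ by pulling back the codomain leg along $e$, and dually $\bicomma{e}{1_Y} \simeq X \times_Y Y^\two$.

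Then $P \simeq X\times_Y Y^\two \times_X X \times_Y Y^\two$, which by pasting bipullbacks is $Y^\two \times_Y X \times_Y Y^\two$. Meanwhile, $Y^\two \simeq Y^\two \times_Y Y \times_Y Y^\two$ is the unit composite, via the factorization $Y^\two\times_Y Y^\two \to Y^\two$. So $w$ is the composite of two arrows:
\begin{itemize}
\item the bipullback of $e \colon X \to Y$ along the projection $Y^\two \times_Y Y^\two \to Y$ onto the middle object;
\item the composition map $Y^\two \times_Y Y^\two \to Y^\two$.
\end{itemize}

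The second arrow is in $\E$: it is the $\E$-part of the factorization defining the composite $1_Y \circ 1_Y \cong 1_Y$ in $\DFib{\K}$, as in \Cref{thm:dfibs-bicategory}, i.e.\ the unit isomorphism is realized by an $\E$-arrow. The first arrow is in $\E$ since $\E$ is stable under bipullback by hypothesis. As $\E$ is closed under composition (being the left class of a factorization system), $w \in \E$, and \Cref{lemma:dagger} concludes.

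The main obstacle is the first step: checking carefully, via the universal properties of bicomma objects, that the morphism $w$ produced by \Cref{lemma:dagger} really is (up to invertible 2-cell) this composite. In particular, the 2-cells pasted along $e$ must be identified with the middle component of the triple bipullback. Should this bookkeeping prove awkward, the fallback is to use powering by $\two$: $e^\two$ is in $\E$ by hypothesis, and one can compare $w$ with a bipullback of $e^\two$.
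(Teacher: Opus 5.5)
Your argument is correct in outline, but it uses a different decomposition from the paper's.

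The paper first gets rid of one of the two bicomma squares. Since a TSDF with legs $q,p$ is $p_\diamond q^\diamond$ \cite[Prop.~4.26(a)]{carboniModulatedBicategories1994}, composing with a (co)representable only post-composes a leg with $e$. The remaining bicomma object is a bipullback of the arrow object \cite[Prop.~1.7]{carboniModulatedBicategories1994}. The comparison map is then literally the projection $Y^\two\times_Y X\to Y^\two$ (in your notation), which is a bipullback of $e$, so stability of $\E$ under bipullbacks ends the proof.

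You instead keep both bicomma squares, write $P\simeq (Y^\two\times_Y Y^\two)\times_Y X$, and factor $w$ as a bipullback $\pi$ of $e$ followed by the composition map $c\colon Y^\two\times_Y Y^\two\to Y^\two$. This avoids the composition-with-representables step. The price is two extra facts: closure of $\E$ under composition, which is standard, and $c\in\E$.

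The justification you give for $c\in\E$ (``it is the $\E$-part realizing the unitor'') is not enough as stated. Morphisms of spans $Y^\two\times_Y Y^\two\to Y^\two$ are far from unique: up to isomorphism they correspond to $2$-cells $1_Y\Rightarrow 1_Y$ in $\K$, and those coming from non-invertible $2$-cells are not in $\E$. So you would have to know that the unitor of $\DFib{\K}$ is induced by this particular $c$. The clean fix is to apply \Cref{lemma:dagger} to the identity square $1_Y1_Y=1_Y1_Y$. Its universal morphism is exactly $c$. Its $\gamma^\dagger$ is invertible because $(1_Y)_\diamond$ is isomorphic to the identity TSDF on $Y$, so the unit and counit of $(1_Y)_\diamond\dashv(1_Y)^\diamond$ are invertible.

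The identification $w\cong c\circ\pi$ that you flag as the main obstacle is routine. By the $2$-dimensional universal property of $Y^\two$, it reduces to checking that the equivalences $\bicomma{1_Y}{e}\simeq Y^\two\times_Y X$ and $\bicomma{e}{1_Y}\simeq X\times_Y Y^\two$ are compatible with the bicomma $2$-cells.

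There are two slips. Your first expression for $P$ should be $(Y^\two\times_Y X)\times_X(X\times_Y Y^\two)$. Also, $Y^\two$ is the $\M$-image of $Y^\two\times_Y Y^\two$, not something equivalent to $Y^\two\times_Y Y\times_Y Y^\two$. The powering-by-$\two$ fallback is not needed.
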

\begin{proof}
  By \Cref{lemma:dagger}, recall that the square $1e = 1e$ is exact if and only if the universal morphism of spans factoring the square
  \[\begin{tikzcd}[column sep=tiny, row sep=scriptsize, cramped]
      &&[-30pt] \bicomma{e}{1_B} \times_A \bicomma{1_B}{e} \\
      & \bicomma{e}{1_B} & \cong &[-30pt] \bicomma{1_B}{e} \\
      B && A && B \\
      & B & {=} & B \\
      && B
      \arrow[from=1-3, to=2-2]
      \arrow[from=1-3, to=2-4]
      \arrow[from=2-2, to=3-1]
      \arrow[from=2-2, to=3-3]
      \arrow[from=2-4, to=3-3]
      \arrow[from=2-4, to=3-5]
      \arrow[equals, from=3-1, to=4-2]
      \arrow[between={0.4}{0.6}, Rightarrow, from=3-3, to=3-1]
      \arrow["e"{description}, from=3-3, to=4-2]
      \arrow["e"{description}, from=3-3, to=4-4]
      \arrow[between={0.4}{0.6}, Rightarrow, from=3-5, to=3-3]
      \arrow[equals, from=3-5, to=4-4]
      \arrow[equals, from=4-2, to=5-3]
      \arrow[equals, from=4-4, to=5-3]
    \end{tikzcd}\]
	into the bicomma span $B \leftarrow B^\two \rightarrow B$ lies in $\E$.

  By \cite[Prop.\ 4.26(a)]{carboniModulatedBicategories1994}, it is in fact
  enough to only consider the two bottom right squares. By \cite[Prop.
  1.7]{carboniModulatedBicategories1994}, the span at the top of these two
  squares is the same as the one at the top of the diagram
  \[\begin{tikzcd}[column sep=tiny, row sep=scriptsize, cramped]
      &[10pt] &[-5pt] {B^\two \times_B A} \\
      & {B^\two} & \cong &[-5pt] A \\
      B && B & {=} &[10pt] B \\
      & B & & \\
      && B
      \arrow[from=1-3, to=2-2]
      \arrow[from=1-3, to=2-4]
      \arrow[from=2-2, to=3-1]
      \arrow[from=2-2, to=3-3]
      \arrow["e"{description}, from=2-4, to=3-3]
      \arrow["e"{description}, from=2-4, to=3-5]
      \arrow[equals, from=3-1, to=4-2]
      \arrow[between={0.4}{0.6}, Rightarrow, from=3-3, to=3-1]
      \arrow[equals, from=3-3, to=4-2]
      \arrow[equals, from=3-5, to=5-3]
      \arrow[equals, from=4-2, to=5-3]
    \end{tikzcd}\] \looseness=-1 The arrow $B^\two \times_B A \to B^\two$ is the
  one underlying the universal morphism of spans factoring this top span into $B
  \leftarrow B^\two \rightarrow B$. Since $\E$ is stable under bipullbacks,
  this arrow is also in $\E$.
\end{proof}

\begin{lemma}[\protect{\Cref{lemma:bicommas-exact}}]
  Bicomma squares are exact.
\end{lemma}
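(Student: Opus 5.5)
We use \Cref{lemma:dagger}: given a bicomma square $\gamma\colon f\dom \Rightarrow g\cod$ of a cospan $A \xrightarrow{g} C \xleftarrow{f} B$, we must show that the universal morphism of spans is an $\E$-arrow, up to equivalence. Here the square in question is
\[
\bicomma{\dom}{1_{\bicomma fg}} \times_{\bicomma fg} \bicomma{1_{\bicomma fg}}{\dom}\ \longrightarrow\ \bicomma{f}{g},
\]
or more precisely its composite with the codomain of the source. This universal morphism factors the pasted square, built from the two cograph/graph bicomma squares and $\gamma$, through the bicomma span $g^\diamond f_\diamond$. By the first lemma of this subsection, $g^\diamond f_\diamond$ is itself a bicomma span of the cospan, so we may take it to be $A \xleftarrow{\cod} \bicomma fg \xrightarrow{\dom} B$ with 2-cell $\gamma$.

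We reduce as in the proof of \Cref{lemma:left-class-exact}. First apply \cite[Prop.\ 4.26(a)]{carboniModulatedBicategories1994} to discard the parts of the pasting that are already bicomma squares. Then use \cite[Prop.\ 1.7]{carboniModulatedBicategories1994} to rewrite the top span, namely the bipullback of the cograph of $\cod$ and the graph of $\dom$ over $\bicomma fg$. This identifies the comparison arrow with a map of the form
\[
\bigl(A^\two \times_A \bicomma fg\bigr) \times_B B^\two \to \bicomma fg,
\]
which admits a section given by the identity 2-cells. The essential point is that the universal property of $\bicomma fg$ makes this comparison map a ``retraction onto the middle'': its source is obtained from $\bicomma fg$ by bipullbacks along arrows of the form $A^\two \to A$ and $B^\two \to B$, so it is built from $\bicomma fg$ by powering by $\two$ and bipullback.

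The cleanest route is to show that this comparison arrow is, up to equivalence, the composite of bipullbacks of the arrows $d\colon A^\two \to A$ and $B^\two \to B$ that sends an element to its ``composite''. For the conclusion, it suffices to show either that these arrows lie in $\E$, or, more directly, that the comparison has a right adjoint section in $\K$ whose counit is invertible. Any arrow with such a fully faithful right adjoint is orthogonal to the $\M$-arrows generated by TSDFs, because TSDFs have unique path lifting. Hence the comparison lies in $\E$, since $\M$ is generated by TSDFs.

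The main obstacle is this last orthogonality step: checking directly from the lifting properties in \Cref{def:two-sided-discrete-fibration} that an arrow with a section that is a unit of an adjunction is orthogonal to every TSDF $E \to X\times Y$. We would first try this via the bipullback description of orthogonality, using unique left and right path lifting to lift the unit 2-cell along both projections.
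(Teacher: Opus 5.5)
There is a genuine gap. Your reduction via \Cref{lemma:dagger} is the right starting point (note that the source of the universal morphism is $\bicomma{\cod}{1_A}\times_{\bicomma{f}{g}}\bicomma{1_B}{\dom}$, not what you first wrote). What is missing is the step that puts the comparison $\kappa$ into $\E$, and neither route you propose works as stated.

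Elementwise, $\kappa$ sends $(\beta\colon b\to \dom e,\ e,\ \alpha\colon \cod e\to a)$ to $g(\alpha)\,e\,f(\beta)$, so it \emph{composes} on both sides. A bipullback of $d\colon A^\two\to A$ or $B^\two\to B$, or a composite of such, is instead a projection that forgets components. In \Cref{lemma:left-class-exact} the comparison was a bipullback of the given $\E$-arrow $e$ only because the square was $1e=1e$; here there is no $\E$-arrow to pull back, and you give no identification of $\kappa$ with such projections.

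More seriously, $\kappa$ has no right adjoint with invertible counit in general. The identity-inserting section is left adjoint to composing on one side and right adjoint to composing on the other, so it is neither for $\kappa$, and other candidates fail too. In $\coop\Cat$ your condition would make the corresponding functor of $\Cat$ a coreflective full inclusion. Take the cospan of $\coop\Cat$ given by the span $\mathbf{1}\leftarrow\emptyset\rightarrow\mathbf{1}$ of $\Cat$:
\begin{itemize}
\item the bicomma object is the discrete category $\{a,b\}$;
\item the source of $\kappa$ consists of $a,b$ and a middle copy $\{a',b'\}$, with one arrow between $a$ and $a'$ and one between $b'$ and $b$, one pointing into and one out of the middle copy;
\item the middle object that receives no arrow from $\{a,b\}$ therefore has no coreflection.
\end{itemize}

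By contrast, the step you single out as the main obstacle is routine, and it uses discreteness rather than path lifting. Suppose $l\dashv r$ with invertible counit and $\langle q,p\rangle G\cong Fl$. Then $Gr$ is a lift, because $\langle q,p\rangle G\eta\cong Fl\eta$ is invertible and a TSDF reflects invertibility of $2$-cells. The $2$-dimensional part and the dual case (right adjoints with invertible unit) are similar.

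For comparison, the paper does not analyse $\kappa$ at all. It observes that \cite[Prop.~4.25]{carboniModulatedBicategories1994} constructs an $\E$-arrow to prove that an arbitrary TSDF $A\xleftarrow{q}E\xrightarrow{p}B$ is isomorphic to $p_\diamond q^\diamond$. When the TSDF is a bicomma span, that arrow is exactly the universal morphism of \Cref{lemma:dagger}, so exactness is immediate.

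A self-contained version of your idea is possible if you split $\kappa$ into its two one-sided halves. First take the bipullback, along the projection $\bicomma{\cod}{1_A}\to\bicomma{f}{g}$, of the $B$-side action $\bicomma{1_B}{\dom}\to\bicomma{f}{g}$; then follow it with the $A$-side action $\bicomma{\cod}{1_A}\to\bicomma{f}{g}$. Each half does admit an adjoint section with invertible unit or counit, on opposite sides for the two halves, obtained from the universal property of $\bicomma{f}{g}$. Each half therefore lies in $\E$ by the argument above, and since $\E$ is closed under composition and stable under bipullback, $\kappa\in\E$.
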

\begin{proof}
  In \cite[Prop. 4.25]{carboniModulatedBicategories1994}, if the two-sided
  discrete fibration is a bicomma span, the $\E$-arrow that is constructed is the
  universal morphism described in \Cref{lemma:dagger}.
\end{proof}

\begin{corollary}[\protect{\Cref{cor:composition-with-bicomma}}]
    Composition of TSDFs can be performed by means of bicomma squares instead of
  bipullbacks.
\end{corollary}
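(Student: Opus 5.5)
The approach is to compare the bipullback-based definition of composition with a bicomma-based one, using the exactness of bicomma squares (\Cref{lemma:bicommas-exact}) and the characterization of exactness in \Cref{lemma:dagger}.

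Fix TSDFs $e = A \xleftarrow{q} E \xrightarrow{p} B$ and $f = B \xleftarrow{s} F \xrightarrow{r} C$. By \cite[Prop.\ 4.25]{carboniModulatedBicategories1994} we have $e \cong p_\diamond q^\diamond$ and $f \cong r_\diamond s^\diamond$. Hence
\[ fe \cong r_\diamond s^\diamond p_\diamond q^\diamond. \]

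Now form the bicomma square $\gamma\colon s\dom \Rightarrow p\cod$ of the cospan $E \xrightarrow{p} B \xleftarrow{s} F$. It is exact by \Cref{lemma:bicommas-exact}, so $\central\gamma\colon \dom_\diamond \cod^\diamond \Rightarrow s^\diamond p_\diamond$ is invertible. Alternatively, one can read this off directly from the lemma stating that $s^\diamond p_\diamond$ is a bicomma span of this cospan, which gives $s^\diamond p_\diamond \cong \dom_\diamond\cod^\diamond$. Either way we obtain
\[ fe \cong r_\diamond \dom_\diamond \cod^\diamond q^\diamond \cong (r\dom)_\diamond (q\cod)^\diamond, \]
using pseudofunctoriality of $(-)_\diamond$ and $(-)^\diamond$.

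It then remains to identify $(r\dom)_\diamond (q\cod)^\diamond$ with the $\M$-part of the span $A \xleftarrow{q\cod} \bicomma{s}{p} \xrightarrow{r\dom} C$. To do this, apply \cite[Prop.\ 4.25]{carboniModulatedBicategories1994} in its general form: for any span $A \xleftarrow{y} X \xrightarrow{x} C$, the composite $x_\diamond y^\diamond$ is the $\M$-part of the $(\E,\M)$-factorization of $X \to A\times C$ (or of its TSF reflection). I would check that this holds for an arbitrary span, by reusing the argument of \Cref{lemma:dagger} with a square of identities. This is the step I expect to need the most care. If the cited result only covers TSFs, I would first note that a bicomma span is a TSDF and that postcomposing with the TSDFs $q$ and $r$ preserves two-sided fibrations \cite[\textsection 4.16]{carboniModulatedBicategories1994}, so that $A \xleftarrow{q\cod}\bicomma{s}{p}\xrightarrow{r\dom} C$ is a TSF and \cite[Prop.\ 4.18]{carboniModulatedBicategories1994} applies.

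Consequently the composite $fe$ is the image of $\bicomma{s}{p}$ rather than of $E\times_B F$. Moreover, the comparison arrow $h\colon \bicomma{s}{p}\to G$ lies in $\E$ by uniqueness of factorizations, which is the fact used in the proof of \Cref{thm:extension-pseudofunctors}.
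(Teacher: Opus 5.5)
This is essentially the paper's argument: you rewrite $fe \cong r_\diamond s^\diamond p_\diamond q^\diamond \cong (r\dom)_\diamond (q\cod)^\diamond$ using the bicomma square, observe that $A \xleftarrow{q\cod} \bicomma{s}{p} \xrightarrow{r\dom} C$ is a TSF, and identify its $\M$-part with $fe$, exactly as the paper does by noting that the construction of \cite[Prop.\ 4.25]{carboniModulatedBicategories1994} still works for TSFs. Only your fallback is viable, though, for two reasons. First, the ``arbitrary span'' version is false: in $\Pos$, for $A \xleftarrow{1} A \xrightarrow{1} A$ the diagonal $A \to A \times A$ already lies in $\M$, whereas $1_\diamond 1^\diamond \cong A^\two$. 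Second, \cite[Prop.\ 4.18]{carboniModulatedBicategories1994} on its own only says that the $\M$-part $A \xleftarrow{y} G \xrightarrow{x} C$ is a TSDF, so you still need the identification $(r\dom)_\diamond (q\cod)^\diamond \cong x_\diamond h_\diamond h^\diamond y^\diamond \cong x_\diamond y^\diamond \cong G$, which follows from $h \in \E$ being co-fully-faithful (\Cref{lemma:left-class-exact}).
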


\begin{proof}
  Consider TSDFs \[ e = A \xleftarrow{q} E \xrightarrow {p} B \qquad f = B
    \xleftarrow{s} F \xrightarrow{r} C \]
  and write $E \xleftarrow{\cod} \bicomma{s}{p} \xrightarrow{\dom} F$ for the
  bicomma of $E \xrightarrow{p} B \xleftarrow{s} F$.

  Since $e \cong p_\diamond q^\diamond$ and $f \cong r_\diamond
  s^\diamond$~\cite[Prop. 4.26(a)]{carboniModulatedBicategories1994}, \[ fe
    \cong r_\diamond s^\diamond p_\diamond q^\diamond \cong r_\diamond
    (\cod)_\diamond (\dom)^\diamond q^\diamond .\] By~\cite[\textsection
  4.15]{carboniModulatedBicategories1994}, $A \leftarrow \bicomma{s}{p}
  \rightarrow C$ is a TSF. The construction of \cite[Prop.
  4.25]{carboniModulatedBicategories1994} also works for TSFs and shows that the
  $\M$-morphism in the $(\E,\M)$ factorization of $\bicomma{s}{p} \to A \times C$
  can be chosen to be $r_\diamond (d_1)_\diamond (d_0)^\diamond q^\diamond \cong
  fe$.
\end{proof}

\subsection{Proofs of the extension theorems}

We start by constructing the various extensions, and then we address uniqueness
and the converse implications. Fix from now on a regular bicategory $\K$ and a
bicategory $\K'$.

\subsubsection{Constructing extensions (pseudofunctors)}

Let $\ps F\colon \K \to \Adj{\K'}$ be a pseudofunctor such that,
\begin{itemize}
\item for every co-fully-faithful arrow $f: A \to B$ in $\K$, the $2$-cell
  $\varepsilon_{\ps F f}: (\ps F f)_> (\ps F f)^< \Rightarrow 1_B$ is invertible.
\end{itemize}
We extend $\ps F_> = (\ps F -)_>$ to an oplax functor $\u{\ps F}\colon \DFib{\K}
\to \K'$, as witnessed by an oplax natural transformation $\delta^{\ps F}:
\u{\ps F}(-)_\diamond \Rightarrow (\ps F -)_\diamond$.

On the way, we moreover show that $\u{\ps F}$ and $\delta^{\ps F}$ are
respectively a pseudofunctor and a pseudonatural transformation as soon as the
following condition holds:
\begin{description}
\item[BCC\hypertarget{hyp:BCC-functor}] for every exact square $\gamma\colon xu \Rightarrow yv$ in $\K$,
  $(\ps F \gamma)_>^\circlearrowright \colon (\ps F u)_> (\ps F v)^< \Rightarrow
  (\ps F x)^< (\ps F y)_>$ is invertible.
\end{description}

\noindent \emph{The family of functors.}
First, we define $\u {\ps F}$ on objects by setting $\underline{\ps F}A = \ps F A$,
and on arrows by setting:
\[ \u{\ps F} (A \xleftarrow{q} E \xrightarrow{p} B) = (\ps F p)_> (\ps F q)^< \]
Consider now a $2$-cell in $\DFib{\K}$:
\[\begin{tikzcd}[sep = 20pt]
	& E \\
	A && B \\
	& {E'}
	\arrow["q"', curve={height=12pt}, from=1-2, to=2-1]
	\arrow["p", curve={height=-12pt}, from=1-2, to=2-3]
	\arrow[""{name=0, anchor=center, inner sep=0}, "h"{description}, from=1-2, to=3-2]
	\arrow["{q'}", curve={height=-12pt}, from=3-2, to=2-1]
	\arrow["{p'}"', curve={height=12pt}, from=3-2, to=2-3]
	\arrow["\chi"', between={0.3}{0.7}, Rightarrow, from=0, to=2-1]
	\arrow["\zeta", between={0.3}{0.7}, Rightarrow, from=2-3, to=0]
\end{tikzcd}\] We set the image of this $2$-cell under $\underline{\ps F}$ to be
the $2$-cell in $\bicat M$ obtained by pasting:
\[ (\ps F p')_> \counit_{\ps F h} (\ps F q')^< \cdot (\ps F \zeta)_> (\ps F \chi)^< \]
As a string diagram, this is
\[ \stringdiagram{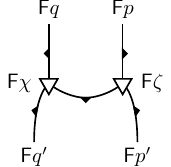} \]

We have implicitly omitted to depict the associator of $\ps F$ in the string
diagram above: for instance, by $\ps F \zeta$ we really mean the composite
\[ \begin{tikzcd}
	{\ps F p} & {\ps F (p'h)} &[10pt] {(\ps F p') (\ps F h)}
	\arrow["{\ps F \zeta}", Rightarrow, from=1-1, to=1-2]
	\arrow["{\phi_{p',h}^{-1}}", Rightarrow, from=1-2, to=1-3]
\end{tikzcd} \]
From now on we omit in this way all the structural $2$-cells
making $\K'$ a bicategory and $\ps F$ a pseudofunctor. For full rigor, these can
be considered to be merged with the adjacent $2$-cells, and can be added back in
a canonical way.

The above defines an action of $\ps F$ on $\DFib{\K}(A,B)$: given an isomorphism
between morphisms of spans
\[\begin{tikzcd}[sep = 20pt]
	&& E \\
	A &&&& B \\
	&& {E'}
	\arrow["q"', curve={height=18pt}, from=1-3, to=2-1]
	\arrow["p", curve={height=-18pt}, from=1-3, to=2-5]
	\arrow[""{name=0, anchor=center, inner sep=0}, "{h'}"{description}, curve={height=18pt}, from=1-3, to=3-3]
	\arrow[""{name=1, anchor=center, inner sep=0}, "h"{description}, curve={height=-18pt}, from=1-3, to=3-3]
	\arrow["{q'}", curve={height=-18pt}, from=3-3, to=2-1]
	\arrow["{p'}"', curve={height=18pt}, from=3-3, to=2-5]
	\arrow["\chi"', between={0.2}{0.8}, Rightarrow, from=0, to=2-1]
	\arrow["\nu"', between={0.2}{0.8}, Rightarrow, from=1, to=0]
	\arrow["\zeta"', between={0.2}{0.8}, Rightarrow, from=2-5, to=1]
\end{tikzcd}\]
the images of these two morphisms of spans coincide:
\[ \stringdiagram{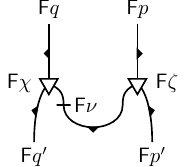} =
  \stringdiagram{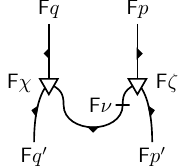} \] This action is moreover functorial:
because $\ps F$ is a pseudofunctor with codomain $\Adj{\K'}$, its unitor and
associator are compatible with the units and counits in $\K'$, and it follows
that $\ps F$ preserves identity $2$-cells and composition of $2$-cells. In
string diagrams, because we do not depict the structural $2$-cells involved in
the corresponding equations, this is trivial. As an example, if we were to
actually depict the unitors of $\K$ and $\ps F$, proving preservation of
identities would look like
\[ \stringdiagram{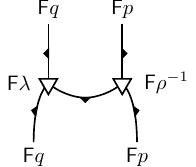} = \stringdiagram{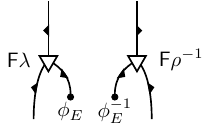} =
  \stringdiagram{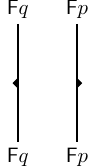} \] (recall the invertible $2$-cell
$\phi_E\colon 1_{\ps F E} \Rightarrow \ps F(1_E)$ and the coherence axioms it
satisfies with respect to the left and right unitors in $\K$).

\bigskip
\noindent \emph{The structural $2$-cells.} We now construct the structural
$2$-cells making $\u{\ps F}$ into a oplax functor. For an object $A$ of $\K$,
the $2$-cell $\underline{\ps F}(A \xleftarrow{\cod} A^\two \xrightarrow{\dom} A)
\Rightarrow 1_{\ps F A} $, witnessing preservation of identities, is constructed
as the composite
\[ (\ps F \dom)_> (\ps F \cod)^< \Rightarrow (\ps F 1_A)^< (\ps F 1_A)_> \cong
  1_{\ps F A} \]using the $2$-cell induced by the image of the bicomma square $1_A \dom
\Rightarrow 1_A \cod$ for the first step, and coherence in $\K'$ and the
pseudofunctoriality of $\ps F$ for the second step. Note that this $2$-cell is
invertible if \hyperlink{hyp:BCC-functor}{BCC} holds. As a string diagram, it is
depicted
\[ \stringdiagram{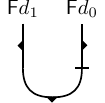} \]Consider next two TSDFs and their composite:
\[ e = A \xleftarrow{q} E \xrightarrow {p} B \quad f = B \xleftarrow{s} \ps F
  \xrightarrow{r} C \quad fe = A \xleftarrow{y} G \xrightarrow{x} C \]Let $E \xleftarrow{\cod} \bicomma{s}{p} \xrightarrow{\dom} F$ be the bicomma
span of the cospan $E \xrightarrow{p} B \xleftarrow{s} F$, and write $h\colon
\bicomma{s}{p} \to G$ for the arrow factoring $A \xleftarrow{q \cod}
\bicomma{s}{p} \xrightarrow{p \dom} C$ into $gf$, which by
\Cref{cor:composition-with-bicomma} lies in $\mathcal E$:
\[\begin{tikzcd}[row sep=1pt, column sep=large]
    && {\bicomma{s}{p}} \\
    & E && F \\
    A && G && C
    \arrow["\cod"', from=1-3, to=2-2]
    \arrow["\dom", from=1-3, to=2-4]
    \arrow[""{name=0, anchor=center, inner sep=0}, "h", dashed, from=1-3, to=3-3]
    \arrow["q"', from=2-2, to=3-1]
    \arrow["r", from=2-4, to=3-5]
    \arrow["y", from=3-3, to=3-1]
    \arrow["x"', from=3-3, to=3-5]
    \arrow["\cong"{description, pos=0.2}, draw=none, from=0, to=3-5]
    \arrow["\cong"{description, pos=0.8}, draw=none, from=3-1, to=0]
  \end{tikzcd}\] The $2$-cell $\underline{\ps G}(fe) \Rightarrow (\underline{\ps
  F}f)(\underline{\ps G}e)$, witnessing preservation of composition, is
constructed as the composite
\begin{align*}
  (\ps F x) (\ps F y)^r
  &\cong (\ps F x) (\ps F h) (\ps F h)^r (\ps F y)^r \\
  &\cong (\ps F r)(\ps F \dom)(\ps F \cod)^r(\ps F q)^r \\
  &\Rightarrow (\ps F r)(\ps F s)^r(\ps F p)(\ps F q)^r
\end{align*}
using preservation of the co-fully-faithful morphism $h$ for the first step,
pseudofunctoriality of $\ps F$ for the second step, and the $2$-cell induced by
the image of the bicomma square $s \dom \Rightarrow p \cod$ for the third step.
Note that this $2$-cell is invertible if \hyperlink{hyp:BCC-functor}{BCC} holds.
As a string diagram, it is depicted
\[ \stringdiagram{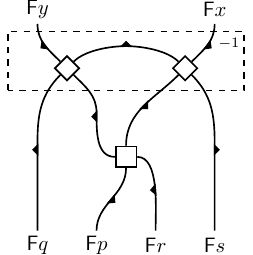} \]

\bigskip
\noindent\emph{The coherence axioms.}
For the identity coherence axioms, consider TSDFs
\[ 1_A = A \xleftarrow{\cod} A^\two \xrightarrow{\dom} A \quad e = A
  \xleftarrow{q} E \xrightarrow{p} B \quad e 1_A = A \xleftarrow{y} F
  \xrightarrow{x} B \]the bicomma square
\[\begin{tikzcd}[column sep=large, row sep=1pt]
    && {\bicomma{p}{\dom}} \\
    & {A^\two} && E \\
    A && A && B \arrow["s"', from=1-3, to=2-2]\arrow["r", from=1-3, to=2-4]\arrow["\cod"', from=2-2, to=3-1]\arrow["\dom"', from=2-2, to=3-3]\arrow["\gamma"', between={0.4}{0.6}, Rightarrow, from=2-4, to=2-2]\arrow["q", from=2-4, to=3-3]\arrow["p", from=2-4, to=3-5]\end{tikzcd}\]and the $(\E,\M)$-factorization
\[\begin{tikzcd}[row sep=1pt, column sep=large]
	&& {\bicomma{p}{d_0}} \\
	& {A^\two} && E \\
	A && F && B \arrow["s"', from=1-3, to=2-2]\arrow["r", from=1-3, to=2-4]\arrow["h'"{description}, from=1-3, to=3-3]\arrow["\cod"', from=2-2, to=3-1]\arrow["\zeta'"', shift right=1, between={0.3}{0.7}, Rightarrow, from=2-4, to=3-3]\arrow["p", from=2-4, to=3-5]\arrow["\chi'"', shift right=1, between={0.3}{0.7}, Rightarrow, from=3-3, to=2-2]\arrow["y", from=3-3, to=3-1]\arrow["x"', from=3-3, to=3-5]\end{tikzcd}\]For the unitor $e 1_A \cong e$ in $\DFib{K}$, recall that it is a morphism of
spans $F \to E$ whose precomposite with $h'\colon \bicomma{p}{\dom} \to F$ is a
morphism $h\colon \bicomma{p}{\dom} \to E$ that factors the square
\[\begin{tikzcd}[column sep=huge]
	A & {A^\two} & {\bicomma{p}{\dom}} \\
	A & A & E \arrow[from=1-1, to=2-1]\arrow["\cod"', from=1-2, to=1-1]\arrow["\dom"{description}, from=1-2, to=2-2]\arrow["s"', from=1-3, to=1-2]\arrow["r", from=1-3, to=2-3]\arrow["\alpha"', between={0.3}{0.7}, Rightarrow, from=2-2, to=1-1]\arrow[equals, from=2-2, to=2-1]\arrow["\gamma"', between={0.4}{0.6}, Rightarrow, from=2-3, to=1-2]\arrow["q", from=2-3, to=2-2]\end{tikzcd}\]as
\[\begin{tikzcd}[column sep=huge, column sep=large]
	A & {A^\two} & {\bicomma{p}{\dom}} \\
	A & A & E \arrow[from=1-1, to=2-1]\arrow["\cod"', from=1-2, to=1-1]\arrow["s"', from=1-3, to=1-2]\arrow[""{name=0, anchor=center, inner sep=0}, "r", curve={height=-24pt}, from=1-3, to=2-3]\arrow[""{name=1, anchor=center, inner sep=0}, "h"', curve={height=24pt}, from=1-3, to=2-3]\arrow[equals, from=2-2, to=2-1]\arrow["\chi"', between={0.4}{0.6}, Rightarrow, from=2-3, to=1-1]\arrow["q", from=2-3, to=2-2]\arrow["\,\zeta"', between={0.3}{0.7}, Rightarrow, from=0, to=1]\end{tikzcd}\]where $\chi$ and $p \zeta$ are invertible. The proof of the identity coherence
axiom corresponding to this unitor is then
\begin{align*}
  \stringdiagram{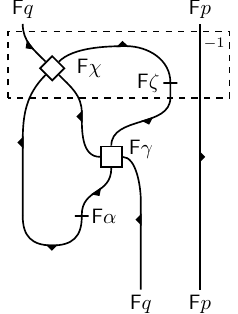}
  &= \stringdiagram{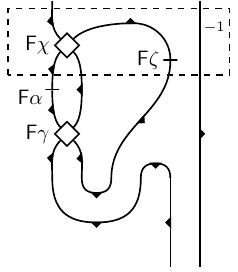} \\
  &= \stringdiagram{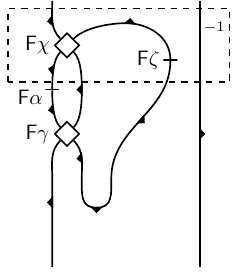} \\
  &= \stringdiagram{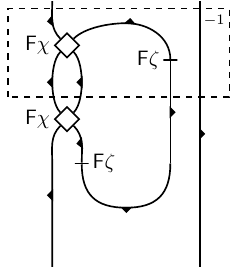} \\
  &= \stringdiagram{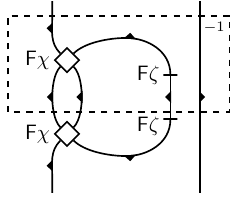} \\
  &= \stringdiagram{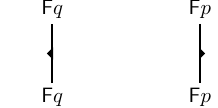} \\
\end{align*}

The proof for the other identity coherence axiom is symmetric to the above one.
Similarly, for the composition coherence axiom, consider three TSDFs and
associated squares as below.
\[\begin{tikzcd}[row sep=1pt]
	&&& {\bicomma{s}{p} \times_F \bicomma{t}{r}} \\
	&& {\bicomma{s}{p}} & \cong & {\bicomma{t}{r}} \\
	& E && F && G \\
	A && B && C && D \arrow[from=1-4, to=2-3]\arrow[from=1-4, to=2-5]\arrow[from=2-3, to=3-2]\arrow[from=2-3, to=3-4]\arrow[from=2-5, to=3-4]\arrow[from=2-5, to=3-6]\arrow["q"', from=3-2, to=4-1]\arrow["p"', from=3-2, to=4-3]\arrow[between={0.5}{0.7}, Rightarrow, from=3-4, to=3-2]\arrow["s", from=3-4, to=4-3]\arrow["r"', from=3-4, to=4-5]\arrow[between={0.3}{0.5}, Rightarrow, from=3-6, to=3-4]\arrow["u", from=3-6, to=4-5]\arrow["t", from=3-6, to=4-7]\end{tikzcd}\]The two composites of interest, on the left and right side of the square
defining the composition coherence axiom, can both be shown to factor through a
$2$-cell with string diagram of the shape
\[ \stringdiagram{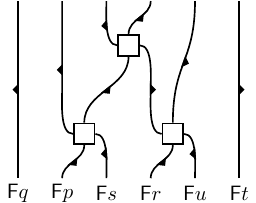} \](where the three squares correspond to the three squares in the previous
diagram), respectively pre-composed with images of morphisms of spans $\ps F h$
and $\ps F h'$ with codomain $\ps F (\bicomma{s}{p} \times_F \bicomma{t}{r})$.
The associator in $\DFib{\K}$, being constructed by the orthogonality of the
factorization system and the universal property of TSDFs, precisely sends $h$
and $h'$, and thus the two $2$-cells of interest are equal.

\bigskip
\noindent \emph{The natural transformation.} We finally show that $\u{\ps F}$
extends $\ps F_>$ by constructing an oplax natural transformation $\delta\colon
\u{\ps F}(-)_\diamond \Rightarrow \ps F_>$. We let $\delta_A$ be the identity
$1_{\ps FA}\colon \ps F A = \ps F A$ for every object $A$ of $\K$. Given an
arrow $f\colon A \to B$ in $\K$ and writing $f_\diamond = A \xleftarrow{\cod}
\bicomma{1_A}{f} \xrightarrow{\dom} B$, we let $\delta_f\colon \u {\ps
  F}(f_\diamond) \Rightarrow (\ps F f)_>$ be the $2$-cell induced by the image
of the bicomma square $\alpha \colon 1_B \dom \Rightarrow f \cod$. Note that it
is invertible if \hyperlink{hyp:BCC-functor}{BCC} holds. As a string diagram, it
is depicted
\[ \stringdiagram{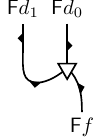} \]For the naturality axiom, recall that $\gamma_\diamond\colon \bicomma{1_B}{f}
\Rightarrow \bicomma{1_B}{g}$ is the essentially unique morphism of spans that
factors the square
\[\begin{tikzcd}[column sep=huge, row sep=large]
    A & {\bicomma{1_B}{f}} \\
    B & B \arrow[""{name=0, anchor=center, inner sep=0}, "g"{description},
    curve={height=18pt}, from=1-1, to=2-1]\arrow[""{name=1, anchor=center, inner sep=0}, "f"{description},
    curve={height=-18pt}, from=1-1, to=2-1]\arrow["\cod"', from=1-2, to=1-1]\arrow["\dom", from=1-2, to=2-2]\arrow["\alpha"', between={0.3}{0.7}, Rightarrow, from=2-2, to=1-1]\arrow[equals, from=2-2, to=2-1]\arrow["\gamma"', between={0.3}{0.7}, Rightarrow, from=1, to=0]\end{tikzcd}\]through the bicomma square $\beta$ of $g$:
\[\begin{tikzcd}[column sep=huge, row sep=large]
    && \bicomma{1_B}{g} \\
    A & {\bicomma{1_B}{f}} \\
    B & B \arrow[""{name=0, anchor=center, inner sep=0}, curve={height=12pt},
    from=1-3, to=2-1]\arrow["{\gamma_\diamond}"{description}, from=1-3, to=2-2]\arrow[""{name=1, anchor=center, inner sep=0}, curve={height=-18pt},
    from=1-3, to=3-2]\arrow["g"{description}, from=2-1, to=3-1]\arrow["\cod"{description}, from=2-2, to=2-1]\arrow["\dom"{description}, from=2-2, to=3-2]\arrow["\beta"', between={0.3}{0.7}, Rightarrow, from=3-2, to=2-1]\arrow[equals, from=3-2, to=3-1]\arrow["\zeta"'{pos=0.6}, between={0.2}{0.8}, Rightarrow, from=1, to=2-2]\arrow["\chi"'{pos=0.4}, between={0.2}{0.8}, Rightarrow, from=2-2, to=0]\end{tikzcd}\]Naturality follows:
\[ \stringdiagram{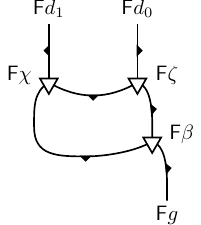} =
  \stringdiagram{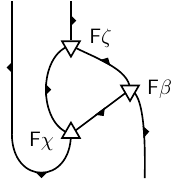} =
  \stringdiagram{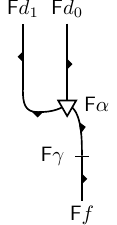} \]

Unitality is immediate: the unitor of $\u {\ps F}(-_\diamond)$ is equal to
$\delta_{1_A}$ (modulo the structural $2$-cells of $\K'$ and $\ps F$) by
construction.

For associativity, consider $f\colon A \to B$ and $g\colon B \to C$ in $\K$, the
bicomma squares
\[\begin{tikzcd}[cramped]
    {\bicomma{1_C}{gf}} & C \\
    A & C \arrow[from=1-1, to=1-2]\arrow[from=1-1, to=2-1]\arrow["\beta", between={0.3}{0.7}, Rightarrow, from=1-2, to=2-1]\arrow[equals, from=1-2, to=2-2]\arrow["gf"', from=2-1, to=2-2]\end{tikzcd} \qquad \begin{tikzcd}[column sep=tiny, row sep=scriptsize, cramped]
    && [-9pt] {\bicomma{\cod'}{\dom}} \\
    & {\bicomma{1_B}{f}} && [-9pt] {\bicomma{1_C}{g}} \\
    A && B && C \\
    & B & {=} & C \\
    && C \arrow[from=1-3, to=2-2]\arrow[from=1-3, to=2-4]\arrow["\cod"', from=2-2, to=3-1]\arrow["\dom", from=2-2, to=3-3]\arrow["{\,\alpha''}"', shift right=2, between={0.2}{0.8}, Rightarrow,
    from=2-4, to=2-2]\arrow["{\cod'}"', from=2-4, to=3-3]\arrow["{\dom'}", from=2-4, to=3-5]\arrow["f"', from=3-1, to=4-2]\arrow["\alpha"', between={0.3}{0.7}, Rightarrow, from=3-3, to=3-1]\arrow[equals, from=3-3, to=4-2]\arrow["g"{description}, from=3-3, to=4-4]\arrow["{\alpha'}"', between={0.3}{0.7}, Rightarrow, from=3-5, to=3-3]\arrow[equals, from=3-5, to=4-4]\arrow["g"', from=4-2, to=5-3]\arrow[equals, from=4-4, to=5-3]\end{tikzcd}\]and the essentially unique morphism of spans factoring this last square into the
bicomma $\bicomma{1_C}{gf}$:
\[\begin{tikzcd}[column sep=large, row sep=1pt]
    && {\bicomma{\cod'}{\dom}} \\
    & {\bicomma{1_B}{f}} && {\bicomma{1_C}{g}} \\
    A && {\bicomma{1_C}{gf}} && C \arrow[from=1-3, to=2-2]\arrow[from=1-3, to=2-4]\arrow["h"{description}, from=1-3, to=3-3]\arrow["\cod"', from=2-2, to=3-1]\arrow["\zeta"', between={0.3}{0.7}, Rightarrow, from=2-4, to=3-3]\arrow["{\dom'}", from=2-4, to=3-5]\arrow["\chi"', between={0.3}{0.7}, Rightarrow, from=3-3, to=2-2]\arrow["{\cod''}", from=3-3, to=3-1]\arrow["{\dom''}"', from=3-3, to=3-5]\end{tikzcd}\](with $\chi$ and $\zeta$ invertible).

The associator of $\u{\ps F}(-_\diamond)$ at $g$ and $f$ is
\[ \stringdiagram{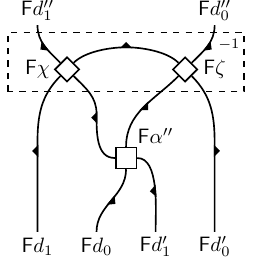} \]Associativity follows:
\begin{align*}
  \stringdiagram{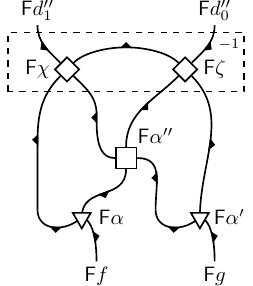}
  &= \stringdiagram{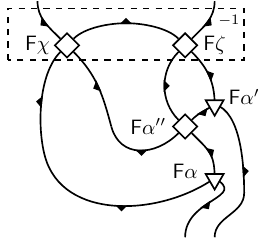} \\
  &= \stringdiagram{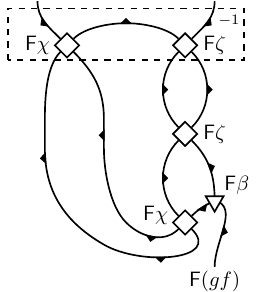} \\
  &= \stringdiagram{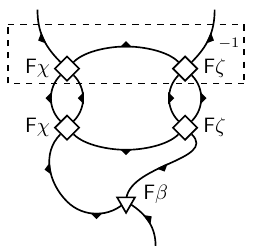} \\
  &= \stringdiagram{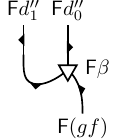}
\end{align*}

\subsubsection{Constructing extensions (transformations)}

Let $\ps F, \ps G\colon \K \rightrightarrows \Adj{\K'}$ be two pseudofunctors.
Suppose that $\ps F$ and $\ps G$ satisfy the condition of the previous
subsection, so that they have oplax extensions $\u{\ps F}, \u{\ps G}\colon
\DFib{\K}\to \K'$, realized by identity-on-objects oplax natural transformations
$\delta^{\ps F}\colon \underline{\ps F}(-)_\diamond \Rightarrow \ps F_>$ and
$\delta^{\ps G}\colon \underline{\ps G}(-)_\diamond \Rightarrow \ps G_>$.

Let now $\gamma\colon \ps F_> \Rightarrow \ps G_>$ be a pseudonatural
transformation. We construct an oplax natural transformation $\underline \gamma
\colon \underline{\ps F} \Rightarrow \underline{\ps G}$, such that $\gamma \cdot
\delta^{\ps F} = \delta^{\ps G} \cdot \underline \gamma (-)_\diamond$. On the
way, we also show that, when \hyperlink{hyp:BCC-functor}{BCC} holds for both
$\ps F$ and $\ps G$ (so that $\u{\ps F}$ and $\u{\ps G}$ are pseudofunctors),
$\u\gamma$ is pseudonatural as soon as
\begin{description}
\item[BCC'\hypertarget{hyp:BCC-transformation}] for every $f\colon A \to B$ in
  $\K$, $\gamma_f^\circlearrowright\colon \gamma_A (\ps F f)^< \Rightarrow (\ps
  G g)^< \gamma_B$ is invertible.
\end{description}

\bigskip \noindent \emph{Construction.} For an object $A$ of $\K$, we set the
component $\underline{\gamma}_A\colon \underline{\ps F}A \rightarrow
\underline{\ps G} A$ to simply be $\gamma_A\colon \ps F A \to \ps G A$. Then,
for $f = A \xleftarrow{q} E \xrightarrow{p} B$ in $\DFib{\K}$, we set the
$2$-cell $\u\gamma_f\colon \underline{\gamma}_B \circ \underline{\ps F} f
\Rightarrow \underline{\ps G} f \circ \underline{\gamma}_A$ to be the composite
\[ \gamma_B (\ps F p)_> (\ps F q)^< \cong (\ps G p)_> \gamma_E (\ps F q)^<
  \Rightarrow (\ps G p)_>(\ps G q)^< \gamma_A \]of the inverse of $\gamma_p$ with $\gamma_q^\circlearrowright$. Note that it is
invertible if \hyperlink{hyp:BCC-transformation}{BCC'} holds. In string
diagrams, we depict $\ps F$ in blue and $\ps G$ in red, so that $\u \gamma_f$ is
\[ \stringdiagram{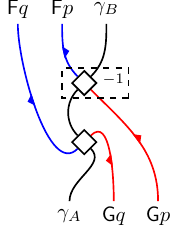} \]
Note that we do not label the $2$-cells in the string diagrams anymore as they
can be inferred from the context and the previous definitions.

Throughout the proofs below, we implicitly use the general fact that if $xy =
y'x'$, for any appropriately typed $2$-cells $x$, $y$, $x'$ and $y'$, then
\[ (y')^{-1}x = (y')^{-1}xyy^{-1} = (y')^{-1}y'x'y^{-1} = x'y^{-1} \] (when $y$
and $y'$ are invertible) and similarly
\[ y(x')^{-1} = x^{-1}y' \] (when $x$ and $x'$ are invertible). Hence when an
equation of the former shape holds we freely allow ourselves to also use the
corresponding equations of the latter shapes. We for instance do this in the
first step below in the proof of naturality of $\u\gamma$.
\[ \stringdiagram{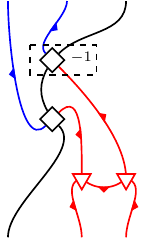} =
  \stringdiagram{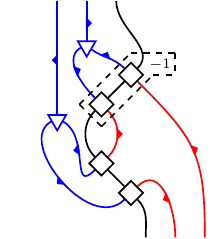} =
  \stringdiagram{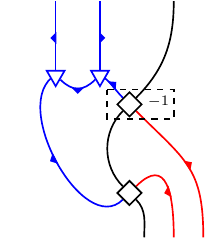} \]

\bigskip \noindent \emph{Coherence.}
Unitality follows from the naturality of $\gamma$:
\[ \stringdiagram{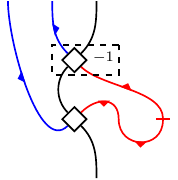} =
  \stringdiagram{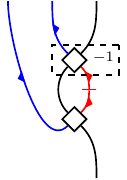} =
  \stringdiagram{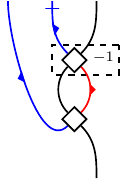} =
  \stringdiagram{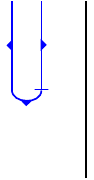} \]

For associativity, first notice that the definitions of $\ps F$ and $\ps G$ on
each $\DFib{\K}(A,B)$ do not use that the source and target spans are TSDFs, and
can hence be extended to $\Span{\K}(A,B)$. The proofs of naturality of
$\u\gamma$ above also works in this context, so that $\u\gamma$ commutes with
images of morphisms of spans. This fact gives us the first step in the proof of
associativity below; the second step is given by the lemma that follows.
\begin{align*}
  \stringdiagram{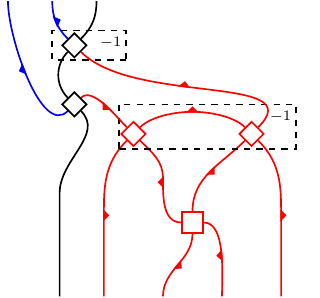}
  &= \stringdiagram{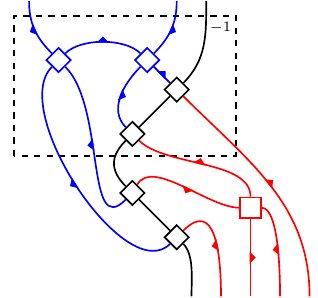} \\
  &= \stringdiagram{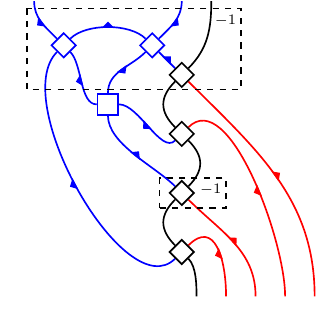}
\end{align*}

\begin{lemma}
  \[ \stringdiagram{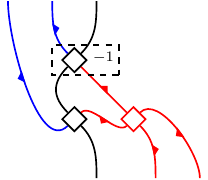} =
    \stringdiagram{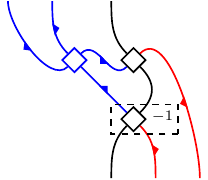} \]
\end{lemma}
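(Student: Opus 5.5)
The plan is to treat the equation as an instance of compatibility between the mate construction $(-)^\circlearrowright$ and the structure of the pseudonatural transformation $\gamma\colon \ps F_> \Rightarrow \ps G_>$. The equation should say that the mates $\gamma_q^\circlearrowright$, $\gamma_{\cod}^\circlearrowright$, together with the inverses of $\gamma_p$, $\gamma_{\dom}$, commute past the images under $\ps F$ and $\ps G$ of the bicomma square $s\dom \Rightarrow p\cod$. I would first unfold every $\gamma_{(-)}^\circlearrowright$ node on both sides into its definition, $x^< y \counit_v \cdot x^< \gamma v^< \cdot \eta_x u v^<$. Both sides then become diagrams built only from units and counits of the adjunctions $(\ps F -)_> \dashv (\ps F -)^<$ and $(\ps G -)_> \dashv (\ps G -)^<$, the invertible cells $\gamma_f$ and their inverses, and the images of the square under $\ps F$ and $\ps G$.

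Next I would use the axioms of $\gamma$ to reorganize the middle of the diagram. The associativity axiom for $\gamma$ (Definition~\ref{def:lax-natural-transformations}, in its pseudo form) splits each component at a composite, such as $\gamma_{p\cod}$, into a pasting of $\gamma_p$ and $\gamma_{\cod}$, up to the omitted associators of $\ps F$ and $\ps G$. The naturality axiom of $\gamma$, applied to the bicomma square viewed as a $2$-cell $s\dom \Rightarrow p\cod$, then lets me slide $\ps F(\text{square})$ across the $\gamma$-nodes so that it emerges as $\ps G(\text{square})$ on the other side. Because $\ps F$ and $\ps G$ land in $\Adj{\K'}$, the images of the square form mate pairs, so by the sliding rules for mate pairs recalled above they can also be moved around bends. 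Finally, the triangle identities cancel each unit--counit pair introduced by the unfolding. Throughout, I would use the paper's convention that an equation $xy = y'x'$ between invertible cells yields $(y')^{-1}x = x'y^{-1}$.

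The main obstacle I expect is the bookkeeping around the dashed inverse boxes. The inverses of $\gamma_p$ and $\gamma_{\dom}$ sit inside boxes, while the images of the square and the units and counits are not invertible, so only the $\gamma$-cells may be moved in and out of a box. My first attempt would be to push all the $\gamma_f^{-1}$ to one side of the diagram using naturality before applying any triangle identity, so that every box contains a single invertible node. If the two sides still do not visibly match, I would fall back on comparing them after precomposing with the $\E$-arrow $h$ through which $\bicomma{s}{p}$ factors into the composite TSDF. This is legitimate because $\ps F h$ is co-fully-faithful, so its counit $\counit_{\ps F h}$ is invertible; the comparison then reduces to the corresponding equation at the level of the bicomma span itself.
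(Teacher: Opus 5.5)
Your proposal is correct and follows essentially the paper's argument. The paper likewise unfolds the mates into units and counits, uses invertibility of the naturality squares of $\gamma$ to rearrange the naturality axiom of $\gamma$ at the bicomma square (with associativity left implicit), and cancels the remaining unit--counit pairs by the triangle identities; your fallback via precomposition with the $\mathcal{E}$-arrow $h$ is not needed, because the paper handles that reduction separately as the first step of the associativity proof, before this lemma is used.
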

\begin{proof}
  Because the naturality squares are invertible,
  \begin{align*}
    \stringdiagram{transformation-natural-rotated-1}
    &= \stringdiagram{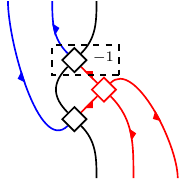} \\
    &= \stringdiagram{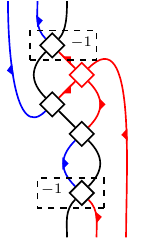} \\
    &= \stringdiagram{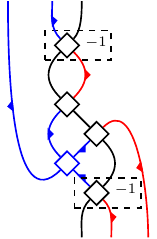} \\
    &= \stringdiagram{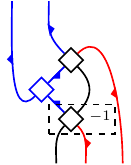} \\
    &= \stringdiagram{transformation-natural-rotated-6}
  \end{align*}
\end{proof}

\bigskip \noindent \emph{Extension.} We show that $\gamma \cdot \delta^{\ps F}
= \delta^{\ps G} \cdot \u \gamma (-)_\diamond$. This holds along objects because
$\delta^{\ps F}$ and $\delta^{\ps G}$ are the identities thereon, and $\u
\gamma_A = \gamma_A$. The naturality $2$-cells also coincide:
\[ \stringdiagram{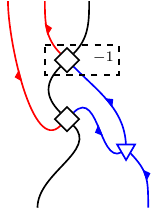} =
  \stringdiagram{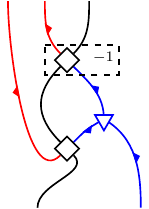} =
  \stringdiagram{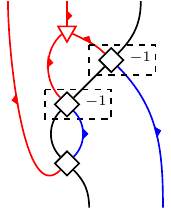} =
  \stringdiagram{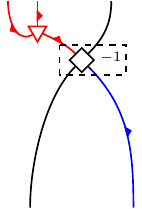} \]

\subsubsection{Constructing extensions (modifications)}
Consider now pseudofunctors $\ps F, \ps G\colon \K \to \Adj{\K'}$ and
pseudonatural transformations $\gamma, \gamma'\colon \ps F_> \Rightarrow \ps
G_>$ as above and their oplax extensions $\u\gamma,\u\gamma' \colon \u{\ps F}
\Rightarrow \u{\ps G}$. Let $\frak m\colon \gamma \Rrightarrow \gamma'$ be a
modification. We show that $\frak m_A \colon \gamma_A \Rightarrow \gamma_A'$
also defines a modification $\u{\frak m}\colon \u \gamma \Rrightarrow \u
\gamma'$.

Indeed, we have that
\[ \stringdiagram{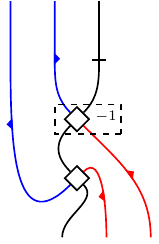} = \stringdiagram{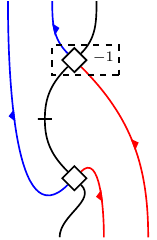} = \stringdiagram{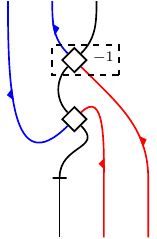}\]

\subsubsection{Uniqueness and converse implications (pseudofunctors)}
Consider $\u{\ps F}$ and $\delta_{\ps F}$ as in
\Cref{thm:extension-pseudofunctors}:
\[\begin{tikzcd}[column sep = 14pt]
    {\DFib{\K}} & {\DFib{\K'}} \\
    \K & {\K'}
    \arrow["{\u{\ps F}}", from=1-1, to=1-2]
    \arrow["{(-)_\diamond}", from=2-1, to=1-1]
    \arrow["{\ps F}"', from=2-1, to=2-2]
    \arrow["{\delta_{\ps F}}", between={0.3}{0.8}, Rightarrow, from=2-2, to=1-1]
    \arrow["{(-)_\diamond}"', from=2-2, to=1-2]
  \end{tikzcd} \]

In particular, this means that $\u{\ps F} f_\diamond \cong (\ps F f)_\diamond \cong \u {\ps F}' f_\diamond$ for each arrow $f$ in ${\K}$.
Because $\u{\ps  F}, \u{\ps F}'$ are pseudofunctors, they preserves adjunctions, and hence $\u{\ps F}
f^\diamond \cong (\ps F f)^\diamond \cong \u{\ps F}' f^\diamond$. Given now an arbitrary TSDF $ e = A
  \xleftarrow{q} E \xrightarrow{p} B $, by \cite[Prop. 4.25]{carboniModulatedBicategories1994} we know that $e \cong
p_\diamond q^\diamond$, and
hence $\u{\ps F}e \cong (\ps F p)_\diamond (\ps F q)^\diamond \cong \u{\ps F}'e$. This determines $\u{\ps F}$ uniquely, up to isomorphism, on arrows.

Because $(-)_\diamond$ is locally fully-faithul, the action of $\u{\ps F}$ is
determined on $2$-cells $f_\diamond \Rightarrow g_\diamond$ in $\DFib{\K}$, and
because $\u{\ps F}$ preserves adjunctions, it is also determined on
$2$-cells $f^\diamond \Rightarrow g^\diamond$. Note then that an arbitrary $2$-cell
in $\DFib{\K}$
\[\begin{tikzcd}[sep = 13pt]
	& E \\
	A && B \\
	& {E'}
	\arrow["q"', curve={height=12pt}, from=1-2, to=2-1]
	\arrow["p", curve={height=-12pt}, from=1-2, to=2-3]
	\arrow[""{name=0, anchor=center, inner sep=0}, "f"{description}, from=1-2, to=3-2]
	\arrow["{q'}", curve={height=-12pt}, from=3-2, to=2-1]
	\arrow["{p'}"', curve={height=12pt}, from=3-2, to=2-3]
	\arrow["\chi"', between={0.3}{0.7}, Rightarrow, from=0, to=2-1]
	\arrow["\zeta", between={0.3}{0.7}, Rightarrow, from=2-3, to=0]
\end{tikzcd}\]
can be rewritten as the composite:
\[\begin{tikzcd}[column sep=large]
	{p_\diamond q^\diamond} & {p'_\diamond f_\diamond f^\diamond q'^\diamond} & {p'_\diamond q'^\diamond.}
	\arrow["{\zeta_\diamond \chi^\diamond}", Rightarrow, from=1-1, to=1-2]
	\arrow["{ p'_\diamond \varepsilon_f q'^\diamond}", Rightarrow, from=1-2, to=1-3]
\end{tikzcd}\]
Thus, the action of $\u{\ps F}$ on this $2$-cell is also
determined, as $\u{\ps F}$ preserves (counits of) adjunctions, and hence $\ps F$ is entirely determined as a pseudofunctor.

Finally, we show that $\ps F$ must preserve exact squares. Given $\gamma\colon
xu \Rightarrow yv$, since by definition
\[\gamma^\dagger = x^\diamond y_\diamond \counit_v \cdot x^\diamond
  \gamma_\diamond v^\diamond \cdot \eta_x u_\diamond v^\diamond\]
then $\u{\ps F}\gamma^\dagger \cong (\ps F \gamma)^\dagger$
by the action of $\u{\ps F}$ on $2$-cells determined above. In particular, if
$\gamma^\dagger$ is invertible, then so is $(\ps F \gamma)^\dagger$, which concludes the proof.

\subsubsection{Uniqueness and converse implications (transformations)}

Consider now pseudonatural transformations as in
\Cref{thm:extension-psnat-trans}:
\[\begin{tikzcd}[cramped, column sep=9pt, row sep=10pt]
    {\DFib{\K}} && {\DFib{\K'}} \\
    \\
    \K && {\K'}
    \arrow["{\u{ \ps F}'}", curve={height=-18pt}, from=1-1, to=1-3]
    \arrow["{(-)_\diamond}", from=3-1, to=1-1]
    \arrow[""{name=0, anchor=center, inner sep=0}, "{\ps F'}"{description}, curve={height=-18pt}, from=3-1, to=3-3]
    \arrow[""{name=1, anchor=center, inner sep=0}, "{\ps F}"', curve={height=18pt}, from=3-1, to=3-3]
    \arrow[""{name=2, anchor=center, inner sep=0}, "{(-)_\diamond}"', from=3-3, to=1-3]
    \arrow["\gamma\,", between={0.2}{0.8}, Rightarrow, from=1, to=0]
    \arrow["{\delta_{\ps F'}}"'{pos=0.9}, between={0.5}{1}, Rightarrow, from=2, to=1-1]
  \end{tikzcd}
  =
  \begin{tikzcd}[cramped, column sep = 9pt, row sep=10pt]
    {\DFib{\K}} && {\DFib{\K'}} \\
    \\
    \K && {\K'}
    \arrow[""{name=0, anchor=center, inner sep=0}, "{\u{ \ps F}'}", curve={height=-18pt}, from=1-1, to=1-3]
    \arrow[""{name=1, anchor=center, inner sep=0}, "{\u{\ps F}}"{description}, curve={height=18pt}, from=1-1, to=1-3]
    \arrow[""{name=2, anchor=center, inner sep=0}, "{(-)_\diamond}", from=3-1, to=1-1]
    \arrow["{\ps F}"', curve={height=18pt}, from=3-1, to=3-3]
    \arrow["{(-)_\diamond}"', from=3-3, to=1-3]
    \arrow["{\u{\gamma}}\,", between={0.2}{0.8}, Rightarrow, from=1, to=0]
    \arrow["{\delta^{\ps F}}"{pos=0.4}, shift left=1, between={0.3}{0.6}, Rightarrow, from=3-3, to=2]
  \end{tikzcd}
\]

Because $\delta^{\ps F}$ is the identity on objects, we immediately get that $\u
\gamma_A = (\gamma_A)_\diamond$, and that $\u \gamma_{f_\diamond}$ is uniquely
determined. Moreover, using the coherence data of pseudonatural transformations,
$\u \gamma$ can be shown to appropriately commute with images by $\ps F$ and
$\ps F'$ of units and counits in $\K$.

As $(-)_\diamond$ is locally-fully-faithul, the correspondence $\chi
\mapsto \chi^\dagger$ is in fact a bijection between $2$-cells in $\K$ and
$2$-cells in $\DFib{\K}$ of the appropriate types: this is an instance of the
folklore \emph{central lemma} in a proarrow equipment --- see, e.g.~\cite[Lemma
1.2.3]{myersStringDiagramsDouble2018}. The two functions in this bijection can
be fully expressed in terms of units and counits in $\DFib{\K}$, as we have already
seen above for the mapping $\chi \mapsto \chi^\dagger$.

It follows from elementary computations using the calculus of adjunctions that,
up to invertible modification $\u \gamma_{f^\diamond}^{-1} = (\gamma_f)^\dagger$. This entails that
the commuting squares of $\gamma$ are indeed exact; moreover, $\gamma$ is
uniquely determined because, by the coherence data of pseudonatural
transformations, $\u \gamma_{p_\diamond q^{\diamond}}$ is, up to invertible modification,
the pasting of $\u \gamma_{p_\diamond}$ and $\u \gamma_{q^\diamond}$.

 \section{Deferred proofs from \texorpdfstring{\Cref{sec:04}}{Section IV}}\label{app:A4}

In this appendix we conclude the proof of \Cref{thm:loke-extend} and we provide more details on \Cref{cor:extension-on-cat}.

We begin by giving a complete definitions of the pseudomonads on $\CAT$ obtained by \emph{left oplax Kan extension} of relative (2-)monads, in the case of interest for this paper. Fix a monad $\monad T$ on $\Set$.
We define a 2-functor $\ps L\colon \CAT \to \CAT$ as follows. For a category $C$, we set $\ps LC$ to be the category having:
\begin{itemize}
    \item as objects, triples $(X,h,\nu)$ of a set $X$, a functor $h\colon X \to C$, and an element $\nu \in T (X)$;
    \item as morphisms $(X,h,\nu) \to (X',h',\nu')$, pairs of a function $g \colon X' \to X$ such that $Tg(\nu') = \nu$ and a natural transformation $\alpha \colon h \circ g \Rightarrow h'$.
\end{itemize}
For a functor $f \colon C \to D$, we set $\ps L f \colon \ps L C \to \ps L D$ to be the functor defined by:
\[\begin{tikzcd}
	{(X,h,\nu)} & {(X,fh,\nu)} \\
	{(X',h',\nu')} & {(X',fh',\nu')}
	\arrow[maps to, from=1-1, to=1-2]
	\arrow["{(g,\alpha)}"', from=1-1, to=2-1]
	\arrow["{(g, f*\alpha)}", from=1-2, to=2-2]
	\arrow[maps to, from=2-1, to=2-2]
\end{tikzcd}\]
For a natural transformation $\sigma \colon f \Rightarrow f'$ with $f,f' \colon C \to D$, we set $\ps L \sigma \colon \ps L f \Rightarrow \ps L f'$ to be the natural transformation defined at a triple $(X,h,\nu)$ in $\ps L C$ by the $\ps L D$-arrow:
\[ (\id, \sigma*h) \colon  (X,f h,\nu ) \to (X,f'h,\nu)\]

As shown in \cite{tarantinoUltracategoriesKanExtensions2025}, the 2-functor $\ps L$ carries the structure of a pseudomonad, which we now describe. Its unit $\eta^{\ps L} \colon \id \Rightarrow \ps L$ is given by the transformation defined at a category $C$ by the functor:
\[\begin{tikzcd}
	c & {(1, c, \eta^T_1(*))} \\
	{c'} & {(1, c', \eta^T_1(*))}
	\arrow[maps to, from=1-1, to=1-2]
	\arrow["t"', from=1-1, to=2-1]
	\arrow["{(\id_1, t)}", from=1-2, to=2-2]
	\arrow[maps to, from=2-1, to=2-2]
\end{tikzcd}\]
where we identify an object $c$ in $C$ with a functor $c \colon 1 \to C$, and an arrow $t \colon c \to c'$ with a natural transformation $t \colon c \Rightarrow c'$. Its multiplication $\mu^{\ps L}\colon \ps L^2 \Rightarrow \ps L$ is given by the transformation defined at a category $C$ by the following functor.
\begin{itemize}
    \item Consider an object $(X,H,\nu)$ in $\ps L^2 C$. For $x\in X$, denote $H(x) = (X_x, h_x, \nu_x)$, and let $S$ be the direct sum $\sum_{x\in X}X_x$ and $s \colon S \to C$ the functor $\sum_{x\in X}h_x$. Denote by $i_{x}$ each inclusion $X_x \hookrightarrow S$; let $q\colon X \to T ( S )$ be the function defined by $q(x) \coloneqq T i_x ( \nu_x )$, and let $Q \colon T (X) \to T ( S )$ be the function $\mu^T_{S} \circ T q$.  Then, the functor $\mu^{\ps L}_C$ maps $(X,H,\nu)$ to the triple $(S, s, Q(\nu))$.
    \item Consider an arrow $(g,\alpha) \colon (X,H,\nu)\to (X',H',\nu')$ in $\ps L^2 C$. For $x\in X'$, denote $\alpha_x \colon (X_{g(x)}, h_{g(x)}, \nu_{g(x)}) \to (X_x', h'_x, \nu'_x)$ by $(a_x, \gamma_x)$. The functions $\set{a_x}$ determine a function $a \colon S' \to S$ which can be showed to satisfy $Ta (Q'(\nu')) = Q(\nu)$, and the transformations $\set{ \gamma_x }$ determine a transformation $\gamma \colon s \circ a \Rightarrow s'$: then, the functor $\mu^{\ps L}_C$ maps $(g,\alpha)$ to the pair $(a, \gamma) \colon (S,s,Q(\nu)) \to (S',s',Q'(\nu'))$.
\end{itemize}
Note that, up to fixing canonical choices for coproducts in $\Set$, both $\eta^{\ps L}$ and $\mu^{\ps L}$ can be assumed to be $2$-natural --- that is, such that that their naturality squares commute on the nose.

To complete the proof of \Cref{thm:loke-extend}, we now show that the unit and
multiplication of $\ps L$ satisfy the Beck-Chevalley condition.

\begin{proposition}\label{prop:lokes-allow-for-lax-algebras-2}
	The unit $\eta^{{\ps L}} \colon 1_{\CAT} \Rightarrow {\ps L}$ satisfies the Beck-Chevalley condition.
\end{proposition}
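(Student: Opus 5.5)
The plan is to unfold the Beck--Chevalley condition for $\eta^{\ps L}$ into a concrete statement about hom-sets of $\ps L C$ and $\ps L D$, and then to conclude by the co-Yoneda lemma.

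Since $\eta^{\ps L}$ is $2$-natural (see the remark after its definition), it suffices to show the following. For every functor $f \colon C \to D$, the strictly commuting naturality square $\eta^{\ps L}_D \circ f = \ps L f \circ \eta^{\ps L}_C$ must be exact in $\coop\CAT$. By \Cref{ex:exactness-in-examples}, this is the same as exactness in $\CAT$, with the square read in the orientation prescribed by passing through $\coop{(-)}$. In that orientation, the criterion of \Cref{ex:exactness-in-examples} asks the following. Fix an object $(X,h,\nu)$ of $\ps L C$ and an object $d$ of $D$. Then the canonical map
\[ \int^{c \in C} D(f c, d) \times \ps L C\big((X,h,\nu), \eta^{\ps L}_C c\big) \longrightarrow \ps L D\big((X, fh, \nu), \eta^{\ps L}_D d\big) \]
must be a bijection. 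This map sends the class of $\braket{c, \phi, (g,\alpha)}$ to $(\id_1,\phi)\circ \ps L f(g,\alpha)$.

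The first step is to compute the hom-sets into objects in the image of the unit. A morphism $(X,h,\nu) \to (1, c, \eta^T_1(*))$ in $\ps L C$ consists of two pieces of data.
\begin{itemize}
\item A function $g \colon 1 \to X$, i.e.\ a point $x_0 \in X$, subject to $T g(\eta^T_1(*)) = \nu$. By naturality of $\eta^T$ this condition reads $\nu = \eta^T_X(x_0)$.
\item An arrow $h(x_0) \to c$ in $C$.
\end{itemize}
Hence, writing $P_\nu = \set{x_0 \in X | \eta^T_X(x_0) = \nu}$, we get
\[ \ps L C\big((X,h,\nu), \eta^{\ps L}_C c\big) \cong \textstyle\sum_{x_0 \in P_\nu} C(h(x_0), c), \]
naturally in $c$. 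In the same way, the target of the canonical map is $\sum_{x_0 \in P_\nu} D(f h(x_0), d)$. The key point is that $P_\nu$ depends only on $(X,\nu)$, and not on the category or the functor.

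The second step is to compute the coend. Coends commute with coproducts, so the coend becomes
\[ \textstyle\sum_{x_0\in P_\nu}\int^{c} D(fc,d)\times C(h(x_0),c). \]
By the co-Yoneda lemma, each summand is isomorphic to $D(f h(x_0), d)$, via $\braket{c,\phi,\psi} \mapsto \phi \circ f(\psi)$. It then remains to check that, under the identifications of the first step, this co-Yoneda isomorphism coincides with the canonical map above. Unwinding the composite $(\id_1,\phi)\circ(x_0, f\psi)$ in $\ps L D$ gives exactly the pair $(x_0, \phi\circ f\psi)$, so the two maps agree and the canonical map is a bijection.

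I expect the main obstacle to be getting the orientation right. Reading the square the other way round leads to a coend over $\prod_{x} C(c, h(x))$ instead. That version only yields a bijection when $f$ preserves the relevant products, and it also involves the condition $T!(\nu) = \eta^T_1(*)$, which fails, for instance, for $T = \mathcal P$ or $T = \mathcal F$. So some care is needed to trace the dualities $\coop\CAT$ versus $\CAT$ and the convention of \Cref{ex:exactness-in-examples}, to confirm that the relevant hom-sets are those out of $(X,h,\nu)$ into the unit objects. Once that is settled, the remaining verifications are routine naturality checks.
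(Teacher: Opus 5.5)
Your proof is correct and is essentially the paper's. The paper checks the same orientation (the colimit of $\ps L C((X,h,\nu),\eta^{\ps L}_C(-))$ weighted by $D(f(-),d)$) and the same description of maps into $\eta^{\ps L}_C(c)$ as pairs $(x,\psi)$ with $\nu=\eta^T_X(x)$; it then verifies the universal property by hand via $\Omega(x,\chi)=\sigma_{(h(x),\chi)}(x,\id_{h(x)})$, which is the co-Yoneda isomorphism you invoke after splitting the coend over $P_\nu$, written out explicitly. On the orientation you left open: tracing the appendix's construction of $\u\gamma$ (which requires $\gamma_q^\circlearrowright$ to be invertible) back through $\coop\CAT$, the cell that must be invertible is the mate $(\eta^{\ps L}_C)_\diamond f^\diamond \Rightarrow (\ps L f)^\diamond(\eta^{\ps L}_D)_\diamond$ in $\PROF$, whose components are exactly your canonical map, so your choice is the one the construction needs.
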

\begin{proof}

	Fix a functor $f \colon C \to D$ and consider the square:
	\[\begin{tikzcd}
		C & {\ps L C} \\
		D & {\ps L D}
		\arrow["{\eta_C^{\ps L}}", from=1-1, to=1-2]
		\arrow["f"', from=1-1, to=2-1]
		\arrow["{\ps L f}", from=1-2, to=2-2]
		\arrow["{\eta_D^{\ps L}}"', from=2-1, to=2-2]
	\end{tikzcd}\]
To show that its $\ps L$-image is exact we need to show that, for each $d \in D$ and each $(X,h,\nu) \in \ps L C$, the set $\ps L D (  (X, fh, \nu), \eta^{\ps L}_D(d) )$ is the colimit of the diagram:
\[(*) \quad \begin{tikzcd}
	{ \op {\left( \int D(f(-) , d)\right)} } &[-10pt] C &[60pt] \Set
	\arrow[from=1-1, to=1-2]
	\arrow["{\ps L C ( (X,h,\nu), \eta_C^{\ps L}(-))}", from=1-2, to=1-3]
\end{tikzcd}\]
where $\int D(f(-) , d)$ is the category of elements of the presheaf $ D(f(-) , d) \colon \op C \to \Set$, together with its projection functor to $\op C$.

Note first that a morphism $(X, h, \nu) \to \eta_C^{\ps L}(c)$ in $\ps L C$ is simply given by a pair $(x, \psi)$ of an element $x \in X$ such that $\nu = \eta_X^T(x)$, and a $C$-arrow $\psi \colon h(x) \to c$. Therefore, the set  $\ps L D ( (X,fh,\nu), \eta_D^{\ps L}(d))$ is a cocone on the diagram $(*)$ where, for each $(c,\phi)$ in $\int D(f(-) , d)$, the coprojection
\[     \iota_{(c,\phi)} \colon \ps L C ( (X,h,\nu), \eta_C^{\ps L}(c)) \to \ps L D ( (X,fh,\nu), \eta_D^{\ps L}(d)) \]
is defined by $(x, \psi) \mapsto (x, \phi \circ f(\psi))$.

Suppose $S$ is another cocone on the diagram $(*)$, determined by a family of maps $\set{ \sigma_{(c,\phi)}\colon \ps L C ( (X,h,\nu), \eta_C^{\ps L}(c)) \to S}$. It is then immediate to see that the map
\[ \Omega \colon \ps L C ( (X,fh,\nu), \eta_D^{\ps L}(d)) \to S\]
defined by
\[ (x \in X, \chi \colon fh(x) \to d) \mapsto \sigma_{(h(x), \chi)}(x, \id_{h(x)})\]
is the unique such making all relevant triangles commute since $(x,\chi) = \iota_{(h(x),\chi)}(x,\id_{h(x)})$, thus exhibiting $\ps L D ((X, fh, \nu), \eta_D^{\ps L}(d))$ as the colimit of $(*)$.
\end{proof}

\begin{proposition}\label{prop:lokes-allow-for-lax-algebras-3}
	The multiplication $\mu^{\ps L}\colon \ps L^2 \Rightarrow\ps L$ satisfies the Beck-Chevalley condition.
\end{proposition}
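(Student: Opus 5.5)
We follow the template of \Cref{prop:lokes-allow-for-lax-algebras-2}. Fix a functor $f\colon C \to D$ and consider the naturality square $\ps L f \circ \mu^{\ps L}_C = \mu^{\ps L}_D \circ \ps L^2 f$, which commutes on the nose by $2$-naturality. By \Cref{ex:exactness-in-examples}, it suffices to show that for every $\mathcal X = (X,H,\nu)\in \ps L^2 D$ and every $(Y,k,\xi)\in \ps L C$ the canonical map
\[ \int^{\mathcal Z \in \ps L^2 C} \ps L D\big(\mu^{\ps L}_D \mathcal X \to\; \ps L^2 f(\mathcal Z)\big)\text{-type data} \;\longrightarrow\; \ps L D\big((S, s, Q(\nu)), (Y, fk, \xi)\big) \]
is a bijection. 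Here we work in $\CAT$, so the relevant coend (colimit) is indexed over elements $(\mathcal Z, \phi\colon \mathcal X \to \ps L^2 f\mathcal Z)$, and the value on such an element is $\ps L C(\mu^{\ps L}_C\mathcal Z, (Y,k,\xi))$. Concretely, we must show that $\ps L D(\mu_D\mathcal X, (Y,fk,\xi))$ is the colimit of $\ps L C(\mu_C \mathcal Z,(Y,k,\xi))$ over the category of elements of $\ps L^2 D(\mathcal X,\ps L^2 f(-))$. The coprojections send $(\phi,\omega)$ to $\ps L f(\omega)\circ \mu_D(\phi)$.

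The key step is to produce, for every morphism $(a,\gamma)\colon (S,s,Q(\nu))\to (Y,fk,\xi)$ in $\ps L D$, a canonical representative in the colimit. Here $a\colon Y\to S=\sum_x X_x$ satisfies $Ta(\xi)=Q(\nu)$, and $\gamma_y\colon s(a(y))\to fk(y)$.

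We split $Y$ along $a$: set $Y_x := a^{-1}(X_x)$, so that $Y=\sum_x Y_x$. However, we need ultrafilter-type data $\xi_x\in T(Y_x)$ together with a reindexing of $\nu$ such that $\xi=\mu^T(\dots)$, and this fails for general $T$. So instead we use the trivial decomposition that always exists. We take $\mathcal Z := \eta^{\ps L}_{\ps L C}(Y,k,\xi) = (1, (Y,k,\xi), \eta^T_1(*))$, which by the unit law satisfies $\mu_C\mathcal Z\cong (Y,k,\xi)$. A morphism $\phi\colon \mathcal X \to \ps L^2 f \mathcal Z = (1,(Y,fk,\xi),\eta_1(*))$ in $\ps L^2 D$ is exactly a choice of $x\in X$ with $\nu = \eta^T_X(x)$, together with an $\ps L D$-arrow $H(x)\to (Y,fk,\xi)$.

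This suggests that the comparison must instead be handled directly. Given $(a,\gamma)$, we take $\mathcal Z := (X, Z, \nu)$ with $Z(x) = (Y_x, k|_{Y_x}, \xi_x)$. The hard part is defining $\xi_x$ from $\xi$ and $a$ so that the result lies in $T(Y_x)$ with $Q$-compatibility. We expect this to be the main obstacle, and we would attempt to avoid it by using the trivial choice $\mathcal Z = (X, Z, \nu)$ with $Z(x)=(Y, k, \xi)$ for every $x$. In that case $\mu_C \mathcal Z = (X\times Y, k\pi_2, Q(\nu))$, and $\phi_x := (a_x, \gamma)$ should restrict $a$ appropriately. The cocone map is then defined as $\Omega(a,\gamma) := \sigma_{(\mathcal Z,\phi)}(\text{diagonal-type arrow } \mu_C\mathcal Z\to (Y,k,\xi))$, and one checks that the coprojection of this pair returns $(a,\gamma)$.

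Uniqueness and well-definedness then follow as in \Cref{prop:lokes-allow-for-lax-algebras-2}. Every element $(\phi,\omega)$ of the diagram is connected by a morphism of elements, induced by $\omega$ through the universal property of $\mu$ as a left oplax Kan extension, to the canonical one above. This zig-zag shows that the cocone map is forced. The verification that the pair of functions $(a,\gamma)$ yields a genuine $\ps L^2 D$-morphism, specifically the $T$-equation $T(\cdot)(\cdot)=\nu$, is the step we expect to require the monad axioms of $T$ in an essential way.
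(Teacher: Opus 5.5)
\paragraph{The orientation is transposed, and in that form the statement is false.}
You are checking the wrong exactness condition. Reading the Beck--Chevalley condition literally off the naturality cell $\mu_f$ does give your orientation, with homs \emph{out of} $\mu_D\mathcal X$. But the construction of $\u\mu$ uses invertibility of the mate $\gamma_A(\ps F f)^<\Rightarrow(\ps G f)^<\gamma_B$ (condition BCC$'$ in the appendix). For $\mu$ this unwinds as follows: for every $\mathcal W=(Y,K,\xi)\in\ps L^2D$ and $b=(Z,l,\chi)\in\ps LC$, the set $\ps LD(\ps Lf\,b,\mu_D\mathcal W)$ is the colimit of $\ps LC(b,\mu_C(-))$ over the elements of $\ps L^2D(\ps L^2f(-),\mathcal W)$. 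The coprojections are $(\psi,\phi)\mapsto\mu_D(\psi)\circ\ps Lf(\phi)$. This is what the paper proves: homs \emph{into} $\mu$.

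Your transposed condition is false in general, which is why you could not produce a preimage. Take $T=\mathrm{Id}$, so objects of $\ps LC$ are families with a chosen index and morphisms preserve chosen indices. Let $C=1$, let $D=\{d_1,d_2\}$ be discrete, and let $f(*)=d_1$. Set $\mathcal X=(\{0,1\},H,0)$ with $H(0)=(1,d_1,*)$ and $H(1)=(\{u,v\},h_1,v)$, where $h_1(u)=d_1$ and $h_1(v)=d_2$. Set $b=(\{0,1\},*,0)$.
\begin{itemize}
\item The function $a(0)=(0,*)$, $a(1)=(1,u)$, with identity arrows, is a morphism $\mu_D\mathcal X\to\ps Lf\,b$.
\item Suppose some $\phi\colon\mathcal X\to\ps L^2f\mathcal Z$ had index function $p$ hitting $1$ at some $z$. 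Its component $H(1)\to\ps Lf\,Z(z)$ would send the chosen index of $Z(z)$ to $v$, and so would need a $D$-arrow $d_2\to d_1$.
\item Hence $p$ lands in $\{0\}$, and every $\ps Lf(\omega)\circ\mu_D(\phi)$ has index function landing in the summand $X_0$. It is never $a$.
\end{itemize}
The same example works for $\ps L_\beta$ with principal ultrafilters.

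\paragraph{The proposed data are not morphisms, and how to fix the argument.}
Independently of orientation, your candidate data fail to be morphisms. With $Z(x)=(Y,k,\xi)$:
\begin{itemize}
\item a map $\mathcal X\to\ps L^2f\mathcal Z$ needs functions $a_x\colon Y\to X_x$ with $Ta_x(\xi)=\nu_x$ for every $x$, which $a\colon Y\to\sum_xX_x$ does not provide;
\item a map $\mu_C\mathcal Z=(X\times Y,k\pi_2,Q(\nu))\to(Y,k,\xi)$ needs a function $Y\to X\times Y$ pushing $\xi$ to $Q(\nu)$, which generally does not exist.
\end{itemize}
The uniqueness step is also only asserted, not proved.

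In the correct orientation no decomposition of $\xi$ is needed. Given $(r,\rho)\colon(Z,fl,\chi)\to\mu_D\mathcal W$ with $K(y)=(Y_y,k_y,\xi_y)$, relabel $\mathcal W$ along $r$. Put
\begin{itemize}
\item $\mathcal V=(Y,\,y\mapsto(Y_y,l\,r\,i_y,\xi_y),\,\xi)\in\ps L^2C$;
\item $\psi=(\id_Y,(\id_{Y_y},\rho*i_y)_y)\colon\ps L^2f\mathcal V\to\mathcal W$;
\item $\phi=(r,\id)\colon(Z,l,\chi)\to\mu_C\mathcal V=(\sum_yY_y,lr,Q(\xi))$.
\end{itemize}
The only nontrivial $T$-equation, $Tr(Q(\xi))=\chi$, is part of the data of $(r,\rho)$, and $\mu_D(\psi)\circ\ps Lf(\phi)=(r,\rho)$. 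This gives surjectivity.

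For injectivity, take any other representative $(\mathcal V',\psi',\phi')$, with $\mathcal V'=(V',L',\omega)$, $L'(v)=(Z_v,l'_v,\chi_v)$ and $\psi'=(p,(a_y,\tilde\psi_y)_y)$. Let $\mathcal V_1=(Y,\,y\mapsto(Y_y,l'_{p(y)}a_y,\xi_y),\,\xi)$. There is a zig-zag $\mathcal V'\to\mathcal V_1\leftarrow\mathcal V$: the first map is $(p,(a_y,\id))$, and the second is $\id_Y$ together with the $C$-components of $\phi'$. This zig-zag identifies the two representatives in the colimit.

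Note that the representative written in the paper uses the constant family $L(y)=(Z,l,\chi)$. That requires $T(r\,i_y)(\xi_y)=\chi$ for every $y$, which fails, for example, for $T=\beta$ with all $Y_y$ singletons and $\xi$ non-principal. Relabelling $\mathcal W$ along $r$ avoids this problem.
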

\begin{proof}

Fix a functor $f \colon C \to D$ and consider the square:
\[\begin{tikzcd}
	{\ps L^2C} & {\ps L C} \\
	{\ps L^2 D} & {\ps L D}
	\arrow["{\mu_C^{\ps L}}", from=1-1, to=1-2]
	\arrow["{\ps L^2 f}"', from=1-1, to=2-1]
	\arrow["{\ps L f}", from=1-2, to=2-2]
	\arrow["{\mu_D^{\ps L}}"', from=2-1, to=2-2]
\end{tikzcd}\]
	As above, to show that its $\ps L$-image is exact we need to show that, for each $(Y,K,\xi)$ in $\ps L^2 D$ and each $(Z,l,\chi)$ in $\ps L C$, the set $\ps L D ( (Z, f l,\chi), \mu_D^{\ps L} (Y, K , \xi))$ is the colimit of the diagram:
	{\[ (*) \, \begin{tikzcd}
	{ \op {\left( \int \ps L^2 D(\ps L^2 f (-) , (Y, K, \xi))\right)} } &[-8pt] \ps L^2 C &[60pt] \Set
	\arrow[from=1-1, to=1-2]
	\arrow["{\ps L C ( (Z, l, \chi), \mu_C^{\ps L}(-))}", from=1-2, to=1-3]
	\end{tikzcd}\]}

    Throughout, we write $H(x) = (X_x, h_x, \nu_x)$ and $\mu_C^{\ps L}(X,H,\nu) = (\sum_{x\in X}X_x, \sum_{x\in X}h_x, \bar\nu)$, and we denote by $i^H_{x_0}$ the inclusion $X_{x_0} \to \sum_{x\in X}X_x$ for some $x_0 \in X$; we use similar notations for $(Y,K,\xi)$.
    Note that $\ps L D ( (Z, f l,\chi), \mu_D^{\ps L} (Y, K , \xi))$ is a cocone on the diagram $(*)$ where, for each $(X, H , \nu)$ in $\ps L^2 C$ and each $(p, \alpha) \colon (X, \ps L f \circ H, \nu) \to (Y, K ,\xi)$ in $\ps L^2 D$, the corresponding coprojection
	\[ \iota \colon \ps L C ( (Z, l, \chi), \mu_C^{\ps L}(X, H, \nu) ) \to \ps L D ( (Z, f l,\chi), \mu_D^{\ps L} (Y, K, \xi)) \]
    is defined by $(q,\gamma) \mapsto \mu_D^{\ps L} (p,\alpha)\circ \ps L f (q,\gamma)$. Graphically, this means that $\iota_{(X,h,\nu),(p,\alpha)}$ maps $(q,\gamma)$ to the morphism described by the pasting:
    \[ \begin{tikzcd}[column sep=24pt, row sep = 12pt]
	Z & {\sum_{x}X_x} &[20pt] {\sum_y Y_y} \\
	C \\
	& D
	\arrow["l"', from=1-1, to=2-1]
	\arrow["q"', from=1-2, to=1-1]
	\arrow[""{name=0, anchor=center, inner sep=0}, "{\sum_x h_x}", from=1-2, to=2-1]
	\arrow["{\sum_y i^H_{p(y)} a_y}"', from=1-3, to=1-2]
	\arrow[""{name=1, anchor=center, inner sep=0}, "{{\sum_yk_y}}", curve={height=-12pt}, from=1-3, to=3-2]
	\arrow["f"', curve={height=6pt}, from=2-1, to=3-2]
	\arrow["\gamma", between={0.2}{0.8}, Rightarrow, from=1-1, to=0]
	\arrow["{\sum_y\tah\alpha_y}", between={0.4}{0.8}, Rightarrow, from=0, to=1]
\end{tikzcd}\]
where we denote by $(a_y, \tah\alpha_y) \colon (X_{p(y)}, f h_{p(y)}, \nu_{p(y)} ) \to (Y_y,k_y,\xi_y)$ the component of $\alpha \colon \ps L f \circ H \circ p \Rightarrow K$ at $y\in Y$. 

Suppose $S$ is another cocone on the diagram $(*)$, determined by a family of maps $\set{ \sigma_{(X,H,\nu),(p,\alpha)} \colon \ps L C ( (Z, l, \chi), \mu_C^{\ps L}(X, H, \nu) )  \to S}$. We define a map
\[\Omega \colon \ps L D ( (Z, f l,\chi), \mu_D^{\ps L} (Y, K , \xi)) \to S\]
as follows: let $(r, \rho) \colon (Z, fl, \chi) \to \mu_D^{\ps L}(Y,K,\xi)$ in $\ps L D$. Note that:
	\[\begin{tikzcd}[column sep=6pt, row sep = 12pt]
	Z && {\sum_{y}Y_y} \\
	C \\
	& D
	\arrow[""{name=0, anchor=center, inner sep=0}, "l"', from=1-1, to=2-1]
	\arrow["r"', from=1-3, to=1-1]
	\arrow[""{name=1, anchor=center, inner sep=0}, "{\sum_y k_y}", curve={height=-12pt}, from=1-3, to=3-2]
	\arrow["f"', curve={height=6pt}, from=2-1, to=3-2]
	\arrow["\rho", between={0.2}{0.8}, Rightarrow, from=0, to=1]
\end{tikzcd} = \begin{tikzcd}[column sep=22pt, row sep = 12pt]
	Z &[11pt] {\sum_{y} Z} &[15pt] {\sum_{y}Y_y} \\
	C \\
	& D
	\arrow["l"', from=1-1, to=2-1]
	\arrow["{\sum_{y}\id_Z}"', from=1-2, to=1-1]
	\arrow[""{name=0, anchor=center, inner sep=0}, "{\sum_{y}l}", from=1-2, to=2-1]
	\arrow["{ \sum_y i^L_y r i_y^k }"', from=1-3, to=1-2]
	\arrow[""{name=1, anchor=center, inner sep=0}, "{\sum_{y}k_y}", curve={height=-12pt}, from=1-3, to=3-2]
	\arrow["f"', curve={height=6pt}, from=2-1, to=3-2]
	\arrow["{=}"{marking, allow upside down}, draw=none, from=1-1, to=0]
	\arrow["{\sum_{y}\rho*i_y^k}", between={0.35}{0.8}, Rightarrow, from=0, to=1]
	\end{tikzcd}\]
    where $L\colon Y \to \ps L C$ is given by $L(y) \coloneqq (Z, l, \chi)$, so that $i^L_y$ is the inclusion $Z \to \sum_{y\in Y}Z$. Consider the object $(Y,L,\xi)$ in $\ps L^2 C$:
    \begin{itemize}
    \item the pair $(\id_Y, \hat \rho)$, where $\hat\rho \colon \ps L f \circ L \Rightarrow K$ is the transformation defined at $y\in Y$ by the $\ps L D$-arrow $(ri_y^k, \rho*i_y^k) \colon (Z, fl, \chi) \to (Y_y, k_y, \xi_y)$, defines an $\ps L ^2 D$-arrow $(Y, \ps L f\circ L, \xi) \to (Y, K, \xi)$;
    \item the pair $(\sum_y \id_Z, \id)$ defines an $\ps L C$-arrow $(Z,l,\chi) \to \mu_C^{\ps L}(Y,L,\xi)$.
    \end{itemize}
Thus, by the above description of the maps $\set{\iota}$, the previous diagram expresses that $(r,\rho)$ coincides with $\iota_{(Y, L,\xi),(\id_Y, \hat \rho)}(\sum_y \id_Z, \id)$, so that we can define:
	\[\textstyle \Omega ( r, \rho ) \coloneqq \sigma_{(Y, L,\xi),(\id_Y, \hat \rho)}(\sum_y \id_Z, \id)\]
It is then immediate to see that $\Omega$ is the unique map making all relevant triangles commute, thus exhibiting $\ps L D ( (Z, f l,\chi), \mu_D^{\ps L} (Y, K , \xi))$ as the colimit of $(*)$.
\end{proof}

\begin{scholium}
	Both proofs above carry over verbatim to the quotient pseudomonad $\bb T$ for $T \in \set{ \mathcal F, \beta, \mathcal P,\mathcal D}$, hence proving also \Cref{cor:quotients-extend} since clearly these quotients also preserve fully-faithfulness.
\end{scholium}

Moving on to \Cref{cor:extension-on-cat}, we first recall the definition of the \emph{Kleisli bicategory} of a pseudomonad $\monad {\ps T}$ on a bicategory $\K$.

\begin{definition}
    The \emph{Kleisli bicategory} of $\ps T$ is the bicategory $\Kl {\ps T}$ defined as follows:
    \begin{itemize}
        \item objects of $\Kl{\ps T}$ are objects of $\K$;
        \item the hom-category $\Kl{\ps T}(A,B)$ is given by $\K(A, \ps T B)$;
        \item the identity arrow at an object $A$ is given by the unit $\eta^{\ps T}_A \colon A \to \ps T A$;
        \item the composite of $f \colon A \to \ps T B$ and $g \colon B \to \ps T C$ in $\Kl{\ps T}$ is given by the composite
\small
\[\begin{tikzcd}
	A & {\ps T B} & {\ps T ^2C} & {\ps T C}
	\arrow["f", from=1-1, to=1-2]
	\arrow["{\ps T g}", from=1-2, to=1-3]
	\arrow["{\mu^{\ps T}_C}", from=1-3, to=1-4]
\end{tikzcd}\]
\normalsize
in $\K$, which extends to a functor $\K(B,\ps TC) \times \K(A,\ps TB)\to \K(A, \ps T C)$.
\end{itemize}

\looseness=-1
Note that $\Kl{\ps T}$ is generally not a $2$-category even if $\K$ is one, as it inherits its structural $2$-cells from the pseudomonad structure of $\ps T$.

Note also that $\Kl{\ps T}$ comes equipped with a canonical pseudofunctor $\ps J_{\ps T} \colon \K \to \Kl{\ps T}$ acting as the identity on objects and composing with $\eta^{\ps T}$ on the hom-categories.
\end{definition}

\looseness=-1 Towards addressing \Cref{cor:extension-on-cat}, consider a
pseudomonad $\monad {\ps T}$ on $\CAT$. Identifying $\coop{\DFib{\coop\CAT}}$
with $\PROF$, by \Cref{cor:extension-pseudomonads} we know
that $\ps T$ extends to a pseudomonad on $\PROF$ if and only if $\ps T$, $\eta$
and $\mu$ all satisfy the appropriate Beck-Chevalley condition. However, through
the biequivalence $\PROF \simeq \Kl\psh$ --- along which $\ps J_{\psh} \colon
\CAT \to \PROF$ can be identified with $(-)_\diamond$ --- we can see this
extension from the perspective of the extensions to Kleisli bicategories
discussed in \cite{chengPseudodistributiveLaws2003}.

Intuitively, the idea is that while $\ps T$, $\eta$ and $\mu$ can be extended \emph{separately} to constitute a pseudomonad, there should be some further compatibility with respect to the monad structure of $\monad{\ps T}$. More formally, for a pseudomonad $\monad{\ps S}$ on a bicategory $\K$, we say that a pseudomonad $\monad{\ps T}$ on $\K$ \emph{extends to $\Kl{\ps S}$} if there is a pseudomonad $\monad{\u{\ps T}}$ on $\Kl{\ps S}$ together with a pseudonatural equivalence $\zeta \colon  \ps J_{\ps S} \ps T \cong \u{\ps T} \ps J_{\ps S}$ and two invertible modifications
\[\begin{tikzcd}[column sep = 15pt, row sep = 20pt,cramped]
	{\ps J_{\ps S}} && {\u{\ps T} \ps J_{\ps S}} \\
	& {\ps J_{\ps S} \ps T}
	\arrow[""{name=0, anchor=center, inner sep=0}, "{\eta^{\u{\ps T}}*\ps J_{\ps S}}", from=1-1, to=1-3]
	\arrow["{\ps J_{\ps S} * \eta^{\ps T}}"', from=1-1, to=2-2]
	\arrow["\zeta"', from=2-2, to=1-3]
	\arrow["{{{\mathfrak p}}}", between={0.2}{0.8}, Rightarrow, from=0, to=2-2]
\end{tikzcd} \quad \begin{tikzcd}[row sep = 24pt,cramped]
	{\ps J_{\ps S}{\ps {T}}^2} & {\ps J_{\ps S} \ps T} \\
	{{\u{\ps T}}\ps J_{\ps S}\ps T} \\
	{{\u{\ps {T}}}^2 \ps J_{\ps S}} & {\u{\ps T} \ps J_{\ps S}}
	\arrow["{\ps J_{\ps S} * \mu^{\ps T}}", from=1-1, to=1-2]
	\arrow["{{{\zeta *\ps T}}}"', from=1-1, to=2-1]
	\arrow["{\mathfrak q}"', between={0.2}{0.8}, Rightarrow, from=1-2, to=3-1]
	\arrow["\zeta", from=1-2, to=3-2]
	\arrow["{\u{\ps T} *\zeta }"', from=2-1, to=3-1]
	\arrow["{\mu^{\u{\ps T}}*\ps J_{\ps S}}"', from=3-1, to=3-2]
\end{tikzcd}\]
satisfying coherence axioms analogous to those of a pseudoalgebra for a pseudomonad.

To prove \Cref{cor:extension-on-cat}, we now see that these modifications trivialize if $\monad{\ps T}$ extends to $\PROF$ as in \Cref{cor:extension-pseudomonads}.

\begin{corollary}
  For a pseudomonad $\monad{\ps T}$ on $\CAT$, the following are equivalent:
  \begin{enumerate}
  \item $\ps T$, $\eta^{\ps T}$ and $\mu^{\ps T}$ satisfy the Beck-Chevalley condition;
  \item $\monad{\ps T}$ extends to a pseudomonad on $\PROF$;
  \item $\monad{\ps T}$ pseudodistributes over $\psh$.
\end{enumerate}
\begin{proof}
The equivalence between (2) and (3) is proved in \cite[Thm.\ 4.3]{chengPseudodistributiveLaws2003}. That (2) implies (1) follows by \Cref{thm:extension-pseudofunctors}.(3), \Cref{thm:extension-psnat-trans}.(2) and \Cref{thm:extension-modifications}. To show that (1) implies (2) and conclude the proof, we then need to show that the pseudomonad $\monad{\u{\ps T}}$ on $\PROF$ extending $\monad{\ps T}$ according to \Cref{cor:extension-pseudomonads} satisfies the further requirements of an extension to the Kleisli bicategory $\Kl{\psh}$ described above.

First, set $\zeta \colon (\ps T -)_\diamond \cong \ps T(-)_\diamond$ to be the canonical pseudonatural equivalence $\delta^{\ps T}$ witnessing the extension in the sense of \Cref{thm:extension-pseudofunctors}. Recall also that, by construction in \Cref{cor:extension-pseudomonads}, the unit $\eta^{\u{\ps T}}$ and multiplication $\mu^{\u{\ps T}}$ are given by $\u{\eta^{\ps T}} \circ \omega^1$ and $\u{\mu^{\ps T}}\circ \omega^{\ps T}$ where $\u{\eta^{\ps T}}$ and $\u{\mu^{\ps T}}$ are the extensions of $\eta^{\ps T}$ and $\mu^{\ps T}$ given by \Cref{thm:extension-psnat-trans} while $\omega^1$ and $\omega^{\ps T}$ have identity components. Unraveling definitions, we see that the component of the modification $\mathfrak p$ at a category $C$ is a natural transformation of profunctors
\[\begin{tikzcd}[sep = small, cramped]
	C && {\ps T C} \\
	& {\ps F C}
	\arrow[""{name=0, anchor=center, inner sep=0}, "{{{(\eta^{\ps T}_C)_\diamond}}}"{inner sep=.8ex}, "\shortmid"{marking}, from=1-1, to=1-3]
	\arrow["{{{(\eta^{\ps T}_C)_\diamond}}}"'{inner sep=.8ex}, "\shortmid"{marking}, from=1-1, to=2-2]
	\arrow[equals, from=2-2, to=1-3]
	\arrow[between={0.2}{1}, Rightarrow, from=0, to=2-2]
\end{tikzcd}\]
which we can take to be the identity. Similarly, the component of $\mathfrak q$ at $C$ is a natural transformation of profunctors
\[\begin{tikzcd}[row sep = 12pt, column sep = 20pt, cramped]
	{\ps T^2 C} & {\ps T C} \\
	{\ps T^2 C} \\
	{\ps T^2 C} & {\ps T C}
	\arrow["{{(\mu^{\ps T}_C)_\diamond}}"{inner sep=.8ex}, "\shortmid"{marking}, from=1-1, to=1-2]
	\arrow[equals, from=1-1, to=2-1]
	\arrow[between={0.2}{0.8}, Rightarrow, from=1-2, to=3-1]
	\arrow[equals, from=1-2, to=3-2]
	\arrow[equals, from=2-1, to=3-1]
	\arrow["{{(\mu^{\ps T}_C)_\diamond}}"'{inner sep=.8ex}, "\shortmid"{marking}, from=3-1, to=3-2]
\end{tikzcd}\]
which we can also take to be the identity. This concludes the proof, as coherence is then trivial.

\end{proof}
\end{corollary}
 \section{Deferred proofs from \texorpdfstring{\Cref{sec:05}}{Section V}}\label{app:A5}

We here provide more details on the definitions of \Cref{sec:05} and we give a proof of \Cref{prop:lokes-allow-for-lax-algebras}. 

First, we begin by justifying \Cref{def:lax-algebras-allowed} by showing that, for a left skew monad allowing for lax algebras, \emph{free algebras} are indeed lax algebras.

\begin{proposition}
	Let $\monad{\u{\ps T}}$ be a left skew monad on $\PROF$ allowing for lax algebras. Then, for any category $C$, the category $\ps T C$ carries the structure of a lax algebra defined by the profunctor $(\mu^{\ps T}_C)_\diamond \colon \ps T ^2 C \pro \ps T C$.
\end{proposition}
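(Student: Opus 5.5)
We equip $\ps T C$ with the data suggested by the structure of the monad and then reduce each coherence axiom of \Cref{def:lax-algebras-allowed} to a coherence axiom of the pseudomonad $\monad{\ps T}$ on $\CAT$. The carrier is $A_0 = \ps T C$ and the profunctor is $A = (\mu^{\ps T}_C)_\diamond$.

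For the unitor $\Gamma \colon 1_{\ps T C} \Rightarrow (\mu^{\ps T}_C)_\diamond \circ (\eta^{\ps T}_{\ps T C})_\diamond$ we take the composite
\[ 1 \cong (1_{\ps T C})_\diamond \xRightarrow{(\mathfrak l_C)_\diamond} (\mu^{\ps T}_C\eta^{\ps T}_{\ps T C})_\diamond \cong (\mu^{\ps T}_C)_\diamond(\eta^{\ps T}_{\ps T C})_\diamond, \]
using the pseudofunctor $(-)_\diamond$. For the multiplicator $\Delta \colon A \circ \u{\ps T}A \Rightarrow A \circ (\mu^{\ps T}_{\ps T C})_\diamond$, we use the identification of \Cref{rem:lax-algebras-allowed-identifying-delta}. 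Condition (1) of \Cref{def:allowing-lax-algebras} gives that $\delta^{\ps T}$ is pseudonatural, so $\u{\ps T}(\mu^{\ps T}_C)_\diamond \cong (\ps T\mu^{\ps T}_C)_\diamond$. We then set $\Delta$ to be $(\mathfrak m_C)_\diamond$, or its inverse according to the direction convention, transported along these isomorphisms and the compositors of $(-)_\diamond$.

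Each axiom is now a pasting in $\PROF$ of $2$-cells between \emph{representable} profunctors. These cells are $(-)_\diamond$-images of pseudomonad data ($\mathfrak l,\mathfrak r,\mathfrak m$, the pseudonaturality cells $\eta_\mu,\mu_\mu$), together with structural cells of $\u{\ps T}$ and $\u{\ps T}^2$ restricted to representables: $\psi$, $\phi$, and the components of $\eta^{\u{\ps T}}$ and $\mu^{\u{\ps T}}$. We handle these as follows.
\begin{itemize}
\item By condition (2b), $\phi_{f'_\diamond,f_\diamond}=\psi^{-1}_{f'_\diamond,f_\diamond}$.
\item The unitors $\psi_A$ are invertible, by the remark following \Cref{def:allowing-lax-algebras}.
\item By \Cref{rem:lax-algebras-allowed-identifying-delta}, $\u{\ps T}$ restricts to a pseudofunctor on representables agreeing with $(\ps T-)_\diamond$. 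Under that identification, $\eta^{\u{\ps T}}_{f_\diamond}$ and $\mu^{\u{\ps T}}_{f_\diamond}$ become $(\eta^{\ps T}_f)_\diamond$ and $(\mu^{\ps T}_f)_\diamond$, using pseudonaturality of $\omega^{\ps T}$ from condition (1).
\end{itemize}
After these substitutions every cell is the image under the locally fully faithful pseudofunctor $(-)_\diamond\colon\CAT\to\PROF$ of a $2$-cell of $\CAT$. Each axiom is then equivalent to an equation in $\CAT$:
\begin{itemize}
\item axiom (1) becomes the pseudomonad axiom relating $\mathfrak l$ and $\mathfrak m$ (axiom (3) of \Cref{def:skew-monad});
\item axiom (2) becomes the one relating $\mathfrak r$ and $\mathfrak m$ (axiom (2) there);
\item axiom (3) becomes the pentagon-type axiom for $\mathfrak m$ (axiom (1)).
\end{itemize}
In each case we read off the equation by applying local faithfulness of $(-)_\diamond$.

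The main obstacle is bookkeeping in axiom (3). There the cell $\u{\ps T}\Delta$ appears, and we must check that $\u{\ps T}$ applied to a $2$-cell between representables agrees, under the identification of \Cref{rem:lax-algebras-allowed-identifying-delta}, with $(\ps T\mathfrak m_C)_\diamond$. We must also check that the cell $\psi_{A,\u{\ps T}A}$, together with $\phi_{A,(\mu_C)_\diamond}$, collapses to a compositor of $(\ps T-)_\diamond$. This relies on naturality of $\phi$ and on the compatibility diagram (2a), which identifies $\phi_{G,f_\diamond}$ on a composite with the appropriate pastings of $\psi^{-1}$. To handle this, we first prove a small lemma: on composable representables, the family $\phi$ assembles with $\delta^{\ps T}$ into the inverse compositor of $(\ps T-)_\diamond$. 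We then apply this lemma inside the three-layer diagram of axiom (3) before reducing to $\CAT$.
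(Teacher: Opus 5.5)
Your proposal is correct and follows essentially the same route as the paper. The paper likewise takes as unitor and multiplicator the $(-)_\diamond$-images of $\mathfrak l_C$ and $\mathfrak m_C$, inverted in its left-skew convention, and derives the three lax-algebra axioms from skew-monad axioms (3), (2) and (1) respectively, calling "straightforward" the identifications via $\delta^{\ps T}$, $\omega^{\ps T}$ and $\phi=\psi^{-1}$ that you spell out. Two small simplifications: since $A=(\mu^{\ps T}_C)_\diamond$ is itself representable, every $\phi$ in the axioms has both arguments representable, so condition (2b) alone suffices and (2a) is not needed; and to transfer the $\CAT$ equations to $\PROF$ you only need that $(-)_\diamond$ is a pseudofunctor, not that it is locally faithful.
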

\begin{proof}
	First recall by \Cref{rem:lax-algebras-allowed-identifying-delta} that we can identify, for a functor $f \colon C \to D$:
	\begin{itemize}
	\item $\u{\ps T}(f_\diamond)$ with $(\ps T f)_\diamond$, and
	\item the components of $\eta^{\u {\ps T}}$ and $\mu^{\u{\ps T}}$ at $f_\diamond$ with $(\eta^{\ps T}_f)_\diamond$ and $(\mu^{\ps T}_f)_\diamond$, respectively.
	\end{itemize}

	Consider now the category $\ps T C$ with the profunctor $(\mu^{\ps T}_C)_\diamond \colon \ps T ^2 C \pro \ps T C$.
	As a unitor $1_{\ps T C} \Rightarrow (\mu^{\ps T}_C)_\diamond \circ (\eta^{\ps T}_{\ps T C})_\diamond$ we take $(\mathfrak l^{\u {\ps T}}_C)^{-1}$, i.e.\ the natural transformation $(\mathfrak l^{\ps T}_C)^{-1}_\diamond$. As a multiplicator $(\mu^{\ps T}_C)_\diamond \circ {\ps T} (\mu^{\ps T}_C)_\diamond \Rightarrow (\mu^{\ps T}_C)_\diamond \circ (\mu^{\ps T}_{\ps T C})_\diamond$ we take $(\mathfrak{m}^{\u {\ps T}}_C)^{-1}$, i.e.\ the natural transformation $(\mathfrak{m}^{{\ps T}}_C)_\diamond^{-1}$. With these definitions, it is straightforward to see that the three axioms of \Cref{def:lax-algebras-allowed} follow respectively from axiom (3), axiom (2), and axiom (1) in \Cref{def:skew-monad}, thus making the tuple a lax $\u{\ps T}$-algebra.

\end{proof}

We now proceed to justify \Cref{def:colax-morphisms-lax-algebras-allowed} by showing that lax algebras determine a $2$-category whose arrows are representable colax morphisms.

\begin{proposition}\label{prop:2cat-lax-algebras-allowed}
	Let $\monad{\u{\ps T}}$ be a left skew monad on $\PROF$ allowing for lax algebras. There is a $2$-category of lax $\u{\ps T}$-algebras, representable colax morphisms, and algebra $2$-cells.
\end{proposition}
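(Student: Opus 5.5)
To build the $2$-category, I will define identities and composition of representable colax morphisms, and composition of algebra $2$-cells. Then I will check the coherence axioms of \Cref{def:colax-morphisms-lax-algebras-allowed} for each.

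The underlying data is inherited from $\CAT$. Objects are lax $\u{\ps T}$-algebras. A morphism is a pair $\braket{f,\Theta}$. Algebra $2$-cells are certain natural transformations between the underlying functors. Composition of $2$-cells, both vertical and horizontal, is computed in $\CAT$, so associativity and unitality hold strictly for free. The only things to check are closure of the defining conditions under these operations and the interchange law, which is inherited from $\CAT$ once closure is established.

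\emph{Identities.} For an algebra $\braket{A_0,A,\Gamma,\Delta}$ the identity is $\braket{1_{A_0},\Theta}$, where $\Theta\colon (1_{A_0})_\diamond\circ A\Rightarrow A\circ(\ps T1_{A_0})_\diamond$ is the canonical isomorphism built from the unitors of $\PROF$ and the pseudofunctoriality of $(-)_\diamond$ and $\ps T$. I then check axioms (1) and (2) of \Cref{def:colax-morphisms-lax-algebras-allowed} for it. Axiom (1) reduces to the unit coherence of $\eta^{\ps T}$ at $1_{A_0}$. Axiom (2) needs, besides the unit coherence of $\mu^{\ps T}$, that the composite $\phi_{A,(\ps T 1)_\diamond}\cdot\u{\ps T}\Theta\cdot\psi_{1_\diamond,A}$ is the canonical isomorphism. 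For this I use that the lax unitor $\psi_A$ is invertible, as noted after \Cref{def:allowing-lax-algebras}, together with condition (2)(b) identifying $\phi$ with $\psi^{-1}$ on representables.

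\emph{Composition.} For $\braket{f,\Theta}\colon A\to A'$ and $\braket{g,\Theta'}\colon A'\to A''$, I set the composite to be $\braket{gf,\Theta''}$. Here $\Theta''$ is the pasting of $g_\diamond\Theta$, then $\Theta'(\ps Tf)_\diamond$, then the canonical isomorphisms $(gf)_\diamond\cong g_\diamond f_\diamond$ and $(\ps T g)_\diamond(\ps Tf)_\diamond\cong(\ps T(gf))_\diamond$. Axiom (1) for $\Theta''$ follows by pasting the two instances of axiom (1) and using the composition coherence of $\eta^{\ps T}$ as a pseudonatural transformation.

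Axiom (2) for $\Theta''$ is where I expect the main obstacle. On the left-hand side, $\u{\ps T}\Theta''$ has to be decomposed into $\u{\ps T}(g_\diamond\Theta)$ and $\u{\ps T}(\Theta'(\ps Tf)_\diamond)$. This is only possible through the lax associator $\psi$ and the semi-oplax associator $\phi$, which are not mutually inverse in general. The strategy is as follows.
\begin{enumerate}
\item Use naturality of $\psi$ to slide $\u{\ps T}(g_\diamond\Theta)$ past $\psi_{g_\diamond,f_\diamond A}$.
\item Use naturality of $\phi$ to slide $\u{\ps T}(\Theta'(\ps Tf)_\diamond)$ past $\phi_{A'',(\ps T g\,\ps Tf)_\diamond}$.
\item In the middle, an expression of the form $\phi_{A'\circ(\ps Tf)_\diamond,\ldots}\cdot\psi_{g_\diamond,\ldots}$ appears. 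Rewrite it with condition (2)(a) of \Cref{def:allowing-lax-algebras}, which splits $\phi$ across a composite of representables, and with condition (2)(b), which cancels $\phi$ against $\psi$ whenever both arguments are representable.
\item Use the associativity coherence of the lax functor $\u{\ps T}$ for $\psi$, which relates $\psi_{g_\diamond f_\diamond,A}$ to $\psi_{g_\diamond,f_\diamond A}$ and $\psi_{g_\diamond,f_\diamond}$.
\end{enumerate}
After these steps the two instances of axiom (2) for $\Theta$ and $\Theta'$ can be applied in sequence. The remaining bookkeeping is the composition coherence of $\mu^{\ps T}_{gf}$ versus $\mu^{\ps T}_g,\mu^{\ps T}_f$. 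I would carry this out with string diagrams as in \Cref{app:A3}, treating representables via the identifications of \Cref{rem:lax-algebras-allowed-identifying-delta}.

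\emph{Algebra $2$-cells.} Vertical composites of algebra $2$-cells are again algebra $2$-cells, by pasting the defining equalities and using functoriality of $\ps T$ on $2$-cells. For horizontal composites, whiskering an algebra $2$-cell by a morphism is an algebra $2$-cell, by naturality of the $\Theta$'s and pseudofunctoriality of $\ps T$ and $(-)_\diamond$. Identities $1_f$ are algebra $2$-cells trivially.

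Finally, associativity and unitality of composition of morphisms hold strictly on the functor component. On the $\Theta$ component they follow from coherence of the canonical isomorphisms in $\PROF$ and from strictness of $(-)_\diamond$ up to these isomorphisms. The result is a $2$-category rather than merely a bicategory, provided the canonical isomorphisms of $(-)_\diamond$ are chosen coherently. If needed, one can instead record morphisms via the equivalent data of the transformation with the isomorphism already absorbed.
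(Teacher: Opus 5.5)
Your composite and your plan for axiom (2) follow the paper's route. The paper pastes $\Theta$, $\Theta'$ and $\psi_{f'_\diamond,f_\diamond}$; under the identifications of \Cref{rem:lax-algebras-allowed-identifying-delta} this last cell is your canonical isomorphism $(\ps Tg)_\diamond(\ps Tf)_\diamond\cong(\ps T(gf))_\diamond$. It then checks axiom (2) by one diagram chase with the same ingredients you list. The gap is in two of your reductions: you invoke conditions (2)(a)--(b) of \Cref{def:allowing-lax-algebras} at a profunctor that is not representable, and there they say nothing.

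\emph{Identities.} You need $\phi_{A,(1_{\ps TA_0})_\diamond}=\psi_{A,(1_{\ps TA_0})_\diamond}^{-1}$ for the arbitrary profunctor $A$. Condition (2)(b) only concerns $\phi_{f'_\diamond,f_\diamond}$ with both arguments representable. Condition (2)(a) at $f=f'=1$ only shows that $\psi_{A,1_\diamond}\cdot\phi_{A,1_\diamond}$ is idempotent. Without this unit law, axiom (2) for $\braket{1_{A_0},\Theta}$ additionally requires $\Delta$ to absorb that idempotent.

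\emph{Composition, step 3.} After steps 1, 2 and 4, the middle term is $\phi_{g_\diamond\circ A',(\ps Tf)_\diamond}\cdot\psi_{g_\diamond,A'\circ(\ps Tf)_\diamond}$. To reach the instance of axiom (2) for $\Theta'$ you need the mixed interchange law
\[ \phi_{g_\diamond\circ A',(\ps Tf)_\diamond}\cdot\psi_{g_\diamond,\,A'\circ(\ps Tf)_\diamond}=(\psi_{g_\diamond,A'}*\u{\ps T}(\ps Tf)_\diamond)\cdot(\u{\ps T}g_\diamond*\phi_{A',(\ps Tf)_\diamond}) \]
as $2$-cells $\u{\ps T}g_\diamond\circ\u{\ps T}(A'\circ(\ps Tf)_\diamond)\Rightarrow\u{\ps T}(g_\diamond\circ A')\circ\u{\ps T}(\ps Tf)_\diamond$. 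Condition (2)(a) only relates instances of $\phi$ to one another, and (2)(b) needs $A'$ to be representable, so neither supplies this law.

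The paper's chase uses exactly this square, namely the one through $A''\,\u{\ps T}f'_\diamond\,\u{\ps T}(A'\,\u{\ps T}f_\diamond)$ and $A''\,\u{\ps T}(f'_\diamond\, A'\,\u{\ps T}f_\diamond)$, with no further justification. It also leaves identities implicit, so you inherited this gap rather than introduced it. To close it, either add the unit law and the mixed law to \Cref{def:allowing-lax-algebras}, or verify them for the skew monads of interest. For $\u{\ps L}_T$ and its quotients:
\begin{itemize}
\item the unit law follows from $\psi_{G,f_\diamond}\cdot\phi_{G,f_\diamond}=1$ (proof of \Cref{prop:lokes-allow-for-lax-algebras-4}), together with invertibility of $\psi_{G,1_\diamond}$, which comes from the invertible lax unitor;
\item the mixed law holds on representatives. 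Using the explicit formulas for $\psi$ and $\phi$ in \Cref{app:A5}, both composites send a triple to the same triple, whose last component is an identity.
\end{itemize}
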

\begin{proof}
	We here describe how representable colax morphisms are composed; that this yields a $2$-category is then tedious but straightforward.

	Consider two representable colax morphisms $\braket{f, \Theta} \colon \braket{A_0, A, \Gamma,\Delta} \to \braket{A_0', A', \Gamma',\Delta'}$ and $\braket{f',\Theta'} \colon \braket{A_0', A', \Gamma',\Delta'} \braket{A_0'', A'', \Gamma'',\Delta''}$. We define their composite as the pair of the functor $f' f \colon A_0 \to A_0''$ and the natural transformation obtained as the pasting:
\[\begin{tikzcd}
	{\ps T A_0} & {\ps TA_0'} & {\ps TA_0''} \\
	{A_0} & {A_0'} & {A_0''}
	\arrow["{\u{\ps T} f_\diamond}"', from=1-1, to=1-2]
	\arrow[""{name=0, anchor=center, inner sep=0}, "{\u{\ps T}( f'f)_\diamond}",bend left = 45, from=1-1, to=1-3]
	\arrow["A"', from=1-1, to=2-1]
	\arrow["{\u{\ps T} f_\diamond'}"', from=1-2, to=1-3]
	\arrow["{A'}"', from=1-2, to=2-2]
	\arrow["{A''}", from=1-3, to=2-3]
	\arrow["\Theta"', between={0.3}{0.7}, Rightarrow, from=2-1, to=1-2]
	\arrow["{f_\diamond}"', from=2-1, to=2-2]
	\arrow["{\Theta'}"', between={0.3}{0.7}, Rightarrow, from=2-2, to=1-3]
	\arrow["{f'_\diamond}"', from=2-2, to=2-3]
	\arrow["{\psi_{f'_\diamond, f_\diamond}}", between={0}{0.8}, Rightarrow, from=1-2, to=0, pos=0.4]
\end{tikzcd}\]
where we identify $f'_\diamond \circ f_\diamond \cong (f'f)_\diamond$ by pseudofunctoriality of $(-)_\diamond$. The fact that the composite satisfies the axioms of \Cref{def:colax-morphisms-lax-algebras-allowed} can be verified by diagram chasing. For instance, axiom (2) corresponds to the commutativity of the outer border in the diagram:
\[\hspace{-4em}\begin{tikzcd}[row sep = small, column sep = 0pt]
	{(f'f)_\diamond \ A \ \u{\ps T}A } &&&& {(f'f)_\diamond \ A \ (\mu^{\ps T}_{A_0})_\diamond} \\
	{f'_\diamond \ A' \ \u{\ps T}f_\diamond \ \u{\ps T}A} & {f'_\diamond \ A' \ \u{\ps T}(f_\diamond \ A)} & {f_\diamond' \ A' \ \u{\ps T}(A' \ \u{\ps T}f_\diamond )} \\
	{A'' \ \u{\ps T}f_\diamond' \ \u{\ps T}f_\diamond \ \u{\ps T}A} & {A'' \ \u{\ps T}f_\diamond' \ \u{\ps T}(f_\diamond \ A)} && {f_\diamond' \ A' \ (\mu^{\ps T}_{A_0'})_\diamond \ \u{\ps T}^2f_\diamond } \\
	{A'' \ \u{\ps T}(f'f)_\diamond \ \u{\ps T}A} && {f_\diamond' \ A' \ \u{\ps T}A' \ \u{\ps T}^2f_\diamond } && {f'_\diamond \ A' \ \u{\ps T}f_\diamond \ (\mu^{\ps T}_{A_0})_\diamond} \\
	{A'' \ \u{\ps T}(f'_\diamond \ f_\diamond \ A)} & {A'' \ \u{\ps T}f_\diamond' \ \u{\ps T}(A' \ \u{\ps T}f_\diamond )} & {A'' \ \u{\ps T}f_\diamond' \ \u{\ps T}A' \ \u{\ps T}^2f_\diamond )} \\
	&& {A'' \ \u{\ps T}(f'_\diamond \  A') \ \u{\ps T}^2 f_\diamond)} & {A'' \ \u{\ps T}f'_\diamond \ (\mu^{\ps T}_{A_0'})_\diamond \ \u{\ps T}^2f_\diamond } \\
	{A'' \ \u{\ps T}(f'_\diamond \  A' \ \u{\ps T} f_\diamond)} && {A'' \ \u{\ps T}(A'' \ \u{\ps T}f'_\diamond) \ \u{\ps T}^2 f_\diamond)} && {A'' \ \u{\ps T}f'_\diamond \  \u{\ps T}f_\diamond \ (\mu^{\ps T}_{A_0})_\diamond} \\
	{A'' \ \u{\ps T}(A'' \ \u{\ps T}f'_\diamond \ \u{\ps T} f_\diamond)} && {A'' \ \u{\ps T}A'' \ \u{\ps T}^2f'_\diamond \ \u{\ps T}^2 f_\diamond)} \\
	{A'' \ \u{\ps T}(A'' \ \u{\ps T}(f'f)_\diamond )} & {A'' \ \u{\ps T}A'' \ \u{\ps T}(\u{\ps T}f'_\diamond \ \u{\ps T} f_\diamond)} && {A'' \ (\mu^{\ps T}_{A_0''})_\diamond \ \u{\ps T}^2f'_\diamond \ \u{\ps T}^2 f_\diamond)} \\
	&& {A'' \ (\mu^{\ps T}_{A_0''})_\diamond \ \u{\ps T}(\u{\ps T}f'_\diamond \ \u{\ps T} f_\diamond)} \\
	{A'' \ \u{\ps T}A'' \ \u{\ps T}^2(f'f)_\diamond )} && {A'' \ (\mu^{\ps T}_{A_0''})_\diamond \ \u{\ps T}^2(f' f)_\diamond} && {A'' \ \u{\ps T}(f'f)_\diamond \ (\mu^{\ps T}_{A_0})_\diamond}
	\arrow["\Delta", from=1-1, to=1-5]
	\arrow["\Theta"', from=1-1, to=2-1]
	\arrow["\Theta", from=1-5, to=4-5]
	\arrow["\psi", from=2-1, to=2-2]
	\arrow["{{\Theta'}}"', from=2-1, to=3-1]
	\arrow["{{\u{\ps T}\Theta}}", from=2-2, to=2-3]
	\arrow["{{\Theta'}}"', from=2-2, to=3-2]
	\arrow["\phi", from=2-3, to=4-3]
	\arrow["{\Theta'}", from=2-3, to=5-2]
	\arrow["\psi", from=3-1, to=3-2]
	\arrow["\psi"', from=3-1, to=4-1]
	\arrow["\psi", from=3-2, to=5-1]
	\arrow["{{\u{\ps T}\Theta}}"', from=3-2, to=5-2]
	\arrow["{{\Theta'}}"', from=3-4, to=6-4]
	\arrow["\psi"', from=4-1, to=5-1]
	\arrow["{{\Delta'}}"', from=4-3, to=3-4]
	\arrow["{{\Theta'}}", from=4-3, to=5-3]
	\arrow["\mu", from=4-5, to=3-4]
	\arrow["{{\Theta'}}", from=4-5, to=7-5]
	\arrow["{{\u{\ps T}\Theta}}"', from=5-1, to=7-1]
	\arrow["\phi", from=5-2, to=5-3]
	\arrow["\psi"', from=5-2, to=7-1]
	\arrow["\psi", from=5-3, to=6-3]
	\arrow["{{\u{\ps T}\Theta'}}", from=6-3, to=7-3]
	\arrow["\mu", from=6-4, to=9-4]
	\arrow["\phi", from=7-1, to=6-3]
	\arrow["{{\u{\ps T}\Theta'}}"', from=7-1, to=8-1]
	\arrow["\phi", from=7-3, to=8-3]
	\arrow["\mu", from=7-5, to=6-4]
	\arrow["\psi", from=7-5, to=11-5]
	\arrow["\phi", from=8-1, to=7-3]
	\arrow["{{\u{\ps T}\psi}}"', from=8-1, to=9-1]
	\arrow["\phi", from=8-1, to=9-2]
	\arrow[""{name=0, anchor=center, inner sep=0}, "{{\Delta''}}", from=8-3, to=9-4]
	\arrow["\phi"', from=9-1, to=11-1]
	\arrow["\phi", from=9-2, to=8-3]
	\arrow[""{name=1, anchor=center, inner sep=0}, "{{\Delta''}}", from=9-2, to=10-3]
	\arrow["{{\u{\ps T}\psi}}", from=9-2, to=11-1]
	\arrow["\psi", from=9-4, to=10-3]
	\arrow["{{(**)}}"{description}, draw=none, from=9-4, to=11-5]
	\arrow["{{\u{\ps T}\psi}}", from=10-3, to=11-3]
	\arrow["{{\Delta''}}"', from=11-1, to=11-3]
	\arrow["\mu", from=11-5, to=11-3]
	\arrow["{{(*)}}"{description}, draw=none, from=1, to=0]
\end{tikzcd}\]
where each subdiagram commutes either trivially, or by the axioms making $\braket{f, \Theta}$ and $\braket{f',\Theta'}$ colax morphisms, or by the axioms of \Cref{def:allowing-lax-algebras}. In particular, subdiagrams $(*)$ and $(**)$ commute since $\psi_{\u{\ps T} f'_\diamond , \u{\ps T} f_\diamond} =  \phi^{-1}_{\ps T f'_\diamond , \u{\ps T} f_\diamond} $, where as usual we identify $\u{\ps T} f_\diamond$ with the representable $(\ps T f)_\diamond$ via the isomorphism $\delta^{\ps T}_f$ (and similarly for $f'$), together with the fact that $\mu^{\u{\ps T}}$ is a pseudonatural transformation between pseudofunctors when restricted to representables.
\end{proof}

Finally, we give a proof of \Cref{prop:lokes-allow-for-lax-algebras}: that is,
we show that the skew monads obtained by \Cref{thm:loke-extend} and
\Cref{cor:quotients-extend} allow for lax algebras. Fix for the rest of this
section a monad $\monad{T}$ on $\Set$, and let $\monad{\ps L}$ be the
pseudomonad on $\CAT$ obtained from it by left oplax Kan extension. Consider the
left skew monad $\monad{\u{\ps L}}$ on $\PROF$. We split the proof in
\Cref{prop:lokes-allow-for-lax-algebras-0,prop:lokes-allow-for-lax-algebras-1,prop:lokes-allow-for-lax-algebras-2,prop:lokes-allow-for-lax-algebras-3}
--- the latter two we have stated and proved in \Cref{app:A4} --- (also together
with \Cref{sch:quotients}).

\begin{proposition}\label{prop:lokes-allow-for-lax-algebras-0}
	The transformation $\delta^{\ps L} \colon \u{\ps L}(-)\Rightarrow (\ps L -)_\diamond$ is pseudonatural.
\end{proposition}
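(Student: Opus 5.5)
We have to show that every component $\delta^{\ps L}_f \colon \u{\ps L}(f_\diamond) \Rightarrow (\ps L f)_\diamond$ is invertible. The oplax naturality coherences are already provided by \Cref{thm:extension-pseudofunctors}, so invertibility of the components is all that remains. Here $\u{\ps L}$ is the extension in the regular bicategory $\coop\CAT$, and $\delta^{\ps L}_f$ is the $2$-cell induced by the $\ps L$-image of the bicomma square $1_B \dom \Rightarrow f\cod$ that defines $f_\diamond$. By \Cref{lemma:dagger} and \Cref{ex:exactness-in-examples}, it therefore suffices to show that $\ps L$ sends this one particular square to an exact square in $\CAT$. This is the statement of \hyperlink{hyp:BCC-functor}{BCC}, but only for the squares coming from representables. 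Note that the full Beck-Chevalley condition for $\ps L$ is false in general, which is why we restrict to these squares.

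Concretely, in $\CAT$ the profunctor $f_\diamond$ for $f \colon C \to D$ corresponds to the collage, i.e.\ the cograph cospan $C \xrightarrow{\cod} R \xleftarrow{\dom} D$. In $R$, the hom-sets from objects of $D$ to objects of $C$ are $D(d, f c)$. Following \Cref{rem:explicit-description-on-profunctors}, we have $\u{\ps L}(f_\diamond) = (\ps L\dom)^\diamond \circ (\ps L\cod)_\diamond$, whose elements at $((X,h,\nu),(Y,k,\xi))$ are the $\ps L R$-arrows $(X,\dom h,\nu)\to(Y,\cod k,\xi)$. Such an arrow is a pair $(g\colon Y\to X,\ \alpha)$ with $Tg(\xi)=\nu$ and $\alpha_y\in R(\dom h g(y),\cod k(y)) = D(hg(y), fk(y))$. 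Because $\ps L\dom$ and $\ps L\cod$ are fully faithful by \Cref{thm:loke-extend}, no coend identification is needed. The comparison $\delta^{\ps L}_f$ sends this element to the same pair $(g,\alpha)$, now viewed as an $\ps L D$-arrow $(X,h,\nu)\to(Y,fk,\xi)$, i.e.\ an element of $(\ps L f)_\diamond$.

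The key step is then to check that this map is a bijection. Both sides consist of a function $g$ with $Tg(\xi)=\nu$ together with a family of $D$-arrows $hg(y)\to fk(y)$. The correspondence is the identity on this data, using that $\ps L$ acts on functors by postcomposition and componentwise on transformations. This is just \Cref{def:left-oplax-kan-extension} unfolded, with $\ps L$ applied to the collage $R$.

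The main obstacle is bookkeeping rather than mathematics. One must confirm that the abstract $2$-cell $\delta^{\ps L}_f$, built from units and counits through \Cref{lemma:dagger}, really is the naive identification described above. I would check this by computing the induced morphism of cospans (equivalently, the functor out of the collage of $\u{\ps L}(f_\diamond)$ into that of $(\ps L f)_\diamond$) on generators, and verifying that it is identity-on-objects and fully faithful. Finally, I would note that for the quotients $\bb T$ the same argument goes through, since identifying arrows ``almost everywhere'' is compatible on both sides.
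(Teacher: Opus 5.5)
Your proposal is correct and follows essentially the same route as the paper. Both identify an element of $\u{\ps L}(f_\diamond)$ at $((X,h,\nu),(Y,k,\xi))$ with an $\ps L R$-arrow $(g,\alpha)\colon(X,\dom h,\nu)\to(Y,\cod k,\xi)$, i.e.\ a function $g$ with $Tg(\xi)=\nu$ together with $D$-arrows $hg(y)\to fk(y)$, and both observe that $\delta^{\ps L}_f$ merely reads this data as an $\ps L D$-arrow $(X,h,\nu)\to(Y,fk,\xi)$, which is evidently bijective.

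Two small corrections. First, the coend defining $(\ps L\dom)^\diamond\circ(\ps L\cod)_\diamond$ collapses to $\ps L R(\ps L\dom -,\ps L\cod -)$ by the (co-)Yoneda lemma, as it does for any composite $g^\diamond\circ f_\diamond$, not because $\ps L\dom$ and $\ps L\cod$ are fully faithful. Second, your aside that the full Beck--Chevalley condition for $\ps L$ is false is not established in the paper, which only withdraws its earlier claim that the condition holds.
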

\begin{proof}
	Let $f \colon C \to D$ be a functor and consider the canonical natural transformation $\delta^{\ps L}_f \colon \u{\ps L}f_\diamond \Rightarrow (\ps L f)_\diamond$. Let $D \xrightarrow{d_0} R \xleftarrow{d_1} C$ be the representation of $f_\diamond$ as a two-sided codiscrete cofibration. Fix $(X, h, \nu)$ in $\ps L D$ and $(Y, k, \xi)$ in $\ps L C$.

	An element of $\u{\ps L}f_\diamond ( (X,h,\nu), (Y,k,\xi))$ can be identified with a suitable equivalence class of morphisms $(p, \gamma) \colon (X, d_0 h, \nu ) \to (Y, d_1k, \xi)$ in $\ps L R$. Note that the component of $\gamma \colon d_0 h p \Rightarrow d_1 k$ at each $y\in Y$ is given by an $R$-arrow $d_0hp(y) \to d_1k(y)$, which we can identify with a $D$-arrow $hp(y) \to fk(y)$. Since we have that
	\[(\ps L f)_\diamond ((X,h,\nu), (Y,k,\xi)) \cong \ps L D((X,h,\nu), (Y, fk, \xi)),\]
	the map $\delta^{\ps L}_f$ is defined simply by seeing the pair $(p,\gamma)$ as an $\ps L D$-arrow $(X,h,\nu) \to (Y,fk,\xi)$, and thus it is clearly a bijection.
\end{proof}

\begin{proposition}\label{prop:lokes-allow-for-lax-algebras-1}
	The transformation $\omega^{\ps L} \colon \u{\ps L}^2 \Rightarrow \u{(\ps L ^2)}$ is pseudonatural.
\end{proposition}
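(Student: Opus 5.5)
The plan is to reduce pseudonaturality of $\omega^{\ps L}$ to invertibility of its components. Recall from the proof of \Cref{cor:extension-pseudomonads} that $\omega^{\ps L}_e$, for a profunctor $e$ given by a two-sided codiscrete cofibration $C \xrightarrow{d_1} R \xleftarrow{d_0} D$, is the composite
\[ \u{\ps L}\big((\ps L d_0)_\diamond(\ps L d_1)^\diamond\big) \Rightarrow \u{\ps L}(\ps L d_0)_\diamond\, \u{\ps L}(\ps L d_1)^\diamond \Rightarrow (\ps L^2 d_0)_\diamond(\ps L^2 d_1)^\diamond \]
of the oplax associator $\psi$ of $\u{\ps L}$ with $\delta^{\ps L}_{\ps L d_0}\theta^{\ps L}_{\ps L d_1}$. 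Oplax naturality of $\omega^{\ps L}$ is already known, and its components on objects are identities. So it suffices to show that each such $2$-cell is invertible.

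The second factor is handled by earlier results. The $\delta^{\ps L}$ part is invertible by \Cref{prop:lokes-allow-for-lax-algebras-0}. For $\theta^{\ps L}$, I will argue that it is the mate of $\delta^{\ps L}$ under the adjunctions $f_\diamond\dashv f^\diamond$ and $(\ps L f)_\diamond \dashv (\ps L f)^\diamond$. This holds once $\u{\ps L}$ preserves these adjunctions on representables, which I expect to follow from $\delta^{\ps L}$ being invertible together with the definition $\u{\ps L}(g^\diamond) = (\ps L 1)_\diamond(\ps L g)^\diamond$. Concretely, $\theta^{\ps L}_g$ is induced by a bicomma square whose $\ps L$-image is exact by the same elementwise argument as in \Cref{prop:lokes-allow-for-lax-algebras-0}. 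I would verify this directly: an element of $\u{\ps L}(g^\diamond)$ is an $\ps L R$-arrow whose components live in the cograph, and these identify with $\ps L D$-arrows into the image of $\ps L g$.

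The main obstacle is the first factor, the associator $\psi_{G,F}$ at $G = (\ps L d_0)_\diamond$ and $F = (\ps L d_1)^\diamond$. By construction it is induced by the $\ps L$-image of the bicomma square of the cospan $\ps L R \xleftarrow{\ps L d_1} \ps L C \cdots$; more precisely, of the cospan formed by the representing cofibrations of a cograph followed by a graph. Their composite is again represented in a simple way: the bicomma of $\ps L d_0$ and $\ps L d_1$ is essentially $\ps L R$ itself, up to the $\E$-factorization. I would therefore show that the relevant square is one whose $\ps L$-image is exact. The approach is to compute both sides explicitly via \Cref{rem:explicit-description-on-profunctors}, as equivalence classes of zig-zags of $\ps L^2 R$-arrows. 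I would then prove that the comparison map is a bijection by the same cocone/colimit technique used in \Cref{prop:lokes-allow-for-lax-algebras-2,prop:lokes-allow-for-lax-algebras-3}: represent each element canonically with an identity on one side, and show that any cocone factors uniquely through it. The key observation for this step is that an object of $\ps L^2 R$ whose components all lie in $C$ or in $D$ can be reindexed along the inclusions. This works because the morphisms in $\ps L$ only involve functions between index sets and componentwise arrows.

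Finally, I expect the argument to descend to the quotients $\bb T$ exactly as in the scholium, since every identification used above is compatible with agreeing almost everywhere.
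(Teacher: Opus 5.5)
Your proposal is correct in outline, but it takes a different route from the paper.

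\textbf{Your route.} You keep $\omega^{\ps L}_F$ in its defining form $(\delta^{\ps L}*\theta^{\ps L})\cdot\psi$ and prove each factor invertible. You handle $\psi$ via exactness of the $\ps L$-image of a bicomma square, checked by a cocone argument in the style of \Cref{prop:lokes-allow-for-lax-algebras-2,prop:lokes-allow-for-lax-algebras-3}.

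\textbf{The paper's route.} The paper never splits $\omega^{\ps L}_F$. Write $\ps L D\xrightarrow{j_0}R'\xleftarrow{j_1}\ps L C$ for the collage of $\u{\ps L}F$. Both $\u{\ps L}^2F=(\ps L j_0)^\diamond(\ps L j_1)_\diamond$ and $\u{(\ps L^2)}F=(\ps L^2 d_0)^\diamond(\ps L^2 d_1)_\diamond$ are cograph-after-graph composites. So, by co-Yoneda, their values at $\mathbf a=(X,H,\nu)$ and $\mathbf b=(Y,K,\xi)$ are just the hom-sets $\ps L R'(\ps L j_0\mathbf a,\ps L j_1\mathbf b)$ and $\ps L^2R(\ps L^2 d_0\mathbf a,\ps L^2 d_1\mathbf b)$. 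An element of either is a function $p\colon Y\to X$ with $Tp(\xi)=\nu$, together with a $Y$-indexed family of components. The components range over the same sets, since $R'(j_0Hp(y),j_1K(y))=\u{\ps L}F(Hp(y),K(y))\cong\ps L R(\ps L d_0Hp(y),\ps L d_1K(y))$.

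\textbf{Comparison.} The paper's argument is much shorter, but it only asserts that this evident bijection is ``precisely'' $\omega^{\ps L}_F$. Your decomposition has that identification built in. However, once $\delta^{\ps L}$ and $\theta^{\ps L}$ are invertible, the composite $\u{\ps L}(\ps L d_0)^\diamond\circ\u{\ps L}(\ps L d_1)_\diamond$ through which $\psi$ passes collapses by co-Yoneda to $\ps L^2R(\ps L^2d_0-,\ps L^2d_1-)$. Invertibility of $\psi$ then amounts to the paper's componentwise comparison, so the cocone machinery is not really needed.

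\textbf{If you keep the exactness argument.} Be precise about the square. The relevant bicomma object is not $\ps L R$: in $\CAT$ it is the cocomma gluing the collages of $(\ps L d_1)_\diamond$ and $(\ps L d_0)^\diamond$ along $\ps L R$. What makes its $\ps L$-image exact is that each cross-arrow there is related to a canonical representative through $\ps L R$ by a \emph{single} arrow (co-Yoneda). These relations therefore assemble uniformly over the index set into one $\ps L^2R$-arrow. For a general bicomma square the required zig-zags could have unbounded length across indices, and the argument would not go through. Your stated ``key observation'' about reindexing along inclusions should be replaced by this one.

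\textbf{Two slips.}
\begin{itemize}
\item In $\PROF$ one has $\u{\ps L}F=(\ps L d_0)^\diamond(\ps L d_1)_\diamond$ (\Cref{rem:explicit-description-on-profunctors}), so graphs and cographs are swapped in your formula for $\omega_e$.
\item $\u{\ps L}(g^\diamond)$ is computed from the collage of the cograph, not from the span with legs $g$ and $1$. Taken literally, your formula would make invertibility of $\theta^{\ps L}$ circular, so rely on the direct elementwise check you also sketch.
\end{itemize}
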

\begin{proof}
	Fix a profunctor $F \colon C \pro D$, identified with a two-sided codiscrete cofibration $D \xrightarrow{i_0} R_F \xleftarrow{i_1} C$. By construction, $\u{\ps L} F$ is the profunctor $(\ps L d_0)^\diamond (\ps L d_1)_\diamond$, which we can also represent as a two-sided codiscrete cofibration $\ps L D \xrightarrow{j_0} R_{\u{\ps L}F} \xleftarrow{j_1} \ps L C$. Thus, $\u{\ps L}^2 F$ is the profunctor $(\ps L j_0)^\diamond (\ps L j_1)_\diamond$, while $\u{(\ps L^2)}F$ is the profunctor $(\ps L^2 i_0)^\diamond (\ps L^2i_1)_\diamond$.

	Fix $(X,H, \nu)$ in $\ps L^2 D$ and $(Y,K,\xi)$ in $\ps L^2 C$. An element of $\u{\ps L}^2F((X,H,\nu), (Y,K, \xi))$ is given by a suitable equivalence class of $\ps L R_{\u{\ps L}F}$-arrows $(X, j_0 H , \nu) \to (Y, j_1 K , \xi)$, while an element of $\u{(\ps L^2)}F((X,H,\nu), (Y,K, \xi))$ is given by a suitable equivalence class of $\ps L^2 R_{F}$-arrows $(X, \ps L i_0 H, \nu) \to (Y, \ps L i_1 K , \xi)$. Fix then any function $p \colon Y \to X$. A natural transformation $\gamma \colon j_0 H p \Rightarrow j_1 K$ is defined by a family of $ R_{\u{\ps L}F}$-arrows $\gamma_y \colon j_0Hp(y) \to j_1K(y)$, which we can identify with elements in $\u{\ps L}F(Hp(y), K(y))$. By direct inspection, we see that there is a bijection
	\[ \u{\ps L}F(Hp(y), K(y)) \cong \ps L R_{F}(\ps L i_0 H p(y) , \ps L i_1 K(y))\]
	through which we can identify each $\gamma_y$ with a $\ps L R_F$-arrow $\ps L i_0 Hp(y) \to \ps L i_1K(y)$, and hence $\gamma$ with a natural transformation $\ps L i_0 Hp \Rightarrow \ps L i_1K$. This bijective correspondence between transformations $j_0 H p \Rightarrow j_1 K$ and $\ps L i_0 Hp \Rightarrow \ps L i_1K$ lifts to a bijection between $\u{\ps L}^2F((X,H,\nu), (Y,K, \xi))$ and $\u{(\ps L^2)}F((X,H,\nu), (Y,K, \xi))$ which is precisely the map $\omega^{\ps L}_F$; thus, the transformation $\omega^{\ps L}$ is pseudonatural.
\end{proof}

Before concluding the proof of \Cref{prop:lokes-allow-for-lax-algebras}, we here describe the lax associator $\psi_{G,f_\diamond} \colon \u{\ps L} G \circ \u{\ps L} f_\diamond\Rightarrow \u{\ps L}(G \circ f_\diamond)$ for $\u{\ps L}$ explicitly, for a functor $f \colon A \to B$ and a profunctor $G \colon B \pro C$, so as to introduce notations that we will use in the next proposition. Throughout, we identify $f_\diamond$ and $G$ with the two-sided codiscrete cofibrations $B \xrightarrow{i_0} R_f \xleftarrow{i_1} A$ and $C \xrightarrow{j_0} R_G \xleftarrow{j_1} B$ respectively, and similarly $G\circ f_\diamond$ with $C \xrightarrow{d_0} R_{Gf_\diamond} \xleftarrow{d_1} A$. Fix $(X, h, \nu)$ in $\ps L C$ and $(Y, k, \xi)$ in $\ps L A$.

An element of $(\u{\ps L}G\circ \u{\ps L}f_\diamond) ( ( X, h, \nu), (Y, k, \xi))$ is given by an element of the coend
\[ \int^{(Z, l, \chi) \in \ps L B} \u{\ps L}G ((X,h,\nu), (Z,l,\chi)) \times \u{\ps L}f_\diamond ((Z,l,\chi), (Y,k,\xi)) \]
and hence by the equivalence class of a triple of:
\begin{itemize}
	\item an object $(Z,l,\chi)$ in $\ps L B$,
	\item a $\ps L R_G$-arrow $(p,\gamma) \colon (X, j_0h, \nu) \to (Z,j_1l,\chi)$, and
	\item a $\ps L R_{f_\diamond}$-arrow $(p',\gamma') \colon (Z,i_0l,\chi) \to (Y,i_1k, \nu)$,
\end{itemize}
identified up to coherent zig-zags in $\ps L B$, $\ps L R_G$, and $\ps L R_{f_\diamond}$. Note that, for each $z \in Z$, the component $\gamma_z \colon j_0 h p(z) \to j_1 l(z)$ in $R_G$ can be identified with an element of $G(hp(z), l(z))$; similarly, for each $y \in Y$, the component $\gamma'_y \colon i_0 l p'(y)\to i_1k(y)$ in $R_{f_\diamond}$ can be identified with a $B$-arrow $lp'(y) \to fk(y)$. Thus, the transformation $\psi_{G,f_\diamond}$ maps an equivalence class as above to the element of $\u{\ps L}(Gf_\diamond)((X,h,\nu), (Y,k,\xi))$ represented by the $\ps L R_{Gf_\diamond}$-arrow $(X,d_0h,\nu) \to (Y, d_1k, \xi)$ defined by:
\begin{itemize}
\item the function $pp' \colon Y \to X$, and
\item for each $y\in Y$, the $R_{Gf_\diamond}$-arrow $d_0h pp'(y) \to d_1k(y)$ corresponding to the element of $G(hpp'(y),f k(y))$ obtained by pushing $\gamma_{p'(y)}$ forward along $\gamma'_y$.
\end{itemize}

\begin{proposition}\label{prop:lokes-allow-for-lax-algebras-4}
There exists a natural family of natural transformations $\phi_{G,f_\diamond} \colon \u{\ps L}(G \circ f_\diamond) \Rightarrow \u{\ps L} G \circ \u{\ps L} f_\diamond$, for any functor $f \colon A \to B$ and any profunctor $G \colon B \pro C$, such that
    \begin{enumerate}
      \item the diagram
      \[\begin{tikzcd}
	{\u{\ps L}(G \circ (f'f)_\diamond)} & {\u{\ps L}G \circ \u{\ps L}(f' f)_\diamond} & {\u{\ps L}G \circ \u{\ps L}(f'_\diamond \circ  f_\diamond)} \\
	{\u{\ps L}(G \circ f'_\diamond\circ f_\diamond)} & {\u{\ps L}(G \circ f'_\diamond) \circ \u{\ps L}f_\diamond} & {\u{\ps L}G \circ \u {\ps L}  f'_\diamond \circ \u{\ps L}f_\diamond}
	\arrow["{{\phi_{G, (f'f)_\diamond}}}", from=1-1, to=1-2]
	\arrow["\cong"', from=1-1, to=2-1]
	\arrow["\cong", from=1-2, to=1-3]
	\arrow["{{\u{\ps L}G* \phi_{f'_\diamond, f_\diamond}}}", from=1-3, to=2-3]
	\arrow["{{\phi_{G\circ f'_\diamond, f_\diamond}}}"', from=2-1, to=2-2]
	\arrow["{{\phi_{G,f'_\diamond}*\u{\ps L }f_\diamond}}"', from=2-2, to=2-3]
    \end{tikzcd}\]
      commutes, and
      \item $\phi_{f'_\diamond,f_\diamond} = \psi_{f'_\diamond,f_\diamond}^{-1}$, where $\psi_{f'_\diamond,f_\diamond} \colon \u{\ps L} f'_\diamond \circ \u{\ps L} f_\diamond\Rightarrow \u{\ps L}(f'_\diamond \circ f_\diamond)$ is the lax associator for $\u{\ps L}$
    \end{enumerate}
    for each pair of functors $f\colon A \to B, f'\colon B \to C$ and each profunctor $G \colon C \pro D$.
\end{proposition}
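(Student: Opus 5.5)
I will construct $\phi_{G,f_\diamond}$ explicitly, using the concrete description of $\psi_{G,f_\diamond}$ given just above, and show that on elements it is a section of $\psi_{G,f_\diamond}$. Fix $(X,h,\nu)\in\ps L C$ and $(Y,k,\xi)\in\ps L A$. An element of $\u{\ps L}(G\circ f_\diamond)((X,h,\nu),(Y,k,\xi))$ is represented by a $\ps L R_{Gf_\diamond}$-arrow $(p,\gamma)\colon (X,d_0h,\nu)\to(Y,d_1k,\xi)$. Here $p\colon Y\to X$, and each $\gamma_y$ is an element of $G(hp(y),fk(y))$.

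I define $\phi_{G,f_\diamond}$ to send this class to the class of the triple with middle object $(Y,fk,\xi)\in\ps L B$. The first component is the $\ps L R_G$-arrow $(p,\gamma)\colon (X,j_0h,\nu)\to (Y,j_1fk,\xi)$, reading each $\gamma_y$ as an element of $G(hp(y),fk(y))$. The second component is the identity-type $\ps L R_{f_\diamond}$-arrow $(\id_Y,\id)\colon (Y,i_0fk,\xi)\to(Y,i_1k,\xi)$, whose components are the identities $fk(y)\to fk(y)$.

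Three things must be checked for this definition. First, it is well defined: $\ps L R_{Gf_\diamond}$-zig-zags are sent to coend zig-zags. This holds because a morphism in $\ps L R_{Gf_\diamond}$ between objects over $\ps L C$ and $\ps L A$ factors through the $\ps L B$-part. Second, it is natural in $(X,h,\nu)$ and $(Y,k,\xi)$, which is immediate from the formula. Third, it is natural in $G$ and in $f$ with respect to 2-cells.

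By the description of $\psi$ above, $\psi\circ\phi$ is the identity, since pushing $\gamma$ forward along identities returns $\gamma$. For $G=f'_\diamond$, condition (2) requires $\phi=\psi^{-1}$, so I need the stronger claim that $\psi_{f'_\diamond,f_\diamond}$ is invertible and $\phi$ is a two-sided inverse. For this I show directly that every coend class $\langle (Z,l,\chi),(p,\gamma),(p',\gamma')\rangle$ is equivalent to one of the normal form above. When $G=f'_\diamond$ both legs are representable, so I can use the zig-zag given by the $\ps L B$-arrow $(p',\gamma')\colon (Z,l,\chi)\to(Y,fk,\xi)$. This slides the second component to the identity and composes $\gamma$ with $\gamma'$ in $C$. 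Alternatively, I can deduce invertibility of $\psi_{f'_\diamond,f_\diamond}$ from pseudonaturality of $\delta^{\ps L}$, i.e.\ Proposition~\ref{prop:lokes-allow-for-lax-algebras-0}, together with pseudofunctoriality of $\ps L$ on representables: both sides are identified with $(\ps L(f'f))_\diamond$. I would then check that $\phi$ matches that inverse.

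For condition (1), I chase a representative $(p,\gamma)$ of an element of $\u{\ps L}(G\circ(f'f)_\diamond)$ around both paths. Both yield the triple of triples with middle objects $(Y,f'fk,\xi)$ and $(Y,fk,\xi)$, first component $(p,\gamma)$, and identity-type second and third components. Both paths agree on the nose, up to the canonical identifications $(f'f)_\diamond\cong f'_\diamond f_\diamond$ and the coherence of $\delta^{\ps L}$.

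The main obstacle I expect is well-definedness of $\phi$ with respect to the equivalence relation on $\u{\ps L}(G\circ f_\diamond)$. This relation is generated by zig-zags in $\ps L R_{Gf_\diamond}$ whose intermediate objects live over $\ps L$ of the collage, not only over $\ps L C$ and $\ps L A$. I would first use the normal-form representatives of Remark~\ref{rem:explicit-description-on-profunctors} to reduce to zig-zags with endpoints over $\ps L C$ and $\ps L A$. Then I would check that each elementary step maps to a coend relation, using the $\ps L B$-arrow $(\id,\text{component})$ to absorb it.

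For the quotients $\bb T$, the same construction passes to equivalence classes of morphisms modulo the almost-everywhere identification. This is because $\phi$ and $\psi$ act componentwise on $y\in Y$ and hence respect agreement on a set of measure one.
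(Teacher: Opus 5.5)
Your proposal is correct and follows essentially the paper's proof. You use the same section $\phi_{G,f_\diamond}$ through $(Y,fk,\xi)$ with identity second leg and the same identity $\psi\circ\phi=1$. You get invertibility of $\psi_{f'_\diamond,f_\diamond}$ from the invertible $\delta^{\ps L}$ together with its compatibility with composition; note that merely identifying both sides with $(\ps L(f'f))_\diamond$ would not be enough on its own. Your direct sliding argument along the $\ps L B$-arrow $(p',\gamma')$ also works, and in fact for every profunctor $G$, not only representable ones. Condition (1) then follows by an element chase, as you describe.

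The well-definedness issue you single out as the main obstacle does not arise. By co-Yoneda, $\u{\ps L}(G\circ f_\diamond)((X,h,\nu),(Y,k,\xi))$ is just the hom-set $\ps L R_{Gf_\diamond}((X,d_0h,\nu),(Y,d_1k,\xi))$, so each element has a unique such representative and there is no zig-zag relation left to check.
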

\begin{proof}
With the above notations and conventions, fix an element of $\u{\ps L}(Gf_\diamond)((X,h,\nu), (Y,k,\xi))$, represented up to coherent zig-zags by an $\ps L R_{Gf_\diamond}$-arrow $(t, \zeta) \colon (X,d_0h,\nu) \to (Y, d_1k, \xi)$. We define the value of $\phi_{G, f_\diamond}$ on $(t,\zeta)$ as the element of $(\u{\ps L}G\circ \u{\ps L}f_\diamond) ( ( X, h, \nu), (Y, k, \xi))$ represented by the triple:
\begin{itemize}
	\item the object $(Y, fk)$ in $\ps L B$,
	\item the $\ps L R_G$-arrow $(t, \zeta) \colon (X, j_0h, \nu) \to (Y, j_1fk, \xi)$, where indeed for each $y \in Y$ we can identify the $R_{Gf_\diamond}$-arrow $\zeta_y \colon d_0ht(y) \to d_1k(y)$ with a $R_{G}$-arrow $j_0ht(y) \to j_1fk(y)$;
	\item the $\ps L R_{f_\diamond}$-arrow $(\id_Y,\id) \colon (Y, i_0 fk, \nu) \to (Y, i_1k,\nu)$ defined by choosing, for each $y\in Y$, the identity $fk(y) = fk(y)$ in $B$ as an $R_{f_\diamond}$-arrow $i_0fk(y) \to i_1k(y)$.
\end{itemize}

By construction, it is evident that $\psi_{G,f_\diamond} \circ \phi_{G,f_\diamond} = 1$. Suppose now that $G = f'_\diamond$ for some functor $f' \colon B \to C$. By lax naturality of $\delta^{\ps L}$, the diagram
\[\begin{tikzcd}[column sep = 40pt]
	{(\ps L f')_\diamond \circ (\ps L f)_\diamond} & {\u{\ps L}f'_\diamond \circ (\ps L f)_\diamond} & {\u{\ps L}f'_\diamond \circ \u{\ps L}f_\diamond} \\
	{(\ps L (f' f))_\diamond} & {\u{\ps L}(f' f)_\diamond} & {\u{\ps L}(f'_\diamond \circ f_\diamond)}
	\arrow["{\delta^{\ps L}_{f'} *(\ps L f)_\diamond}", from=1-1, to=1-2]
	\arrow["\cong"', from=1-1, to=2-1]
	\arrow["{\u{\ps L}f'_\diamond *\delta^{\ps L}_f}", from=1-2, to=1-3]
	\arrow["{\psi_{f'_\diamond, f_\diamond}}", from=1-3, to=2-3]
	\arrow["{\delta_{f' f}^{\ps L}}"', from=2-1, to=2-2]
	\arrow["\cong"', from=2-2, to=2-3]
\end{tikzcd}\]
  commutes: this means that $\psi_{f'_\diamond, f_\diamond}$ is invertible, so that necessarily $\phi_{f'_\diamond, f_\diamond} = \psi_{f'_\diamond,f_\diamond}^{-1}$. The fact that the diagram in (1) commutes is immediate.
\end{proof}

\begin{scholium}\label{sch:quotients}
    The same proofs show how the analogous results also hold for the four quotients $\bb F$, $\bbbeta$, $\bb P$ and $\bb D$ introduced in \Cref{sec:04}, hence concluding the proof of \Cref{prop:lokes-allow-for-lax-algebras}. Indeed, in all four of the previous propositions, all arguments remain true if the arrows of $\ps L C$, for any category $C$, are identified up to almost-everywhere equality.
\end{scholium}

\end{document}